\documentclass[12pt,fleqn]{article}

\usepackage{amsmath,amssymb,amsthm}
\usepackage{geometry} 
\usepackage[pdfpagemode=UseNone,pdfstartview=FitH,hypertexnames=false]{hyperref}
\usepackage{doi}

\newcommand{\A}{{\mathcal A}}
\newcommand{\As}[1][]{A_\sigma #1}
\newcommand{\abs}[1]{\left|#1\right|}
\newcommand{\B}{{\mathcal B}}
\newcommand{\Bs}[1][]{B_\sigma #1}
\newcommand{\bdry}[1]{\partial #1}
\newcommand{\bgdnorm}[2][]{\big\|#2\big\|_{#1}^\ast}
\newcommand{\bgset}[1]{\big\{#1\big\}}
\newcommand{\D}{{\mathcal D}}
\newcommand{\dist}[2]{\text{dist}\, (#1,#2)}
\newcommand{\dnorm}[2][]{\left\|#2\right\|_{#1}^\ast}
\newcommand{\E}{{\mathcal E}}
\newcommand{\eps}{\varepsilon}
\newcommand{\F}{{\mathcal F}}

\newcommand{\halfthin}{\kern 0.08333em}\newcommand{\loc}{\text{loc}}
\newcommand{\hquad}{\hspace{0.08in}}
\newcommand{\id}{id}
\newcommand{\incl}{\hookrightarrow}
\newcommand{\M}{{\mathcal M}}
\newcommand{\N}{\mathbb N}
\newcommand{\norm}[2][]{\left\|#2\right\|_{#1}}
\renewcommand{\O}{\text{O}}
\newcommand{\PS}[1]{$(\text{PS})_{#1}$}
\newcommand{\QED}{\mbox{\qedhere}}
\newcommand{\R}{\mathbb R}
\newcommand{\RP}{\R \text{P}}
\newcommand{\rad}{\text{rad}}
\newcommand{\restr}[2]{\left.#1\right|_{#2}}
\newcommand{\seq}[1]{\left(#1\right)}
\newcommand{\set}[1]{\left\{#1\right\}}
\newcommand{\wto}{\rightharpoonup}
\newcommand{\Z}{\mathbb Z}

\newcommand{\ds}[1]{\displaystyle #1}
\newcommand{\dint}{\ds{\int}}
\renewcommand{\o}{\text{o}}

\DeclareMathOperator{\sign}{sign}
\DeclareMathOperator{\supp}{supp}

\newenvironment{enumroman}{\begin{enumerate}

}{\end{enumerate}}

\newtheorem{corollary}{Corollary}[section]
\newtheorem{lemma}[corollary]{Lemma}
\newtheorem{proposition}[corollary]{Proposition}
\newtheorem{theorem}[corollary]{Theorem}

\theoremstyle{definition}
\newtheorem{definition}[corollary]{Definition}

\theoremstyle{remark}
\newtheorem{example}[corollary]{Example}
\newtheorem{remark}[corollary]{Remark}

\numberwithin{equation}{section}

\title{\bf A unifying zero-mass equation\thanks{{\em MSC2020:} Primary 35J60, 35J20, Secondary 35B33, 35P30, 35Q55, 46E35, 58E05
\newline \indent\; {\em Key Words and Phrases:} zero-mass equations, nonlocal elliptic equations, ball-mass Sobolev spaces, scaled eigenvalue problems, cohomological index, critical growth, Br\'{e}zis-Nirenberg problem, Chern-Simons-Schr\"{o}dinger equation, Schr\"{o}dinger-Poisson-Slater equation, inverse-power Schr\"{o}dinger equation}}
\author{\bf Kanishka Perera\\
Department of Mathematics\\
Florida Institute of Technology\\
150 W University Blvd, Melbourne, FL 32901-6975, USA\\
\em kperera@fit.edu}
\date{}

\begin{document}

\maketitle

\begin{abstract}
We introduce the nonlocal zero-mass equation
\[
- \Delta u + \Phi_u(|x|)\, |u|^{p-2}\, u = f(u) \quad \text{in } \R^N, \qquad \Phi_u(r) = a \int_r^\infty \rho^{-b}\, h_u^{q-1}(\rho)\, d\rho,
\]
where $h_u(\rho)$ is the mass of $|u|^p$ in the ball of radius $\rho$. For radial functions, this equation includes the defocusing inverse-power Schr\"{o}dinger equation, the Chern-Simons-Schr\"{o}dinger equation, and the Schr\"{o}dinger-Poisson-Slater equation as special cases. We develop the associated ball-mass Lebesgue and Sobolev spaces, which are uniformly convex, and prove sharp compact radial embeddings above a new critical exponent, a Br\'{e}zis-Lieb type splitting, and a Poho\v{z}aev identity. Exploiting a scaling invariance of the operator, we construct an unbounded sequence of eigenvalues of a scaled eigenvalue problem using the Fadell-Rabinowitz cohomological index, and obtain existence and multiplicity results in the subscaled, superscaled, and critical regimes. Our main result is a Br\'{e}zis-Nirenberg type theorem, proved by means of a new scaled linking theorem, which gives a nontrivial radial solution for every $\lambda > 0$ that is not an eigenvalue. Specializing to the three models recovers several known results in a unified way and yields new ones, including a Br\'{e}zis-Nirenberg type result for the Schr\"{o}dinger-Poisson-Slater equation in dimensions $N \ge 4$.
\end{abstract}

\begin{center}
\begin{minipage}{12cm}
\footnotesize \tableofcontents
\end{minipage}
\end{center}

\newpage

\section{Introduction}

This paper introduces a general nonlocal zero-mass equation that includes many physical models as special cases, among them the defocusing inverse-power Schr\"{o}dinger equation, Chern-Simons-Schr\"{o}dinger equation, and the Schr\"{o}dinger-Poisson-Slater equation.

Consider the equation
\begin{equation} \label{1}
- \Delta u + \Phi_u(|x|)\, |u|^{p-2}\, u = f(u) \quad \text{in } \R^N,
\end{equation}
where $N \ge 2$,
\begin{equation} \label{39}
\Phi_u(r) = a \int_r^\infty \rho^{-b}\, h_u^{q-1}(\rho)\, d\rho, \qquad h_u(\rho) = \int_{B_\rho} |u|^p\, dy,
\end{equation}
$r = |x|$, $a > 0$ is a normalization constant,
\begin{gather}
\label{33} 1 < p < \begin{cases}
\infty & \text{if } N = 2\\[5pt]
2^\ast & \text{if } N \ge 3,
\end{cases}\\[10pt]
\label{34} 1 \le q < \infty, \quad pq > 2,\\[10pt]
\label{2} 1 < b < b_\ast := \begin{cases}
1 + 2q & \text{if } N = 2\\[5pt]
1 + Nq\, (1 - p/2^\ast) & \text{if } N \ge 3,
\end{cases}
\end{gather}
$2^\ast = \frac{2N}{N-2}$ is the critical Sobolev exponent when $N \ge 3$, $B_\rho = \set{x \in \R^N : |x| < \rho}$, and $f : \R \to \R$ is a continuous function satisfying a suitable growth condition. In the radial case $u = u(r)$, special cases of equation \eqref{1} appear in different physical models. Despite the fact that some of these models have been studied extensively in recent years, the unifying ball-mass equation \eqref{1} in this general form and for the full range of parameters stated in \eqref{33}--\eqref{2} does not seem to have been considered in the literature. We will present a systematic structural study of this general unifying equation in this paper. Some of the results presented here are new even in the special cases that have appeared in the literature. First we describe some physical models.

When $q = 1$,
\[
\Phi_u(r) = a \int_r^\infty \rho^{-b}\, d\rho = \frac{ar^{-(b-1)}}{b - 1},
\]
so equation \eqref{1} with $a = b - 1$ reduces to the defocusing inverse-power Schr\"{o}dinger equation
\[
- \Delta u + \frac{|u|^{p-2}\, u}{|x|^{b-1}} = f(u) \quad \text{in } \R^N.
\]
This equation arises in the study of standing waves of the inhomogeneous nonlinear \linebreak Schr\"{o}dinger equation, which models the propagation of a laser beam through a medium of nonuniform density such as a plasma, the weight $|x|^{-(b-1)}$ accounting for the inhomogeneity of the medium. The sign of the inhomogeneous term is defocusing, so the self-interaction opposes concentration.

When $N = 2$, $p = 2$, $q = 3$, $b = 3$, and $u = u(r)$,
\[
\Phi_u(r) = 16 \pi^2 a \int_r^\infty \rho^{-3}\, \mathfrak{h}_u^2(\rho)\, d\rho,
\]
where $\mathfrak{h}_u = h_u/4 \pi$, i.e.,
\begin{equation} \label{623}
\mathfrak{h}_u(\rho) = \frac{1}{4 \pi} \int_{B_\rho} u^2\, dy = \int_0^\rho \frac{\tau}{2}\, u^2(\tau)\, d\tau.
\end{equation}
Integrating by parts assuming $u \in L^2(\R^2)$ and noting that
\begin{equation} \label{622}
\mathfrak{h}_u'(\rho) = \frac{\rho}{2}\, u^2(\rho)
\end{equation}
gives
\begin{equation} \label{621}
\Phi_u(r) = 8 \pi^2 a \left(\frac{\mathfrak{h}_u^2(r)}{r^2} + \int_r^\infty \frac{\mathfrak{h}_u(\rho)}{\rho}\, u^2(\rho)\, d\rho\right).
\end{equation}
So in this case equation \eqref{1} with $a = 1/8 \pi^2$ reduces to the Chern-Simons-Schr\"{o}dinger equation
\begin{equation} \label{3}
- \Delta u + \left(\frac{\mathfrak{h}_u^2(|x|)}{|x|^2} + \int_{|x|}^\infty \frac{\mathfrak{h}_u(\rho)}{\rho}\, u^2(\rho)\, d\rho\right) u = f(u) \quad \text{in } \R^2.
\end{equation}
The Chern-Simons-Schr\"{o}dinger system is a gauged nonlinear Schr\"{o}dinger equation in the plane describing the dynamics of a large number of nonrelativistic quantum particles interacting with a self-generated electromagnetic field whose dynamics is governed by the Chern-Simons term rather than the Maxwell term. Such planar gauge theories arise in the study of anyons, the fractional quantum Hall effect, and high-temperature superconductivity. For standing waves of zero vortex number, the gauge field can be eliminated in the Coulomb gauge, which reduces the system to the single nonlocal equation \eqref{3}.

When $N \ge 3$, $q = 2$, $b = N - 1$, and $u = u(r)$,
\[
\Phi_u(r) = a \int_r^\infty \rho^{-(N-1)}\, h_u(\rho)\, d\rho, \qquad h_u(\rho) = \omega_{N-1} \int_0^\rho \tau^{N-1}\, |u(\tau)|^p\, d\tau,
\]
where $\omega_{N-1}$ is the area of $\bdry{B_1}$. Since $\Phi_u'(r) = - ar^{-(N-1)}\, h_u(r)$ and $h_u'(\rho) = \omega_{N-1}\, \rho^{N-1}\, |u(\rho)|^p$,
\[
- \Delta \Phi_u = - r^{-(N-1)} \left(r^{N-1}\, \Phi_u'(r)\right)' = ar^{-(N-1)}\, h_u'(r) = \omega_{N-1}\, a\, |u|^p.
\]
Assuming $u \in L^p(\R^N)$, this gives
\begin{equation} \label{49}
\Phi_u(|x|) = \frac{a}{N - 2} \left(\frac{1}{|x|^{N-2}} \star |u|^p\right).
\end{equation}
So in this case equation \eqref{1} with $a = N - 2$ reduces to the Schr\"{o}dinger-Poisson-Slater equation
\begin{equation} \label{4}
- \Delta u + \left(\frac{1}{|x|^{N-2}} \star |u|^p\right) |u|^{p-2}\, u = f(u) \quad \text{in } \R^N.
\end{equation}
When $N = 3$ and $p = 2$, the convolution term is the electrostatic potential generated by the charge density $u^2$ through the Poisson equation, so equation \eqref{4} describes a system of electrons in a mean-field approximation, the nonlocal term accounting for the Coulomb repulsion among the electrons. The local term then models the exchange energy, the choice $f(u) = |u|^{s-2}\, u$ with $s = 8/3$ being the Slater approximation.

More generally, when $q = 2$ and $u = u(r)$, Fubini's theorem gives
\begin{multline*}
\Phi_u(r) = a \int_r^\infty \rho^{-b} \int_{B_\rho} |u(y)|^p\, dy\, d\rho = a \int_{\R^N} |u(y)|^p \int_{\max \set{r,|y|}}^\infty \rho^{-b}\, d\rho\, dy\\[7.5pt]
= \frac{a}{b - 1} \int_{\R^N} \frac{|u(y)|^p}{\max \set{r,|y|}^{b-1}}\, dy
\end{multline*}
since $b > 1$. Since $\max \set{|x|,\tau}^{2-N}$ is the average of $|x - y|^{2-N}$ over the sphere $|y| = \tau$ by Newton's theorem (see \cite[Theorem 9.7]{MR1817225}), this reduces to \eqref{49} when $b = N - 1$.

\subsection{Scaling property of the ball-mass operator}

The ball-mass operator
\[
\A(u) := - \Delta u + \Phi_u(|x|)\, |u|^{p-2}\, u
\]
has an important scaling property. Setting
\begin{equation} \label{37}
b_\# = N\halfthin (q - 1) + 3, \qquad \alpha = \frac{b_\# - b}{pq - 2}
\end{equation}
and
\[
u_t(x) = t^\alpha\, u(tx), \quad x \in \R^N,\, t \ge 0
\]
gives
\begin{equation} \label{28}
\A(u_t)(x) = t^{\alpha + 2}\, \A(u)(tx).
\end{equation}
Our analysis here will be based on this scaling and some scaling-based variational methods recently developed in \cite{MR5043800} (see Section \ref{section:scaling}). We note that
\begin{equation} \label{29}
b_\# - b_\ast = \frac{(N - 2)(pq - 2)}{2}
\end{equation}
and hence
\begin{equation} \label{35}
\alpha > \frac{N - 2}{2} \ge 0
\end{equation}
by \eqref{34} and \eqref{2}.

\subsection{Nonlinear eigenvalue problem}

The operator
\[
\B(u) := |u|^{\beta - 2}\, u,
\]
where
\begin{equation} \label{31}
\beta = 2 \left(1 + \frac{1}{\alpha}\right),
\end{equation}
scales the same way as $\A$, i.e.,
\[
\B(u_t)(x) = t^{\alpha + 2}\, \B(u)(tx).
\]
So
\begin{equation} \label{52}
\A(u) = \lambda\, \B(u)
\end{equation}
is a scaled eigenvalue problem for the operator $\A$ in the sense that if $u$ is an eigenfunction associated with an eigenvalue of this problem, then the entire $1$-parameter family $\set{u_t : t > 0}$ consists of eigenfunctions associated with the same eigenvalue. The abstract theory developed in \cite{MR5043800} gives a nondecreasing and unbounded sequence $\seq{\lambda_k}$ of positive eigenvalues of this problem. These scaled eigenvalues will play a central role in our results for equation \eqref{1}.

\subsection{Energy space and embeddings}

The natural energy space to look for solutions to equation \eqref{1} is the ball-mass Sobolev space $E^{p,q}_b(\R^N)$ consisting of all weakly differentiable functions $u : \R^N \to \R$ such that
\[
\int_{\R^N} |\nabla u|^2\, dx + \int_0^\infty \rho^{-b}\, h_u^q(\rho)\, d\rho < \infty.
\]
We will show that
\[
\norm{u} := \left[\left(\int_{\R^N} |\nabla u|^2\, dx\right)^{pq} + \left(\int_0^\infty \rho^{-b}\, h_u^q(\rho)\, d\rho\right)^2\right]^{1/2pq}
\]
is a norm on $E^{p,q}_b(\R^N)$ and that $E^{p,q}_b(\R^N)$ equipped with this norm is a uniformly convex Banach space (see Theorem \ref{Theorem 4}).

Denote by $E^{p,q}_{b,\rad}(\R^N)$ the closed subspace of $E^{p,q}_b(\R^N)$ consisting of radially symmetric functions. Set
\begin{equation} \label{30}
b^\ast = 2\, (N - 1)\, q + 1, \qquad 2^{p,q}_{b,\ast} = \frac{2\, [(N - 1)\, pq + b - 1]}{b^\ast - b}
\end{equation}
and note that
\begin{equation} \label{404}
p < 2^{p,q}_{b,\ast} < \begin{cases}
\infty & \text{if } N = 2\\[5pt]
2^\ast & \text{if } N \ge 3
\end{cases}
\end{equation}
by \eqref{2}. We will prove that $E^{p,q}_{b,\rad}(\R^2)$ is compactly embedded in $L^s(\R^2)$ for all $s \in (2^{p,q}_{b,\ast},\infty)$ and $E^{p,q}_{b,\rad}(\R^N)$ is embedded in $L^s(\R^N)$ continuously for all $s \in (2^{p,q}_{b,\ast},2^\ast]$ and compactly for $s \in (2^{p,q}_{b,\ast},2^\ast)$ for $N \ge 3$ (see Theorem \ref{Theorem 1}).

We have
\begin{equation} \label{630}
2^{p,q}_{b,\ast} = \frac{2\, (N - \kappa)}{N + \kappa - 2},
\end{equation}
where
\begin{equation} \label{48}
\kappa = \frac{2\, (b_\ast - b)}{(p + 2)\, q} > 0
\end{equation}
by \eqref{2}, so $2^{p,q}_{b,\ast}$ has the form of a Sobolev exponent. As $b \nearrow b_\ast$, $\kappa \searrow 0$ and hence $2^{p,q}_{b,\ast} \nearrow \infty$ if $N = 2$ and $2^{p,q}_{b,\ast} \nearrow 2^\ast$ if $N \ge 3$.

\subsection{Weak radial solutions and the variational formulation} \label{ssec:var-form}

In view of the above embeddings, a natural growth condition for the nonlinearity $f$ in equation \eqref{1} is
\begin{equation} \label{26}
|f(t)| \le C \left(|t|^{s_1 - 1} + |t|^{s_2 - 1}\right) \quad \forall t \in \R,
\end{equation}
where $C > 0$ is a constant and
\[
2^{p,q}_{b,\ast} < s_1 \le s_2 \begin{cases}
< \infty & \text{if } N = 2\\[5pt]
\le 2^\ast & \text{if } N \ge 3.
\end{cases}
\]
Under this growth condition, a weak radial solution of equation \eqref{1} is a function $u \in E^{p,q}_{b,\rad}(\R^N)$ satisfying
\[
\int_{\R^N} \nabla u \cdot \nabla v\, dx + \int_{\R^N} \Phi_u(|x|)\, |u|^{p-2}\, uv\, dx = \int_{\R^N} f(u)\, v\, dx \quad \forall v \in E^{p,q}_{b,\rad}(\R^N).
\]
Weak radial solutions coincide with critical points of the $C^1$-functional
\[
\E(u) = \frac{1}{2} \int_{\R^N} |\nabla u|^2\, dx + \frac{a}{pq} \int_0^\infty \rho^{-b}\, h_u^q(\rho)\, d\rho - \int_{\R^N} F(u)\, dx, \quad u \in E^{p,q}_{b,\rad}(\R^N), 
\]
where $F(t) = \int_0^t f(\tau)\, d\tau$ is the primitive of $f$ (see Proposition \ref{prop:c1}).

\subsection{Classification of nonlinear regimes}

As in \cite{MR5043800}, the scaling property \eqref{28} of the operator $\A$ naturally leads to the following classification of different nonlinear regimes for $f$, where $\beta$ is given in \eqref{31}:
\begin{enumroman}
\item $f$ is subscaled if
    \[
    \lim_{|t| \to \infty}\, \frac{f(t)}{|t|^{\beta - 2}\, t} = 0;
    \]
\item $f$ is asymptotically scaled if
    \[
    \lim_{|t| \to \infty}\, \frac{f(t)}{|t|^{\beta - 2}\, t} = \lambda \in (0,\infty);
    \]
\item $f$ is superscaled if
    \[
    \lim_{|t| \to \infty}\, \frac{f(t)}{|t|^{\beta - 2}\, t} = \infty.
    \]
\end{enumroman}
We will see in Section \ref{section:main} that the variational functional $\E$ has distinct geometries in these three regimes, which leads to different types of existence and multiplicity results for equation \eqref{1}.

\subsection{Related work} \label{ssec:related}

Equation \eqref{1} is a zero-mass problem in the sense that there is no linear term $\omega u$ on its left-hand side. Its solutions therefore need not decay exponentially, and $H^1(\R^N)$ is not the natural energy space. Zero-mass problems for the local equation $- \Delta u = f(u)$ go back to Berestycki and Lions \cite{MR695535,MR695536}, and the resulting loss of compactness has been addressed in various ways since then (see, e.g., \cite{MR2902133,MR2187794,MR3210961}). In the present setting it is the ball-mass term that supplies the missing control, and the ball-mass Sobolev space $E^{p,q}_b(\R^N)$ introduced in Section \ref{section:bm-spaces} takes over the role of $H^1(\R^N)$. The literature on the three models described above has developed along largely separate lines and we review it in turn.

\emph{Schr\"{o}dinger-Poisson-Slater equation.} For the physical background of equation \eqref{4} we refer to \cite{MR2013491,MR1702877,MR1836081,MR2032129,Slater}. The case $N = 3,\, p = 2$ has been studied extensively. The natural energy space, namely, the space of functions in $D^{1,2}(\R^3)$ with finite Coulomb energy, goes back to Lions \cite{MR636734}. Ruiz \cite{MR2679375} studied this space in detail, showed that its radial subspace is embedded in $L^s(\R^3)$ continuously for $s \in (18/7,6]$ and compactly for $s \in (18/7,6)$, and in no $L^s(\R^3)$ with $s < 18/7$, and obtained a positive solution of the pure power equation for $s \in (18/7,3)$. Ianni and Ruiz \cite{MR2902293} obtained a groundstate and infinitely many radial solutions for $s \in (3,6)$, a sequence of eigenvalues at the scaled exponent $s = 3$, and nonexistence for $s \ge 6$ via a Poho\v{z}aev identity. Infinitely many solutions at negative energy levels for $s \in (18/7,3)$, as well as multiplicity results for critical growth problems near the eigenvalues and for large perturbations, were obtained by Mercuri and Perera \cite{MR5043800}. Positive solutions of the corresponding critical problems were obtained earlier by Liu, Zhang, and Huang \cite{MR3912770}. For general $N$ and $p$ and Riesz potentials of general order, the Coulomb-Sobolev space was introduced by Mercuri, Moroz, and Van Schaftingen \cite{MR3568051}, who developed its functional analytic theory, established optimal interpolation inequalities and radial embeddings, and obtained groundstates (see also \cite{MR3852465,MR4292779}). Building on \cite{MR3568051} and \cite{MR5043800}, the scaled eigenvalue problem and a range of existence and multiplicity results, including results for critical growth, were extended to general $N$ and $p$ by Marinho, Mercuri, and Perera \cite{MarMePe}. For the Schr\"{o}dinger-Poisson-Slater equation with a positive mass we refer to \cite{MR2465993} and the references therein.

\emph{Chern-Simons-Schr\"{o}dinger equation.} The Chern-Simons-Schr\"{o}dinger system was introduced by Jackiw and Pi \cite{MR1084552}, and the reduction of its radial standing waves to equation \eqref{3} is due to Byeon, Huh, and Seok \cite{MR2948224}. Existence, nonexistence, and multiplicity results for this equation were obtained in \cite{MR2948224,MR3415024,MR3353806}. All of these works include a frequency $\omega > 0$, or equivalently a nonlinearity with a negative linear part near zero, and are set in $H^1_\rad(\R^2)$. The zero-mass case has been considered in \cite{MR4708596,MR4968174,MR4502773} for nonlinearities with critical or supercritical exponential growth, but there an additional term of the form $- a\, |u|^{r-2}\, u$ with $a > 0$ and $r > 2$ is added to the right-hand side in order to produce a workable energy space, namely, the space of radial functions $u$ with $\nabla u \in L^2(\R^2)$ and $u \in L^r(\R^2)$. In the present framework no such device is needed since the Chern-Simons term is itself the ball-mass energy (see \eqref{621}).

\emph{Defocusing inverse-power Schr\"{o}dinger equation.} For the physical background of equation \eqref{600} we refer to \cite{Gill,LiuTripathi} (for related equations with spatially decaying nonlinearities see \cite{MR2379460,MR2834784}). When $q = 1$ the energy space $E^{p,1}_b(\R^N)$ is a weighted Sobolev space of Caffarelli-Kohn-Nirenberg type \cite{MR768824}. Compact radial embeddings of weighted Sobolev spaces were studied by Su, Wang, and Willem \cite{MR2334597} and by Badiale, Guida, and Rolando \cite{MR3385192,MR3576582}, although in those works the weighted term is paired with a gradient term of the same order, which leads to spaces different from $E^{p,1}_b(\R^N)$. The closest predecessor is the work of Gloss, Perera, and Ribeiro \cite{GlPeRi}, where equation \eqref{600} with competing weighted nonlinearities is studied in $E^{p,1}_b(\R^N)$.

\emph{Scaling-based methods.} The variational methods used here were introduced by Mercuri and Perera \cite{MR5043800} and were applied there to the Schr\"{o}dinger-Poisson-Slater equation with $N = 3$ and $p = 2$. They were subsequently applied to the Schr\"{o}dinger-Poisson-Slater equation with general $N$ and $p$ in \cite{MarMePe} and to inhomogeneous Schr\"{o}dinger equations in \cite{GlPeRi}.

These lines of work share no common framework. The Coulomb-Sobolev space theory is confined to $q = 2$, where the interaction is generated by a Riesz potential, the Chern-Simons-Schr\"{o}dinger literature treats a single set of parameters arising from the gauge structure, and the inhomogeneous case is local. What makes a unified treatment possible here is the scaling property \eqref{28}, which holds for the whole range \eqref{33}--\eqref{2}. As a consequence, some of the results in Subsections \ref{ssec:app-ip}--\ref{ssec:app-sps} recover known results in the special cases, while others are new. The precise relationship with the literature is described in the remarks following each theorem in those subsections.

\subsection{Outline of the paper} \label{ssec:outline}

The rest of the paper is organized as follows. Section \ref{section:bm-spaces} develops the functional analytic framework for equation \eqref{1}. The ball-mass Lebesgue space $L^{p,q}_b(\R^N)$ and the ball-mass Sobolev space $E^{p,q}_b(\R^N)$ do not appear to have been studied before in the generality of \eqref{33}--\eqref{2}. The special cases $q = 1$ and $q = 2$, $b = N - 1$ are discussed at the beginning of that section, where the results that are known in those cases are identified. We introduce $L^{p,q}_b(\R^N)$ and show that it is a uniformly convex and uniformly smooth Banach space (Theorem \ref{Theorem 2}). The proof exhibits a linear isometry \eqref{510} of $L^{p,q}_b(\R^N)$ into a Lebesgue-Bochner space, which will be used repeatedly in the sequel. Theorem \ref{Theorem 3} gives a scale-explicit local embedding in $L^p_\loc(\R^N)$. We then prove a pairing inequality for the ball-mass term (Theorem \ref{thm:bm-pairing}), together with a characterization of the case of equality, and deduce the boundedness of the associated nonlocal form (Corollary \ref{cor:bm-pairing}) and the identity \eqref{73}, which is the basic computational tool throughout the paper. A Br\'{e}zis-Lieb type splitting in $L^{p,q}_b(\R^N)$ is proved in Theorem \ref{thm:bm-brezis-lieb}, which is what makes the concentration analysis in the critical case possible. We introduce $E^{p,q}_b(\R^N)$ and prove that it is uniformly convex, hence reflexive (Theorem \ref{Theorem 4}), and continuously embedded in $H^1_\loc(\R^N)$ (Theorem \ref{Theorem 5}). Proposition \ref{prop:c1} shows that the ball-mass potential is of class $C^1$ and computes its derivative, which justifies the variational formulation given in Subsection \ref{ssec:var-form}. The argument uses the uniform smoothness of $L^{p,q}_b(\R^N)$ and is not a routine differentiation under the integral sign.

The heart of Section \ref{section:bm-spaces} is the embedding theory. We first show that without any symmetry assumption the ball-mass term makes no contribution, in the sense that $E^{p,q}_b(\R^N)$ embeds in $L^s(\R^N)$ only for $N \ge 3$ and $s = 2^\ast$, and this embedding is not compact (Theorem \ref{thm:no-embedding}). Restricting to radial functions changes the picture completely. Theorem \ref{Theorem 1}, the main result of the section, gives compact embeddings of $E^{p,q}_{b,\rad}(\R^N)$ in $L^s(\R^N)$ for every $s < 2^\ast$ above the exponent $2^{p,q}_{b,\ast}$ defined in \eqref{30}, which is a new critical exponent attached to the parameters $(p,q,b)$ and has the form of a Sobolev exponent (see \eqref{630}). The proof rests on an exterior Gagliardo-Nirenberg type inequality with explicit decay rates in the radius (Lemma \ref{Lemma 1}), obtained from a dyadic annular decomposition. Theorem \ref{thm:emb-sharp} shows that $2^{p,q}_{b,\ast}$ is sharp, so the range in Theorem \ref{Theorem 1} cannot be enlarged. The remaining subsections establish further properties of $E^{p,q}_{b,\rad}(\R^N)$ that the variational arguments require. These include the weak continuity of the ball-mass operator (Theorem \ref{thm:weak-cont}), a radial decay estimate with an explicit rate (Theorem \ref{Theorem 6}), a Trudinger-Moser type inequality in the borderline dimension $N = 2$ together with the attainment of the supremum (Theorem \ref{Theorem 7}), and the density of radial test functions (Theorem \ref{Theorem 9}). The constant $4 \pi$ in Theorem \ref{Theorem 7} is sharp, and whether the inequality persists at $\nu = 4 \pi$ is left open (see Remark \ref{rmk:tm-sharp}).

Section \ref{section:scaling} is largely expository. We recall the notion of a scaling in a Banach space, scaled operators, and the associated scaled eigenvalue problems from \cite{MR5043800}, together with the $\Z_2$-cohomological index of Fadell and Rabinowitz \cite{MR0478189}, whose piercing property is essential for our purposes here and is not shared by the genus. Two results in this section are new and of independent interest. Theorem \ref{Theorem 10} is a multiplicity result for subscaled and superscaled problems, obtained by applying the index-based minimax theory to a suitable constrained functional. It produces an infinite sequence of nontrivial solutions together with the precise sign and asymptotics of their energies. Theorem \ref{Theorem 8} is a scaled linking theorem in which the linking set is built from a homotopy along the fibers of the scaling rather than from a fixed finite-dimensional subspace. It is derived from Proposition \ref{Proposition 9}, a variant of \cite[Proposition 2.22]{MR5043800}.

Our main results for equation \eqref{1} are proved in Section \ref{section:main}. We first verify that the scaling \eqref{103} induced by \eqref{28} satisfies the abstract assumptions $(A_1)$--$(A_5)$, the continuity being the only delicate point (Proposition \ref{prop:continuous}), and that the ball-mass operator and its potential $I_\sigma$ satisfy $(A_6)$, $(A_7)$, and $(A_{10})$ (Proposition \ref{prop:a7}). Along the way we record the alternative form \eqref{140} of $I_\sigma$, which is what the applications in Subsections \ref{ssec:app-ip}--\ref{ssec:app-sps} use. Theorem \ref{thm:bm-regularity} establishes the regularity of weak radial solutions together with pointwise bounds for $u$, $u'$, $u''$, and $\Phi_u$ near the origin, where the ball-mass potential is singular. Remark \ref{rmk:distributional} shows that weak radial solutions are also solutions in the sense of distributions. Theorem \ref{thm:bm-pohozaev} proves a Poho\v{z}aev identity, a Nehari identity, and a scaling identity for equation \eqref{1}. Besides verifying the abstract assumptions $(A_{11})$ and $(f_4)$, it is the source of our nonexistence result in the critical case. These results are new in the generality of \eqref{33}--\eqref{2}.

The existence results then proceed by nonlinear regime. Theorem \ref{thm:bm-eigenvalues} shows that the scaled eigenvalue problem \eqref{51} has an unbounded sequence of positive eigenvalues with radial eigenfunctions. For the pure power equation \eqref{star}, Theorem \ref{thm:pure-power} gives an infinite sequence of nontrivial weak radial solutions in both the subscaled and the superscaled regimes, with energies accumulating at $0$ from below in the first case and diverging to $+ \infty$ in the second. Subsection \ref{ssec:critical} treats critical growth. Theorem \ref{thm:pure-crit-nonex} shows that the pure critical power equation \eqref{pure-crit} has no nontrivial weak radial solution, so a perturbation is necessary. Proposition \ref{prop:local-ps} establishes a local Palais-Smale condition below the threshold $\frac{1}{N}\, S^{N/2}$, the proof combining the Br\'{e}zis-Lieb splitting of Theorem \ref{thm:bm-brezis-lieb} with the constraint $pq < 2^\ast$. Theorem \ref{thm:left-nbhd} then produces $m$ distinct pairs of solutions of equation \eqref{136} for all $\lambda$ in a suitably small left neighborhood of an eigenvalue of multiplicity $m$, and Theorem \ref{thm:large-mu} produces arbitrarily many pairs of solutions of equation \eqref{138} for all sufficiently large $\mu$.

Subsection \ref{ssec:bn} contains our main result, Theorem \ref{thm:bn}, a Br\'{e}zis-Nirenberg type theorem for the ball-mass operator that is the analog of the classical results of \cite{MR709644} and \cite{MR831041} and gives a nontrivial weak radial solution of equation \eqref{bn-eq} for every $\lambda > 0$ that is not an eigenvalue of \eqref{51}. The case $\lambda < \lambda_1$ is a mountain pass argument, but the case $\lambda_k < \lambda < \lambda_{k+1}$ requires the new scaled linking geometry of Theorem \ref{Theorem 8}, applied to a homotopy that interpolates along the fibers of the scaling between a suitably chosen compact symmetric subset $A_0$ of a sublevel set of $\widetilde{\Psi}$ and a concentrating truncated Talenti bubble. Two ingredients are specific to the ball-mass setting and have no counterparts in the classical argument, namely, the asymptotics of the ball-mass energy of the truncated bubble (Lemma \ref{lem:bubble}), where the gain $\eps^{\delta_\beta}$ coming from the subcritical term must be shown to beat the loss $\eps^\nu$ coming from the ball-mass term, and the construction of an index-$k$ set $A_0$ whose elements are supported away from the origin (Lemma \ref{lem:A0}), which is what makes the bubble and the linking set interact through disjoint supports. The resonant case $\lambda = \lambda_k$ is not covered by Theorem \ref{thm:bn} and remains open (see Remark \ref{rmk:resonant}).

Subsections \ref{ssec:app-ip}--\ref{ssec:app-sps} specialize the results of Subsections \ref{ssec:eigen-prob}--\ref{ssec:bn} to the three physical models described previously. Not all of the results apply to all three models, and which ones do is determined by the sign of $\beta - 2^{p,q}_{b,\ast}$, computed in \eqref{505}. For the defocusing inverse-power Schr\"{o}dinger equation this quantity is negative, and only the results that do not involve the scaled eigenvalues survive. For the Chern-Simons-Schr\"{o}dinger equation $\beta - 2^{p,q}_{b,\ast}$ vanishes, and there is no critical exponent since $N = 2$, so among the general results only the pure power theorem applies. The borderline exponent $4$ that emerges in \eqref{608} agrees with the one known in the literature for this model. For the Schr\"{o}dinger-Poisson-Slater equation $\beta - 2^{p,q}_{b,\ast}$ is positive and the full theory applies, subject to the further restriction \eqref{616} in the critical case. In particular, Theorem \ref{thm:bn} yields a Br\'{e}zis-Nirenberg type result for the Schr\"{o}dinger-Poisson-Slater equation in dimensions $N \ge 4$. Some of the results obtained in these three subsections are known for the individual models, while others appear to be new, and each theorem there is followed by a remark comparing it with the literature (see also Subsection \ref{ssec:related}). The situation is summarized in Table \ref{tab:models}. Finally, Subsection \ref{section:open} collects some open problems.

\begin{table}[ht]
\centering
\small
\renewcommand{\arraystretch}{1.35}
\begin{tabular}{|p{2.2cm}|p{3.7cm}|c|c|c|p{3.8cm}|}
\hline
Model & Parameters & $\alpha$ & $\beta$ & $\beta - 2^{p,q}_{b,\ast}$ & Results \\
\hline
Inverse-power & $q = 1$, $a = b - 1$, $N \ge 2$, $2 < p < 2^\ast$ ($p < \infty$ if $N = 2$), $1 < b < 1 + N - \frac{(N - 2)\, p}{2}$ & $\dfrac{3 - b}{p - 2}$ & $\dfrac{2\, (p + 1 - b)}{3 - b}$ & $< 0$ & Theorems \ref{Theorem 4.20} and \ref{Theorem 4.21} (known), Theorem \ref{Theorem 4.22} (new) \\
\hline
Chern-Simons-Schr\"{o}dinger & $N = 2$, $p = 2$, $q = 3$, $b = 3$, $a = 1/8 \pi^2$ & $1$ & $4$ & $0$ & Theorem \ref{thm:css-pure} (new) \\
\hline
Schr\"{o}dinger-Poisson-Slater & $N \ge 3$, $q = 2$, $b = N - 1$, $a = N - 2$, $1 < p < \frac{N + 2}{N - 2}$ & $\dfrac{2}{p - 1}$ & $p + 1$ & $> 0$ & Theorems \ref{thm:sps-eigenvalues}, \ref{thm:sps-left}, and \ref{thm:sps-large} (known), Theorem \ref{thm:sps-pure} (new for general $N$ and $p$), Theorem \ref{thm:sps-nonex}, Theorem \ref{thm:sps-bn} (new, $N \ge 4$) \\
\hline
\end{tabular}
\caption{The three physical models as special cases of equation \eqref{1}, with the scaling exponent $\alpha$ from \eqref{37}, the scaled exponent $\beta$ from \eqref{31}, the sign of $\beta - 2^{p,q}_{b,\ast}$ from \eqref{505}, and the results of Subsections \ref{ssec:app-ip}--\ref{ssec:app-sps} (see the remarks following each result for the precise relationship with the literature).}
\label{tab:models}
\end{table}

\section{Ball-mass spaces} \label{section:bm-spaces}

The spaces introduced in this section are new in the generality of \eqref{33}--\eqref{2}, but two special cases have appeared before (see Subsection \ref{ssec:related}), and it is useful to see how they sit inside the present scale. When $q = 1$, \eqref{602} shows that $L^{p,1}_b(\R^N)$ is the weighted Lebesgue space of functions with $|x|^{-(b-1)/p}\, u \in L^p(\R^N)$, so $E^{p,1}_b(\R^N)$ is a Sobolev space of Caffarelli-Kohn-Nirenberg type \cite{MR768824}. This space was studied in \cite{GlPeRi}, where its completeness and uniform convexity and the density of test functions were established, together with radial embeddings that coincide with those of Theorem \ref{Theorem 1} for this case \cite[Lemma 5.1]{GlPeRi}. When $q = 2$ and $b = N - 1$, \eqref{49} shows that for radial functions the ball-mass term is the Coulomb energy, so that $E^{p,2}_{N-1,\rad}(\R^N)$ is the radial Coulomb-Sobolev space of \cite{MR3568051}. In this case the completeness and uniform convexity in Theorem \ref{Theorem 4}, the density of test functions in Theorem \ref{Theorem 9}, the splitting of Theorem \ref{thm:bm-brezis-lieb}, the embeddings of Theorem \ref{Theorem 1}, and the decay estimate of Theorem \ref{Theorem 6} all correspond to results established in \cite{MR3568051} for the Coulomb-Sobolev space, and for $N = 3$ and $p = 2$ the embeddings and their sharpness are due to Ruiz \cite[Theorem 1.2]{MR2679375}. The endpoint $s = 2^{p,q}_{b,\ast}$, which Theorem \ref{Theorem 1} leaves open, behaves differently in the two cases. The corresponding embedding holds when $q = 1$ \cite[Lemma 5.1]{GlPeRi} and fails when $q = 2$ and $b = N - 1$ \cite[Theorem 4]{MR3568051}.

Two further points should be kept in mind. First, the Coulomb-Sobolev space is defined through a Riesz potential and is therefore invariant under translations, whereas the ball-mass term is anchored at the origin. By Newton's theorem the two coincide on radial functions but not in general, so Theorem \ref{thm:no-embedding} concerns $E^{p,q}_b(\R^N)$ and is not in conflict with the embeddings of the full Coulomb-Sobolev space obtained in \cite[Theorem 1]{MR3568051} and \cite[Theorem 1.5]{MR2679375}. Second, both special cases are $1$-parameter degenerations of the present scale. For $q = 1$ the ball-mass term is local and for $q = 2$ it is generated by a Riesz potential, whereas for $q \ne 1, 2$ it is neither and no framework covering this range appears to be available in the present literature. In particular, the Chern-Simons-Schr\"{o}dinger case $q = 3$ lies outside both theories.

\subsection{Ball-mass Lebesgue spaces}

For
\begin{equation} \label{5}
1 < p < \infty, \qquad 1 \le q < \infty, \qquad 1 < b < b^\# := 1 + Nq,
\end{equation}
we define the ball-mass Lebesgue space $L^{p,q}_b(\R^N)$ to be the space of all measurable functions $u : \R^N \to \R$ such that
\[
\int_0^\infty \rho^{-b}\, h_u^q(\rho)\, d\rho < \infty,
\]
where $h_u$ is given in \eqref{39}.

\begin{theorem} \label{Theorem 2}
If \eqref{5} holds, then $L^{p,q}_b(\R^N)$ equipped with the norm
\[
\norm[L^{p,q}_b(\R^N)]{u} = \left(\int_0^\infty \rho^{-b}\, h_u^q(\rho)\, d\rho\right)^{1/pq}
\]
is a uniformly convex and uniformly smooth Banach space.
\end{theorem}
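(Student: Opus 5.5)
The plan is to realize $L^{p,q}_b(\R^N)$ isometrically as a closed subspace of a Lebesgue--Bochner space $L^{q}(\mu; L^p(\R^N))$, where $\mu$ is the measure $\rho^{-b}\, d\rho$ on $(0,\infty)$. Both uniform convexity and uniform smoothness then follow from standard facts about such spaces.

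Concretely, define
\[
T u(\rho) := u\, \chi_{B_\rho}, \qquad \rho \in (0,\infty).
\]
Then $\norm[L^p(\R^N)]{Tu(\rho)}^p = h_u(\rho)$. Consequently
\[
\int_0^\infty \rho^{-b}\, \norm[L^p]{Tu(\rho)}^{pq}\, d\rho = \int_0^\infty \rho^{-b}\, h_u^q(\rho)\, d\rho .
\]
This is not quite the $L^q(\mu;L^p)$ norm, because the exponent is $pq$ rather than $q$. The natural target is therefore $X := L^{pq}(\mu; L^p(\R^N))$, and $\norm[X]{Tu} = \norm[L^{p,q}_b]{u}$. Since $T$ is linear, the norm axioms are inherited at once. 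Here $pq \ge p > 1$, and definiteness holds because $h_u \equiv 0$ forces $u = 0$ a.e.

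I would check Bochner measurability of $\rho \mapsto u\chi_{B_\rho}$ as follows. Its values lie in the separable space $L^p$, so it suffices to have weak measurability, and that follows from Fubini. In fact, whenever $u \in L^{p,q}_b$, the function $h_u$ is finite for every $\rho$: it is nondecreasing, so finiteness of the integral forces $h_u < \infty$ everywhere. Hence $u \in L^p_\loc$, and $\rho \mapsto u\chi_{B_\rho}$ is continuous into $L^p$ by dominated convergence.

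\emph{Completeness.} I would show that $T(L^{p,q}_b)$ is closed in $X$. Take $u_n$ Cauchy in $L^{p,q}_b$; then $Tu_n \to g$ in $X$. The key observation is that for every $R>0$ we have
\[
\int_R^\infty \rho^{-b}\, h_{u_n - u_m}^q\, d\rho \ge h_{u_n-u_m}^q(R) \int_R^{2R} \rho^{-b}\, d\rho ,
\]
using monotonicity of $h$ again. So $(u_n)$ is Cauchy in $L^p(B_R)$ for every $R$, and it converges in $L^p_\loc$ to some $u$. For a subsequence, $u_n\chi_{B_\rho} \to g(\rho)$ in $L^p$ for a.e.\ $\rho$, which identifies $g(\rho) = u\chi_{B_\rho}$. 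Thus $g = Tu$, and Fatou's lemma gives $u \in L^{p,q}_b$. Consequently $L^{p,q}_b$ is a Banach space isometric to a closed subspace of $X$.

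\emph{Geometry.} This is the only nonroutine ingredient. I would invoke the classical result (Day; see also Diestel--Uhl) that $L^r(\mu;Y)$ is uniformly convex for $1<r<\infty$ whenever $Y$ is uniformly convex. By duality, the same holds for uniform smoothness, since $Y$ uniformly smooth means $Y^\ast$ is uniformly convex and $L^r(\mu;Y)^\ast = L^{r'}(\mu;Y^\ast)$ in the reflexive case. Here $Y = L^p(\R^N)$ with $1<p<\infty$ is both uniformly convex and uniformly smooth by Clarkson, and $r = pq \in (1,\infty)$. Both properties pass to closed subspaces, and the isometry transports them to $L^{p,q}_b$.

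If a self-contained argument is preferred, uniform convexity can be obtained directly: apply the $L^p$ Clarkson modulus pointwise in $\rho$ and then a Clarkson-type inequality in the outer $L^{pq}$ norm. This is the step I expect to require the most care, because the modulus of convexity of $Y$ must be integrated against a set where $\norm{Tu(\rho)-Tv(\rho)}$ is comparable to $\norm{Tu(\rho)}+\norm{Tv(\rho)}$. I would rely on Day's theorem rather than redo it.

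Note that the hypothesis $b < b^\#$ is not needed for this theorem. It only guarantees that the space is nontrivial, for example that it contains bounded compactly supported functions. Likewise $b>1$ only ensures $\int_R^{2R}\rho^{-b}\,d\rho < \infty$ and is used in the completeness step.
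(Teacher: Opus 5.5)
Your proposal is correct and follows the paper's route: the isometry $u \mapsto u\,\mathbf{1}_{B_\rho}$ into $L^{pq}((0,\infty),\rho^{-b}\,d\rho;L^p(\R^N))$ transfers uniform convexity and uniform smoothness, and completeness comes from the local $L^p$ Cauchy property. The only difference is in the last step of completeness: you close the image in the Bochner space, while the paper applies Fatou's lemma directly in the scalar space $L^{pq}((0,\infty),\rho^{-b}\,d\rho)$. One small slip in your closing aside: $\int_R^{2R}\rho^{-b}\,d\rho$ is finite for every real $b$, so your completeness step does not actually use $b>1$, although the paper's version, which integrates over $(R,\infty)$, does.
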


\begin{proof}
First we show that $\norm[L^{p,q}_b(\R^N)]{\cdot}$ is a norm. We have
\[
\norm[L^{p,q}_b(\R^N)]{\cdot} = \big\|\!\norm[L^p(B_\rho)]{\cdot}\!\big\|_{L^{pq}((0,\infty),\,\rho^{-b}\, d\rho)}.
\]
It is immediate from this that $\norm[L^{p,q}_b(\R^N)]{\cdot}$ is positive definite and absolutely homogeneous. The triangle inequality also follows since $\norm[L^{pq}((0,\infty),\,\rho^{-b}\, d\rho)]{\cdot}$ is monotone on the positive cone.

The map
\begin{equation} \label{510}
\Lambda : L^{p,q}_b(\R^N) \to X, \quad \Lambda u(x,\rho) = u(x)\, \mathbf{1}_{B_\rho}(x),
\end{equation}
where $X$ is the Lebesgue-Bochner space $L^{pq}((0,\infty),\,\rho^{-b}\, d\rho;L^p(\R^N))$ and $\mathbf{1}_{B_\rho}$ denotes the indicator function of $B_\rho$, is a linear isometry. Since $pq \ge p > 1$ by \eqref{5}, $X$ is uniformly convex and uniformly smooth (see, e.g., \cite[Chapter 1]{MR1400007}). It follows that $L^{p,q}_b(\R^N)$ is uniformly convex and uniformly smooth.

It remains to show that $L^{p,q}_b(\R^N)$ is complete. For $u \in L^{p,q}_b(\R^N)$ and $R > 0$,
\[
\norm[L^{p,q}_b(\R^N)]{u}^{pq} \ge \int_R^\infty \rho^{-b}\, h_u^q(\rho)\, d\rho \ge h_u^q(R) \int_R^\infty \rho^{-b}\, d\rho = \frac{R^{-(b-1)}}{b - 1} \norm[L^p(B_R)]{u}^{pq},
\]
so a Cauchy sequence $\seq{u_j}$ in $L^{p,q}_b(\R^N)$ is a Cauchy sequence in $L^p(B_R)$ for every $R > 0$ and hence converges in $L^p_\loc(\R^N)$ to some measurable function $u$. Let $\eps > 0$ and let $j_0$ be such that $\norm[L^{p,q}_b(\R^N)]{u_j - u_k} < \eps$ for all $j, k \ge j_0$. For fixed $j \ge j_0$, $\norm[L^p(B_\rho)]{u_j - u_k} \to \norm[L^p(B_\rho)]{u_j - u}$ as $k \to \infty$ for every $\rho \in (0,\infty)$, so Fatou's lemma in $L^{pq}((0,\infty),\,\rho^{-b}\, d\rho)$ gives
\[
\norm[L^{p,q}_b(\R^N)]{u_j - u} \le \liminf_{k \to \infty}\halfthin \norm[L^{p,q}_b(\R^N)]{u_j - u_k} \le \eps.
\]
Hence $u \in L^{p,q}_b(\R^N)$ and $u_j \to u$ in $L^{p,q}_b(\R^N)$.
\end{proof}

We have the following local embedding theorem for $L^{p,q}_b(\R^N)$.

\begin{theorem} \label{Theorem 3}
If \eqref{5} holds, then there exists a constant $C > 0$ such that
\[
\norm[L^p(B_R)]{u} \le CR^{(b-1)/pq} \norm[L^{p,q}_b(\R^N)]{u} \quad \forall u \in L^{p,q}_b(\R^N)
\]
for all $R > 0$. In particular, $L^{p,q}_b(\R^N)$ is continuously embedded in $L^p_\loc(\R^N)$.
\end{theorem}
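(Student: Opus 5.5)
The plan is to reuse the tail estimate from the completeness part of the proof of Theorem \ref{Theorem 2}, keeping the explicit dependence on $R$. Fix $u \in L^{p,q}_b(\R^N)$ and $R > 0$. The mass function $h_u$ is nondecreasing in $\rho$, so $h_u(\rho) \ge h_u(R)$ for every $\rho \ge R$. Discarding the part of the integral over $(0,R)$, which is nonnegative, gives
\[
\norm[L^{p,q}_b(\R^N)]{u}^{pq} \ge \int_R^\infty \rho^{-b}\, h_u^q(\rho)\, d\rho \ge h_u^q(R) \int_R^\infty \rho^{-b}\, d\rho = \frac{R^{-(b-1)}}{b - 1}\, h_u^q(R).
\]
The integral $\int_R^\infty \rho^{-b}\, d\rho$ converges because $b > 1$ by \eqref{5}.

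Since $h_u(R) = \norm[L^p(B_R)]{u}^p$, the left side above dominates $\frac{R^{-(b-1)}}{b-1} \norm[L^p(B_R)]{u}^{pq}$. Taking $pq$-th roots yields
\[
\norm[L^p(B_R)]{u} \le (b - 1)^{1/pq}\, R^{(b-1)/pq} \norm[L^{p,q}_b(\R^N)]{u}.
\]
This is the asserted inequality with $C = (b-1)^{1/pq}$, a constant independent of $R$ and $u$. Only the assumption $b > 1$ is used here; the upper bound $b < b^\#$ plays no role.

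For the "in particular" statement, let $K \subset \R^N$ be compact and choose $R$ with $K \subset B_R$. Then $\norm[L^p(K)]{u} \le \norm[L^p(B_R)]{u} \le C R^{(b-1)/pq} \norm[L^{p,q}_b(\R^N)]{u}$. The inclusion map is linear, so this bound gives continuity of $L^{p,q}_b(\R^N) \incl L^p_\loc(\R^N)$ with respect to the Fréchet topology generated by the seminorms $\norm[L^p(B_R)]{\cdot}$.

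There is no real obstacle in this argument. The only points to verify are the monotonicity of $h_u$ and the convergence of the tail integral, both of which are immediate.
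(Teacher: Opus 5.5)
Your argument is correct and is essentially the paper's own proof. The paper proves Theorem~\ref{Theorem 3} by pointing back to the last paragraph of the proof of Theorem~\ref{Theorem 2}, which is exactly your tail estimate $\norm[L^{p,q}_b(\R^N)]{u}^{pq} \ge \frac{R^{-(b-1)}}{b-1} \norm[L^p(B_R)]{u}^{pq}$, and this gives the constant $C = (b-1)^{1/pq}$.
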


\begin{proof}
This was shown in the last paragraph of the proof of Theorem \ref{Theorem 2}.
\end{proof}

Recall that
\begin{multline*}
b_\ast = \begin{cases}
2q + 1 & \text{if } N = 2\\[5pt]
Nq\, (1 - p/2^\ast) + 1 & \text{if } N \ge 3,
\end{cases} \qquad b_\# = N\halfthin (q - 1) + 3, \qquad b^\# = Nq + 1,\\[7.5pt]
b^\ast = 2\, (N - 1)\, q + 1.
\end{multline*}
We have
\begin{equation} \label{109}
b^\# - b_\# = N - 2, \qquad b^\ast - b^\# = (N - 2)\, q,
\end{equation}
which together with \eqref{29} and \eqref{34} show that
\begin{equation} \label{36}
b_\ast \le b_\# \le b^\# \le b^\ast
\end{equation}
and equality holds throughout if and only if $N = 2$.

\subsection{Pairing inequality}

The linear isometry given in \eqref{510} and the H\"{o}lder inequality in $X$ give us the following pairing inequality for $L^{p,q}_b(\R^N)$.

\begin{theorem} \label{thm:bm-pairing}
If \eqref{5} holds, then
\[
\int_0^\infty \rho^{-b}\, h_u^{q-1}(\rho) \bigg(\int_{B_\rho} |u|^{p-2}\, uv\, dx\bigg) d\rho \le \norm[L^{p,q}_b(\R^N)]{u}^{pq-1} \norm[L^{p,q}_b(\R^N)]{v} \quad \forall u, v \in L^{p,q}_b(\R^N).
\]
For $u \ne 0$, equality holds if and only if $v = cu$ for some constant $c \ge 0$.
\end{theorem}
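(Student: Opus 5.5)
The plan is to do two Hölder steps, first in $x$ and then in $\rho$, and to track exactly when each is an equality.

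\emph{Inner step.} For fixed $\rho$, Hölder in $L^p(B_\rho)$ with exponents $p/(p-1)$ and $p$ gives
\[
\int_{B_\rho} |u|^{p-2}\, uv\, dx \le \int_{B_\rho} |u|^{p-1}\, |v|\, dx \le h_u^{(p-1)/p}(\rho)\, h_v^{1/p}(\rho).
\]
Substituting this into the left-hand side bounds it by
\[
\int_0^\infty \rho^{-b}\, h_u^{q-1/p}(\rho)\, h_v^{1/p}(\rho)\, d\rho .
\]

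\emph{Outer step.} I will write this integrand as $\big(h_u^{1/p}\big)^{pq-1} h_v^{1/p}$ and apply Hölder in $L^{pq}((0,\infty),\rho^{-b}\,d\rho)$ with exponents $pq/(pq-1)$ and $pq$. This is legitimate because $pq>1$. Since $\int_0^\infty \rho^{-b} h_u^{q}\, d\rho = \norm[L^{p,q}_b(\R^N)]{u}^{pq}$, the bound becomes
\[
\norm[L^{p,q}_b(\R^N)]{u}^{pq-1} \norm[L^{p,q}_b(\R^N)]{v},
\]
which is the asserted inequality. If $pq = 1$ could occur the outer step would degenerate, but \eqref{5} gives $pq \ge p > 1$. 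An equivalent route is the isometry $\Lambda$ from \eqref{510}: it lets one read the whole left-hand side as the duality pairing of $J(\Lambda u)$ with $\Lambda v$, where $J$ is the duality map of $X$.

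\emph{Equality case.} The main obstacle is the case of equality. If $v = cu$ with $c \ge 0$, both sides equal $c\,\norm[L^{p,q}_b(\R^N)]{u}^{pq}$. Conversely, suppose $u \ne 0$ and equality holds.

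\begin{itemize}
\item The outer Hölder step must be an equality. Since $pq>1$, this forces $h_v = \mu^{p} h_u$ for a.e.\ $\rho$ with respect to the measure $\rho^{-b}\,d\rho$, for some constant $\mu \ge 0$ (the degenerate case $v=0$ corresponds to $\mu=0$).
\item The inner Hölder step must be an equality for a.e.\ $\rho$ with $h_u(\rho)>0$, since the weight $\rho^{-b}h_u^{q-1}$ is positive there. Equality in the first inequality requires $uv \ge 0$ a.e.\ on $B_\rho$. Equality in Hölder requires $|v|^p = c_\rho |u|^p$ a.e.\ on $B_\rho$, where the constant $c_\rho$ might a priori depend on $\rho$.
\end{itemize}

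To remove the $\rho$-dependence, note that the balls $B_\rho$ are nested and that such $\rho$ can be taken arbitrarily large. Since $u \ne 0$, $h_u(\rho)>0$ for all large $\rho$, and on overlapping balls the constants $c_\rho$ must agree wherever $u\ne0$. Hence $|v| = c|u|$ a.e.\ on $\R^N$ for a single constant $c \ge 0$.

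Combining this with $uv \ge 0$ a.e. gives $v = cu$ on the set $\{u \ne 0\}$. On $\{u = 0\}$ we have $|v| = c|u| = 0$ a.e., so $v = 0 = cu$ there as well. Therefore $v = cu$ a.e., which completes the equality characterization.
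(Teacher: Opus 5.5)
Your proof is correct and is essentially the paper's argument: your inner-then-outer H\"older chain is the H\"older inequality in the Bochner space $X = L^{pq}((0,\infty),\rho^{-b}\,d\rho;L^p(\R^N))$ written out by hand, which the paper applies to $H_u$ and $\Lambda v$ using $\|H_u\|_{X^\ast} = \|u\|_{L^{p,q}_b(\R^N)}^{pq-1}$, and your nested-balls treatment of the equality case is the paper's. Two cosmetic points: your outer-equality bullet is never used, since inner equality for a.e.\ $\rho$ with $h_u(\rho)>0$ together with the nested balls already gives $v = cu$; and in your aside, $H_u$ is the duality map of $X$ with gauge $t^{pq-1}$ applied to $\Lambda u$, not the normalized duality map.
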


\begin{proof}
The dual of $X$ is $X^\ast = L^{(pq)'}((0,\infty),\,\rho^{-b}\, d\rho;L^{p'}(\R^N))$, where $p' = p/(p - 1)$ and $(pq)' = pq/(pq - 1)$. Set
\[
H_u(x,\rho) = h_u^{q-1}(\rho)\, |u(x)|^{p-2}\, u(x)\, \mathbf{1}_{B_\rho}(x).
\]
Since $(p - 1)\, p' = p$, we have
\[
\norm[L^{p'}(\R^N)]{H_u(\cdot,\rho)} = h_u^{q-1}(\rho) \bigg(\int_{B_\rho} |u|^{(p - 1)\, p'}\halfthin dx\bigg)^{1/p'} = h_u^{q - 1 + 1/p'}(\rho) = h_u^{q - 1/p}(\rho).
\]
Since $(q - 1/p)(pq)' = q$, this gives
\[
\norm[X^\ast]{H_u}^{(pq)'} = \int_0^\infty \norm[L^{p'}(\R^N)]{H_u(\cdot,\rho)}^{(pq)'} \rho^{-b}\, d\rho = \int_0^\infty \rho^{-b}\, h_u^q(\rho)\, d\rho = \norm[L^{p,q}_b(\R^N)]{u}^{pq},
\]
so $\norm[X^\ast]{H_u} = \norm[L^{p,q}_b(\R^N)]{u}^{pq-1}$. The H\"{o}lder inequality in $X$ now gives
\begin{multline*}
\int_0^\infty \rho^{-b}\, h_u^{q-1}(\rho) \bigg(\int_{B_\rho} |u|^{p-2}\, uv\, dx\bigg) d\rho = \int_0^\infty \bigg(\int_{\R^N} H_u(x,\rho)\, \Lambda v(x,\rho)\, dx\bigg) \rho^{-b}\, d\rho\\[7.5pt]
\le \norm[X^\ast]{H_u} \norm[X]{\Lambda v} = \norm[L^{p,q}_b(\R^N)]{u}^{pq-1} \norm[L^{p,q}_b(\R^N)]{v}
\end{multline*}
since $\Lambda$ is an isometry.

Suppose $u \ne 0$ and equality holds. Then equality holds in the H\"{o}lder inequality in $L^p(B_\rho)$ pairing $H_u(\cdot,\rho)$ with $\Lambda v(\cdot,\rho)$ for a.e.\! $\rho$. So $v = c(\rho)\, u$ a.e.\! in $B_\rho$ for some constant $c(\rho) \ge 0$ for a.e.\! $\rho$ with $h_u(\rho) > 0$. If $\rho_1 < \rho_2$ with $h_u(\rho_1) > 0$, then $v = c(\rho_1)\, u$ and $v = c(\rho_2)\, u$ a.e.\! in $B_{\rho_1}$, where $u$ does not vanish identically, so $c(\rho_1) = c(\rho_2)$. So $c(\rho) \equiv c$ for some constant $c \ge 0$ and $v = cu$ a.e.\! in $\R^N$. Conversely, if $v = cu$ with $c \ge 0$, then both sides equal $c \norm[L^{p,q}_b(\R^N)]{u}^{pq}$.
\end{proof}

Theorem \ref{thm:bm-pairing} together with Fubini's theorem gives
\begin{multline} \label{107}
\int_{\R^N} \Phi_u(|x|)\, |u|^{p-2}\, uv\, dx = a \int_{\R^N} \bigg(\int_{|x|}^\infty \rho^{-b}\, h_u^{q-1}(\rho)\, d\rho\bigg) |u|^{p-2}\, uv\, dx\\[7.5pt]
= a \int_0^\infty \rho^{-b}\, h_u^{q-1}(\rho) \bigg(\int_{B_\rho} |u|^{p-2}\, uv\, dx\bigg) d\rho,
\end{multline}
in particular,
\begin{equation} \label{73}
\int_{\R^N} \Phi_u(|x|)\, |u|^p\, dx = a \int_0^\infty \rho^{-b}\, h_u^q(\rho)\, d\rho.
\end{equation}
So we have the following corollary.

\begin{corollary} \label{cor:bm-pairing}
If \eqref{5} holds, then
\[
\int_{\R^N} \Phi_u(|x|)\, |u|^{p-2}\, uv\, dx \le a \norm[L^{p,q}_b(\R^N)]{u}^{pq-1} \norm[L^{p,q}_b(\R^N)]{v} \quad \forall u, v \in L^{p,q}_b(\R^N).
\]
For $u \ne 0$, equality holds if and only if $v = cu$ for some constant $c \ge 0$.
\end{corollary}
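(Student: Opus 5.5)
The plan is to obtain the corollary by combining the Fubini identity \eqref{107} with the pairing inequality of Theorem \ref{thm:bm-pairing}. The one real task is to justify \eqref{107} for arbitrary $u, v \in L^{p,q}_b(\R^N)$, since $\Phi_u$ is defined by an improper integral and the integrand $|u|^{p-2}\, uv$ changes sign.

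\emph{Step 1: absolute convergence.} First replace $v$ by $|v|$ and $u$ by $|u|$. This leaves $h_u$ and both norms unchanged, because the norms depend only on $|u|$ and $|v|$. Then apply Theorem \ref{thm:bm-pairing} to the pair $(|u|,|v|)$, which gives
\[
\int_0^\infty \rho^{-b}\, h_u^{q-1}(\rho) \bigg(\int_{B_\rho} |u|^{p-1}\, |v|\, dx\bigg) d\rho \le \norm[L^{p,q}_b(\R^N)]{u}^{pq-1} \norm[L^{p,q}_b(\R^N)]{v} < \infty .
\]
The integrand $\rho^{-b}\, h_u^{q-1}(\rho)\, |u(x)|^{p-1}\, |v(x)|\, \mathbf{1}_{B_\rho}(x)$ is nonnegative and measurable on $\R^N \times (0,\infty)$. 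Tonelli's theorem therefore lets us rewrite the left side as
\[
\int_{\R^N} \bigg(\int_{|x|}^\infty \rho^{-b}\, h_u^{q-1}(\rho)\, d\rho\bigg) |u|^{p-1}\, |v|\, dx = \frac{1}{a} \int_{\R^N} \Phi_u(|x|)\, |u|^{p-1}\, |v|\, dx .
\]
Here we use that $x \in B_\rho$ if and only if $\rho > |x|$. So the signed integrand $\rho^{-b}\, h_u^{q-1}(\rho)\, |u|^{p-2}\, uv\, \mathbf{1}_{B_\rho}(x)$ is absolutely integrable on the product space.

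\emph{Step 2: Fubini and the inequality.} Absolute integrability allows Fubini's theorem on the signed integrand, which is exactly \eqref{107}. In particular $\Phi_u(|x|)\, |u|^{p-2}\, uv \in L^1(\R^N)$, and $\Phi_u(|x|)$ is finite for a.e.\ $x$ with $u(x)\, v(x) \ne 0$. Multiplying the inequality of Theorem \ref{thm:bm-pairing} by $a > 0$ then gives the asserted bound
\[
\int_{\R^N} \Phi_u(|x|)\, |u|^{p-2}\, uv\, dx \le a \norm[L^{p,q}_b(\R^N)]{u}^{pq-1} \norm[L^{p,q}_b(\R^N)]{v} .
\]

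\emph{Step 3: equality.} Since \eqref{107} identifies the left side of the corollary with $a$ times the left side of Theorem \ref{thm:bm-pairing}, and $a > 0$, equality in the corollary holds exactly when equality holds in the theorem. For $u \ne 0$, Theorem \ref{thm:bm-pairing} says this happens if and only if $v = cu$ with $c \ge 0$. Conversely, if $v = cu$, then \eqref{73} gives both sides equal to $ac \norm[L^{p,q}_b(\R^N)]{u}^{pq}$.

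The only delicate point is Step 1. It is the absolute-convergence check that makes Fubini legitimate, together with the observation that $\Phi_u$ may be infinite on a null set without affecting the integral.
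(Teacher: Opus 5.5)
Your proposal is correct and follows the paper's route: the corollary is Theorem~\ref{thm:bm-pairing} multiplied by $a>0$ and rewritten through the Fubini identity \eqref{107}, and the equality case carries over unchanged. Your Step~1, which applies the theorem to $(|u|,|v|)$ and uses Tonelli to get absolute integrability, supplies the justification of \eqref{107} that the paper leaves implicit.
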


\subsection{Ball-mass Br\'{e}zis-Lieb lemma}

We have the following analog of the Br\'{e}zis-Lieb lemma \cite{MR699419} for $L^{p,q}_b(\R^N)$.

\begin{theorem} \label{thm:bm-brezis-lieb}
If \eqref{5} holds and $\seq{u_j}$ is a bounded sequence in $L^{p,q}_b(\R^N)$ such that $u_j \to u$ in $L^p_\loc(\R^N)$, then $u \in L^{p,q}_b(\R^N)$ and
\begin{equation} \label{16}
\norm[L^{p,q}_b(\R^N)]{u_j}^{pq} - \norm[L^{p,q}_b(\R^N)]{u_j - u}^{pq} \to \norm[L^{p,q}_b(\R^N)]{u}^{pq}.
\end{equation}
\end{theorem}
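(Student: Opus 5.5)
The plan is to reduce \eqref{16} to a pointwise-in-$\rho$ statement and then pass to the limit under the integral $\int_0^\infty \rho^{-b}\,(\cdot)\,d\rho$.

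\emph{Step 1: membership.} For each fixed $\rho$, since $u_j \to u$ in $L^p(B_\rho)$, we have $h_{u_j}(\rho) \to h_u(\rho)$. Fatou's lemma then gives
\[
\norm[L^{p,q}_b(\R^N)]{u}^{pq} \le \liminf_j \norm[L^{p,q}_b(\R^N)]{u_j}^{pq} \le M,
\]
so $u \in L^{p,q}_b(\R^N)$.

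\emph{Step 2: pointwise splitting.} For every fixed $\rho$, we also have $h_{u_j - u}(\rho) \to 0$. By the elementary inequality
\[
\abs{(A+B)^q - A^q} \le C_q\big(A^{q-1} B + B^q\big), \qquad A, B \ge 0,
\]
we get
\[
\abs{h_{u_j}^q - h_{u_j - u}^q} \le C\big(h_{u_j-u}^{q-1}(\rho)\,\delta_j(\rho) + \delta_j(\rho)^q\big),
\]
where $\delta_j(\rho) = \abs{h_{u_j}(\rho) - h_{u_j-u}(\rho)}$. This suggests working instead with $L^p$ norms. Put $a_j(\rho) = \norm[L^p(B_\rho)]{u_j}$, $c_j(\rho) = \norm[L^p(B_\rho)]{u_j - u}$, and $a(\rho) = \norm[L^p(B_\rho)]{u}$. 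The triangle inequality gives $\abs{a_j - c_j} \le a$ pointwise in $\rho$, and $c_j(\rho) \to 0$, $a_j(\rho) \to a(\rho)$ for each $\rho$. Consequently
\[
a_j^{pq} - c_j^{pq} \to a^{pq} \quad \text{pointwise in } \rho.
\]

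\emph{Step 3: domination (the main obstacle).} The integral of $c_j^{pq}$ need not tend to zero, because mass can escape to infinity in $\rho$. So we cannot use dominated convergence on each term separately; only the difference is controlled. The mean value inequality gives
\[
\abs{a_j^{pq} - c_j^{pq}} \le pq\,\abs{a_j - c_j}\,\max(a_j, c_j)^{pq-1} \le pq\, a\,(a + c_j)^{pq-1}.
\]
For any $\eps > 0$, Young's inequality then yields
\[
\abs{a_j^{pq} - c_j^{pq} - a^{pq}} \le \eps\, c_j^{pq} + C_\eps\, a^{pq}.
\]
This is exactly the Br\'ezis--Lieb device. Define
\[
W_j = \big(\abs{a_j^{pq} - c_j^{pq} - a^{pq}} - \eps\, c_j^{pq}\big)_+ \le C_\eps\, a^{pq}.
\]
We have $a^{pq}\rho^{-b} \in L^1(0,\infty)$ by Step 1, and $W_j \to 0$ pointwise by Step 2. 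Dominated convergence therefore gives $\int_0^\infty \rho^{-b} W_j\, d\rho \to 0$. Hence
\[
\limsup_j \int_0^\infty \rho^{-b}\,\abs{a_j^{pq} - c_j^{pq} - a^{pq}}\, d\rho \le \eps \sup_j \norm[L^{p,q}_b(\R^N)]{u_j - u}^{pq},
\]
and the supremum on the right is finite since the sequence is bounded and $u \in L^{p,q}_b(\R^N)$.

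\emph{Conclusion.} Letting $\eps \to 0$ gives \eqref{16}, because $a_j^{pq} = h_{u_j}^q$, $c_j^{pq} = h_{u_j - u}^q$, and $a^{pq} = h_u^q$. The only delicate point is the Young/truncation step that absorbs the nonvanishing tail $c_j$.
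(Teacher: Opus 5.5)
Your proof is correct and follows the paper's route: Fatou's lemma gives $u \in L^{p,q}_b(\R^N)$, and then a Br\'{e}zis--Lieb argument gives \eqref{16}. The only difference is in how the second step is carried out. The paper lifts to the Bochner space $X = L^{pq}((0,\infty),\rho^{-b}\,d\rho;L^p(\R^N))$ via the isometry $\Lambda$ and cites the Br\'{e}zis--Lieb lemma there. You instead run that same argument by hand on the scalar functions $\rho \mapsto \norm[L^p(B_\rho)]{\cdot}$, and the pointwise triangle inequality $|a_j - c_j| \le a$ plays exactly the role it has in the vector-valued lemma. The estimate for $(A+B)^q - A^q$ in your Step 2 is never used and can be dropped.
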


\begin{proof}
Since $u_j \to u$ in $L^p_\loc(\R^N)$,
\[
\norm[L^p(B_\rho)]{u_j} \to \norm[L^p(B_\rho)]{u}
\]
for every $\rho \in (0,\infty)$. Applying Fatou's lemma in $L^{pq}((0,\infty),\,\rho^{-b}\, d\rho)$ gives
\begin{multline*}
\norm[L^{p,q}_b(\R^N)]{u} = \big\|\!\norm[L^p(B_\rho)]{u}\!\big\|_{L^{pq}((0,\infty),\,\rho^{-b}\, d\rho)} \le \liminf_{j \to \infty}\, \big\|\!\norm[L^p(B_\rho)]{u_j}\!\big\|_{L^{pq}((0,\infty),\,\rho^{-b}\, d\rho)}\\[7.5pt]
= \liminf_{j \to \infty}\halfthin \norm[L^{p,q}_b(\R^N)]{u_j} < \infty
\end{multline*}
since $\seq{u_j}$ is bounded in $L^{p,q}_b(\R^N)$, so $u \in L^{p,q}_b(\R^N)$.

Since $\seq{u_j}$ is bounded in $L^{p,q}_b(\R^N)$ and the map $\Lambda$ in \eqref{510} is an isometry, $\seq{\Lambda u_j}$ is bounded in $X$. Since $u_j \to u$ in $L^p_\loc(\R^N)$,
\[
\norm[L^p(\R^N)]{\Lambda u_j(\cdot,\rho) - \Lambda u(\cdot,\rho)} = \norm[L^p(B_\rho)]{u_j - u} \to 0
\]
for every $\rho \in (0,\infty)$. Applying the Br\'{e}zis-Lieb lemma in $X$ gives
\[
\norm[X]{\Lambda u_j}^{pq} - \norm[X]{\Lambda u_j - \Lambda u}^{pq} \to \norm[X]{\Lambda u}^{pq},
\]
which is equivalent to \eqref{16} since $\Lambda$ is a linear isometry.
\end{proof}

\subsection{Ball-mass Sobolev space}

For $p$, $q$, and $b$ satisfying \eqref{33}--\eqref{2}, we define the ball-mass Sobolev space $E^{p,q}_b(\R^N)$ to be the space of all weakly differentiable functions $u : \R^N \to \R$ such that
\[
\norm[L^2(\R^N)]{\nabla u} + \norm[L^{p,q}_b(\R^N)]{u} < \infty.
\]

\begin{theorem} \label{Theorem 4}
If \eqref{33}--\eqref{2} hold, then $E^{p,q}_b(\R^N)$ equipped with the norm
\[
\norm[E^{p,q}_b(\R^N)]{u} = \left(\norm[L^2(\R^N)]{\nabla u}^{2pq} + \norm[L^{p,q}_b(\R^N)]{u}^{2pq}\right)^{1/2pq}
\]
is a uniformly convex Banach space.
\end{theorem}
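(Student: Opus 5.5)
The plan is to realize $E^{p,q}_b(\R^N)$ isometrically as a subspace of a uniformly convex product space, in the same spirit as the isometry \eqref{510}. Consider
\[
T : E^{p,q}_b(\R^N) \to Y := L^2(\R^N;\R^N) \times L^{p,q}_b(\R^N), \qquad Tu = (\nabla u, u),
\]
where $Y$ carries the norm $\norm[Y]{(F,v)} = \big(\norm[L^2]{F}^{2pq} + \norm[L^{p,q}_b]{v}^{2pq}\big)^{1/2pq}$. This is the $\ell^{2pq}$-sum of two uniformly convex Banach spaces: $L^2$ is a Hilbert space, and $L^{p,q}_b(\R^N)$ is uniformly convex by Theorem \ref{Theorem 2}. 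Since $2pq > 1$ (indeed $pq > 2$ by \eqref{34}), the $\ell^r$-sum ($1<r<\infty$) of two uniformly convex spaces is uniformly convex; this is a classical result (Day, Clarkson), which I would cite from \cite{MR1400007}. The map $T$ is linear and norm-preserving by construction. Consequently the formula is a seminorm, and it is definite: if $\norm{u}=0$ then $\norm[L^{p,q}_b]{u}=0$, so $u=0$ a.e. Hence it is a norm, and uniform convexity passes to $E^{p,q}_b(\R^N)$ because it is isometric to a subspace of $Y$.

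It remains to prove completeness, which I expect to be the only real step. Let $(u_j)$ be Cauchy in $E^{p,q}_b(\R^N)$. Then $(u_j)$ is Cauchy in $L^{p,q}_b(\R^N)$, so by Theorem \ref{Theorem 2} it converges there to some $u$. By Theorem \ref{Theorem 3}, $u_j \to u$ in $L^p_\loc(\R^N)$, hence in $L^1_\loc$. Also $\nabla u_j \to G$ in $L^2(\R^N;\R^N)$, hence in $L^1_\loc$. For $\varphi \in C^\infty_0(\R^N)$ we have
\[
\int u_j\, \partial_i \varphi\, dx = - \int \partial_i u_j\, \varphi\, dx,
\]
and passing to the limit gives $\int u\, \partial_i\varphi = -\int G_i \varphi$. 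So $u$ is weakly differentiable with $\nabla u = G \in L^2$. Therefore $u \in E^{p,q}_b(\R^N)$ and $\norm{u_j - u} \to 0$, since both components converge.

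The main obstacle is only the bookkeeping in the completeness step, namely making sure that $L^{p,q}_b$-convergence yields local integrability so that weak derivatives pass to the limit; Theorem \ref{Theorem 3} supplies exactly this. If one prefers to avoid citing the $\ell^r$-sum result, uniform convexity can be checked directly. Take $\norm{u}=\norm{v}=1$ with $\norm{u-v}\ge\eps$. Then one of the components, $\nabla u-\nabla v$ in $L^2$ or $u-v$ in $L^{p,q}_b$, has norm at least a fixed fraction of $\eps$. Combine the moduli of convexity of the two factors with the strict convexity of $t\mapsto t^{2pq}$ to obtain $\norm{(u+v)/2}\le 1-\delta(\eps)$.
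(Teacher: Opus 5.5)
Your proposal is correct and is essentially the paper's proof. The paper uses the same linear isometry $u \mapsto (\nabla u, u)$ into the $\ell^{2pq}$-sum of $L^2(\R^N;\R^N)$ and $L^{p,q}_b(\R^N)$, with uniform convexity coming from Theorem \ref{Theorem 2} since $2pq > 1$, and it proves completeness by showing the image is closed: Theorem \ref{Theorem 3} gives $L^p_\loc$ convergence, which identifies the $L^2$-limit of the gradients as $\nabla w$, exactly as in your Cauchy-sequence argument.
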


\begin{proof}
The map
\[
\Upsilon : E^{p,q}_b(\R^N) \to L^2(\R^N;\R^N) \oplus_{2pq} L^{p,q}_b(\R^N), \quad \Upsilon u = (\nabla u,u)
\]
is a linear isometry. The $\ell^{2pq}$-direct sum $L^2(\R^N;\R^N) \oplus_{2pq} L^{p,q}_b(\R^N)$ is uniformly convex in view of Theorem \ref{Theorem 2} since $2pq > 1$. It follows that $\norm[E^{p,q}_b(\R^N)]{\cdot}$ is a uniformly convex norm.

Now we show that $\Upsilon(E^{p,q}_b(\R^N))$ is closed and hence $E^{p,q}_b(\R^N)$ is complete. Suppose $\Upsilon u_j = (\nabla u_j,u_j) \to (V,w)$ in $L^2(\R^N;\R^N) \oplus_{2pq} L^{p,q}_b(\R^N)$. Then $u_j \to w$ in $L^{p,q}_b(\R^N)$ and hence also in $L^p_\loc(\R^N)$ by Theorem \ref{Theorem 3}, while $\nabla u_j \to V$ in $L^2(\R^N;\R^N)$, so $V = \nabla w$ a.e.\! in $\R^N$. Hence $w \in E^{p,q}_b(\R^N)$ with $\Upsilon w = (V,w)$, so $(V,w) \in \Upsilon(E^{p,q}_b(\R^N))$.
\end{proof}

We have the following local embedding theorem for $E^{p,q}_b(\R^N)$.

\begin{theorem} \label{Theorem 5}
If \eqref{33}--\eqref{2} hold, then
\[
\norm[H^1(B_R)]{u} \le C_R \norm[E^{p,q}_b(\R^N)]{u} \quad \forall u \in E^{p,q}_b(\R^N)
\]
for some constant $C_R > 0$. In particular, $E^{p,q}_b(\R^N)$ is continuously embedded in $H^1_\loc(\R^N)$.
\end{theorem}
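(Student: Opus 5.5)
The estimate splits into its two parts: the gradient part, which is immediate, and the $L^2(B_R)$ part, which carries all the work. For the gradient, $\norm[L^2(B_R)]{\nabla u} \le \norm[L^2(\R^N)]{\nabla u} \le \norm[E^{p,q}_b(\R^N)]{u}$ directly from the definition of the norm in Theorem \ref{Theorem 4}. The difficulty is that the ball-mass term only controls $u$ in $L^p(B_R)$ (Theorem \ref{Theorem 3}), and $p$ may be smaller than $2$. So we must trade gradient control for $L^2$ control on a bounded domain, using a Poincaré-type inequality anchored by $L^p$.

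For the $L^2$ part, work on the ball $B_R$. Let $\bar u = |B_R|^{-1}\int_{B_R} u\, dx$; this is finite because $u \in L^p(B_R) \subset L^1(B_R)$. The Poincaré-Wirtinger inequality on the ball gives $\norm[L^2(B_R)]{u - \bar u} \le C_R \norm[L^2(B_R)]{\nabla u}$.

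This step needs $u \in W^{1,1}(B_R)$, which holds since $u \in L^1(B_R)$ and $\nabla u \in L^2(B_R) \subset L^1(B_R)$. We then apply Poincaré-Wirtinger in $W^{1,1}$ together with the Sobolev embedding. Alternatively, truncate $u$ to $T_k u = \max(-k,\min(k,u))$, which lies in $H^1(B_R)$, apply the inequality, and pass to the limit as $k \to \infty$ by Fatou's lemma. Either way, $u \in L^2(B_R)$ follows.

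It remains to control the constant $\bar u$. Write $|\bar u|\,|B_R|^{1/p} = \norm[L^p(B_R)]{\bar u} \le \norm[L^p(B_R)]{u} + \norm[L^p(B_R)]{u - \bar u}$.

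The first term is bounded by $CR^{(b-1)/pq}\norm[L^{p,q}_b(\R^N)]{u}$ by Theorem \ref{Theorem 3}. For the second term there are two cases:
\begin{itemize}
\item If $p \le 2$, Hölder gives $\norm[L^p(B_R)]{u-\bar u} \le |B_R|^{1/p-1/2}\norm[L^2(B_R)]{u-\bar u}$, which is controlled by the previous step.
\item If $p > 2$, the admissible range \eqref{33} gives $p < 2^\ast$ (or any finite $p$ when $N=2$). Then the Sobolev-Poincaré inequality $\norm[L^p(B_R)]{u-\bar u} \le C_R\norm[L^2(B_R)]{\nabla u}$ applies.
\end{itemize}

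Combining these gives $\norm[L^2(B_R)]{u} \le \norm[L^2(B_R)]{u-\bar u} + |\bar u|\,|B_R|^{1/2} \le C_R\big(\norm[L^2(\R^N)]{\nabla u} + \norm[L^{p,q}_b(\R^N)]{u}\big)$. Both summands on the right are bounded by $\norm[E^{p,q}_b(\R^N)]{u}$, which proves the theorem.

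The main obstacle is the justification step: verifying that $u$ lies in a space where Poincaré-Wirtinger applies when we only know a priori that $u \in L^p_\loc$ with $p$ possibly below $2$. The truncation argument is the route I would try first.
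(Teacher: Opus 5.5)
Your proof is correct, but it takes a different route from the paper. The paper handles $p \ge 2$ by H\"{o}lder alone, $\norm[L^2(B_R)]{u} \le |B_R|^{1/2 - 1/p} \norm[L^p(B_R)]{u}$, combined with Theorem \ref{Theorem 3}. It handles $p < 2$ by a local Gagliardo--Nirenberg inequality, $\norm[L^2(B_R)]{u} \le C_R \big(\norm[L^2(B_R)]{\nabla u}^\theta \norm[L^p(B_R)]{u}^{1-\theta} + \norm[L^p(B_R)]{u}\big)$. You instead apply Poincar\'{e}--Wirtinger to $u - \bar u$ and control the mean separately. This gives an additive rather than a multiplicative bound, which is all the statement needs, and it rests on a more elementary inequality. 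Your truncation step also makes explicit that $u \in L^2(B_R)$ a priori, which the paper's appeal to Gagliardo--Nirenberg uses tacitly.

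Two simplifications are available. First, your $p > 2$ branch is unnecessary: there H\"{o}lder already gives $u \in L^2(B_R)$ with the required bound, so no Sobolev--Poincar\'{e} inequality is needed. Second, in every case the mean is controlled directly by H\"{o}lder, $\abs{\bar u} \le |B_R|^{-1/p} \norm[L^p(B_R)]{u}$. So the detour through $\norm[L^p(B_R)]{u - \bar u}$ and the case split can both be dropped.
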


\begin{proof}
Since $\norm[L^2(\R^N)]{\nabla u} \le \norm[E^{p,q}_b(\R^N)]{u}$, it only remains to show that
\[
\norm[L^2(B_R)]{u} \le C_R \norm[E^{p,q}_b(\R^N)]{u} \quad \forall u \in E^{p,q}_b(\R^N).
\]
In view of Theorem \ref{Theorem 3}, this follows from the H\"{o}lder inequality when $p \ge 2$ and from the Gagliardo-Nirenberg inequality
\[
\norm[L^2(B_R)]{u} \le C_R \left(\norm[L^2(B_R)]{\nabla u}^\theta \norm[L^p(B_R)]{u}^{1 - \theta} + \norm[L^p(B_R)]{u}\right),
\]
where $\theta \in (0,1)$ is given by $1/2 = \theta\, (1/2 - 1/N) + (1 - \theta)/p$, when $p < 2$.
\end{proof}

\subsection{Differentiability of the ball-mass potential}

The following proposition justifies the variational formulation of equation \eqref{1} given in Subsection \ref{ssec:var-form}.

\begin{proposition} \label{prop:c1}
The functional
\[
\mathcal{P}(u) = \int_0^\infty \rho^{-b}\, h_u^q(\rho)\, d\rho, \quad u \in E^{p,q}_{b,\rad}(\R^N)
\]
is of class $C^1$ with
\[
\mathcal{P}'(u) v = \frac{pq}{a} \int_{\R^N} \Phi_u(|x|)\, |u|^{p-2}\, uv\, dx, \quad u, v \in E^{p,q}_{b,\rad}(\R^N).
\]
\end{proposition}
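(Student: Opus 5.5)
We view $\mathcal{P}$ as the composition $\mathcal{P}(u) = \norm[X]{\Lambda u}^{pq}$. Here $\Lambda$ is the linear isometry \eqref{510} into $X = L^{pq}((0,\infty),\,\rho^{-b}\, d\rho;L^p(\R^N))$, and the inclusion $E^{p,q}_{b,\rad}(\R^N) \incl L^{p,q}_b(\R^N)$ is bounded with norm at most one. It therefore suffices to show that $J(w) = \norm[X]{w}^{pq}$ is $C^1$ on $X$ and to compute $J'$ along $\Lambda$. Since $X$ is uniformly smooth (Theorem \ref{Theorem 2}), its norm is Fr\'{e}chet differentiable away from the origin with uniformly continuous derivative on the unit sphere. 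Because $pq > 1$, the power $J$ is then $C^1$ on all of $X$, with $J'(0) = 0$ and $\norm[X^\ast]{J'(w)} = pq \norm[X]{w}^{pq-1}$. Composition with the bounded linear map $\Lambda$ shows that $\mathcal{P}$ is $C^1$ on $L^{p,q}_b(\R^N)$, hence on $E^{p,q}_{b,\rad}(\R^N)$.

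Next we identify $J'(w)$ at $w = \Lambda u$ by computing the G\^{a}teaux derivative directly; this is legitimate because Fr\'{e}chet differentiability is already known. We claim
\[
J'(\Lambda u)\, \Lambda v = pq \int_0^\infty \rho^{-b}\, h_u^{q-1}(\rho) \bigg(\int_{B_\rho} |u|^{p-2}\, uv\, dx\bigg) d\rho .
\]
Equivalently, $J'(\Lambda u) = pq\, H_u$ in the notation of the proof of Theorem \ref{thm:bm-pairing}. This is the standard formula for the derivative of $\norm[X]{\cdot}^{pq}$ in a Lebesgue--Bochner space. To check it, differentiate $t \mapsto h_{u+tv}(\rho)^q$ at $t = 0$ for each fixed $\rho$, using $\frac{d}{dt} \int_{B_\rho} |u+tv|^p = p \int_{B_\rho} |u+tv|^{p-2}(u+tv)\, v$. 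Then pass the derivative under the $\rho$-integral by dominated convergence.

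For the domination we use convexity. The difference quotients of the convex function $t \mapsto h_{u+tv}^q(\rho)$ on $|t| \le 1$ are bounded by $h_{u+v}^q(\rho) + h_{u-v}^q(\rho) + 2h_u^q(\rho)$, which is integrable against $\rho^{-b}\, d\rho$ because $u \pm v \in L^{p,q}_b(\R^N)$. Finally, Fubini's identity \eqref{107} turns the right-hand side into $\frac{pq}{a} \int_{\R^N} \Phi_u(|x|)\, |u|^{p-2}\, uv\, dx$. Corollary \ref{cor:bm-pairing} guarantees that this is a bounded linear functional in $v$.

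The main obstacle is the first step: passing from uniform smoothness of $X$ to continuity of $u \mapsto \mathcal{P}'(u)$ in the dual norm. The G\^{a}teaux computation by itself gives no continuity, and the nonlinear factor $h_u^{q-1}$ makes a direct estimate of $\norm{\mathcal{P}'(u_j) - \mathcal{P}'(u)}$ messy. If invoking uniform smoothness feels too abstract, I would use the following fallback. Take $u_j \to u$ and note that $H_{u_j} \to H_u$ weakly in $X^\ast$; this follows from pointwise convergence of $h_{u_j}$ together with the pairing bound. We also have $\norm[X^\ast]{H_{u_j}} = \norm{u_j}^{pq-1} \to \norm{u}^{pq-1} = \norm[X^\ast]{H_u}$. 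Since $X^\ast$ is uniformly convex (being the dual of a uniformly smooth space), weak convergence together with convergence of norms gives strong convergence. This yields the continuity of $\mathcal{P}'$.
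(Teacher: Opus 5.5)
Your proof is correct and follows the paper's argument: $C^1$ regularity comes from uniform smoothness, $pq>1$, and the chain rule (the paper works with $L^{p,q}_b(\R^N)$ directly rather than $X$, which is equivalent via the isometry $\Lambda$), and the derivative is then identified by a G\^{a}teaux computation under dominated convergence followed by \eqref{107}. The only difference is the dominating function: yours comes from convexity of $t \mapsto h_{u+tv}^q(\rho)$, whereas the paper bounds $\frac{1}{t}\int_0^t h_{u+sv}^{q-1}(\rho)\, P_\rho(u+sv,v)\, ds$ by $h_w^q(\rho)$ with $w = |u| + |v|$ via H\"{o}lder; your fallback for continuity of the derivative is unnecessary, since uniform smoothness already supplies it, exactly as the paper uses.
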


\begin{proof}
Since $L^{p,q}_b(\R^N)$ is uniformly smooth by Theorem \ref{Theorem 2}, its norm is of class $C^1$ away from the origin. Since $pq > 1$ by \eqref{5}, it follows that the functional
\[
\psi(u) = \norm[L^{p,q}_b(\R^N)]{u}^{pq}, \quad u \in L^{p,q}_b(\R^N)
\]
is of class $C^1$ with $\psi'(0) = 0$. Since the inclusion $\iota : E^{p,q}_{b,\rad}(\R^N) \hookrightarrow L^{p,q}_b(\R^N)$ is a bounded linear mapping and $\mathcal{P} = \psi \circ \iota$, the chain rule now shows that $\mathcal{P}$ is of class $C^1$.

For $u, v \in E^{p,q}_{b,\rad}(\R^N)$ and $t \ne 0$,
\begin{equation} \label{120}
\frac{\mathcal{P}(u + tv) - \mathcal{P}(u)}{t} = \int_0^\infty \rho^{-b} \left(\frac{h_{u+tv}^q(\rho) - h_u^q(\rho)}{t}\right) d\rho.
\end{equation}
For each $\rho > 0$, the mapping $t \mapsto h_{u+tv}(\rho)$ is $C^1$ with derivative $p\, P_\rho(u + tv,v)$, where
\[
P_\rho(u,v) = \int_{B_\rho} |u|^{p-2}\, uv\, dx,
\]
so the integrand in \eqref{120} is equal to
\[
\frac{pq\halfthin \rho^{-b}}{t} \int_0^t h_{u+sv}^{q-1}(\rho)\, P_\rho(u + sv,v)\, ds \to pq\halfthin \rho^{-b}\, h_u^{q-1}(\rho)\, P_\rho(u,v) \quad \text{as } t \to 0.
\]
For $|t| \le 1$ and $|s| \le |t|$, we have $|u + sv| \le |u| + |v| =: w$ and $|v| \le w$, so $h_{u+sv} \le h_w$ and $h_v \le h_w$. So the H\"{o}lder inequality gives
\[
|h_{u+sv}^{q-1}(\rho)\, P_\rho(u + sv,v)| \le h_{u+sv}^{q-1}(\rho)\, h_{u+sv}^{1-1/p}(\rho)\, h_v^{1/p}(\rho) \le h_w^q(\rho)
\]
and hence
\[
\abs{\frac{\rho^{-b}}{t} \int_0^t h_{u+sv}^{q-1}(\rho)\, P_\rho(u + sv,v)\, ds} \le \rho^{-b}\, h_w^q(\rho) \in L^1((0,\infty))
\]
since $w \in E^{p,q}_{b,\rad}(\R^N)$. We may now pass to the limit in \eqref{120} using the dominated convergence theorem to get
\[
\mathcal{P}'(u) v = pq \int_0^\infty \rho^{-b}\, h_u^{q-1}(\rho)\, P_\rho(u,v)\, d\rho = \frac{pq}{a} \int_{\R^N} \Phi_u(|x|)\, |u|^{p-2}\, uv\, dx
\]
by \eqref{107}.
\end{proof}

\subsection{Sharp embedding results}

In this subsection we consider embeddings of $E^{p,q}_b(\R^N)$ in Lebesgue spaces. We begin by showing that without any restrictions there are no embeddings for $N = 2$ and only the Sobolev embedding in $L^{2^\ast}(\R^N)$ for $N \ge 3$.

\begin{theorem} \label{thm:no-embedding}
Assume \eqref{33}--\eqref{2}. Then $E^{p,q}_b(\R^N)$ is continuously embedded in $L^s(\R^N)$ if and only if $N \ge 3$ and $s = 2^\ast$, and this embedding is not compact.
\end{theorem}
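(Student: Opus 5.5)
We prove the two directions separately, using test functions that live far from the origin, where the ball-mass term is negligible. The key observation: if $v$ is supported in an annulus or ball $B_r(x_0)$ with $|x_0| = R \gg r$, then $h_v(\rho) = 0$ for $\rho < R - r$ and $h_v(\rho) \le \norm[L^p]{v}^p$. Hence
\[
\norm[L^{p,q}_b(\R^N)]{v}^{pq} \le \frac{(R - r)^{-(b-1)}}{b - 1}\, \norm[L^p(\R^N)]{v}^{pq},
\]
which tends to $0$ as $R \to \infty$ because $b > 1$. Translating a fixed profile far away therefore keeps $\norm[L^2]{\nabla v}$ and all $L^s$ norms unchanged, while the ball-mass norm becomes arbitrarily small.

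\emph{Sufficiency and non-compactness.} For $N \ge 3$, Sobolev's inequality gives the embedding in $L^{2^\ast}$. The only care needed is that elements of $E^{p,q}_b(\R^N)$ vanish at infinity in the $D^{1,2}$ sense. This follows because $u \in L^p_\loc$ with $\nabla u \in L^2$, and finiteness of the ball-mass integral rules out a nonzero constant at infinity, since a constant $c \ne 0$ would give $h_u(\rho) \sim c^p \rho^N$ and $\int^\infty \rho^{-b+Nq}\, d\rho = \infty$ because $b < 1 + Nq$. More precisely, one can approximate $u$ by truncations/cutoffs and apply the Sobolev inequality to $u - c$, then use the ball-mass bound to force $c = 0$.

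For non-compactness, take a fixed $\varphi \in C^\infty_0(B_1) \setminus \{0\}$ and set $v_j = \varphi(\cdot - x_j)$ with $|x_j| \to \infty$. By the estimate above, $(v_j)$ is bounded in $E^{p,q}_b$ and converges weakly to $0$, since it tends to $0$ locally in $L^p$. However, $\norm[L^{2^\ast}]{v_j} = \norm[L^{2^\ast}]{\varphi}$ for all $j$, so no subsequence converges strongly.

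\emph{Necessity.} Suppose $\norm[L^s]{u} \le C \norm[E^{p,q}_b]{u}$ for all $u$. Apply this to $v_{\lambda,R}(x) = \varphi((x - x_R)/\lambda)$, with $|x_R| = R \gg \lambda$. Then $\norm[L^2]{\nabla v}^2 = \lambda^{N-2} \norm[L^2]{\nabla \varphi}^2$ and $\norm[L^s]{v} = \lambda^{N/s} \norm[L^s]{\varphi}$. Letting $R \to \infty$ with $\lambda$ fixed kills the ball-mass term, so we obtain
\[
\lambda^{N/s} \le C'\, \lambda^{(N-2)/2} \quad \text{for all } \lambda > 0.
\]
For $N = 2$ this reads $\lambda^{2/s} \le C'$, which fails as $\lambda \to \infty$ (and $s = \infty$ is excluded similarly, or by taking logarithmic-type profiles). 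For $N \ge 3$, letting $\lambda \to 0$ and $\lambda \to \infty$ forces $N/s = (N-2)/2$, that is, $s = 2^\ast$.

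The main subtlety I expect lies in the sufficiency step, namely showing rigorously that $u \in E^{p,q}_b$ has no constant component at infinity so that Sobolev applies. Here I would use the Sobolev inequality on $D^{1,2}$, where $u$ differs from some constant $c$ by an element of $L^{2^\ast}$. I would then argue that $|u| \ge |c|/2$ on a set of infinite measure with positive density in large balls, making the ball-mass integral diverge unless $c = 0$.
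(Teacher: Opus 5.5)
Your proposal is correct and follows essentially the same route as the paper. You translate a fixed profile far from the origin to kill the ball-mass term, which reduces matters to $\|u\|_{L^s} \le C\|\nabla u\|_{L^2}$ on $C^\infty_c(\R^N)$. Dilation then forces $s = 2^\ast$ (the Moser sequence disposes of $N = 2$, $s = \infty$), Sobolev gives sufficiency, and far-away translates give non-compactness.

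Two small remarks. For $N = 2$, $s = \infty$ the scaling inequality gives no contradiction, so that case is not excluded ``similarly''; only your second option, logarithmic (Moser-type) profiles, works, as in the paper. Also, your argument that $u - c \in L^{2^\ast}$, with $c = 0$ forced by the divergence of $\int^\infty \rho^{Nq - b}\, d\rho$ (since $b < 1 + Nq$), justifies a step the paper's one-line appeal to the Sobolev inequality leaves implicit.
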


\begin{proof}
Suppose
\begin{equation} \label{43}
\norm[L^s(\R^N)]{u} \le C \norm[E^{p,q}_b(\R^N)]{u} \quad \forall u \in E^{p,q}_b(\R^N)
\end{equation}
for some constant $C > 0$. Let $u \in C^\infty_c(\R^N)$, let $R > 0$, and take $y \in \R^N$ such that $u_y := u(\cdot - y) \equiv 0$ in $B_R$. Then $\norm[L^s(\R^N)]{u_y} = \norm[L^s(\R^N)]{u}$ and $\norm[L^2(\R^N)]{\nabla u_y} = \norm[L^2(\R^N)]{\nabla u}$. On the other hand, $h_{u_y}(\rho) = 0$ for $\rho \le R$ and $h_{u_y}(\rho) \le \norm[L^p(\R^N)]{u}^p$ for all $\rho > 0$, so
\[
\norm[L^{p,q}_b(\R^N)]{u_y} \le \norm[L^p(\R^N)]{u} \left(\int_R^\infty \rho^{-b}\, d\rho\right)^{1/pq} = \frac{R^{-(b-1)/pq}}{(b - 1)^{1/pq}} \norm[L^p(\R^N)]{u} \to 0 \quad \text{as } R \to \infty
\]
since $b > 1$. So applying \eqref{43} to $u_y$ and letting $R \to \infty$ gives
\begin{equation} \label{44}
\norm[L^s(\R^N)]{u} \le C \norm[L^2(\R^N)]{\nabla u} \quad \forall u \in C^\infty_c(\R^N).
\end{equation}
Fixing $u \in C^\infty_c(\R^N) \setminus \set{0}$ and applying this to $u_\lambda := u(\lambda\, \cdot)$ gives
\[
\lambda^{-N/s} \norm[L^s(\R^N)]{u} \le C \lambda^{-(N-2)/2} \norm[L^2(\R^N)]{\nabla u} \quad \forall \lambda > 0,
\]
which forces $N = 2$ and $s = \infty$, or $N \ge 3$ and $s = 2^\ast$. The standard Moser sequence in $\R^2$ violates \eqref{44} and hence rules out the former.

Conversely, if $N \ge 3$, then the Sobolev inequality gives
\[
\norm[L^{2^\ast}(\R^N)]{u} \le C \norm[L^2(\R^N)]{\nabla u} \le C \norm[E^{p,q}_b(\R^N)]{u} \quad \forall u \in E^{p,q}_b(\R^N)
\]
for some constant $C > 0$.

This embedding is not compact since for any $u \in C^\infty_c(\R^N) \setminus \set{0}$ and $\seq{y_j} \subset \R^N$ with $|y_j| \to \infty$, $u_j := u_{y_j} \wto 0$ in $E^{p,q}_b(\R^N)$ since $\nabla u_j \wto 0$ in $L^2(\R^N;\R^N)$ and $\norm[L^{p,q}_b(\R^N)]{u_j} \to 0$, but $\|u_{y_j}\|_{L^{2^\ast}(\R^N)} = \norm[L^{2^\ast}(\R^N)]{u} \not\to 0$.
\end{proof}

However, restricting to radial functions gives compact embeddings into a range of Lebesgue spaces. The main result of this section is the following embedding theorem for $E^{p,q}_{b,\rad}(\R^N)$, where $2^{p,q}_{b,\ast}$ is given in \eqref{30}. Compactness of radial embeddings of this type goes back to Strauss \cite{MR0454365} and Berestycki and Lions \cite{MR695535}. Here the gradient term alone does not give compactness. It must be used together with the ball-mass term.

\begin{theorem} \label{Theorem 1}
Assume \eqref{33}--\eqref{2}. Then $E^{p,q}_{b,\rad}(\R^2)$ is compactly embedded in $L^s(\R^2)$ for all $s \in (2^{p,q}_{b,\ast},\infty)$. For $N \ge 3$, $E^{p,q}_{b,\rad}(\R^N)$ is embedded in $L^s(\R^N)$ continuously for all $s \in (2^{p,q}_{b,\ast},2^\ast]$ and compactly for $s \in (2^{p,q}_{b,\ast},2^\ast)$.
\end{theorem}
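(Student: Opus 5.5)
The plan is to split $\R^N$ into a ball $B_{R_0}$ and its exterior. On the ball I will use the local embedding of Theorem \ref{Theorem 5}. On the exterior I will prove an inequality with an explicit negative power of $R_0$. This is the exterior Gagliardo--Nirenberg estimate I expect to appear as Lemma \ref{Lemma 1}.

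\emph{Interior.} By Theorem \ref{Theorem 5}, a bounded set in $E^{p,q}_{b,\rad}(\R^N)$ is bounded in $H^1(B_{R_0})$ for every $R_0$. Hence by Rellich it is precompact in $L^s(B_{R_0})$ for every $s<\infty$ when $N=2$, and for every $s<2^\ast$ when $N\ge 3$. The continuous embedding in $L^{2^\ast}(\R^N)$ for $N\ge3$ is just the Sobolev inequality, as in Theorem \ref{thm:no-embedding}.

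\emph{Exterior estimate.} I will work on the dyadic annuli $A_R=\{R<|x|<2R\}$ with $R=2^kR_0$.
\begin{enumerate}
\item The ball-mass term controls $L^p(A_R)$ with a tail. As in the proof of Theorem \ref{Theorem 2}, monotonicity of $h_u$ gives
\[
\norm[L^p(A_R)]{u}^{pq}\le h_u^q(2R)\le C R^{b-1}\int_{2R}^{4R}\rho^{-b}h_u^q(\rho)\,d\rho .
\]
\item Radial functions are one-dimensional on $A_R$, with measure $\approx R^{N-1}dr$. Pick $r_0\in(R,2R)$ where $|u(r_0)|^p$ is at most its average over the annulus. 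Integrating $\frac{d}{dr}|u|^{(p+2)/2}$ from $r_0$ and using Cauchy--Schwarz gives
\[
\sup_{A_R}|u|^{(p+2)/2}\le C\big(R^{-(N-1)}\norm[L^p(A_R)]{u}^{p/2}\norm[L^2(A_R)]{\nabla u}+R^{-N(p+2)/2p}\norm[L^p(A_R)]{u}^{(p+2)/2}\big).
\]
\item Combine the two bounds through
\[
\int_{A_R}|u|^s\le \big(\sup_{A_R}|u|\big)^{s-p}\norm[L^p(A_R)]{u}^p .
\]
Each annulus then contributes $R^{-\gamma}$ times a product of powers of $\norm[L^2]{\nabla u}$ and of the localized ball-mass norm. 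The exponent is
\[
-\gamma=\tfrac{2(s-p)}{p+2}\Big(\tfrac{b-1}{2q}-(N-1)\Big)+\tfrac{b-1}{q}.
\]
The second term of the sup bound should give a more negative exponent.
\end{enumerate}

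\emph{Exponent bookkeeping.} A direct check should show that $\gamma>0$ is equivalent to $s>2\,[(N-1)pq+b-1]/(b^\ast-b)=2^{p,q}_{b,\ast}$. Since $b<b_\ast<b^\ast$, the coefficient of $s$ has the right sign. I then bound each shell's gradient and ball-mass factors by the full norms, or by their tails beyond $R_0$, and sum the geometric series over $k$. This yields
\[
\norm[L^s(\R^N\setminus B_{R_0})]{u}^s\le CR_0^{-\gamma}\big(\norm[E^{p,q}_b(\R^N)]{u}^{\theta_1}+\norm[E^{p,q}_b(\R^N)]{u}^{\theta_2}\big).
\]

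\emph{Conclusion.} Together with the interior bound, this gives the continuous embeddings. For compactness, take a bounded sequence. By Theorem \ref{Theorem 4} it converges weakly along a subsequence, and by Rellich it converges strongly in $L^s(B_{R_0})$. The exterior estimate makes the tails uniformly small in $R_0$, which gives strong convergence in $L^s(\R^N)$ for $s$ in the open ranges.

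The main obstacle is the exterior lemma: getting the one-dimensional sup estimate with the correct powers of $R$, and verifying that the resulting exponent $\gamma$ is positive exactly on $(2^{p,q}_{b,\ast},\cdot)$. I would first do this computation carefully for the leading term, then check that the lower-order average term is harmless.
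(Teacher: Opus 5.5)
Your proposal is correct and follows essentially the same route as the paper. Both arguments use the dyadic annuli $B_{2^{j+1}}\setminus B_{2^j}$ and control $\int_{A_j}|u|^p$ by $\int_{2^{j+1}}^{2^{j+2}}\rho^{-b}h_u^q\,d\rho$. Both apply a one-dimensional interpolation on each annulus whose decay exponent is positive exactly when $s>2^{p,q}_{b,\ast}$; your $\gamma$ equals $s\gamma_\ast$ in Lemma \ref{Lemma 1}. Both then conclude with Rellich on $B_{R_0}$ and uniformly small tails.

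The only differences are cosmetic. You derive the one-dimensional Gagliardo--Nirenberg inequality by hand, via a Strauss-type sup bound together with $\int|u|^s\le(\sup|u|)^{s-p}\int|u|^p$, which produces the same $\theta=2(s-p)/((p+2)s)$. You also sum $s$-th powers against a geometric series instead of summing $L^s$ norms with H\"older over the sum, as the paper does.
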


Set
\[
2^{p,q}_{b,\#} = \frac{Npq}{Nq + 1 - b} = \frac{Npq}{b^\# - b}.
\]
We have
\[
2^{p,q}_{b,\ast} - 2^{p,q}_{b,\#} = \begin{cases}
\dfrac{2\, (b - 1)}{b_\ast - b} & \text{if } N = 2\\[15pt]
\dfrac{2\, (b - 1)(b_\ast - b)}{(b^\# - b)(b^\ast - b)} & \text{if } N \ge 3,
\end{cases}
\]
so
\begin{equation} \label{106}
p < 2^{p,q}_{b,\#} < 2^{p,q}_{b,\ast}
\end{equation}
by \eqref{2} and \eqref{36}. We begin by proving the following exterior Gagliardo-Nirenberg type inequality.

\begin{lemma} \label{Lemma 1}
If \eqref{33}--\eqref{2} hold and $s \in (2^{p,q}_{b,\ast},\infty)$, then there exists a constant $C > 0$ such that
\[
\norm[L^s(\R^N \setminus B_R)]{u} \le C \left(R^{- \gamma_\ast} \norm[L^2(\R^N)]{\nabla u}^\theta \norm[L^{p,q}_b(\R^N)]{u}^{1 - \theta} + R^{- \gamma_\#} \norm[L^{p,q}_b(\R^N)]{u}\right) \quad \forall u \in E^{p,q}_{b,\rad}(\R^N)
\]
for all $R > 0$, where
\begin{gather}
\label{6} \theta = \frac{2\, (s - p)}{(p + 2)\, s} \in (0,1),\\[10pt]
\notag \gamma_\ast = \frac{(b^\ast - b)(s - 2^{p,q}_{b,\ast})}{(p + 2)\, qs} > 0, \qquad \gamma_\# = \frac{(b^\# - b)(s - 2^{p,q}_{b,\#})}{pqs} > 0.
\end{gather}
\end{lemma}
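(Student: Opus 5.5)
The plan is a dyadic annular decomposition of $\R^N \setminus B_R$. On each annulus we use a one-dimensional Gagliardo--Nirenberg inequality for the radial profile, and we control the $L^p$ mass of the annulus by the ball-mass norm through Theorem \ref{Theorem 3}. Set $A_k = \set{x : 2^k R \le |x| < 2^{k+1} R}$ for $k \ge 0$ and $r_k = 2^k R$. Then
\[
\norm[L^s(\R^N \setminus B_R)]{u}^s = \sum_{k \ge 0} \norm[L^s(A_k)]{u}^s,
\]
and it suffices to bound each $\norm[L^s(A_k)]{u}$ by
\[
C\left(r_k^{-\gamma_\ast} \norm[L^2(\R^N)]{\nabla u}^\theta \norm[L^{p,q}_b(\R^N)]{u}^{1-\theta} + r_k^{-\gamma_\#} \norm[L^{p,q}_b(\R^N)]{u}\right).
\]
Since $\gamma_\ast, \gamma_\# > 0$, summing $s$-th powers then gives geometric series in $2^{-k s\gamma}$. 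Using $(x+y)^s \le 2^{s-1}(x^s + y^s)$ and $\norm[L^2(A_k)]{\nabla u} \le \norm[L^2(\R^N)]{\nabla u}$, these series sum to $C R^{-s\gamma_\ast}(\cdots) + C R^{-s\gamma_\#}(\cdots)$, which is the claim. By density (or directly, for weakly differentiable radial functions, which are locally absolutely continuous in $r$ away from $0$), we may work with the profile $v(r) = u(x)$, $|x| = r$.

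On a single annulus $A = A_k$ with $r = r_k$, write $v$ on $[r,2r]$. The weight $\tau^{N-1}$ is comparable to $r^{N-1}$ there, so
\[
\norm[L^2(A)]{\nabla u}^2 \asymp r^{N-1} \int_r^{2r} |v'|^2\, d\tau, \qquad \norm[L^p(A)]{u}^p \asymp r^{N-1} \int_r^{2r} |v|^p\, d\tau.
\]
The one-dimensional inequality on an interval of length $r$ is
\[
\norm[L^\infty(r,2r)]{v} \le C\left(\norm[L^2(r,2r)]{v'}^{2/(p+2)} \norm[L^p(r,2r)]{v}^{p/(p+2)} + r^{-1/p} \norm[L^p(r,2r)]{v}\right).
\]
It follows from $|v|^{(p+2)/2}$ having derivative controlled by $|v|^{p/2}|v'|$, together with the mean value of $|v|^p$; the constant is scale invariant by the dilation $\tau \mapsto r\tau$. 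We combine this with $\norm[L^s(A)]{u}^s \le \norm[L^\infty(A)]{u}^{s-p} \norm[L^p(A)]{u}^p$. This produces exactly the interpolation exponent $\theta = \frac{2(s-p)}{(p+2)s}$ from \eqref{6} on the gradient, with explicit powers of $r$ coming from the weights $r^{N-1}$.

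Next we bound the annular mass: $\norm[L^p(A_k)]{u}^p \le h_u(2r_k)$, and by Theorem \ref{Theorem 3} (or directly from the estimate in its proof), $h_u(2r_k) \le C r_k^{(b-1)/q} \norm[L^{p,q}_b(\R^N)]{u}^p$. Substituting gives bounds of the form $r_k^{e_1} \norm{\nabla u}^\theta \norm[L^{p,q}_b]{u}^{1-\theta}$ and $r_k^{e_2} \norm[L^{p,q}_b]{u}$. The remaining task is to check that $e_1 = -\gamma_\ast$ and $e_2 = -\gamma_\#$. This is the step I expect to be the main obstacle, since it is pure exponent bookkeeping. One must track the factor $r^{-(N-1)(1/2 - 1/p)\cdot 2/(p+2)}$ converting one-dimensional norms to $N$-dimensional ones, the factor $r^{(N-1)/s}$-type terms from the $L^s$ interpolation, and the factor $r^{(b-1)/pq}$ from Theorem \ref{Theorem 3}. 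These should assemble into the stated formulas, which vanish precisely at $s = 2^{p,q}_{b,\ast}$ and $s = 2^{p,q}_{b,\#}$ respectively. As a consistency check I would verify that the exponents are dictated by dimensional analysis: under $u \mapsto u(\lambda\,\cdot)$ both sides must scale the same way. This fixes $\gamma_\ast$ and $\gamma_\#$ once $\theta$ is known. Their positivity for $s > 2^{p,q}_{b,\ast}$ follows from $b < b^\# \le b^\ast$ and \eqref{106}.
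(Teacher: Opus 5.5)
Your proposal is correct. It shares the paper's skeleton: dyadic annuli, a one-dimensional Gagliardo--Nirenberg inequality on each annulus, and $\norm[L^s(A)]{u}^s\le\norm[L^\infty(A)]{u}^{s-p}\norm[L^p(A)]{u}^p$. This leads to the same per-annulus estimate as \eqref{9}, which the paper derives after rescaling to $R=1$. With $r_k=2^kR$ and $m_k=\int_{A_k}|u|^p\,dx$ it reads
\[
\norm[L^s(A_k)]{u} \le C\Big(r_k^{-\theta(N-1)}\norm[L^2(A_k)]{\nabla u}^{\theta}\, m_k^{(1-\theta)/p} + r_k^{-N(1/p-1/s)}\, m_k^{1/p}\Big).
\]
The real difference is how the annuli are summed.

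The paper sets $\mu_j=2^{-(b-1)j}m_j^q$. It bounds $\norm[L^{p,q}_b(\R^N)]{u}^{pq}\ge c_b\sum_j\mu_j$ by integrating $\rho^{-b}h_u^q$ over the disjoint intervals $[2^{j+1},2^{j+2}]$. It then sums $\sum_j\norm[L^s(A_j)]{u}$ with a H\"older inequality in $j$, so each annulus is charged only its own share of the gradient and ball-mass energies.

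You instead charge every annulus the full norms, via $m_k\le h_u(2r_k)\le Cr_k^{(b-1)/q}\norm[L^{p,q}_b(\R^N)]{u}^p$ from Theorem \ref{Theorem 3}. The geometric factors then absorb the overcounting in an $\ell^s$ sum. The paper's argument also needs $\gamma_\ast,\gamma_\#>0$, for convergence of $\sum_j2^{-\gamma_\ast\sigma j}$ and $\sum_j2^{-\gamma_\#\tau j}$, so your cruder charging loses nothing. Your version is slightly more elementary: no H\"older exponents $\sigma,\tau$, and general $R$ is handled directly by the scale invariance of the one-dimensional inequality on $[r,2r]$.

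The exponent bookkeeping you flagged does close, but your stated conversion factor is wrong. The first term of your $L^\infty$ bound converts as
\[
\norm[L^2(r,2r)]{v'}^{2/(p+2)}\norm[L^p(r,2r)]{v}^{p/(p+2)}\asymp r^{-2(N-1)/(p+2)}\norm[L^2(A)]{\nabla u}^{2/(p+2)}m^{1/(p+2)}.
\]
The factor is not $r^{-(N-1)(1/2-1/p)\cdot 2/(p+2)}$. The second term is $r^{-N/p}m^{1/p}$. Raising to the power $1-p/s$ and multiplying by $m^{1/s}$ gives the display above. Inserting your bound on $m_k$ then yields
\[
e_1=-\theta(N-1)+\frac{(1-\theta)(b-1)}{pq}=-\gamma_\ast,\qquad e_2=-N\Big(\frac1p-\frac1s\Big)+\frac{b-1}{pq}=-\gamma_\#.
\]
This follows from $1-\theta=p(s+2)/((p+2)s)$ and the definitions of $b^\ast$, $b^\#$, $2^{p,q}_{b,\ast}$, and $2^{p,q}_{b,\#}$.
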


\begin{proof}
Setting $v(x) = u(Rx)$ gives
\begin{gather*}
\norm[L^s(\R^N \setminus B_1)]{v} = R^{-N/s} \norm[L^s(\R^N \setminus B_R)]{u},\\[10pt]
\norm[L^2(\R^N)]{\nabla v} = R^{-(N-2)/2} \norm[L^2(\R^N)]{\nabla u},\\[10pt]
\norm[L^{p,q}_b(\R^N)]{v} = R^{- (b^\# - b)/pq} \norm[L^{p,q}_b(\R^N)]{u},
\end{gather*}
so it suffices to prove the inequality for $R = 1$.

We use the dyadic annular decomposition
\[
\R^N \setminus B_1 = \bigcup_{j \ge 0} A_j,
\]
where $A_j = B_{2^{j+1}} \setminus B_{2^j}$. We have
\begin{equation} \label{7}
\norm[L^s(\R^N \setminus B_1)]{u} = \left(\sum_{j \ge 0} \norm[L^s(A_j)]{u}^s\right)^{1/s} \le \sum_{j \ge 0} \norm[L^s(A_j)]{u}
\end{equation}
since $s > 1$.

Write $r = |x|$ and set
\[
v_j(r) = u(2^j\halfthin r), \quad 1 \le r < 2.
\]
By the $1$-dimensional Gagliardo-Nirenberg inequality in a bounded domain, there exists a constant $C > 0$ such that
\begin{equation} \label{8}
\norm[L^s(1,2)]{v_j} \le C \left(\norm[L^2(1,2)]{v_j'}^\theta \norm[L^p(1,2)]{v_j}^{1 - \theta} + \norm[L^p(1,2)]{v_j}\right) \quad \forall j \ge 0
\end{equation}
with $\theta$ as in \eqref{6}. We have
\begin{align*}
\int_{A_j} |u|^s\, dx & = \omega_{N-1}\, 2^{Nj} \int_1^2 r^{N-1}\, |v_j(r)|^s\, dr,\\[7.5pt]
D_j := \int_{A_j} |\nabla u|^2\, dx & = \omega_{N-1}\, 2^{(N - 2)\, j} \int_1^2 r^{N-1}\, |v_j'(r)|^2\, dr,\\[7.5pt]
m_j := \int_{A_j} |u|^p\, dx & = \omega_{N-1}\, 2^{Nj} \int_1^2 r^{N-1}\, |v_j(r)|^p\, dr,
\end{align*}
so
\begin{gather*}
\norm[L^s(1,2)]{v_j} \asymp 2^{- Nj/s} \norm[L^s(A_j)]{u},\\[10pt]
\norm[L^2(1,2)]{v_j'} \asymp 2^{- (N - 2)\, j/2}\halfthin D_j^{1/2},\\[10pt]
\norm[L^p(1,2)]{v_j} \asymp 2^{- Nj/p}\, m_j^{1/p}.
\end{gather*}
Combining \eqref{8} with these estimates and \eqref{6} gives
\begin{equation} \label{9}
\norm[L^s(A_j)]{u} \le C \left(2^{- \theta\, (N - 1)\, j}\halfthin D_j^{\theta/2}\, m_j^{(1 - \theta)/p} + 2^{- (1/p - 1/s)\, Nj}\, m_j^{1/p}\right) \quad \forall j \ge 0.
\end{equation}

We have
\[
\norm[L^2(\R^N)]{\nabla u}^2 \ge \sum_{j \ge 0} \norm[L^2(A_j)]{\nabla u}^2 = \sum_{j \ge 0} D_j
\]
and
\[
\norm[L^{p,q}_b(\R^N)]{u}^{pq} \ge \sum_{j \ge 0} \int_{2^{j+1}}^{2^{j+2}} \rho^{-b}\, h_u^q(\rho)\, d\rho \ge \sum_{j \ge 0} \left(\int_{A_j} |u|^p\, dx\right)^q \int_{2^{j+1}}^{2^{j+2}} \rho^{-b}\, d\rho = c_b \sum_{j \ge 0} \mu_j, 
\]
where $\mu_j = 2^{- (b - 1)\, j}\, m_j^q$ and $c_b > 0$ is a constant. So
\begin{multline}
\sum_{j \ge 0} 2^{- \theta\, (N - 1)\, j}\halfthin D_j^{\theta/2}\, m_j^{(1 - \theta)/p} = \sum_{j \ge 0} 2^{- \gamma_\ast j}\halfthin D_j^{\theta/2}\, \mu_j^{(1 - \theta)/pq}\\[7.5pt]
\le \left(\sum_{j \ge 0} 2^{- \gamma_\ast \sigma j}\right)^{1/\sigma}\! \left(\sum_{j \ge 0} D_j\right)^{\theta/2}\! \left(\sum_{j \ge 0} \mu_j\right)^{(1 - \theta)/pq} \le C \norm[L^2(\R^N)]{\nabla u}^\theta \norm[L^{p,q}_b(\R^N)]{u}^{1 - \theta},
\end{multline}
where $\sigma \in (1,2)$ is given by $1/\sigma + \theta/2 + (1 - \theta)/pq = 1$, and
\begin{equation} \label{10}
\sum_{j \ge 0} 2^{- (1/p - 1/s)\, Nj}\, m_j^{1/p} = \sum_{j \ge 0} 2^{- \gamma_\# j}\halfthin \mu_j^{1/pq} \le \left(\sum_{j \ge 0} 2^{- \gamma_\# \tau j}\right)^{1/\tau}\! \left(\sum_{j \ge 0} \mu_j\right)^{1/pq} \le C \norm[L^{p,q}_b(\R^N)]{u},
\end{equation}
where $\tau \in (1,2)$ is given by $1/\tau + 1/pq = 1$.

Combining \eqref{7} with \eqref{9}--\eqref{10} gives the desired inequality.
\end{proof}

We are now ready to prove Theorem \ref{Theorem 1}.

\begin{proof}[Proof of Theorem \ref{Theorem 1}]
Continuity of the embedding for all $s \in (2^{p,q}_{b,\ast},2^\ast]$ when $N \ge 3$ is immediate from Theorem \ref{Theorem 5} and Lemma \ref{Lemma 1}. Let
\[
2^{p,q}_{b,\ast} < s < \begin{cases}
\infty & \text{if } N = 2\\[5pt]
2^\ast & \text{if } N \ge 3
\end{cases}
\]
and let $\seq{u_j}$ be a bounded sequence in $E^{p,q}_{b,\rad}(\R^N)$. Since $E^{p,q}_{b,\rad}(\R^N)$ is reflexive by Theorem \ref{Theorem 4}, we may assume that $u_j \wto u$ in $E^{p,q}_{b,\rad}(\R^N)$ for a renamed subsequence. Set $v_j = u_j - u$. Then $v_j \wto 0$ and $\seq{v_j}$ is bounded in $E^{p,q}_{b,\rad}(\R^N)$. For any $R > 0$,
\[
\norm[L^s(\R^N)]{v_j}^s = \norm[L^s(B_R)]{v_j}^s + \norm[L^s(\R^N \setminus B_R)]{v_j}^s.
\]
In view of Theorem \ref{Theorem 5}, $v_j \wto 0$ in $H^1(B_R)$ and hence $v_j \to 0$ in $L^s(B_R)$ by the Rellich-Kondrachov theorem. On the other hand, $\norm[L^s(\R^N \setminus B_R)]{v_j} \to 0$ as $R \to \infty$, uniformly in $j$, by Lemma \ref{Lemma 1}. It follows that $v_j \to 0$ in $L^s(\R^N)$.
\end{proof}

Our next theorem shows that the exponent $2^{p,q}_{b,\ast}$ in Theorem \ref{Theorem 1} is sharp.

\begin{theorem} \label{thm:emb-sharp}
Assume \eqref{33}--\eqref{2}. Then $E^{p,q}_{b,\rad}(\R^N)$ is not continuously embedded in $L^s(\R^N)$ for any $s < 2^{p,q}_{b,\ast}$.
\end{theorem}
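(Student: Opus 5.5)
The plan is to argue by contradiction with an explicit family of radial "shell" functions that concentrate near a large sphere. Their Dirichlet energy and ball-mass energy stay bounded, while their $L^s$ norm blows up whenever $s < 2^{p,q}_{b,\ast}$. Suppose there were a constant $C$ with $\norm[L^s(\R^N)]{u} \le C \norm[E^{p,q}_b(\R^N)]{u}$ for all $u \in E^{p,q}_{b,\rad}(\R^N)$. Fix a nonzero profile $\varphi \in C^\infty_c((0,1))$. For $R \ge 1$ consider
\[
u_R(x) = A\, \varphi\!\left(\frac{|x| - R}{L}\right), \qquad A = R^{a}, \quad L = R^{\lambda},
\]
where the exponents $a$ and $\lambda$ will be fixed below and chosen so that $L \le R$. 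Then $u_R$ is supported in the shell $R < |x| < R + L \subset B_{2R}$, and polar coordinates give three bounds:
\[
\norm[L^2(\R^N)]{\nabla u_R}^2 \asymp A^2 R^{N-1} L^{-1}, \qquad \norm[L^s(\R^N)]{u_R}^s \asymp A^s R^{N-1} L,
\]
and
\[
h_{u_R}(\rho) = 0 \ \text{for } \rho \le R, \qquad h_{u_R}(\rho) \le m := \norm[L^p(\R^N)]{u_R}^p \asymp A^p R^{N-1} L .
\]
The last bound gives $\norm[L^{p,q}_b(\R^N)]{u_R}^{pq} \le m^q \int_R^\infty \rho^{-b}\, d\rho \lesssim A^{pq} R^{(N-1)q} L^q R^{1-b}$, using $b > 1$.

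Next I would impose that both energies are of order one. This gives two linear equations for the exponents:
\[
2a + N - 1 - \lambda = 0, \qquad pq\, a + (N-1)\, q + \lambda q + 1 - b = 0 .
\]
Solving them yields
\[
a = \frac{\lambda + 1 - N}{2}, \qquad \lambda = \frac{(N-1)\, q\, (p - 2) + 2\, (b - 1)}{(p + 2)\, q}.
\]
With these choices $\norm[E^{p,q}_b(\R^N)]{u_R}$ stays bounded as $R \to \infty$. The $L^s$ norm satisfies $\norm[L^s(\R^N)]{u_R}^s \asymp R^{\,s a + N - 1 + \lambda}$, and the exponent $s(\lambda + 1 - N)/2 + N - 1 + \lambda$ is positive if and only if $s < 2(N - 1 + \lambda)/(N - 1 - \lambda)$. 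Substituting $\lambda$, this threshold becomes
\[
\frac{2\,[(N-1)\,pq + b - 1]}{2\,(N-1)\,q + 1 - b} = \frac{2\,[(N-1)\,pq + b - 1]}{b^\ast - b} = 2^{p,q}_{b,\ast}.
\]
Here the denominator is positive because $b < b_\ast \le b^\ast$ by \eqref{36}. Hence for $s < 2^{p,q}_{b,\ast}$ we get $\norm[L^s(\R^N)]{u_R} \to \infty$, which contradicts the assumed embedding.

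The step needing care is the geometric consistency condition $L \le R$, that is $\lambda \le 1$ (possibly $\lambda < 0$, which is harmless). Checking it, $\lambda < 1$ is equivalent to $b < 1 + q\,[2N - (N-2)\,p]/2$. For $N \ge 3$ the right-hand side is exactly $b_\ast = 1 + Nq - (N-2)\,pq/2$, and for $N = 2$ it is $1 + 2q = b_\ast$. So $\lambda < 1$ follows precisely from \eqref{2}, and the shells are thin relative to their radius. This is what justifies the estimates $\int |\nabla u_R|^2 \asymp A^2 R^{N-1}/L$ and $m \asymp A^p R^{N-1} L$, since $r^{N-1} \asymp R^{N-1}$ on the support. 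The remaining work is routine: verifying that $u_R$ is weakly differentiable, radial and lies in $E^{p,q}_{b,\rad}(\R^N)$, and tracking the constants depending only on $\varphi$.
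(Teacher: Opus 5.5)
Your proposal is correct and is essentially the paper's proof. The paper uses the same radial shells $\psi((|x|-R)/\delta)$ supported in $R \le |x| \le R+\delta$, with $\delta = R^{1-\kappa}$ and $\kappa = 2(b_\ast - b)/((p+2)q)$, and your width exponent satisfies $\lambda = 1-\kappa$. The only difference is cosmetic: you rescale the amplitude by $R^{a}$ so that the energy is $O(1)$, whereas the paper keeps amplitude one and uses the homogeneity of the embedding inequality to obtain $R^{(N+\kappa-2)(2^{p,q}_{b,\ast}-s)/2s} \le C$ directly.
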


\begin{proof}
We will show that if
\begin{equation} \label{45}
\norm[L^s(\R^N)]{u} \le C \norm[E^{p,q}_b(\R^N)]{u} \quad \forall u \in E^{p,q}_{b,\rad}(\R^N)
\end{equation}
for some constant $C > 0$, then $s \ge 2^{p,q}_{b,\ast}$. Let $\psi : \R \to [0,1]$ be a smooth function such that $\supp \psi \subset [0,1]$, $\psi \equiv 1$ on $[1/3,2/3]$, and $|\psi'| \le 4$. For $R \ge 1$ and $0 < \delta \le R$, set
\[
u_{R,\delta}(x) = \psi\bigg(\frac{|x| - R}{\delta}\bigg), \quad x \in \R^N.
\]
Then $u_{R,\delta} \in E^{p,q}_{b,\rad}(\R^N)$ is a radial function supported in the annulus $R \le |x| \le R + \delta$. Since $u_{R,\delta} \equiv 1$ in $R + \delta/3 \le |x| \le R + 2 \delta/3$,
\begin{equation} \label{46}
\norm[L^s(\R^N)]{u_{R,\delta}}^s \ge \omega_{N-1} \int_{R + \delta/3}^{R + 2 \delta/3} r^{N-1}\, dr \ge \frac{1}{3}\, \omega_{N-1} R^{N-1} \delta.
\end{equation}
Since $|\nabla u_{R,\delta}| = |\psi'/\delta| \le 4/\delta$ and $R + \delta \le 2R$,
\begin{equation}
\norm[L^2(\R^N)]{\nabla u_{R,\delta}}^2 \le 16\, \omega_{N-1}\halfthin \delta^{-2} \int_R^{R + \delta} r^{N-1}\, dr \le CR^{N-1} \delta^{-1}.
\end{equation}
Since $h_{u_{R,\delta}}(\rho) = 0$ for $\rho \le R$ and
\[
h_{u_{R,\delta}}(\rho) \le \norm[L^p(\R^N)]{u_{R,\delta}}^p \le \omega_{N-1} \int_R^{R + \delta} r^{N-1}\, dr \le CR^{N-1} \delta
\]
for all $\rho > 0$,
\begin{equation} \label{47}
\norm[L^{p,q}_b(\R^N)]{u_{R,\delta}}^{pq} \le CR^{(N - 1)\, q}\, \delta^q \int_R^\infty \rho^{-b}\, d\rho = CR^{(N - 1)\, q + 1 - b}\, \delta^q.
\end{equation}

Now take $\delta = R^{1 - \kappa}$ with $\kappa > 0$ as in \eqref{48} and note that $\delta \le R$ since $R \ge 1$. Inequality \eqref{45} together with the estimates \eqref{46}--\eqref{47} and this choice of $\delta$ leads to
\[
R^{(N + \kappa - 2)(2^{p,q}_{b,\ast} - s)/2s} \le C \quad \forall R \ge 1,
\]
which forces $s \ge 2^{p,q}_{b,\ast}$.
\end{proof}

\subsection{Weak continuity of the ball-mass operator} \label{ssec:weak-cont}

Our next theorem shows that the ball-mass operator is weakly continuous.

\begin{theorem} \label{thm:weak-cont}
Assume \eqref{33}--\eqref{2}. If $u_j \wto u$ in $E^{p,q}_{b,\rad}(\R^N)$, then
\[
\int_{\R^N} \nabla u_j \cdot \nabla v\, dx + \int_{\R^N} \Phi_{u_j}(|x|)\, |u_j|^{p-2}\, u_j\halfthin v\, dx \to \int_{\R^N} \nabla u \cdot \nabla v\, dx + \int_{\R^N} \Phi_u(|x|)\, |u|^{p-2}\, uv\, dx
\]
for all $v \in E^{p,q}_{b,\rad}(\R^N)$.
\end{theorem}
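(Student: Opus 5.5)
The gradient term converges by definition of weak convergence, since $v \mapsto \int \nabla v \cdot \nabla w\, dx$ is a bounded linear functional on $E^{p,q}_{b,\rad}(\R^N)$. So the plan reduces to showing
\[
\int_{\R^N} \Phi_{u_j}(|x|)\, |u_j|^{p-2}\, u_j\halfthin v\, dx \to \int_{\R^N} \Phi_u(|x|)\, |u|^{p-2}\, uv\, dx .
\]
By \eqref{107}, this is equivalent to convergence of
\[
a \int_0^\infty \rho^{-b}\, h_{u_j}^{q-1}(\rho)\, P_\rho(u_j,v)\, d\rho, \qquad P_\rho(w,v) = \int_{B_\rho} |w|^{p-2}\, wv\, dx .
\]
In the language of the proof of Theorem \ref{thm:bm-pairing}, this is $\int_0^\infty \langle H_{u_j}(\cdot,\rho), \Lambda v(\cdot,\rho)\rangle\, \rho^{-b}\, d\rho$. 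It therefore suffices to show that $H_{u_j} \wto H_u$ weakly in $X^\ast$, and then pair with the fixed element $\Lambda v \in X$.

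\textbf{Step 1: local strong convergence.} By Theorem \ref{Theorem 5}, $u_j \wto u$ in $H^1(B_R)$ for every $R$. Weak convergence in $E^{p,q}_{b,\rad}(\R^N)$ also gives weak convergence in $L^{p,q}_b(\R^N)$, and hence in $L^p(B_R)$ by Theorem \ref{Theorem 3}. Rellich-Kondrachov gives $u_j \to u$ in $L^2(B_R)$ and a.e. after passing to a subsequence. I then upgrade this to $u_j \to u$ in $L^p(B_R)$. For $p<2^\ast$ (any $p$ when $N=2$), compactness of $H^1(B_R) \incl L^p(B_R)$ gives this directly; for $p<2$ it already follows from $L^2(B_R)$ convergence. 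Consequently, for each fixed $\rho$ we get $h_{u_j}(\rho) \to h_u(\rho)$ and $|u_j|^{p-2}u_j \mathbf 1_{B_\rho} \to |u|^{p-2}u \mathbf 1_{B_\rho}$ in $L^{p'}$. Thus $H_{u_j}(\cdot,\rho) \to H_u(\cdot,\rho)$ in $L^{p'}(\R^N)$ for every $\rho$. To avoid subsequence issues, I will argue that every subsequence has a further subsequence along which the limit is the same.

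\textbf{Step 2: passing to the $\rho$-integral.} The norms $\norm[X^\ast]{H_{u_j}} = \norm[L^{p,q}_b(\R^N)]{u_j}^{pq-1}$ are bounded. Split the integral at $\rho \le \eps$ and $\rho \ge R$ and treat the tails by Hölder. On $(R,\infty)$ the tail is at most $C\big(\int_R^\infty \rho^{-b} h_v^q\, d\rho\big)^{1/pq}$, which is small uniformly in $j$ because $v$ is fixed. The same argument handles $(0,\eps)$. On $[\eps,R]$, the integrand $\rho^{-b} h_{u_j}^{q-1} P_\rho(u_j,v)$ converges pointwise. It is dominated by $\rho^{-b}\, h_{u_j}^{q-1/p}(R)\, h_v^{1/p}(R)$, which is bounded since $h_{u_j}(R) \le C R^{(b-1)/p}\norm{u_j}^{\cdots}$ is uniformly bounded by Theorem \ref{Theorem 3}. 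Dominated convergence on the finite interval then finishes this piece.

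The main obstacle is uniform control of the tails $\rho \to 0$ and $\rho \to \infty$, where $h_{u_j}$ need not converge uniformly. The Hölder splitting in $X$ handles this because the small factor comes from the fixed function $v$, not from $u_j$. Compactness is needed only locally, which is why the radial restriction is not really used.
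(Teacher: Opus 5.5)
Your proof is correct and follows the paper's argument. You rewrite the nonlocal term via \eqref{107}, obtain pointwise convergence of the $\rho$-integrand from $L^p_\loc$ convergence, apply dominated convergence on $[\eps,R]$, and control both tails uniformly in $j$ by H\"{o}lder, with the smallness coming from the fixed $v$. The only real difference is how you obtain $u_j \to u$ in $L^p_\loc(\R^N)$: you use Theorem~\ref{Theorem 5} and Rellich--Kondrachov (since $p < 2^\ast$), whereas the paper uses the radial compact embedding of Theorem~\ref{Theorem 1} together with \eqref{106}; your route is slightly more direct and, as you note, shows that radial symmetry is not needed for this statement.
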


\begin{proof}
The mapping
\[
E^{p,q}_{b,\rad}(\R^N) \to \R, \quad u \mapsto \int_{\R^N} \nabla u \cdot \nabla v\, dx
\]
is a bounded linear functional on $E^{p,q}_{b,\rad}(\R^N)$, so weak convergence implies
\[
\int_{\R^N} \nabla u_j \cdot \nabla v\, dx \to \int_{\R^N} \nabla u \cdot \nabla v\, dx.
\]

By \eqref{107},
\[
\int_{\R^N} \Phi_{u_j}(|x|)\, |u_j|^{p-2}\, u_j\halfthin v\, dx = a \int_0^\infty \rho^{-b}\, h_{u_j}^{q-1}(\rho)\, P_\rho(u_j,v)\, d\rho
\]
and
\[
\int_{\R^N} \Phi_u(|x|)\, |u|^{p-2}\, uv\, dx = a \int_0^\infty \rho^{-b}\, h_u^{q-1}(\rho)\, P_\rho(u,v)\, d\rho,
\]
where
\[
P_\rho(u,v) = \int_{B_\rho} |u|^{p-2}\, uv\, dx.
\]
By the H\"{o}lder inequality,
\begin{equation} \label{108}
|P_\rho(u,v)| \le h_u^{1-1/p}(\rho)\, h_v^{1/p}(\rho).
\end{equation}
By Theorem \ref{Theorem 1}, \eqref{106}, and the H\"{o}lder inequality, $u_j \to u$ in $L^p_\loc(\R^N)$ and hence
\[
\rho^{-b}\, h_{u_j}^{q-1}(\rho)\, P_\rho(u_j,v) \to \rho^{-b}\, h_u^{q-1}(\rho)\, P_\rho(u,v)
\]
for every $\rho \in (0,\infty)$. By \eqref{108} and Theorem \ref{Theorem 3},
\[
\abs{\rho^{-b}\, h_{u_j}^{q-1}(\rho)\, P_\rho(u_j,v)} \le \rho^{-b}\, h_{u_j}^{q-1/p}(\rho)\, h_v^{1/p}(\rho) \le C \rho^{-1} \norm[L^{p,q}_b(\R^N)]{u_j}^{pq-1} \norm[L^{p,q}_b(\R^N)]{v} \le C \rho^{-1}.
\]
So
\[
\int_\delta^R \rho^{-b}\, h_{u_j}^{q-1}(\rho)\, P_\rho(u_j,v)\, d\rho \to \int_\delta^R \rho^{-b}\, h_u^{q-1}(\rho)\, P_\rho(u,v)\, d\rho
\]
for any $R > \delta > 0$ by the dominated convergence theorem. On the other hand, setting $I = (0,\delta) \cup (R,\infty)$, using \eqref{108}, and applying the H\"{o}lder inequality gives
\begin{multline*}
\abs{\int_I \rho^{-b}\, h_{u_j}^{q-1}(\rho)\, P_\rho(u_j,v)\, d\rho} \le \int_I \big(\rho^{-b}\, h_{u_j}^q(\rho)\big)^{1-1/pq} \left(\rho^{-b}\, h_v^q(\rho)\right)^{1/pq} d\rho\\[7.5pt]
\le \norm[L^{p,q}_b(\R^N)]{u_j}^{pq-1} \left(\int_I \rho^{-b}\, h_v^q(\rho)\, d\rho\right)^{1/pq} \le C \left(\int_I \rho^{-b}\, h_v^q(\rho)\, d\rho\right)^{1/pq} \to 0
\end{multline*}
uniformly in $j$ as $\delta \to 0$ and $R \to \infty$ since
\[
\int_0^\infty \rho^{-b}\, h_v^q(\rho)\, d\rho = \norm[L^{p,q}_b(\R^N)]{v}^{pq} < \infty.
\]
Similarly,
\[
\int_I \rho^{-b}\, h_u^{q-1}(\rho)\, P_\rho(u,v)\, d\rho \to 0
\]
as $\delta \to 0$ and $R \to \infty$. Hence
\[
\int_{\R^N} \Phi_{u_j}(|x|)\, |u_j|^{p-2}\, u_j\halfthin v\, dx \to \int_{\R^N} \Phi_u(|x|)\, |u|^{p-2}\, uv\, dx. \QED
\]
\end{proof}

\subsection{Radial decay estimate}

Set
\[
\zeta = \frac{b^\ast - b}{q}
\]
and note that $\zeta > 0$ by \eqref{2} and \eqref{36}. We have the following analog of the radial lemma of Strauss \cite{MR0454365} (see also \cite{MR695535}).

\begin{theorem} \label{Theorem 6}
If \eqref{33}--\eqref{2} hold, then there exists a constant $C > 0$ such that
\[
|u(r)|^{p+2} \le Cr^{- \zeta} \norm[L^2(\R^N)]{\nabla u}^2 \norm[L^{p,q}_b(\R^N)]{u}^p \quad \forall u \in E^{p,q}_{b,\rad}(\R^N)
\]
for all $r > 0$. In particular, for any $R > 0$,
\[
\norm[L^\infty(\R^N \setminus B_R)]{u} \le C_R \norm[E^{p,q}_b(\R^N)]{u} \quad \forall u \in E^{p,q}_{b,\rad}(\R^N)
\]
for some constant $C_R > 0$.
\end{theorem}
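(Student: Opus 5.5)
The plan is a Strauss-type argument. It combines the fundamental theorem of calculus along rays with a dyadic decomposition of $(r,\infty)$, much as in the proof of Lemma \ref{Lemma 1}.

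First, the regularity setup. For $u \in E^{p,q}_{b,\rad}(\R^N)$, Theorem \ref{Theorem 5} gives $u \in H^1_\loc$. Hence the profile $u(\tau)$ is locally absolutely continuous on $(0,\infty)$, and so is $w := |u|^{(p+2)/2}$, with $|w'| = \frac{p+2}{2}\, |u|^{p/2}\, |u'|$ a.e. By Cauchy-Schwarz,
\[
\int_r^\infty |u|^{p/2}\, |u'|\, d\tau \le \left(\int_r^\infty \tau^{N-1}\, |u'|^2\, d\tau\right)^{1/2} \left(\int_r^\infty \tau^{1-N}\, |u|^p\, d\tau\right)^{1/2},
\]
and the first factor is at most $\omega_{N-1}^{-1/2} \norm[L^2(\R^N)]{\nabla u}$. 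So everything reduces to the key estimate
\begin{equation*}
\int_r^\infty \tau^{1-N}\, |u(\tau)|^p\, d\tau \le C r^{-\zeta} \norm[L^{p,q}_b(\R^N)]{u}^p .
\end{equation*}
Once this holds, $w'$ is integrable on $(r,\infty)$, so $w$ has a limit $L \ge 0$ at infinity. Then $w(r) \le L + \int_r^\infty |w'|\, d\tau$, and if $L = 0$, squaring gives exactly the claimed bound $|u(r)|^{p+2} \le C r^{-\zeta} \norm[L^2]{\nabla u}^2 \norm[L^{p,q}_b]{u}^p$.

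For the key estimate, split $(r,\infty)$ into the dyadic shells $I_j = [2^j r, 2^{j+1} r)$, $j \ge 0$, and let $m_j$ be the $L^p$ mass of $u$ on the corresponding annulus. Then
\[
\int_{I_j} \tau^{1-N}\, |u|^p\, d\tau \le C (2^j r)^{2-2N}\, m_j .
\]
Exactly as in the proof of Lemma \ref{Lemma 1}, $h_u(\rho) \ge m_j$ for $\rho \ge 2^{j+1} r$. This gives
\[
m_j^q \int_{2^{j+1} r}^{2^{j+2} r} \rho^{-b}\, d\rho \le \norm[L^{p,q}_b]{u}^{pq},
\]
that is, $m_j \le C (2^j r)^{(b-1)/q} \norm[L^{p,q}_b]{u}^p$. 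The shell contributions are therefore bounded by $C (2^j r)^{2 - 2N + (b-1)/q} \norm[L^{p,q}_b]{u}^p$. The exponent here is $-(2(N-1)q + 1 - b)/q = -\zeta < 0$, so the series is geometric and sums to $C r^{-\zeta} \norm[L^{p,q}_b]{u}^p$.

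The step needing care is showing $L = 0$, i.e.\ that $u(\tau) \to 0$ as $\tau \to \infty$. Suppose instead $L > 0$. Then $|u| \ge c > 0$ for all large $\tau$, so $m_j \ge c' (2^j r)^N$ for large $j$. This contradicts $m_j \le C (2^j r)^{(b-1)/q}$, because $(b-1)/q < N$ is equivalent to $b < b^\#$, which holds by \eqref{2} and \eqref{36}. (Alternatively, one could first prove the estimate for compactly supported radial functions and pass to the limit using the density result Theorem \ref{Theorem 9}, but the direct argument avoids this.)

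Finally, the second statement follows from the first: for $r \ge R$ we have $r^{-\zeta} \le R^{-\zeta}$, and $\norm[L^2]{\nabla u}^2 \norm[L^{p,q}_b]{u}^p \le \norm[E^{p,q}_b]{u}^{p+2}$, so taking the $(p+2)$-th root gives $\norm[L^\infty(\R^N \setminus B_R)]{u} \le C_R \norm[E^{p,q}_b]{u}$.
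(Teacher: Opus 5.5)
Your proof is correct, and its skeleton is the paper's: the fundamental theorem of calculus along rays plus Cauchy--Schwarz reduces everything to $\int_r^\infty \tau^{1-N}|u|^p\,d\tau$. That integral is then controlled by the mass growth $h_u(\rho)\le C\rho^{(b-1)/q}\norm[L^{p,q}_b(\R^N)]{u}^p$ of Theorem \ref{Theorem 3}.

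The execution differs in two places.

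\emph{Bounding the weighted mass integral.} The paper writes $|u|^p\rho^{1-N}=\omega_{N-1}^{-1}h_u'(\rho)\rho^{-2(N-1)}$ and integrates by parts. This requires checking that the boundary term $h_u(\rho)\rho^{-2(N-1)}$ vanishes at infinity. Your dyadic shell sum is the discrete version of the same summation and avoids the boundary term, at the cost of a few extra lines.

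\emph{Vanishing at infinity.} This is the more substantive difference. The paper invokes Theorem \ref{Theorem 1} (so that $u\in L^s$ for some $s$) to find $r_j\to\infty$ with $u(r_j)\to 0$, and then integrates from $r$ to $r_j$. You instead obtain the existence of $\lim_{\tau\to\infty}|u(\tau)|$ from the integrability of $w'$, and rule out a positive limit with the same growth bound, using only $b<b^\#$. This makes the decay estimate independent of the embedding theory (Lemma \ref{Lemma 1} and Theorem \ref{Theorem 1}). The paper's version is shorter only because that theory is already in place.

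One caution about your parenthetical alternative: it would be circular within the paper. The density result Theorem \ref{Theorem 9} is proved via Lemma \ref{Lemma 2}, and Lemma \ref{Lemma 2} itself uses the decay estimate of Theorem \ref{Theorem 6}. The direct argument you actually give is the right one.
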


\begin{proof}
Theorem \ref{Theorem 1} gives a sequence $r_j \to \infty$ such that $u(r_j) \to 0$. Integrating from $r$ to $r_j$ and passing to the limit gives
\begin{multline*}
|u(r)|^{p/2 + 1} = - \int_r^\infty \frac{d}{d\rho}\, |u(\rho)|^{p/2 + 1}\, d\rho = - \left(\frac{p}{2} + 1\right) \int_r^\infty |u(\rho)|^{p/2 - 1}\, u(\rho)\, u'(\rho)\, d\rho\\[7.5pt]
\le C \left(\int_r^\infty \omega_{N-1}\, \rho^{N-1}\, |u'(\rho)|^2\, d\rho\right)^{1/2}\! \left(\int_r^\infty \frac{|u(\rho)|^p}{\rho^{N-1}}\, d\rho\right)^{1/2}
\end{multline*}
by the Cauchy-Schwarz inequality, so
\begin{equation} \label{11}
|u(r)|^{p+2} \le C \norm[L^2(\R^N)]{\nabla u}^2 \int_r^\infty \frac{|u(\rho)|^p}{\rho^{N-1}}\, d\rho.
\end{equation}
Since
\[
h_u(\rho) = \omega_{N-1} \int_0^\rho \tau^{N-1}\, |u(\tau)|^p\, d\tau,
\]
$h_u'(\rho) = \omega_{N-1}\, \rho^{N-1}\, |u(\rho)|^p$ and hence
\begin{equation}
\int_r^\infty \frac{|u(\rho)|^p}{\rho^{N-1}}\, d\rho = \frac{1}{\omega_{N-1}} \int_r^\infty \frac{h_u'(\rho)}{\rho^{2\, (N - 1)}}\, d\rho.
\end{equation}
By Theorem \ref{Theorem 3},
\[
h_u(\rho) \le C \rho^{(b-1)/q} \norm[L^{p,q}_b(\R^N)]{u}^p,
\]
in particular,
\[
\frac{h_u(\rho)}{\rho^{2\, (N - 1)}} \le C \rho^{- \zeta} \norm[L^{p,q}_b(\R^N)]{u}^p \to 0 \quad \text{as } \rho \to \infty.
\]
So integrating by parts gives
\begin{multline} \label{12}
\int_r^\infty \frac{h_u'(\rho)}{\rho^{2\, (N - 1)}}\, d\rho = - \frac{h_u(r)}{r^{2\, (N - 1)}} + 2\, (N - 1) \int_r^\infty \frac{h_u(\rho)}{\rho^{2N-1}}\, d\rho \le C \norm[L^{p,q}_b(\R^N)]{u}^p \int_r^\infty \rho^{- \zeta - 1}\, d\rho\\[7.5pt]
= Cr^{- \zeta} \norm[L^{p,q}_b(\R^N)]{u}^p.
\end{multline}
Combining \eqref{11}--\eqref{12} gives the desired inequality.
\end{proof}

\subsection{Ball-mass Trudinger-Moser inequality}

In this subsection we prove an analog of the Trudinger-Moser inequality \cite{MR0301504,MR0216286} for $E^{p,q}_{b,\rad}(\R^2)$. Since the domain is unbounded, the exponential term must be truncated as in \cite{MR1163431}. When $N = 2$, let $n$ be the smallest integer such that
\begin{equation} \label{200}
2n > 2^{p,q}_{b,\ast} = \frac{2\, (pq + b - 1)}{2q + 1 - b},
\end{equation}
let
\[
\psi(t) = e^t - \sum_{l=0}^{n-1} \frac{t^l}{l!} = \sum_{l \ge n} \frac{t^l}{l!},
\]
and set
\[
\Psi_\nu(u) = \int_{\R^2} \psi(\nu u^2)\, dx, \quad u \in E^{p,q}_{b,\rad}(\R^2)
\]
for $\nu > 0$. Let
\[
U = \bgset{u \in E^{p,q}_{b,\rad}(\R^2) : \norm[E^{p,q}_b(\R^2)]{u} \le 1}.
\]
We have the following Trudinger-Moser type inequality.

\begin{theorem} \label{Theorem 7}
If $N = 2$ and
\[
1 < p < \infty, \qquad 1 \le q < \infty, \qquad pq > 2, \qquad 1 < b < 1 + 2q,
\]
then for each $\nu \in (0,4 \pi)$,
\begin{equation} \label{201}
S_\nu := \sup_{u \in U}\, \Psi_\nu(u) < \infty
\end{equation}
and the supremum is attained at some $u$ with $\norm[E^{p,q}_b(\R^2)]{u} = 1$.
\end{theorem}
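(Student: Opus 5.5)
We prove \eqref{201} by splitting $\Psi_\nu(u)$ into an interior part on $B_R$ and an exterior part on $\R^2 \setminus B_R$. The radius $R$ will be fixed depending only on the parameters.

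\emph{Exterior part.} For $u \in U$, Theorem \ref{Theorem 6} gives $|u(r)| \le C_R$ for $r \ge R$. Since $\psi(t) = t^n \sum_{l \ge 0} t^l\, l!/(l+n)!\, \le t^n e^t / n!$, we get
\[
\int_{\R^2 \setminus B_R} \psi(\nu u^2)\, dx \le \frac{\nu^n e^{\nu C_R^2}}{n!} \int_{\R^2 \setminus B_R} |u|^{2n}\, dx .
\]
By \eqref{200}, $2n > 2^{p,q}_{b,\ast}$, so Theorem \ref{Theorem 1} (or directly Lemma \ref{Lemma 1}) bounds the last integral by a constant uniformly on $U$. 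The exterior part is therefore bounded.

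\emph{Interior part.} On $B_R$ we cannot use the radial decay, so we reduce to the classical Moser inequality on a ball. Fix $\nu < \nu' < 4\pi$. For $u \in U$ we have $\norm[L^2(\R^2)]{\nabla u} \le 1$ and, by Theorem \ref{Theorem 6}, $|u(R)| \le C_R$. Set $w = (u - u(R))^+$ on $B_R$, truncated so that $w \in H^1_0(B_R)$, with $\norm[L^2(B_R)]{\nabla w} \le 1$. Where $w > 0$ we have $u^2 \le (w + C_R)^2 \le (1+\eps) w^2 + C_\eps$, with $\eps$ chosen so that $(1+\eps)\nu \le \nu'$. Where $w = 0$, $|u|$ is bounded by $C_R$ (for a radial function, the set where $|u| > |u(R)|$ is exactly where $(\pm u - u(R))^+ > 0$, so we apply the argument to $u$ and $-u$ separately). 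Hence
\[
\int_{B_R} \psi(\nu u^2)\, dx \le \int_{B_R} e^{\nu u^2}\, dx \le e^{\nu C_\eps} \left(\int_{B_R} e^{\nu' w_+^2}\, dx + \int_{B_R} e^{\nu' w_-^2}\, dx\right) + |B_R|\, e^{\nu C_R^2},
\]
and Moser's inequality in $H^1_0(B_R)$ with $\nu' < 4\pi$ bounds the right-hand side uniformly. This proves $S_\nu < \infty$.

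\emph{Attainment.} Take a maximizing sequence $(u_j) \subset U$. By reflexivity (Theorem \ref{Theorem 4}), $u_j \wto u$ in $E^{p,q}_{b,\rad}(\R^2)$ for a subsequence, and weak lower semicontinuity of the norm gives $u \in U$. We show $\Psi_\nu(u_j) \to \Psi_\nu(u)$.

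For the exterior part, the terms $\frac{\nu^l}{l!} \int |u_j|^{2l}$ with $l \ge n$ converge by the compact embedding of Theorem \ref{Theorem 1}. The tail in $l$ is controlled uniformly by the $L^\infty$ bound on $\R^2 \setminus B_R$ together with the uniform bound on $\norm[L^{2n}]{u_j}$. Hence this part converges.

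For the interior part, we use the Vitali convergence theorem. First, $u_j \to u$ a.e. by Rellich, using Theorem \ref{Theorem 5}. Second, the interior argument above, applied with exponent $\nu'' \in (\nu, 4\pi)$, shows that $e^{\nu u_j^2}$ is bounded in $L^{\nu''/\nu}(B_R)$. Since $\nu''/\nu > 1$, the family is uniformly integrable, and Vitali gives convergence. Thus $\Psi_\nu(u) = S_\nu$.

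It remains to see that $\norm{u} = 1$. First, $S_\nu > 0$, so $u \ne 0$. If $\norm{u} < 1$, set $t = 1/\norm{u} > 1$. Then $tu \in U$, and $\Psi_\nu(tu) > \Psi_\nu(u)$ because $\psi$ is strictly increasing on $(0,\infty)$ and $u \ne 0$ on a set of positive measure. This contradicts maximality, so $\norm{u} = 1$.

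\emph{Main obstacle.} The step that needs care is the interior reduction to Moser's inequality: the correct truncation at level $u(R)$ for radial functions, and the $(1+\eps)$ absorption of the constant $C_R$. This is exactly where the restriction $\nu < 4\pi$ is used, with the slack $\nu' < 4\pi$ providing room for the $\eps$-loss.
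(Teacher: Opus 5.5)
Your proof is correct. The finiteness of $S_\nu$ is essentially the paper's argument (the paper takes $R = 1$). You subtract the boundary value, absorb $u(R)^2$ by Young's inequality, and apply Moser's inequality in $H^1_0$ of the ball. Outside the ball you combine $\psi(t) \le C t^n e^t$, the $L^\infty$ bound of Theorem~\ref{Theorem 6}, and the embedding into $L^{2n}$.

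Two small remarks on that part:
\begin{itemize}
\item The positive parts are unnecessary. The trace of $u$ on $\bdry{B_R}$ is the constant $u(R)$, so $v = u - u(R)$ already lies in $H^1_0(B_R)$ with $\norm[L^2(B_R)]{\nabla v} \le 1$, and $|u| \le |v| + |u(R)|$ pointwise.
\item As written, $(-u - u(R))^+$ does not vanish on $\bdry{B_R}$ when $u(R) < 0$, and your description of the set $\set{|u| > |u(R)|}$ fails in that case. ``Applying the argument to $-u$'' must mean truncating $-u$ at its own boundary value, i.e.\ $w_- = (u(R) - u)^+$, so that $w_+ + w_- = |u - u(R)|$.
\end{itemize}
The paper also tunes Young's parameter so that the exponent is exactly $4\pi$ and invokes Moser's sharp inequality. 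Your slack $\nu' < 4\pi$ needs only the subcritical version; either works.

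The attainment step is where you genuinely take a different route.
\begin{itemize}
\item The paper never localizes. It writes $\Psi_\nu(u_j) = \sum_{l \ge n} \frac{\nu^l}{l!} \norm[L^{2l}(\R^2)]{u_j}^{2l}$. For a fixed $\nu' \in (\nu,4\pi)$, it bounds the $l$-th term by $S_{\nu'}\, (\nu/\nu')^l$ uniformly in $j$. It then passes to the limit term by term, using the compact embeddings of Theorem~\ref{Theorem 1} into every $L^{2l}(\R^2)$ with $l \ge n$.
\item You instead use Rellich and Vitali on $B_R$, with uniform integrability coming from the bound on $\int_{B_R} e^{\nu'' u_j^2}\, dx$. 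Outside $B_R$ you use the $L^\infty$ decay to control the tail of the series.
\end{itemize}
Both arguments rest on the same input: finiteness of the supremum at a slightly larger exponent. The paper's version is shorter and exploits the power-series form of $\psi$. Yours is more flexible: handle the exterior by generalized dominated convergence, since $\psi(\nu u_j^2) \le C u_j^{2n}$ there and $u_j \to u$ in $L^{2n}(\R^2)$. Then it carries over verbatim to integrands $G(u)$ with $|G(t)| \le C |t|^{2n}$ near $0$ and $|G(t)| \le C e^{\nu t^2}$ for large $|t|$, where no series expansion is available.
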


\begin{proof}
Let $u \in U$. We have
\begin{equation} \label{13}
\Psi_\nu(u) = \int_{B_1} \psi(\nu u^2)\, dx + \int_{\R^2 \setminus B_1} \psi(\nu u^2)\, dx.
\end{equation}
By Theorem \ref{Theorem 5}, $u \in H^1(B_1)$. The trace of $u$ on $\bdry{B_1}$ is $u(1)$ since $u$ is radial, so $v := u - u(1) \in H^1_0(B_1)$ with
\[
\int_{B_1} |\nabla v|^2\, dx = \int_{B_1} |\nabla u|^2\, dx \le \norm[E^{p,q}_b(\R^2)]{u}^2 \le 1.
\]
Set $\delta = 4 \pi/\nu - 1 > 0$. Since
\[
\nu u^2 \le (1 + \delta)\, \nu v^2 + (1 + \delta^{-1})\, \nu u(1)^2
\]
by Young's inequality and
\[
|u(1)| \le C \norm[L^2(\R^2)]{\nabla u}^{2/(p+2)} \norm[L^{p,q}_b(\R^2)]{u}^{p/(p+2)} \le C
\]
by Theorem \ref{Theorem 6},
\[
\nu u^2 \le 4 \pi v^2 + C.
\]
So
\begin{equation}
\int_{B_1} \psi(\nu u^2)\, dx \le \int_{B_1} e^{\nu u^2}\, dx \le C \int_{B_1} e^{4 \pi v^2}\, dx \le C
\end{equation}
by the Trudinger-Moser inequality on $H^1_0(B_1)$. Since $\psi(t) \le t^n e^t$ for all $t \ge 0$ and
\[
\norm[L^\infty(\R^2 \setminus B_1)]{u} \le C \norm[E^{p,q}_b(\R^2)]{u} \le C
\]
by Theorem \ref{Theorem 6},
\[
\psi(\nu u^2) \le \nu^n u^{2n} e^{\nu u^2} \le C u^{2n} \quad \text{in } \R^2 \setminus B_1.
\]
So
\begin{equation} \label{14}
\int_{\R^2 \setminus B_1} \psi(\nu u^2)\, dx \le C \norm[L^{2n}(\R^2 \setminus B_1)]{u}^{2n} \le C \norm[E^{p,q}_b(\R^2)]{u}^{2n} \le C
\end{equation}
by Theorem \ref{Theorem 1} since $2n > 2^{p,q}_{b,\ast}$. Combining \eqref{13}--\eqref{14} gives \eqref{201}.

Let $\seq{u_j} \subset U$ be a maximizing sequence. Since $U$ is bounded and $E^{p,q}_{b,\rad}(\R^2)$ is reflexive by Theorem \ref{Theorem 4}, a renamed subsequence of $\seq{u_j}$ converges weakly to some $u \in E^{p,q}_{b,\rad}(\R^2)$. By the weak lower semicontinuity of the norm, $u \in U$. We have 
\begin{equation} \label{202}
\Psi_\nu(u_j) = \int_{\R^2} \left(\sum_{l \ge n} \frac{\nu^l u_j^{2l}}{l!}\right) dx = \sum_{l \ge n} \frac{\nu^l}{l!} \norm[L^{2l}(\R^2)]{u_j}^{2l}
\end{equation}
by Fubini's theorem. Fix $\nu' \in (\nu,4 \pi)$. For $l \ge n$, \eqref{202} together with \eqref{201} gives
\[
\frac{(\nu')^l}{l!} \norm[L^{2l}(\R^2)]{u_j}^{2l} \le \Psi_{\nu'}(u_j) \le S_{\nu'} < \infty.
\]
Hence
\[
\frac{\nu^l}{l!} \norm[L^{2l}(\R^2)]{u_j}^{2l} \le S_{\nu'} \left(\frac{\nu}{\nu'}\right)^l =: M_l,
\]
and $\sum_{l \ge n} M_l < \infty$ since $\nu < \nu'$, so the last series in \eqref{202} converges uniformly in $j$ by the Weierstrass $M$-test. Since $u_j \to u$ in $L^{2l}(\R^2)$ for $l \ge n$ by \eqref{200} and Theorem \ref{Theorem 1}, passing to the limit in \eqref{202} now gives
\[
S_\nu = \sum_{l \ge n} \frac{\nu^l}{l!} \norm[L^{2l}(\R^2)]{u}^{2l} = \Psi_\nu(u).
\]
Since $S_\nu > 0$, $u \ne 0$. If $\norm[E^{p,q}_b(\R^2)]{u} < 1$, then $v := u/\norm[E^{p,q}_b(\R^2)]{u} \in U$ and $\Psi_\nu(v) > \Psi_\nu(u) = S_\nu$, a contradiction. So $\norm[E^{p,q}_b(\R^2)]{u} = 1$.
\end{proof}

\begin{remark} \label{rmk:tm-sharp}
The constant $4 \pi$ in Theorem \ref{Theorem 7} is sharp. The radial Moser functions concentrating at the origin lie in $E^{p,q}_{b,\rad}(\R^2)$, so the conclusion fails for $\nu > 4 \pi$. Whether it holds for $\nu = 4 \pi$ is an open problem (the attainment of the supremum at the borderline exponent is a delicate matter already for the classical inequality on a bounded domain, where it was settled by Carleson and Chang \cite{MR878016}).
\end{remark}

\subsection{Density of radial test functions}

Denote by $C^\infty_{c,\rad}(\R^N)$ the space of radial functions in $C^\infty_c(\R^N)$.

\begin{theorem} \label{Theorem 9}
If \eqref{33}--\eqref{2} hold, then $C^\infty_{c,\rad}(\R^N)$ is dense in $E^{p,q}_{b,\rad}(\R^N)$.
\end{theorem}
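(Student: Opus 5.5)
The plan is the standard truncation--regularization scheme, in three steps: cut off at infinity, cut off near the origin, then mollify. At each step I check convergence of the gradient in $L^2$ and of the ball-mass term in $L^{p,q}_b(\R^N)$. The ball-mass part is controlled by domination: if $|u_j - u| \le w$ with $w \in L^{p,q}_b(\R^N)$ and $u_j \to u$ in $L^p_\loc(\R^N)$, then $h_{u_j-u}(\rho) \to 0$ for every $\rho$ and $h_{u_j-u} \le h_w$. Dominated convergence in $L^1((0,\infty),\rho^{-b}\,d\rho)$ then gives $\norm[L^{p,q}_b(\R^N)]{u_j - u} \to 0$.

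\emph{Step 1 (cutoff at infinity).} Fix $u \in E^{p,q}_{b,\rad}(\R^N)$ and a radial $\eta_R(x) = \eta(|x|/R)$, where $\eta \equiv 1$ on $[0,1]$, $\eta \equiv 0$ on $[2,\infty)$, and $|\eta'| \le 2$. Since $|u - \eta_R u| \le |u|$ and $\eta_R u \to u$ pointwise, the ball-mass part converges by the domination remark. For the gradient,
\[
\nabla(\eta_R u) - \nabla u = (\eta_R - 1)\nabla u + u \nabla \eta_R .
\]
The first term tends to $0$ in $L^2$ by dominated convergence. The second is bounded by $C R^{-1}\norm[L^2(B_{2R}\setminus B_R)]{u}$. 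For this I use Theorem~\ref{Theorem 6}:
\[
|u(r)|^2 \le C r^{-2\zeta/(p+2)} \quad\text{for } r \ge 1,
\]
so the square of the term is at most $C R^{N-2-2\zeta/(p+2)}$. This decay alone is not enough when $N \ge 3$, so I use more structure. For $N \ge 3$, $u \in L^{2^\ast}(\R^N)$ by Theorem~\ref{Theorem 1}, and H\"older gives
\[
R^{-2}\int_{B_{2R}\setminus B_R} u^2 \le C\norm[L^{2^\ast}(|x|>R)]{u}^2 \to 0 .
\]
For $N = 2$, I split $\int_{A_R} |u|^2 \le \norm{u}_{L^\infty(A_R)}^{2-p'}\dots$ if $p \le 2$; more simply, I interpolate using the decay $|u(r)| \to 0$ together with $u \in L^s(\R^2)$ for some large $s$ from Theorem~\ref{Theorem 1}. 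H\"older with exponent $s/2$ gives
\[
R^{-2}\int_{A_R} u^2 \le C R^{-4/s}\norm[L^s(A_R)]{u}^2 \to 0 .
\]
So compactly supported radial functions are dense.

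\emph{Step 2 (cutoff near the origin)} is not really needed, since $E^{p,q}_b(\R^N)$ functions lie in $H^1_\loc$ by Theorem~\ref{Theorem 5}. I therefore skip it.

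\emph{Step 3 (mollification).} For $u$ radial with support in $B_{2R}$, convolve with a radial mollifier $\varphi_\eps$. Then $u_\eps = \varphi_\eps \star u$ is radial, smooth, and supported in $B_{2R+1}$, and $\nabla u_\eps \to \nabla u$ in $L^2$. Also $u \in L^p(B_{2R+1})$ by Theorem~\ref{Theorem 3}, so $u_\eps \to u$ in $L^p(\R^N)$. Since all supports lie in $B_{2R+1}$, we have $h_{u_\eps-u}(\rho) \le \norm[L^p]{u_\eps - u}^p$. This gives
\[
\norm[L^{p,q}_b(\R^N)]{u_\eps-u}^{pq} \le \norm[L^p]{u_\eps-u}^{pq}\int_0^\infty \rho^{-b}\min\{1,(\rho/\cdot)\}\,d\rho .
\]
The problem is near $\rho = 0$, where $\int_0 \rho^{-b}\,d\rho$ diverges because $b > 1$. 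I handle it with the local bound
\[
h_{v}(\rho) \le \norm[L^\infty]{v}^p\,|B_\rho|,
\]
using $N q > b - 1$ from $b < b^\#$. I apply it to $v = u_\eps - u$ only on small balls, so this needs $u \in L^\infty$ near $0$, which is false in general. Instead, I treat $(0,\delta)$ by domination. I use the pointwise bound $|u_\eps| \le Mu$ (Hardy--Littlewood maximal function), with $h_{Mu}(\rho)$ compared to $h_{|u|}(c\rho)$ for radial data. The cleaner alternative is a version of dominated convergence: given $\rho$, $h_{u_\eps}(\rho) \le h_u(\rho+\eps)$, and I dominate using this after a change of variable.

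\textbf{The main obstacle} is this behaviour near the origin in Step 3. It is where I would spend the effort, verifying that
\[
\int_0^\delta \rho^{-b} h_{u_\eps - u}^q(\rho)\,d\rho \to 0
\]
uniformly via the bound $h_{u_\eps}(\rho) \le h_u(2\rho)$ for $\rho \ge \eps$, together with a separate estimate on $(0,\eps)$ coming from $u \in H^1$ and Sobolev.
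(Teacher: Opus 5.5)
Your overall scheme (cut off at infinity, then mollify) is the same as the paper's, and Step 1 is correct. Both your $L^{2^\ast}$-tail argument for $N \ge 3$ and the H\"older argument with $L^s$ for $N = 2$ work. Contrary to your remark, the decay from Theorem~\ref{Theorem 6} alone already suffices: $N - 2 - 2\zeta/(p+2) = -2(b_\ast - b)/((p+2)q) < 0$ by \eqref{2}, and this is what the paper uses.

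The gap is in Step 3, exactly where you flag the main obstacle: you never prove $\int_0^\delta \rho^{-b} h^q_{u_\eps - u}(\rho)\,d\rho \to 0$.
\begin{itemize}
\item The maximal-function route fails, because $h_{Mu}(\rho)$ admits no bound by $h_{|u|}(c\rho)$. Take $u$ radial, vanishing on $B_1$ but not identically zero. Then $Mu > 0$ on $B_1$, while $h_{|u|}(c\rho) = 0$ for small $\rho$.
\item The bound $h_{u_\eps}(\rho) \le h_u(\rho + \eps)$ is correct (by Jensen). Via $h_u(\rho+\eps) \le h_u(2\rho)$ it does handle the integral over $(\eps,\infty)$ by dominated convergence. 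On $(0,\eps)$ it is useless, since $h_u(\rho+\eps)$ does not vanish as $\rho \to 0$ while $\int_0 \rho^{-b}\,d\rho = \infty$.
\item The ``separate estimate on $(0,\eps)$ coming from $u \in H^1$ and Sobolev'' is exactly what is needed. However, you neither state it nor check that the exponents allow it.
\end{itemize}

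The missing idea is to replace your $L^\infty$ bound by H\"older with a finite exponent:
\[
h_v(\rho) \le |B_\rho|^{1 - p/s}\, \|v\|_{L^s(B_\rho)}^p, \qquad \rho^{-b} h_v^q(\rho) \le C \rho^{Nq(1 - p/s) - b}\, \|v\|_{L^s}^{pq}.
\]
The right-hand side is integrable at $0$ if and only if $s > 2^{p,q}_{b,\#} = Npq/(b^\# - b)$. For $N \ge 3$ you can take $s = 2^\ast$, because $Nq(1 - p/2^\ast) - b > -1$ is precisely $b < b_\ast$, i.e.\ hypothesis \eqref{2}. For $N = 2$ any sufficiently large finite $s$ works.

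For $\rho \ge R$, use $h_v(\rho) \le \|v\|_{L^p(B_R)}^p \le C_R \|v\|_{L^s(B_R)}^p$ together with $\int_R^\infty \rho^{-b}\,d\rho < \infty$. Combining the two ranges gives $\|v\|_{L^{p,q}_b(\R^N)} \le C_R \|v\|_{L^s(B_R)}$ for every $v$ supported in $B_R$; this is the paper's Lemma~\ref{Lemma 3}. Since $u_\eps \to u$ in $H^1(B_R)$, which embeds in $L^s(B_R)$, Step 3 then closes in one line, with no splitting at $\rho = \eps$ and no domination argument.

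Without this estimate, and the observation that the required $s$ lies in the Sobolev range (which is where $b < b_\ast$ enters), your Step 3 is not yet a proof.
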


We begin by proving the following truncation lemma.

\begin{lemma} \label{Lemma 2}
Let $\xi : [0,\infty) \to [0,1]$ be a smooth function such that $\xi \equiv 1$ on $[0,1/3]$, $\xi \equiv 0$ on $[2/3,\infty)$, and $|\xi'| \le 4$. For $u \in E^{p,q}_{b,\rad}(\R^N)$ and $R > 0$, set $\xi_R(x) = \xi(|x|/R)$ and $u_R = \xi_R\halfthin u$. Then $u_R \in E^{p,q}_{b,\rad}(\R^N)$ is a radial function with compact support and $u_R \to u$ in $E^{p,q}_{b,\rad}(\R^N)$ as $R \to \infty$.
\end{lemma}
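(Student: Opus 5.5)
We have to show that $u_R \in E^{p,q}_{b,\rad}(\R^N)$ and that $\norm[L^2(\R^N)]{\nabla(u_R - u)} \to 0$ and $\norm[L^{p,q}_b(\R^N)]{u_R - u} \to 0$ as $R \to \infty$. Radiality and compact support (in $\overline{B}_{2R/3}$) are immediate. Since $|u_R| \le |u|$, we have $h_{u_R} \le h_u$ and hence $u_R \in L^{p,q}_b(\R^N)$. Moreover, $\nabla u_R = \xi_R \nabla u + u \nabla \xi_R$ with $|\nabla \xi_R| \le 4/R$, supported in the annulus $A_R := \set{R/3 \le |x| \le 2R/3}$. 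So membership in $E^{p,q}_b$ will follow once the second term is shown to lie in $L^2$. That is the quantitative estimate below.

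\emph{Ball-mass term.} Since $|u_R - u| = (1 - \xi_R)\,|u| \le |u|\, \mathbf{1}_{\R^N \setminus B_{R/3}}$, we get
\[
h_{u_R - u}(\rho) \le \int_{B_\rho \setminus B_{R/3}} |u|^p\, dx \le h_u(\rho),
\]
and this bound vanishes for $\rho \le R/3$. For each fixed $\rho$ it tends to $0$ as $R \to \infty$. The integrand $\rho^{-b} h_{u_R-u}^q(\rho)$ is therefore dominated by $\rho^{-b} h_u^q(\rho) \in L^1(0,\infty)$ and converges pointwise to $0$. Dominated convergence gives $\norm[L^{p,q}_b(\R^N)]{u_R - u} \to 0$.

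\emph{Gradient term.} We split $\nabla(u_R - u) = (\xi_R - 1)\nabla u + u \nabla \xi_R$. The first piece tends to $0$ in $L^2$ by dominated convergence, since $|\xi_R - 1|\,|\nabla u| \le |\nabla u| \mathbf{1}_{|x| \ge R/3}$. The main obstacle is the cutoff term
\[
\int_{A_R} u^2 |\nabla \xi_R|^2\, dx \le \frac{16}{R^2} \int_{A_R} u^2\, dx,
\]
because $u$ need not lie in $L^2$. We plan to control it with the radial decay estimate, Theorem \ref{Theorem 6}, applied not to all of $u$ but to its tail, so as to get $o(1)$ rather than just $O(1)$. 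From the proof of Theorem \ref{Theorem 6} (estimates \eqref{11} and \eqref{12}), for $r \ge R/3$ we have
\[
|u(r)|^{p+2} \le C \Big(\int_{|x| \ge R/3} |\nabla u|^2\, dx\Big) \int_r^\infty \frac{|u(\rho)|^p}{\rho^{N-1}}\, d\rho .
\]
Bounding the last integral by $C r^{-\zeta} \norm[L^{p,q}_b]{u}^p$ as in \eqref{12} gives
\[
|u(r)|^2 \le \eps_R\, r^{-2\zeta/(p+2)} \quad \text{for } r \ge R/3, \qquad \eps_R \to 0 .
\]
Hence
\[
\frac{1}{R^2} \int_{A_R} u^2 \le C \eps_R\, R^{N - 2 - 2\zeta/(p+2)} .
\]
It remains to check that the exponent satisfies $N - 2 - 2\zeta/(p+2) \le 0$. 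Since $\zeta = (b^\ast - b)/q$, this is equivalent to $(N-2)(p+2) q \le 2(b^\ast - b)$. It holds by \eqref{2} and the identity $2(b^\ast - b) = 2(b^\ast - b_\ast) + 2(b_\ast - b)$ with $b_\ast - b > 0$ (for $N = 2$ it is trivial).

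If this exponent check turns out to be delicate, the fallback is the cruder bound
\[
\int_{A_R} u^2 \le |A_R|^{1-2/s} \norm[L^s(A_R)]{u}^2
\]
with $s$ in the range of Theorem \ref{Theorem 1}, combined with Lemma \ref{Lemma 1} and the uniformly vanishing tail. When $N \ge 3$ the choice $s = 2^\ast$ gives exactly $R^{-2}|A_R|^{2/N} = O(1)$ times a quantity that tends to $0$. Either way, $\nabla u_R \to \nabla u$ in $L^2$, and together with the ball-mass term this gives $u_R \to u$ in $E^{p,q}_{b,\rad}(\R^N)$.
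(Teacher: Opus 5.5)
Your proof is correct and is essentially the paper's. Both arguments dominate the ball-mass term by the tail of $\int \rho^{-b} h_u^q$, use dominated convergence for $(1-\xi_R)\nabla u$, and control the cutoff term $u\,\nabla\xi_R$ with the radial decay estimate of Theorem~\ref{Theorem 6}.

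The tail refinement $\eps_R \to 0$ and the fallback are unnecessary. Since $b^\ast - b_\ast = (N-2)(p+2)q/2$, the exponent satisfies $N - 2 - 2\zeta/(p+2) = -2(b_\ast - b)/((p+2)q) < 0$ strictly. So the global bound $|u(r)| \le C r^{-\zeta/(p+2)} \norm[E^{p,q}_b(\R^N)]{u}$ already gives $O(R^{N-2-2\zeta/(p+2)}) \to 0$, exactly as in the paper.
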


\begin{proof}
Since $\xi_R \equiv 1$ on $B_{R/3}$ and $0 \le 1 - \xi_R \le 1$,
\[
\norm[L^{p,q}_b(\R^N)]{u - u_R}^{pq} = \int_0^\infty \rho^{-b} \left(\int_{B_\rho} |(1 - \xi_R)\, u|^p\, dy\right)^q d\rho \le \int_{R/3}^\infty \rho^{-b}\, h_u^q(\rho)\, d\rho,
\]
which goes to zero as $R \to \infty$ since $u \in L^{p,q}_b(\R^N)$.

Since $\nabla(u - u_R) = (1 - \xi_R)\, \nabla u - u\, \nabla \xi_R$,
\[
\norm[L^2(\R^N)]{\nabla(u - u_R)} \le \norm[L^2(\R^N)]{(1 - \xi_R)\, \nabla u} + \norm[L^2(\R^N)]{u\, \nabla \xi_R}.
\]
Since $(1 - \xi_R)\, |\nabla u| \to 0$ a.e.\! in $\R^N$ and $(1 - \xi_R)^2\, |\nabla u|^2 \le |\nabla u|^2 \in L^1(\R^N)$,
\[
\norm[L^2(\R^N)]{(1 - \xi_R)\, \nabla u}^2 = \int_{\R^N} (1 - \xi_R)^2\, |\nabla u|^2\, dx \to 0 \quad \text{as } R \to \infty
\]
by the dominated convergence theorem. Since $|\nabla \xi_R| = |(1/R)\, \xi'(|x|/R)| \le (4/R)\, \mathbf{1}_{B_{2R/3} \setminus B_{R/3}}$ and $|u(r)| \le Cr^{- \zeta/(p + 2)} \norm[E^{p,q}_b(\R^N)]{u}$ by Theorem \ref{Theorem 6},
\begin{multline*}
\hspace{-12.2pt} \norm[L^2(\R^N)]{u\, \nabla \xi_R}^2 \le \frac{16}{R^2} \int_{B_{2R/3} \setminus B_{R/3}} u^2\, dx = \frac{C}{R^2} \int_{R/3}^{2R/3} r^{N-1}\, u^2(r)\, dr\\[7.5pt]
\hspace{-12.2pt} \le \frac{C}{R^2} \norm[E^{p,q}_b(\R^N)]{u}^2 \int_{R/3}^{2R/3} r^{N - 1 - 2 \zeta/(p + 2)}\, dr = CR^{N - 2 - 2 \zeta/(p + 2)} \norm[E^{p,q}_b(\R^N)]{u}^2 \to 0 \quad \text{as } R \to \infty
\end{multline*}
since
\[
N - 2 - \frac{2 \zeta}{p + 2} = - \frac{2\, (b_\ast - b)}{(p + 2)\, q} < 0
\]
by \eqref{2}. So $\norm[L^2(\R^N)]{\nabla(u - u_R)} \to 0$.
\end{proof}

Next we prove the following local estimate.

\begin{lemma} \label{Lemma 3}
If
\begin{equation} \label{15}
2^{p,q}_{b,\#} < s \begin{cases}
< \infty & \text{if } N = 2\\[5pt]
\le 2^\ast & \text{if } N \ge 3,
\end{cases}
\end{equation}
then for any $R > 0$,
\[
\norm[L^{p,q}_b(\R^N)]{u} \le C_R \norm[L^s(B_R)]{u} \quad \forall u \in L^{p,q}_b(\R^N) \text{ with } \supp u \subset B_R
\]
for some constant $C_R > 0$.
\end{lemma}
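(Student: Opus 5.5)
The plan is to split the integral defining $\norm[L^{p,q}_b(\R^N)]{u}^{pq}$ at $\rho = R$ and to bound $h_u(\rho)$ on each piece by H\"{o}lder's inequality.

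For $\rho \ge R$, the support assumption gives $h_u(\rho) = h_u(R) = \norm[L^p(B_R)]{u}^p$. Since $b > 1$, this part equals
\[
\frac{R^{-(b-1)}}{b-1} \norm[L^p(B_R)]{u}^{pq} \le C_R \norm[L^s(B_R)]{u}^{pq}.
\]
The last step is H\"{o}lder's inequality on the bounded set $B_R$, which applies because $s > 2^{p,q}_{b,\#} > p$ by \eqref{106}.

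For $0 < \rho < R$, H\"{o}lder's inequality with exponents $s/p$ and its conjugate gives
\[
h_u(\rho) \le \abs{B_\rho}^{1 - p/s} \norm[L^s(B_\rho)]{u}^p \le C \rho^{N(1 - p/s)} \norm[L^s(B_R)]{u}^p .
\]
Substituting, this part is bounded by
\[
C \norm[L^s(B_R)]{u}^{pq} \int_0^R \rho^{-b + Nq(1 - p/s)}\, d\rho .
\]
The integral is finite exactly when $Nq(1 - p/s) - b > -1$, that is, $Nq + 1 - b > Npq/s$, or equivalently $s > Npq/(b^\# - b) = 2^{p,q}_{b,\#}$. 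This is precisely the lower bound in \eqref{15}; positivity of $b^\# - b$ comes from \eqref{2} and \eqref{36}. The upper restriction on $s$ in \eqref{15} is not needed for the inequality, only to keep $s$ in the range used elsewhere. Adding the two pieces and taking the $1/pq$ power gives the claim, with $C_R$ of order $R^{(b^\# - b)/pq - N/s}$ (up to the harmless extra term from the exterior piece), consistent with scaling.

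The only delicate point is the integrability at $\rho = 0$, and it is handled by the exponent computation above. No radial symmetry and no gradient term are needed.
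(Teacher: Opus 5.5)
Your proposal is correct and follows the paper's proof essentially step for step. You split the $\rho$-integral at $R$ and apply H\"{o}lder to $h_u(\rho)$ on each piece. The integral near $\rho = 0$ converges exactly because $s > 2^{p,q}_{b,\#}$, and the tail converges because $b > 1$.
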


\begin{proof}
Since $p < s$, the H\"{o}lder inequality gives
\[
h_u(\rho) \le C \rho^{N\, (1 - p/s)} \norm[L^s(B_\rho)]{u}^p \le C \rho^{N\, (1 - p/s)} \norm[L^s(B_R)]{u}^p
\]
for $0 < \rho < R$ and
\[
h_u(\rho) = \norm[L^p(B_R)]{u}^p \le CR^{N\, (1 - p/s)} \norm[L^s(B_R)]{u}^p
\]
for $\rho \ge R$. So
\[
\norm[L^{p,q}_b(\R^N)]{u} \le C \norm[L^s(B_R)]{u} \left(\int_0^R \rho^{Nq\, (1 - p/s) - b}\, d\rho + R^{Nq\, (1 - p/s)} \int_R^\infty \rho^{-b}\, d\rho\right)^{1/pq}
\]
and both integrals converge since $s > 2^{p,q}_{b,\#}$ and $b > 1$.
\end{proof}

Next we prove the following mollification lemma.

\begin{lemma} \label{Lemma 4}
Let $\varphi \in C^\infty_c(B_1)$ be a radial function with $\varphi \ge 0$ and $\int \varphi = 1$. For $u \in E^{p,q}_{b,\rad}(\R^N)$ with compact support and $\eps > 0$, set $\varphi_\eps(x) = \eps^{-N} \varphi(x/\eps)$ and $u^\eps = u \star \varphi_\eps$. Then $u^\eps \in C^\infty_{c,\rad}(\R^N)$ and $u^\eps \to u$ in $E^{p,q}_{b,\rad}(\R^N)$ as $\eps \to 0$.
\end{lemma}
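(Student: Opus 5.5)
First I would check the elementary properties of $u^\eps$. Suppose $\supp u \subset B_{R_0}$. Then $\supp u^\eps \subset B_{R_0+1}$ for $\eps < 1$. Since $u \in H^1(B_{R_0+1})$ by Theorem \ref{Theorem 5}, $u \in L^1_\loc(\R^N)$, so $u^\eps$ is smooth. It is radial because both $u$ and $\varphi_\eps$ are radial: for any rotation $O$, $u^\eps(Ox) = \int u(Ox - y)\, \varphi_\eps(y)\, dy = \int u(x - O^{-1}y)\, \varphi_\eps(O^{-1}y)\, dy = u^\eps(x)$. Hence $u^\eps \in C^\infty_{c,\rad}(\R^N)$.

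The gradient part is standard. Since $\nabla u \in L^2(\R^N)$, we have $\nabla u^\eps = (\nabla u) \star \varphi_\eps \to \nabla u$ in $L^2(\R^N;\R^N)$ as $\eps \to 0$. This uses only the usual weak-derivative commutation with convolution, which is valid here because $u \in H^1_\loc$.

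For the ball-mass part, all the functions $u^\eps - u$ are supported in the fixed ball $B := B_{R_0+1}$, so I would reduce to a Lebesgue estimate through Lemma \ref{Lemma 3}:
\[
\norm[L^{p,q}_b(\R^N)]{u^\eps - u} \le C \norm[L^s(B)]{u^\eps - u}
\]
for any $s$ satisfying \eqref{15}. It therefore suffices to find one such $s$ with $u \in L^s(B)$, since then $u^\eps \to u$ in $L^s(\R^N)$ by standard mollification.

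Choosing this $s$ is the only delicate point, and it is handled by the local Sobolev embedding. If $N \ge 3$, take $s = 2^\ast$. Then $u \in H^1(B) \subset L^{2^\ast}(B)$ by Theorem \ref{Theorem 5} and the Sobolev embedding. This $s$ is admissible because $2^{p,q}_{b,\#} < 2^{p,q}_{b,\ast} < 2^\ast$ by \eqref{106} and \eqref{404}. If $N = 2$, pick any finite $s > 2^{p,q}_{b,\#}$. Then $H^1(B) \subset L^s(B)$, so again $u \in L^s(B)$.

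Combining the gradient and ball-mass convergences, $\norm[E^{p,q}_b(\R^N)]{u^\eps - u} \to 0$.
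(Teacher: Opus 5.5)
Your proof is correct and follows essentially the same route as the paper: mollification converges in $H^1$ on a fixed ball containing all the supports, and the ball-mass part is controlled through Lemma~\ref{Lemma 3} via $L^s$ convergence on that ball. Your explicit choice of $s$ ($s = 2^\ast$ for $N \ge 3$, any finite $s > 2^{p,q}_{b,\#}$ for $N = 2$) and your radiality and smoothness checks make explicit what the paper leaves implicit.
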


\begin{proof}
Let $s$ satisfy \eqref{15} and let $R > 0$ be so large that $\supp u \subset B_{R/2}$. Then $\supp u^\eps \subset B_{R/2 + \eps} \subset B_R$ for $\eps < R/2$. Since $u \in H^1(B_R)$ by Theorem \ref{Theorem 5} and $u$ has compact support in $B_R$, $u^\eps \to u$ in $H^1(B_R)$ as $\eps \to 0$, in particular, $\norm[L^2(\R^N)]{\nabla(u^\eps - u)} \to 0$. Since $H^1(B_R)$ is continuously embedded in $L^s(B_R)$, then $u^\eps \to u$ in $L^s(B_R)$ and hence also in $L^{p,q}_b(\R^N)$ by Lemma \ref{Lemma 3}.
\end{proof}

We are now ready to prove Theorem \ref{Theorem 9}.

\begin{proof}[Proof of Theorem \ref{Theorem 9}]
Given $u \in E^{p,q}_{b,\rad}(\R^N)$ and $\delta > 0$, $\exists R > 0$ with $\norm[E^{p,q}_b(\R^N)]{u - u_R} < \delta/2$ by Lemma \ref{Lemma 2} and $\exists \eps > 0$ with $\norm[E^{p,q}_b(\R^N)]{u_R^\eps - u_R} < \delta/2$ by Lemma \ref{Lemma 4}. Then $u_R^\eps \in C^\infty_{c,\rad}(\R^N)$ and $\norm[E^{p,q}_b(\R^N)]{u_R^\eps - u} < \delta$.
\end{proof}

\section{Scaling-based variational methods} \label{section:scaling}

In this section we recall the scaling-based variational methods from \cite{MR5043800} that we will need in the sequel. We also prove a new scaling-based multiplicity result for constrained even functionals and a variant scaled linking theorem that are of independent interest (see Theorem \ref{Theorem 10} and Theorem \ref{Theorem 8}).

\subsection{Scalings}

Let $W$ be a reflexive Banach space. The following notion of a scaling in $W$ was introduced in \cite{MR5043800}.

\begin{definition}[{\cite[Definition 1.1]{MR5043800}}]
A scaling in $W$ is a continuous mapping
\[
W \times [0,\infty) \to W, \quad (u,t) \mapsto u_t
\]
satisfying
\begin{enumerate}
\item[$(A_1)$] $(u_{t_1})_{t_2} = u_{t_1 t_2}$ for all $u \in W$ and $t_1, t_2 \ge 0$;
\item[$(A_2)$] $(\tau u)_t = \tau u_t$ for all $u \in W$, $\tau \in \R$, and $t \ge 0$;
\item[$(A_3)$] $u_0 = 0$ and $u_1 = u$ for all $u \in W$;
\item[$(A_4)$] $u_t$ is bounded on bounded sets in $W \times [0,\infty)$;
\item[$(A_5)$] there exists $\sigma > 0$ such that $\norm{u_t} = \O(t^\sigma)$ as $t \to \infty$, uniformly in $u$ on bounded sets.
\end{enumerate}
\end{definition}

\subsection{Scaled operators}

Denote by $W^\ast$ the dual of $W$. Recall that $q \in C(W,W^\ast)$ is a potential operator if there is a functional $Q \in C^1(W,\R)$, called a potential for $q$, with Fr\'{e}chet derivative $Q' = q$. By replacing $Q$ with $Q - Q(0)$ if necessary, we may assume that $Q(0) = 0$.

\begin{definition}[{\cite[Definition 2.1]{MR5043800}}]
A scaled operator is an odd potential operator $\As \in C(W,W^\ast)$ that maps bounded sets into bounded sets and satisfies
\begin{equation} \label{105}
\As(u_t) v_t = t^\sigma \As(u) v \quad \forall u, v \in W,\, t \ge 0.
\end{equation}
\end{definition}

\subsection{Cohomological index}

In this subsection we recall the definition and basic properties of the $\Z_2$-cohomological index of Fadell and Rabinowitz (see \cite{MR0478189}). We will use this index to construct minimax eigenvalues of scaled eigenvalue problems in the next section. Krasnoselskii's genus is not suitable for our purposes here since it lacks the piercing property of the cohomological index.

\begin{definition}[\cite{MR0478189}] \label{Definition 1}
Let $\A$ denote the class of symmetric subsets of $W \setminus \set{0}$. For $A \in \A$, let $\overline{A} = A/\Z_2$ be the quotient space of $A$ with each $u$ and $-u$ identified, let $f : \overline{A} \to \RP^\infty$ be the classifying map of $\overline{A}$, and let $f^\ast : H^\ast(\RP^\infty) \to H^\ast(\overline{A})$ be the induced homomorphism of the Alexander-Spanier cohomology rings. The cohomological index of $A$ is defined by
\[
i(A) = \begin{cases}
0 & \text{if } A = \emptyset\\[5pt]
\sup \set{m \ge 1 : f^\ast(\omega^{m-1}) \ne 0} & \text{if } A \ne \emptyset,
\end{cases}
\]
where $\omega \in H^1(\RP^\infty)$ is the generator of the polynomial ring $H^\ast(\RP^\infty) = \Z_2[\omega]$.
\end{definition}

\begin{example}
The classifying map of the unit sphere $S^N$ in $\R^{N+1},\, N \ge 0$ is the inclusion $\RP^N \incl \RP^\infty$, which induces isomorphisms on the cohomology groups $H^l$ for $l \le N$, so $i(S^N) = N + 1$.
\end{example}

The following proposition summarizes the basic properties of the cohomological index.

\begin{proposition}[\cite{MR0478189}] \label{Proposition 7}
The index $i : \A \to \N \cup \set{0,\infty}$ has the following properties:
\begin{enumerate}
\item[$(i_1)$] Definiteness: $i(A) = 0$ if and only if $A = \emptyset$.
\item[$(i_2)$] Monotonicity: If there is an odd continuous map from $A$ to $B$ (in particular, if $A \subset B$), then $i(A) \le i(B)$. Thus, equality holds when the map is an odd homeomorphism.
\item[$(i_3)$] Dimension: $i(A) \le \dim W$.
\item[$(i_4)$] Continuity: If $A$ is closed, then there is a closed neighborhood $U \in \A$ of $A$ such that $i(U) = i(A)$. When $A$ is compact, $U$ may be chosen to be a $\delta$-neighborhood $U_\delta(A) = \set{u \in W : \dist{u}{A} \le \delta}$.
\item[$(i_5)$] Subadditivity: If $A$ and $B$ are closed, then $i(A \cup B) \le i(A) + i(B)$.
\item[$(i_6)$] Stability: If $\Sigma A$ is the suspension of $A \ne \emptyset$, obtained as the quotient space of $A \times [-1,1]$ with $A \times \set{1}$ and $A \times \set{-1}$ collapsed to different points, then $i(\Sigma A) = i(A) + 1$.
\item[$(i_7)$] Piercing property: If $C$, $C_0$, and $C_1$ are closed and $\varphi : C \times [0,1] \to C_0 \cup C_1$ is a continuous map such that $\varphi(-u,t) = - \varphi(u,t)$ for all $(u,t) \in C \times [0,1]$, $\varphi(C \times [0,1])$ is closed, $\varphi(C \times \set{0}) \subset C_0$, and $\varphi(C \times \set{1}) \subset C_1$, then $i(\varphi(C \times [0,1]) \cap C_0 \cap C_1) \ge i(C)$.
\item[$(i_8)$] Neighborhood of zero: If $U$ is a bounded closed symmetric neighborhood of $0$, then $i(\bdry{U}) = \dim W$.
\end{enumerate}
\end{proposition}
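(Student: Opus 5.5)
Since Proposition~\ref{Proposition 7} collects standard facts from \cite{MR0478189}, the plan is to derive each property from Definition~\ref{Definition 1} using functoriality of Alexander--Spanier cohomology and the universal property of the classifying map $f : \overline{A} \to \RP^\infty$ of the principal $\Z_2$-bundle $A \to \overline{A}$.

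\textbf{Definiteness, monotonicity, dimension.} For $(i_1)$, when $A \ne \emptyset$ the class $f^\ast(\omega^0) = 1 \ne 0$ in $H^0(\overline{A})$, so $i(A) \ge 1$. For $(i_2)$, an odd continuous map $g : A \to B$ descends to $\overline{g} : \overline{A} \to \overline{B}$. The map $f_B \circ \overline{g}$ classifies the bundle over $\overline{A}$, so by uniqueness of classifying maps up to homotopy $f_A^\ast = \overline{g}^\ast f_B^\ast$. Hence $f_B^\ast(\omega^{m-1}) = 0$ forces $f_A^\ast(\omega^{m-1}) = 0$, which gives $i(A) \le i(B)$. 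For $(i_3)$, in the finite-dimensional case radial projection is an odd map $A \to S^{\dim W - 1}$, so $(i_2)$ and the example give $i(A) \le \dim W$. In the infinite-dimensional case the bound is vacuous.

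\textbf{Continuity and subadditivity.} For $(i_4)$ I would use the tautness (continuity) property of Alexander--Spanier cohomology: for closed $A$ in the paracompact space $W \setminus \{0\}$, $H^\ast(\overline{A}) = \varinjlim H^\ast(\overline{U})$ over closed symmetric neighborhoods $U$. If $f_A^\ast(\omega^{m}) = 0$ with $m = i(A)$ finite, then $f_U^\ast(\omega^m) = 0$ already for some $U$, so $i(U) \le i(A)$; the reverse inequality is $(i_2)$. When $A$ is compact, the $\delta$-neighborhoods are cofinal. For $(i_5)$, with $i(A) = m$ and $i(B) = n$, the class $f^\ast(\omega^m)$ restricts to zero on $\overline{A}$ and $f^\ast(\omega^n)$ restricts to zero on $\overline{B}$. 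They therefore lift to relative classes in $H^\ast(\overline{A \cup B},\overline{A})$ and $H^\ast(\overline{A \cup B},\overline{B})$. Their cup product lies in $H^\ast(\overline{A \cup B},\overline{A} \cup \overline{B}) = 0$; excision for closed sets is needed here. Hence $f^\ast(\omega^{m+n}) = 0$.

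\textbf{Stability, piercing, neighborhood of zero.} For $(i_6)$, cover $\Sigma A$ by the two cones $C_\pm$, which are contractible and interchanged by the involution, and apply the Mayer--Vietoris (Gysin-type) sequence to the pair $(\overline{\Sigma A}, \overline{A})$. This shows that multiplication by $\omega$ raises the top nonvanishing power by exactly one. I expect $(i_7)$ to be the main obstacle. The plan is to argue by contradiction. Set $D_j = \varphi^{-1}(C_j)$ for $j = 0,1$; these are closed symmetric sets covering $C \times [0,1]$. If $i(\varphi(C \times [0,1]) \cap C_0 \cap C_1) < i(C)$, use $(i_4)$ to thicken $D_0 \cap D_1$ to a neighborhood of the same small index. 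Then build an odd map from $C$ to the union of this neighborhood with two pieces that are deformable into $C \times \{0\}$ and $C \times \{1\}$, respectively. A Mayer--Vietoris argument on $C \times [0,1]$, which retracts oddly onto $C \times \{0\}$, combined with $(i_5)$, then gives an index contradiction. Finally, for $(i_8)$, $\bdry{U}$ is odd-homotopy equivalent to the unit sphere by radial projection, since $U$ is a bounded neighborhood of $0$ and radial rays leave it. When $\dim W = n$, the sphere is $S^{n-1}$ and the index is $n$. When $\dim W = \infty$, $\bdry{U}$ contains oddly embedded copies of $S^{n-1}$ for every $n$, so the index is $\infty$.
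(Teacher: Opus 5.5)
The paper gives no proof of this proposition (it is quoted from \cite{MR0478189}), so your outline has to stand on its own. Your arguments for $(i_1)$--$(i_5)$ are sound, but the sketches of $(i_7)$ and $(i_8)$ do not work as written.

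\textbf{The piercing property $(i_7)$.} An odd map from $C$ into $N\cup P_0\cup P_1$ followed by $(i_5)$ gives only $i(C)\le i(N)+i(P_0\cup P_1)$. The pieces deformable into the ends can themselves have index $i(C)$ (e.g.\ when they contain $C\times\{0\}$), so nothing is contradicted. In fact no argument that merely moves index values around via $(i_2)$, $(i_4)$, $(i_5)$ can succeed. The Krasnoselskii genus also satisfies $(i_1)$--$(i_5)$, but, as the paper remarks, it lacks the piercing property. The contradiction must come from a cup product that remembers the interval direction. Here is one way to do it.
\begin{itemize}
\item Enlarge $D_0$ by $C\times[-1,0]$ and $D_1$ by $C\times[1,2]$. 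This does not change $F=D_0\cap D_1$, and $i(\varphi(C\times[0,1])\cap C_0\cap C_1)\ge i(F)$ by $(i_2)$.
\item Put $Q=C\times[-1,2]$, let $w$ be the pullback of $f_C^\ast(\omega)$ to $\overline{Q}=\overline{C}\times[-1,2]$, and suppose $k=i(F)<i(C)$. By tautness, $w^k$ vanishes on some closed symmetric neighborhood $N\subset C\times(-1,2)$ of $F$.
\item The closed symmetric sets $D_j'=D_j\setminus\mathrm{int}\,N$ are disjoint, contain $C\times\{-1\}$ and $C\times\{2\}$ respectively, and together with $N$ cover $Q$. An even Urysohn function separating them pulls back the generator $e$ of $H^1([0,1],\{0,1\})$ to a class $\gamma\in H^1(\overline{Q},\overline{D_0'}\cup\overline{D_1'})$.
\item Lift $w^k$ to $H^k(\overline{Q},\overline{N})$ and take the relative cup product with $\gamma$. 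This shows that $w^k\cup\gamma$ comes from $H^{k+1}(\overline{Q},\overline{Q})=0$.
\item On the other hand, the restriction of $w^k\cup\gamma$ to $(\overline{C}\times[-1,2],\overline{C}\times\{-1,2\})$ is $f_C^\ast(\omega^k)\times e\ne0$ by the suspension isomorphism. This is the contradiction.
\end{itemize}

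\textbf{The neighborhood of zero $(i_8)$.} Radial projection $\partial U\to S$ is odd, so it gives only the upper bound, which is already $(i_3)$. When $U$ is not star-shaped it is not an odd homotopy equivalence, and $\partial U$ need not receive any odd map from a sphere. For example, let $\partial U\subset\R^2$ be a symmetric Warsaw-type circle: two antipodal topologist's sine curves, the far end of each joined by an arc to the limit segment of the other. Its two path components are interchanged by $u\mapsto-u$, so there is no odd map $S^1\to\partial U$ at all. Yet $i(\partial U)=2$, since $\partial U$ is connected. This is exactly where Alexander--Spanier cohomology matters, and the lower bound has to come from the piercing property.
\begin{itemize}
\item For $\dim W<\infty$, choose $0<r<R$ with $\{\|u\|\le r\}\subset\mathrm{int}\,U$ and $U\subset\{\|u\|<R\}$. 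Apply $(i_7)$ to $\varphi(u,t)=((1-t)\,r+tR)\,u$ on the unit sphere, with $C_0=U\cap\{\|u\|\ge r\}$ and $C_1$ the intersection of the closure of $W\setminus U$ with $\{\|u\|\ge r\}$. Then $C_0\cap C_1\subset\partial U$, which gives $i(\partial U)\ge\dim W$.
\item In infinite dimensions, apply this in each finite-dimensional subspace $V$, noting that the boundary of $U\cap V$ relative to $V$ lies in $\partial U$.
\end{itemize}

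\textbf{Stability $(i_6)$.} Your sketch is thin on the nontrivial half $i(\Sigma A)\ge i(A)+1$. Exactness of the sequence of $(\overline{\Sigma A},\overline{A})$ alone does not show $f_{\Sigma A}^\ast(\omega^{i(A)})\ne0$. You also need three facts:
\begin{itemize}
\item $f_{\Sigma A}^\ast(\omega)$ is the image of the Thom class of the line bundle whose disk bundle is a neighborhood of $\overline{A}$ in $\overline{\Sigma A}$;
\item the Thom isomorphism for that bundle;
\item the complementary cone is contractible.
\end{itemize}
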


\subsection{Scaled eigenvalue problems}

Let $\As$ and $\Bs$ be scaled operators satisfying
\begin{enumerate}
\item[$(A_6)$] $\As(u) u > 0$ for all $u \in W \setminus \set{0}$;
\item[$(A_7)$] every sequence $\seq{u_j}$ in $W$ such that $u_j \wto u$ and $\As(u_j)(u_j - u) \to 0$ has a subsequence that converges strongly to $u$;
\item[$(A_8)$] $\Bs(u) u > 0$ for all $u \in W \setminus \set{0}$;
\item[$(A_9)$] if $u_j \wto u$ in $W$, then $\Bs(u_j) \to \Bs(u)$ in $W^\ast$.
\end{enumerate}
The scaled eigenvalue problem
\begin{equation} \label{25}
\As(u) = \lambda \Bs(u) \quad \text{in } W^\ast
\end{equation}
was studied in \cite{MR5043800}. We say that $\lambda \in \R$ is an eigenvalue of this problem if there is a $u \in W \setminus \set{0}$, called an eigenfunction associated with $\lambda$, satisfying \eqref{25}. Then $u_t$ is also an eigenfunction associated with $\lambda$ for any $t > 0$ since
\[
\As(u_t) v = \As(u_t) (v_{t^{-1}})_t = t^\sigma \As(u) v_{t^{-1}} = \lambda t^\sigma \Bs(u) v_{t^{-1}} = \lambda \Bs(u_t) (v_{t^{-1}})_t = \lambda \Bs(u_t) v
\]
for all $v \in W$.

The potentials
\begin{equation} \label{21}
I_\sigma(u) = \int_0^1 \As(\tau u) u\, d\tau, \quad J_\sigma(u) = \int_0^1 \Bs(\tau u) u\, d\tau, \quad u \in W
\end{equation}
of $\As$ and $\Bs$, respectively, are even, bounded on bounded sets, and have the scaling \linebreak property
\begin{equation} \label{104}
I_\sigma(u_t) = t^\sigma I_\sigma(u), \quad J_\sigma(u_t) = t^\sigma J_\sigma(u) \quad \forall u \in W,\, t \ge 0
\end{equation}
(see \cite[Proposition 2.2]{MR5043800}). By \eqref{21}, $(A_6)$, and $(A_8)$,
\begin{equation} \label{22}
I_\sigma(u) > 0, \quad J_\sigma(u) > 0 \quad \forall u \in W \setminus \set{0}.
\end{equation}
It follows from $(A_9)$ that if $u_j \wto u$ in $W$, then $J_\sigma(u_j) \to J_\sigma(u)$ (see {\cite[Proposition 2.4]{MR5043800}}). We assume that $I_\sigma$ and $J_\sigma$ satisfy
\begin{enumerate}
\item[$(A_{10})$] $I_\sigma$ is coercive, i.e., $I_\sigma(u) \to \infty$ as $\norm{u} \to \infty$;
\item[$(A_{11})$] every solution of problem \eqref{25} satisfies the scaling identity $I_\sigma(u) = \lambda J_\sigma(u)$.
\end{enumerate}

We have $I_\sigma'(0) = \As(0) = 0$ since $\As$ is odd, so the origin is a critical point of $I_\sigma$. It is the only critical point of $I_\sigma$ since
\[
I_\sigma'(u) u = \As(u) u > 0 \quad \forall u \in W \setminus \set{0}
\]
by $(A_6)$. So $I_\sigma(0) = 0$ is the only critical value of $I_\sigma$, and hence it follows from the implicit function theorem that
\[
\M = \bgset{u \in W : I_\sigma(u) = 1}
\]
is a $C^1$-Finsler manifold. Since $I_\sigma$ is continuous, even, and coercive, we see that $\M$ is complete, symmetric, and bounded. Let
\[
\Psi(u) = \frac{1}{J_\sigma(u)}, \quad u \in W \setminus \set{0}
\]
and let $\widetilde{\Psi} = \restr{\Psi}{\M}$. Then eigenvalues of problem \eqref{25} coincide with critical values of $\widetilde{\Psi}$ (see \cite[Proposition 2.5]{MR5043800}).

Denote by $\F$ the class of symmetric subsets of $\M$ and by $i(M)$ the cohomological index of $M \in \F$. For $k \ge 1$, let $\F_k = \bgset{M \in \F : i(M) \ge k}$ and set
\[
\lambda_k := \inf_{M \in \F_k}\, \sup_{u \in M}\, \widetilde{\Psi}(u).
\]
The following theorem was proved in \cite{MR5043800}.

\begin{theorem}[{\cite[Theorem 2.10]{MR5043800}}] \label{Theorem 13}
Assume $(A_1)$--$(A_{11})$. Then $\lambda_k \nearrow \infty$ is a sequence of eigenvalues of problem \eqref{25}.
\begin{enumroman}
\item \label{Theorem 13.i} The first eigenvalue is given by
    \[
    \lambda_1 = \min_{u \in \M}\, \widetilde{\Psi}(u) > 0.
    \]
\item If $\lambda_k = \dotsb = \lambda_{k+m-1} = \lambda$ and $E_\lambda$ is the set of eigenfunctions associated with $\lambda$ that lie on $\M$, then $i(E_\lambda) \ge m$.
\item \label{Theorem 13.iii} If $\lambda_k < \lambda < \lambda_{k+1}$, then
    \[
    i(\widetilde{\Psi}^{\lambda_k}) = i(\M \setminus \widetilde{\Psi}_\lambda) = i(\widetilde{\Psi}^\lambda) = i(\M \setminus \widetilde{\Psi}_{\lambda_{k+1}}) = k,
    \]
    where $\widetilde{\Psi}^a = \bgset{u \in \M : \widetilde{\Psi}(u) \le a}$ and $\widetilde{\Psi}_a = \bgset{u \in \M : \widetilde{\Psi}(u) \ge a}$ for $a \in \R$.
\end{enumroman}
\end{theorem}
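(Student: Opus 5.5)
The statement is Theorem~\ref{Theorem 13}, quoted from \cite[Theorem 2.10]{MR5043800}. The plan is to run the standard cohomological-index minimax scheme for $\widetilde{\Psi}$ on the complete symmetric $C^1$-Finsler manifold $\M$.

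\textbf{Step 1: compactness and positivity.} I first verify that $\widetilde{\Psi}$ satisfies the Palais--Smale condition on $\M$. Take $(u_j)\subset\M$ with $\widetilde{\Psi}(u_j)\to c$ and $\widetilde{\Psi}'(u_j)\to0$.
\begin{itemize}
\item $\M$ is bounded by $(A_{10})$ and $W$ is reflexive, so a subsequence satisfies $u_j\wto u$.
\item By $(A_9)$ and \cite[Proposition 2.4]{MR5043800}, $J_\sigma(u_j)\to J_\sigma(u)$.
\item I claim $J_\sigma(u)>0$. Otherwise $c=\infty$, which is impossible for a finite level. It also follows from \eqref{22} that $u\ne0$.
\item The Lagrange multiplier form of $\widetilde{\Psi}'(u_j)\to0$ gives $\As(u_j)-\mu_j\Bs(u_j)\to0$ in $W^*$, with $\mu_j$ bounded.
\item Testing with $u_j-u$ and using $\Bs(u_j)\to\Bs(u)$ strongly gives $\As(u_j)(u_j-u)\to0$. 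Then $(A_7)$ gives strong convergence.
\end{itemize}
Positivity of $\lambda_1$ comes from a separate argument. The sequence $1/J_\sigma$ is bounded below on $\M$, because $J_\sigma$ is weakly continuous and $\M$ is bounded. The infimum is attained: a minimizing sequence converges weakly to some $u$ with $J_\sigma(u)=\sup J_\sigma>0$ and $I_\sigma(u)\le1$ by weak lower semicontinuity. This lower semicontinuity is itself a consequence of $(A_7)$-type arguments, or one can rescale. Using the scaling $u_t$ with $I_\sigma(u_t)=t^\sigma I_\sigma(u)$, I push $u$ back onto $\M$ with $t\ge1$, which increases $J_\sigma$. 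This gives (i).

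\textbf{Step 2: the $\lambda_k$ are critical values.} The sets $\F_k$ are nonempty for every $k$. This needs $\dim W=\infty$, or the convention that the result concerns all available $k$. The argument is the radial projection of the unit sphere of a $k$-dimensional subspace onto $\M$ along fibers $t\mapsto u_t$. By $(A_1)$--$(A_5)$ this map is odd and continuous, so $(i_2)$ gives index $k$.

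The standard deformation lemma on Finsler manifolds, combined with $(i_2)$ and $(i_4)$, shows that each finite $\lambda_k$ is a critical value. Critical values of $\widetilde{\Psi}$ are eigenvalues by \cite[Proposition 2.5]{MR5043800}. The same deformation argument with $(i_4)$ and $(i_5)$ gives (ii): if $i(E_\lambda)<m$, take a neighborhood $U$ with the same index, then deform $\widetilde{\Psi}^{\lambda+\eps}\setminus U$ below $\lambda-\eps$, which contradicts the definition of $\lambda_{k+m-1}$.

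\textbf{Step 3: unboundedness.} This is the main obstacle. Suppose $\lambda_k\nearrow\lambda<\infty$. Then $K=\{u\in\M:\widetilde{\Psi}'(u)=0,\ \widetilde{\Psi}(u)=\lambda\}$ is compact by the Palais--Smale condition. The argument of Step 2 then yields $i(K)=\infty$, whereas a compact set has finite index by $(i_4)$ combined with finite-dimensional approximation. This is where I expect the bulk of the work.

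\textbf{Step 4: the index identities in (iii).} For $\lambda_k<\lambda<\lambda_{k+1}$ I argue in two directions.
\begin{itemize}
\item \emph{Lower bound.} Since $\lambda>\lambda_k$, there is a set $M\in\F_k$ inside $\widetilde{\Psi}^{\lambda}$, so $i(\widetilde{\Psi}^{\lambda})\ge k$. The same holds for $\widetilde{\Psi}^{\lambda_k}$ after deforming, using that there are no critical values in $(\lambda_k,\lambda_{k+1})$.
\item \emph{Upper bound.} An index $\ge k+1$ would force $\lambda_{k+1}<\lambda_{k+1}$, a contradiction.
\end{itemize}
Finally, $\widetilde{\Psi}^{\lambda_k}$, $\widetilde{\Psi}^{\lambda}$, $\M\setminus\widetilde{\Psi}_\lambda$ and $\M\setminus\widetilde{\Psi}_{\lambda_{k+1}}$ are all linked by the flow of $\widetilde{\Psi}$. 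That flow gives odd deformation retractions across the critical-value-free band, so by $(i_2)$ all four sets have equal index $k$.
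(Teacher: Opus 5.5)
Your overall architecture is the standard one behind the cited result: (PS) for $\widetilde{\Psi}$ on $\M$, followed by the cohomological-index minimax and deformation machinery. The (PS) verification and Steps 2--3 are essentially fine.

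\textbf{The gap is in Step 4.} You use twice that $\widetilde{\Psi}$ has no critical values in $(\lambda_k,\lambda_{k+1})$: first to get $i(\widetilde{\Psi}^{\lambda_k})\ge k$ ``after deforming'', and then to produce odd deformation retractions between the four sets ``across the critical-value-free band''. Nothing in $(A_1)$--$(A_{11})$ gives this, and it is false in general. The minimax levels need not exhaust the critical values of $\widetilde{\Psi}$: already for an even function on a circle, the only finite minimax levels are the minimum and the maximum, while other critical levels may lie in between. There is likewise no reason for the $\lambda_k$ to exhaust the eigenvalues of \eqref{25}; this is not known even for the $p$-Laplacian. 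So those retractions are not available, and your lower bound for $\widetilde{\Psi}^{\lambda_k}$ is unsupported.

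\textbf{How to repair it.} The statement does not need a critical-value-free band. The inclusions $\widetilde{\Psi}^{\lambda_k}\subset\M\setminus\widetilde{\Psi}_\lambda\subset\widetilde{\Psi}^\lambda\subset\M\setminus\widetilde{\Psi}_{\lambda_{k+1}}$ together with $(i_2)$ reduce (iii) to two bounds, each proved at a single level.
\begin{enumerate}
\item \emph{Lower bound $i(\widetilde{\Psi}^{\lambda_k})\ge k$.} By $(i_4)$ there is a closed symmetric neighborhood $N$ of the closed set $\widetilde{\Psi}^{\lambda_k}$ with $i(N)=i(\widetilde{\Psi}^{\lambda_k})$. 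Since $N$ is a neighborhood of the compact critical set at level $\lambda_k$, the deformation lemma (which needs only (PS)) gives $\eps>0$ and an odd continuous $\eta$ with $\eta(\widetilde{\Psi}^{\lambda_k+\eps})\subset\widetilde{\Psi}^{\lambda_k-\eps}\cup N\subset N$. Since $\widetilde{\Psi}^{\lambda_k+\eps}$ contains a member of $\F_k$, this gives $k\le i(N)$.
\item \emph{Upper bound $i(\M\setminus\widetilde{\Psi}_{\lambda_{k+1}})\le k$.} Your bullet only works for closed sublevels $\widetilde{\Psi}^{a}$ with $a<\lambda_{k+1}$. On the open set $\M\setminus\widetilde{\Psi}_{\lambda_{k+1}}$ the supremum of $\widetilde{\Psi}$ may equal $\lambda_{k+1}$. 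You first need a compact symmetric subset $C$ of the same index, as in the proof of Proposition 3.1 of \cite{MR2371112} (the tool used in Lemma \ref{lem:A0}). Then $\max_C\widetilde{\Psi}<\lambda_{k+1}$ rules out $i(C)\ge k+1$.
\end{enumerate}

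\textbf{Two smaller points.}
\begin{enumerate}
\item In (i), weak lower semicontinuity of $I_\sigma$ is not among the hypotheses. Rescaling does not replace it, since it raises $J_\sigma$ only if $I_\sigma(u)\le 1$ is already known. However, (i) follows from Step 2: every antipodal pair $\{u,-u\}$ has index $1$, so $\lambda_1=\inf_{\M}\widetilde{\Psi}$ is a critical value and hence attained.
\item In (ii), the contradiction is with the definition of $\lambda_k$, not $\lambda_{k+m-1}$. Subadditivity $(i_5)$ turns a member of $\F_{k+m-1}$ into a set of index at least $k$, which is then deformed below $\lambda-\eps$.
\end{enumerate}
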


\subsection{Subscaled and superscaled problems} \label{ssec:sub-sup}

Let $\As$ be as in the last section. In this subsection we consider the nonlinear operator equation
\begin{equation} \label{18}
\As(u) = f(u) \quad \text{in } W^\ast,
\end{equation}
where $f \in C(W,W^\ast)$ is an odd potential operator that maps bounded sets into bounded sets and satisfies
\begin{enumerate}
\item[$(f_1)$] there exists $\gamma \ne \sigma$ such that $f(u_t) v_t = t^\gamma f(u) v$ for all $u, v \in W$ and $t \ge 0$;
\item[$(f_2)$] $f(u) u > 0$ for all $u \in W \setminus \set{0}$;
\item[$(f_3)$] if $u_j \wto u$ in $W$, then $f(u_j) \to f(u)$ in $W^\ast$;
\item[$(f_4)$] for any $\mu \in \R$, every solution of the equation $\As(u) = \mu f(u)$ satisfies the scaling identity $\sigma I_\sigma(u) = \gamma \mu F(u)$, where $F(u) = \int_0^1 f(\tau u) u\, d\tau$ is the potential of $f$.
\end{enumerate}
Solutions of equation \eqref{18} are the critical points of the $C^1$-functional
\[
E(u) = I_\sigma(u) - F(u), \quad u \in W,
\]
where $I_\sigma$ is as in the last section. However, it seems difficult to verify the boundedness of \PS{} sequences of $E$, so we will take a constrained approach.

Let $\M$ be as in the last section. As noted in \cite[Section 2.2.1]{MR5043800}, the scaling induces a continuous projection
\begin{equation} \label{102}
\pi : W \setminus \set{0} \to \M, \quad u \mapsto \widetilde{u} := u_{t_u},
\end{equation}
where $t_u = I_\sigma(u)^{- 1/\sigma}$ (see \eqref{22}). We use this projection to reduce equation \eqref{18} to the nonlinear eigenvalue problem
\begin{equation} \label{17}
\As(u) = \mu f(u)
\end{equation}
on $\M$.

\begin{lemma} \label{Lemma 5}
If $u$ is a nontrivial solution of equation \eqref{18}, then $\widetilde{u}$ is an eigenfunction associated with the eigenvalue
\[
\mu = t_u^{\sigma - \gamma}.
\]
Conversely, if $u \in \M$ is an eigenfunction associated with an eigenvalue $\mu \in \R$, then $\mu \ne 0$,
\[
\overline{u} = u_{\mu^{- 1/(\sigma - \gamma)}}
\]
is a nontrivial solution of equation \eqref{18}, and
\begin{equation} \label{24}
E(\overline{u}) = (1 - \mu F(u))\, \mu^{- \sigma/(\sigma - \gamma)}.
\end{equation}
Moreover, solutions $\overline{u}$ that correspond to distinct basepoints $u \in \M$ are distinct.
\end{lemma}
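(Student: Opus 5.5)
The plan is a direct computation that transfers solutions between $W$ and $\M$ using the scaling identities $(A_1)$--$(A_3)$, \eqref{105}, $(f_1)$ and \eqref{104}.

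\emph{Forward direction.} Let $u \ne 0$ solve \eqref{18}, so $t_u = I_\sigma(u)^{-1/\sigma} > 0$ by \eqref{22} and $\widetilde{u} = u_{t_u} \in \M$. For $v \in W$, write $v = (v_{t_u^{-1}})_{t_u}$ by $(A_1)$ and $(A_3)$. Then \eqref{105} gives
\[
\As(\widetilde{u})v = t_u^\sigma \As(u) v_{t_u^{-1}} = t_u^\sigma f(u) v_{t_u^{-1}},
\]
and $(f_1)$ gives $f(\widetilde{u})v = t_u^\gamma f(u)v_{t_u^{-1}}$. Hence $\As(\widetilde{u}) = t_u^{\sigma-\gamma} f(\widetilde{u})$, so $\widetilde{u}$ is an eigenfunction with $\mu = t_u^{\sigma-\gamma}$. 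This is the same computation the paper uses to show that $u_t$ is again an eigenfunction.

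\emph{Converse.} Let $u \in \M$ satisfy $\As(u) = \mu f(u)$.
\begin{enumerate}
\item To see $\mu \ne 0$: if $\mu = 0$, then $\As(u)u = 0$, contradicting $(A_6)$ since $u \ne 0$ (because $I_\sigma(u) = 1$). Testing with $u$ also gives $\mu = \As(u)u / f(u)u > 0$ by $(A_6)$ and $(f_2)$, so the real power $\mu^{-1/(\sigma-\gamma)}$ makes sense. This positivity is worth recording explicitly.
\item Set $t = \mu^{-1/(\sigma-\gamma)}$. The same scaling computation gives $\As(u_t) = t^{\sigma-\gamma}\mu\, f(u_t) = f(u_t)$, so $\overline{u} = u_t$ solves \eqref{18}.
\item $\overline{u} \ne 0$, since $I_\sigma(u_t) = t^\sigma I_\sigma(u) = t^\sigma > 0$.
\item For the energy, \eqref{104} gives $I_\sigma(\overline{u}) = t^\sigma$. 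Scaling the potential $F$, using $(f_1)$ together with $(\tau u)_t = \tau u_t$ from $(A_2)$, gives $F(u_t) = t^\gamma F(u)$. Therefore
\[
E(\overline{u}) = t^\sigma - t^\gamma F(u) = t^\sigma\bigl(1 - t^{\gamma-\sigma}F(u)\bigr) = (1 - \mu F(u))\,\mu^{-\sigma/(\sigma-\gamma)},
\]
since $t^{\gamma-\sigma} = \mu$. This is \eqref{24}.
\end{enumerate}

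\emph{Distinct basepoints.} It remains to show that distinct basepoints give distinct solutions, and this is the only step that is not pure algebra. Note first that $\pi(\overline{u}) = u$. Indeed, $t_{\overline{u}} = I_\sigma(u_t)^{-1/\sigma} = t^{-1}$, so $\pi(\overline{u}) = (u_t)_{t^{-1}} = u_1 = u$ by $(A_1)$ and $(A_3)$. Thus $\pi$ is a left inverse of the map $u \mapsto \overline{u}$. If $\overline{u} = \overline{w}$ for basepoints $u, w \in \M$, then $u = \pi(\overline{u}) = \pi(\overline{w}) = w$, so the map is injective.
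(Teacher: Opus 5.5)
Your proof is correct and follows essentially the same route as the paper. It uses the same scaling computations via \eqref{105}, $(f_1)$, and \eqref{104}, and your identity $\pi(\overline{u}) = u$ is a precise form of the paper's remark that fibers with distinct basepoints on $\M$ do not intersect. Testing with $u$ and using $(A_6)$ and $(f_2)$ to get $\mu > 0$, rather than only $\mu \ne 0$, is a worthwhile addition, since positivity is what actually makes $\mu^{-1/(\sigma - \gamma)}$ meaningful.
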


\begin{proof}
If $u$ is a nontrivial solution of equation \eqref{18}, then
\begin{multline*}
\As(\widetilde{u}) w = \As(u_{t_u})(w_{t_u^{-1}})_{t_u} = t_u^\sigma \As(u) w_{t_u^{-1}} = t_u^{\sigma - \gamma} t_u^\gamma f(u) w_{t_u^{-1}}\\[7.5pt]
= t_u^{\sigma - \gamma} f(u_{t_u})(w_{t_u^{-1}})_{t_u} = t_u^{\sigma - \gamma} f(\widetilde{u}) w
\end{multline*}
for all $w \in W$. Conversely, if $u \in \M$ is an eigenfunction associated with an eigenvalue $\mu \in \R$, then $\mu \ne 0$ by $(A_6)$, $\overline{u} \ne 0$ by (2.3) in \cite{MR5043800}, and
\begin{multline*}
\As(\overline{u}) w = \As(u_{\mu^{- 1/(\sigma - \gamma)}})(w_{\mu^{1/(\sigma - \gamma)}})_{\mu^{- 1/(\sigma - \gamma)}} = \mu^{- \sigma/(\sigma - \gamma)} \As(u) w_{\mu^{1/(\sigma - \gamma)}}\\[7.5pt]
= \mu^{- \gamma/(\sigma - \gamma)} f(u) w_{\mu^{1/(\sigma - \gamma)}} = f(u_{\mu^{- 1/(\sigma - \gamma)}})(w_{\mu^{1/(\sigma - \gamma)}})_{\mu^{- 1/(\sigma - \gamma)}} = f(\overline{u}) w
\end{multline*}
for all $w \in W$. Since $u \in \M$ and $F$ has the scaling property
\[
F(u_t) = t^\gamma F(u) \quad \forall u \in W,\, t \ge 0
\]
(see \cite[Proposition 2.2]{MR5043800}), \eqref{24} follows. Finally, solutions $\overline{u}$ that correspond to distinct basepoints $u \in \M$ are distinct since fibers $\set{u_t : t > 0}$ with distinct basepoints $u$ do not intersect.
\end{proof}

The main result of this section is the following theorem.

\begin{theorem} \label{Theorem 10}
Assume $(A_1)$--$(A_7)$, $(A_{10})$, and $(f_1)$--$(f_4)$. Then equation \eqref{18} has an infinite sequence of nontrivial solutions
\begin{equation} \label{23}
\overline{u}_k = u^k_{\mu_k^{- 1/(\sigma - \gamma)}},
\end{equation}
where $u^k \in \M$, $\mu_k > 0$, and $\mu_k \nearrow \infty$. Moreover,
\[
E(\overline{u}_k) = \left(1 - \frac{\sigma}{\gamma}\right) \mu_k^{- \sigma/(\sigma - \gamma)},
\]
in particular, $E(\overline{u}_k) < 0$ and $E(\overline{u}_k) \nearrow 0$ if $\gamma < \sigma$, while $E(\overline{u}_k) > 0$ and $E(\overline{u}_k) \nearrow \infty$ if $\gamma > \sigma$.
\end{theorem}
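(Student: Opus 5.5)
We will prove Theorem \ref{Theorem 10} by running an index-based minimax on $\M$ for a functional whose critical points are exactly the eigenfunctions of \eqref{17} on $\M$, and then transporting these to solutions of \eqref{18} with Lemma \ref{Lemma 5}. Since $F(u) = \int_0^1 f(\tau u) u\, d\tau > 0$ for $u \ne 0$ by $(f_2)$, we set
\[
\Phi(u) = \frac{1}{F(u)}, \quad u \in W \setminus \set{0}, \qquad \widetilde{\Phi} = \restr{\Phi}{\M}.
\]
This is the analog of $\widetilde{\Psi}$, with $J_\sigma$ replaced by $F$. The Lagrange multiplier rule on the $C^1$-Finsler manifold $\M$ shows that $u \in \M$ is a critical point of $\widetilde{\Phi}$ if and only if $\Phi'(u) = \theta I_\sigma'(u)$ for some $\theta$, i.e., $-F(u)^{-2} f(u) = \theta \As(u)$. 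Testing with $u$ and using $(A_6)$, $(f_2)$ gives $\theta < 0$, so $\As(u) = \mu f(u)$ with $\mu > 0$. Then $(f_4)$ gives $\sigma = \sigma I_\sigma(u) = \gamma \mu F(u)$, so $\mu = \frac{\sigma}{\gamma}\, \widetilde{\Phi}(u)$. (If $\gamma < 0$ this would contradict $\mu > 0$; in that case $f$ cannot be positive with that scaling, and we only need $\gamma > 0$, which follows from $(f_2)$ together with $(f_4)$.) Thus critical values $c$ of $\widetilde{\Phi}$ give eigenvalues $\mu = \sigma c/\gamma$.

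Next we define, for $k \ge 1$,
\[
c_k := \inf_{M \in \F_k}\, \sup_{u \in M}\, \widetilde{\Phi}(u).
\]
We will show that these are critical values with $c_k \nearrow \infty$, exactly as in the proof of Theorem \ref{Theorem 13} from \cite{MR5043800}. This requires the following steps.
\begin{enumerate}
\item[(a)] $\widetilde{\Phi}$ is bounded below by a positive constant on $\M$. Since $\M$ is bounded and $f$ maps bounded sets to bounded sets, $F$ is bounded on $\M$.
\item[(b)] $\widetilde{\Phi}$ satisfies the \PS{} condition on $\M$.
\item[(c)] $\F_k \ne \emptyset$ for every $k$. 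This uses radial projection of spheres in finite-dimensional subspaces via $\pi$ together with $(i_2)$, $(i_3)$, assuming $\dim W = \infty$.
\item[(d)] $c_k \to \infty$. This follows by the standard argument: sublevel sets of $\widetilde{\Phi}$ below a bounded level would have finite index, using $(f_3)$ and weak compactness as in \cite{MR5043800}.
\end{enumerate}
The deformation lemma on $\M$ and the monotonicity, continuity, and dimension properties of the index then make each $c_k$ a critical value. Choosing a critical point $u^k \in \M$ at level $c_k$ and setting $\mu_k = \sigma c_k/\gamma$ gives $\mu_k \nearrow \infty$. Lemma \ref{Lemma 5} then produces the solutions $\overline{u}_k$ in \eqref{23}. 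If some $c_k$ repeat, property (ii) of the analog of Theorem \ref{Theorem 13} provides infinitely many distinct basepoints, and hence distinct solutions by the last assertion of Lemma \ref{Lemma 5}.

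For the energy, \eqref{24} together with $\mu_k F(u^k) = \sigma/\gamma$ (from $(f_4)$ with $I_\sigma(u^k) = 1$) gives
\[
E(\overline{u}_k) = \left(1 - \frac{\sigma}{\gamma}\right) \mu_k^{- \sigma/(\sigma - \gamma)}.
\]
When $\gamma < \sigma$, the exponent $-\sigma/(\sigma - \gamma)$ is negative, so $E(\overline{u}_k) < 0$ and $E(\overline{u}_k) \nearrow 0$. When $\gamma > \sigma$, the exponent is positive, so $E(\overline{u}_k) > 0$ and $E(\overline{u}_k) \nearrow \infty$.

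The main obstacle is the \PS{} condition (b). Suppose $u_j \in \M$, $\widetilde{\Phi}(u_j) \to c$, and $\widetilde{\Phi}'(u_j) \to 0$. We pass to a weakly convergent subsequence $u_j \wto u$. By $(f_3)$, $F(u_j) \to F(u) = 1/c > 0$, so $u \ne 0$. The vanishing-derivative condition gives $\As(u_j) - \mu_j f(u_j) \to 0$ in $W^\ast$ with $\mu_j$ bounded, after projecting the derivative onto the tangent space and using $\As(u_j) u_j \ge c_0 > 0$. This positive lower bound on $\M$ needs an argument; we plan to use the identity relating $\As(u)u$ and $I_\sigma$ via scaling, $\frac{d}{dt} I_\sigma(u_t)|_{t=1} = \sigma$, which gives $\As(u)\, \frac{d}{dt} u_t = \sigma$. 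Then $\As(u_j)(u_j - u) = \mu_j f(u_j)(u_j - u) + \o(1) \to 0$ by $(f_3)$, and $(A_7)$ gives strong convergence. Making this tangent-space decomposition rigorous on the Finsler manifold is the delicate step; we will mimic \cite[Lemma 2.7]{MR5043800}.
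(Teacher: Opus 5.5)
Your overall approach is the paper's. You take the minimax of $1/F$ (the paper's $\widetilde{G}$) over the index classes $\F_k$ on $\M$, identify its critical points as eigenfunctions of \eqref{17} with $\mu = \frac{\sigma}{\gamma}\, \widetilde{G}(u)$ via $(f_4)$, transfer them by Lemma \ref{Lemma 5}, and read off the energy from \eqref{24}.

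The plan goes wrong in the step you single out, the \PS{} condition. You normalize the multiplier by projecting along $u_j$, so you need $\As(u_j) u_j \ge c_0 > 0$ on $\M$. You plan to get this from the identity $\As(u)\, \dot{u} = \sigma$, where $\dot{u}$ is the $t$-derivative of $u_t$ at $t = 1$. That does not work, for two reasons:
\begin{itemize}
\item The hypotheses $(A_1)$--$(A_5)$ do not make $t \mapsto u_t$ differentiable.
\item Even when it is differentiable, as for \eqref{103}, the derivative is $\alpha u + x \cdot \nabla u$, which is not a multiple of $u$. So the identity gives no information about $\As(u) u$.
\end{itemize}
Since $\M$ is not compact, nothing among the hypotheses obviously supplies such a lower bound.

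The lower bound is also unnecessary. By \eqref{41}, $\bgdnorm[u_j]{\widetilde{G}'(u_j)} \to 0$ directly gives $\nu_j \in \R$ with $\nu_j \As(u_j) - \widetilde{G}(u_j)^2 f(u_j) \to 0$ in $W^\ast$.
\begin{itemize}
\item Test with the weak limit $u$. The term $\As(u_j) u$ stays bounded because $\As$ maps bounded sets into bounded sets. Meanwhile $\widetilde{G}(u_j)^2 f(u_j) u \to c^2 f(u) u > 0$ by $(f_3)$, $(f_2)$, and $u \ne 0$. Hence $\nu_j$ is bounded away from zero.
\item Test with $u_j - u$ and use $(f_3)$. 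This gives $\As(u_j)(u_j - u) \to 0$, and $(A_7)$ yields strong convergence.
\end{itemize}
In your normalization, $\mu_j = \widetilde{G}(u_j)^2/\nu_j$ is bounded because of an \emph{upper} bound on $\As(u_j)$, not a lower bound on $\As(u_j) u_j$.

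Two minor corrections:
\begin{itemize}
\item $\gamma > 0$ follows most cleanly from $(f_1)$, $(f_2)$, and continuity, since $t^\gamma F(u) = F(u_t) \to F(0) = 0$ as $t \to 0$.
\item In step (c), the fact you need is $i(\pi(S_V)) \ge i(S_V) = \dim V$ for the unit sphere $S_V$ of a finite-dimensional subspace $V$. This comes from $(i_2)$ and the oddness of $\pi$, not from $(i_3)$.
\end{itemize}
Once $c_k \nearrow \infty$, infinitely many distinct basepoints, and hence distinct solutions, are automatic.
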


The potential $F$ of $f$ is even and bounded on bounded sets (see \cite[Proposition 2.2]{MR5043800}). By $(f_2)$, $F(u) > 0$ for all $u \in W \setminus \set{0}$, so the functional
\[
G(u) = \frac{1}{F(u)}, \quad u \in W \setminus \set{0}
\]
is positive and $\widetilde{G} = \restr{G}{\M}$ is $C^1$. Since
\[
I_\sigma'(u) = \As(u), \qquad G'(u) = - \frac{F'(u)}{F(u)^2} = - G(u)^2\, f(u),
\]
the norm of $\widetilde{G}'(u)$ as an element of the cotangent space $T_u^\ast \M$ at $u \in \M$ is given by
\begin{equation} \label{41}
\bgdnorm[u]{\widetilde{G}'(u)} = \min_{\nu \in \R}\, \bgdnorm{\nu I_\sigma'(u) + G'(u)} = \min_{\nu \in \R}\, \bgdnorm{\nu \As(u) - \widetilde{G}(u)^2\, f(u)},
\end{equation}
where $\dnorm{\cdot}$ is the norm in $W^\ast$ (see, e.g., \cite[Proposition 3.54]{MR2640827}).

\begin{lemma} \label{Lemma 6}
If $u \in \M$ is a critical point of $\widetilde{G}$, then $u$ is an eigenfunction of problem \eqref{17} associated with the eigenvalue
\begin{equation} \label{20}
\mu = \frac{\sigma}{\gamma}\, \widetilde{G}(u).
\end{equation}
\end{lemma}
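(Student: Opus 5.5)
If $u \in \M$ is a critical point of $\widetilde{G}$, then \eqref{41} gives $\nu \in \R$ with
\[
\nu \As(u) = \widetilde{G}(u)^2\, f(u) \quad \text{in } W^\ast .
\]
First we show $\nu \ne 0$. If $\nu = 0$, then $\widetilde{G}(u)^2 f(u) = 0$, and since $\widetilde{G}(u) > 0$ this gives $f(u) = 0$. Then $f(u)u = 0$, which contradicts $(f_2)$ because $u \ne 0$ (elements of $\M$ satisfy $I_\sigma(u) = 1$). So $\nu \ne 0$, and $u$ solves \eqref{17} with
\[
\mu = \frac{\widetilde{G}(u)^2}{\nu}.
\]
Note that $\mu \ne 0$, consistent with Lemma \ref{Lemma 5}.

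It remains to identify $\mu$, and for this we use the scaling identity $(f_4)$ rather than testing the equation against $u$. Since $u$ solves $\As(u) = \mu f(u)$, $(f_4)$ gives $\sigma I_\sigma(u) = \gamma \mu F(u)$. On $\M$ we have $I_\sigma(u) = 1$, and $F(u) = 1/\widetilde{G}(u)$. Hence
\[
\sigma = \frac{\gamma \mu}{\widetilde{G}(u)}, \qquad \text{that is,} \qquad \mu = \frac{\sigma}{\gamma}\, \widetilde{G}(u),
\]
which is \eqref{20}. Here $\gamma \ne 0$: since $F(u) > 0$, $\mu \ne 0$, and $\sigma > 0$, the identity $\sigma = \gamma\mu F(u)$ is impossible with $\gamma = 0$.

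A consequence worth recording is that $\mu$ has the sign of $\gamma$. Comparing with $\mu = \widetilde{G}(u)^2/\nu$ shows that $\nu$ also has the sign of $\gamma$, so $\nu$ need not be positive. This is consistent with $(A_6)$ and $(f_2)$, because in general $\As(u)u$ is not $\sigma I_\sigma(u)$, so testing the equation against $u$ alone does not determine $\mu$.

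The main subtlety is the interpretation of \eqref{41}: a critical point of $\widetilde{G}$ means the minimum there is zero. Since the minimum over $\nu$ is attained, this yields an exact Lagrange multiplier $\nu$. Once $\nu$ is in hand, the rest follows directly from $(f_2)$ and $(f_4)$.
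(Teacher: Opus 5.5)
Your argument is correct and is essentially the paper's proof. The paper uses $(f_4)$ to get $\nu = \frac{\gamma}{\sigma}\,\widetilde{G}(u)$ and then reads off $\mu = \widetilde{G}(u)^2/\nu$, while you set $\mu = \widetilde{G}(u)^2/\nu$ first and apply $(f_4)$ to $\mu$; this is the same computation.

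Your aside that $\nu$ need not be positive is wrong, although the proof does not use it. Testing $\As(u) = \mu f(u)$ with $u$ gives $\As(u)u = \mu\, f(u)u$, and $(A_6)$ and $(f_2)$ make both $\As(u)u$ and $f(u)u$ positive. So $\mu > 0$ and hence $\nu > 0$: testing against $u$ fixes the sign of $\mu$, just not its value. Likewise $\gamma > 0$ holds automatically, since $F(u_t) = t^\gamma F(u)$ must tend to $F(0) = 0$ as $t \to 0$ while $F(u) > 0$.
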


\begin{proof}
By \eqref{41},
\begin{equation} \label{19}
\nu \As(u) = \widetilde{G}(u)^2\, f(u)
\end{equation}
for some $\nu \in \R$. Since $\widetilde{G}(u) > 0$ and $f(u) \ne 0$ by $(f_2)$, we have $\nu \ne 0$, so $(f_4)$ gives
\[
\sigma \nu I_\sigma(u) = \gamma\, \widetilde{G}(u)^2\, F(u).
\]
Since $u \in \M$, this gives
\[
\nu = \frac{\gamma}{\sigma}\, \widetilde{G}(u),
\]
so \eqref{19} reduces to \eqref{17} with $\mu$ as in \eqref{20}.
\end{proof}

\begin{lemma}
$\widetilde{G}$ satisfies the {\em \PS{}} condition.
\end{lemma}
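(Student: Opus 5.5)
Let $\seq{u_j} \subset \M$ be a \PS{c} sequence for $\widetilde{G}$, i.e., $\widetilde{G}(u_j) \to c$ and $\bgdnorm[u_j]{\widetilde{G}'(u_j)} \to 0$. We will extract a strongly convergent subsequence. Since $\M$ is bounded (by $(A_{10})$), $W$ is reflexive, and $\widetilde{G}(u_j)$ is bounded, we pass to a renamed subsequence with $u_j \wto u$ in $W$ and $\widetilde{G}(u_j) \to c \ge 0$. First we show $u \ne 0$ and $c > 0$. Since $F(u_j) = 1/\widetilde{G}(u_j)$ stays bounded away from $0$ (as $c < \infty$), and $(f_3)$ gives $F(u_j) \to F(u)$ (the argument of \cite[Proposition 2.4]{MR5043800} applies verbatim with $f$ in place of $\Bs$), we get $F(u) = 1/c > 0$. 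Hence $u \ne 0$ and $c \in (0,\infty)$.

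By \eqref{41} there are $\nu_j \in \R$ with
\[
\nu_j \As(u_j) - \widetilde{G}(u_j)^2\, f(u_j) \to 0 \quad \text{in } W^\ast.
\]
Next we bound $\nu_j$ away from $0$ and $\infty$. Testing with $u_j$ gives $\nu_j \As(u_j) u_j - \widetilde{G}(u_j)^2 f(u_j) u_j \to 0$. The quantities $f(u_j) u_j$ are bounded (since $f$ maps bounded sets to bounded sets), and by $(f_3)$ they converge to $f(u) u > 0$ by $(f_2)$. So $\nu_j \As(u_j) u_j \to c^2 f(u) u > 0$. Since $\As(u_j) u_j$ is bounded, $\nu_j$ is bounded away from $0$. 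For the upper bound, we need $\As(u_j) u_j$ bounded away from $0$. I would argue this via the scaling: on $\M$ we have $I_\sigma(u_j) = 1$. Differentiating \eqref{104} in $t$ at $t=1$ gives $\As(u) \frac{d}{dt} u_t|_{t=1} = \sigma I_\sigma(u)$, but the scaling need not be differentiable. So I would instead argue by contradiction: if $\nu_j \to \infty$ along a subsequence, then $\As(u_j) = \nu_j^{-1}(\widetilde{G}(u_j)^2 f(u_j) + o(1)) \to 0$ in $W^\ast$. Then $\As(u_j)(u_j - u) \to 0$, so $(A_7)$ gives $u_j \to u$ strongly, and by continuity $\As(u) = 0$. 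This contradicts $(A_6)$ since $u \ne 0$. Hence $\nu_j$ is bounded, and after passing to a subsequence $\nu_j \to \nu \ne 0$.

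Finally, dividing by $\nu_j$ gives
\[
\As(u_j) = \nu_j^{-1}\, \widetilde{G}(u_j)^2\, f(u_j) + o(1).
\]
By $(f_3)$ the right-hand side converges strongly in $W^\ast$ to $\nu^{-1} c^2 f(u)$. Because $u_j - u$ is bounded and $u_j - u \wto 0$, we get $\As(u_j)(u_j - u) \to 0$. Then $(A_7)$ yields a subsequence $u_j \to u$ strongly. Since $\M$ is closed, $u \in \M$, which proves the \PS{} condition.

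The main obstacle is controlling the Lagrange multipliers $\nu_j$, specifically excluding $\nu_j \to \infty$. The contradiction argument via $(A_7)$ and $(A_6)$ handles this without needing differentiability of the scaling. A second point to verify is that $(f_3)$ implies weak-to-strong continuity of $F$, which follows from $F(u) = \int_0^1 f(\tau u) u\, d\tau$ and dominated convergence.
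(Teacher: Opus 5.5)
Your proof is correct and follows the paper's argument: pass to a weak limit $u$, use $(f_3)$ to get $F(u_j)\to F(u)>0$, obtain multipliers $\nu_j$ from \eqref{41} that are bounded away from zero, then conclude $\As(u_j)(u_j-u)\to 0$ and apply $(A_7)$. One step is superfluous: you do not need an upper bound on $\nu_j$ (your contradiction argument via $(A_6)$ is valid but unnecessary), because testing the multiplier relation against $u_j - u$ and using only $\inf_j |\nu_j| > 0$ already gives $\As(u_j)(u_j-u)\to 0$, and this is exactly what the paper does.
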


\begin{proof}
Let $c \in \R$ and let $\seq{u_j} \subset \M$ be a \PS{c} sequence of $\widetilde{G}$, i.e.,
\[
\widetilde{G}(u_j) \to c, \qquad \bgdnorm[u_j]{\widetilde{G}'(u_j)} \to 0.
\]
Since $\M$ is a bounded manifold, $\seq{u_j}$ is bounded and hence converges weakly to some $u \in W$ for a renamed subsequence. Then it follows from $(f_3)$ that $F(u_j) \to F(u)$ (see the proof of Proposition 2.4 in {\cite{MR5043800}}). Since
\[
\widetilde{G}(u_j) = \frac{1}{F(u_j)} \to c,
\]
it follows that $c > 0$ and $F(u) > 0$, in particular, $u \ne 0$.

By \eqref{41}, $\bgdnorm[u_j]{\widetilde{G}'(u_j)} \to 0$ implies that
\begin{equation} \label{42}
\nu_j\, \As(u_j) - \widetilde{G}(u_j)^2\, f(u_j) \to 0
\end{equation}
for some sequence $\seq{\nu_j} \subset \R$. Applying \eqref{42} to $u$ and noting that $\As(u_j) u$ is bounded since $\As$ maps bounded sets into bounded sets and
\[
\widetilde{G}(u_j)^2\, f(u_j) u \to c^2\, f(u) u > 0
\]
by $(f_3)$ and $(f_2)$ shows that $\nu_j$ is bounded away from zero. Now applying \eqref{42} to $u_j - u$ shows that $\As(u_j)(u_j - u) \to 0$, so $u_j \to u$ for a further subsequence by $(A_7)$.
\end{proof}

We are now ready to prove Theorem \ref{Theorem 10}.

\begin{proof}[Proof of Theorem \ref{Theorem 10}]
Denote by $\F$ the class of symmetric subsets of $\M$ and by $i(M)$ the cohomological index of $M \in \F$. For $k \ge 1$, let $\F_k = \bgset{M \in \F : i(M) \ge k}$ and set
\[
c_k := \inf_{M \in \F_k}\, \sup_{u \in M}\, \widetilde{G}(u).
\]
Then $c_k > 0$ and $c_k \nearrow \infty$ is a sequence of critical values of $\widetilde{G}$ (see \cite[Proposition 3.52]{MR2640827}). Let $u^k \in \M$ be a critical point of $\widetilde{G}$ associated with the critical value $c_k$. Then $u^k$ is an eigenfunction of problem \eqref{17} associated with the eigenvalue
\[
\mu_k = \frac{\sigma}{\gamma}\, \widetilde{G}(u^k) = \frac{\sigma}{\gamma}\, c_k
\]
by Lemma \ref{Lemma 6}. Hence $\overline{u}_k$ defined in \eqref{23} is a nontrivial solution of equation \eqref{18} and
\[
E(\overline{u}_k) = (1 - \mu_k F(u^k))\, \mu_k^{- \sigma/(\sigma - \gamma)} = \left(1 - \frac{\mu_k}{\widetilde{G}(u^k)}\right) \mu_k^{- \sigma/(\sigma - \gamma)} = \left(1 - \frac{\sigma}{\gamma}\right) \mu_k^{- \sigma/(\sigma - \gamma)}
\]
by Lemma \ref{Lemma 5}.
\end{proof}

\subsection{Scaling-based multiplicity results for even functionals}

Let $I \in C(W,\R)$ be an even functional satisfying
\begin{equation} \label{121}
I(u_t) = t^\sigma I(u) \quad \forall u \in W,\, t \ge 0
\end{equation}
and
\[
I(u) > 0 \quad \forall u \in W \setminus \set{0}.
\]
Let
\[
\M = \bgset{u \in W : I(u) = 1}
\]
and note that $\M$ is closed and symmetric since $I$ is continuous and even. We assume that $\M$ is a bounded set and that each ray starting from the origin intersects $\M$ at exactly one point. Making essential use of the piercing property of the cohomological index, the following theorem was proved in \cite{MR5043800}.

\begin{theorem}[{\cite[Theorem 2.33]{MR5043800}}] \label{thm:even-non-min}
Let $E \in C^1(W,\R)$ be an even functional and assume that there exists $c^\ast > 0$ such that $E$ satisfies the {\em \PS{c}} condition for all $c \in (0,c^\ast)$. Let $A_0$ and $B_0$ be symmetric subsets of $\M$ such that $A_0$ is compact, $B_0$ is closed, and
\[
i(A_0) \ge k + m - 1, \qquad i(\M \setminus B_0) \le k - 1
\]
for some $k, m \ge 1$. Assume that there exist $R > \rho > 0$ such that
\[
\sup_{u \in A}\, E(u) \le 0 < \inf_{u \in B}\, E(u), \qquad \sup_{u \in X}\, E(u) < c^\ast,
\]
where $A = \set{u_R : u \in A_0}$, $B = \set{u_\rho : u \in B_0}$, and $X = \set{u_t : u \in A_0,\, 0 \le t \le R}$. Then $E$ has $m$ distinct pairs of critical points at levels in $(0,c^\ast)$.
\end{theorem}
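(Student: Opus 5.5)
The plan is a linking argument with an index-theoretic minimax over deformed copies of the tube $X$. First I will build the family of admissible sets. Let $\Gamma$ be the class of sets $\varphi(X)$, where $\varphi \in C(W,W)$ is odd and equal to the identity on $E^{0}$; here $E^0 = \{u \in W : E(u) \le 0\}$, so $\varphi$ fixes $A$. For $j = 1,\dots,m$ I set
\[
c_j := \inf\big\{\sup_{u\in M} E(u) : M \in \Gamma_j\big\},
\]
where $\Gamma_j$ collects the closed symmetric sets $M \subset W$ such that $M \supset \varphi(X) \setminus Y$ for some odd $\varphi$ as above and some closed symmetric $Y$ with $i(Y) \le m-j$. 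This is the standard Benci-type relative index construction. Since $X$ itself is admissible for $j \le m$, we get $c_j \le \sup_X E < c^\ast$.

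The key step is the lower bound $c_1 \ge \inf_B E > 0$. This is where the piercing property $(i_7)$ enters. Take $M \supset \varphi(X)\setminus Y$ with $i(Y) \le m-1$. I will show that $M$ meets $B$, after removing $Y$. Define the odd homotopy $\Phi : A_0 \times [0,1] \to W$ by $\Phi(u,t) = \varphi(u_{tR})$. At $t=0$ we have $u_0 = 0$, and $\varphi(0) = 0$ because $E(0)=0$, so $0 \in E^0$. At $t=1$ we have $\Phi(u,1) = u_R$, which lies in $A$. Let $\pi$ be the radial projection onto $\M$, i.e. along rays, using the assumption that each ray meets $\M$ exactly once. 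Split $W$ into the closed sets $C_0 = \{w : w = 0 \text{ or } I(w) \le \rho^\sigma\}$ and $C_1 = \{w : I(w) \ge \rho^\sigma\}$. By the scaling \eqref{121} their intersection is exactly $\{u_\rho : u \in \M\}$. Since $A_0$ is compact and $R>\rho$, piercing gives
\[
i\big(\Phi(A_0\times[0,1]) \cap \{u_\rho : u\in\M\}\big) \ge i(A_0) \ge k+m-1 .
\]
Monotonicity under $u_\rho \mapsto u$ shows that the part of this intersection lying in $\{u_\rho : u \in \M\setminus B_0\}$ has index at most $k-1$. Subadditivity $(i_5)$ then shows that $\varphi(X)\cap B$ has index at least $m$, so it is not covered by $Y$. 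Hence $\sup_M E \ge \inf_B E > 0$.

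From there the rest is standard deformation theory. The levels satisfy $0 < c_1 \le \dots \le c_m < c^\ast$, and \PS{c} holds on $(0,c^\ast)$. The equivariant deformation lemma, with a deformation fixing $E^0$ since $c_j > 0$, shows that each $c_j$ is critical. If $c_j = c_{j+1} = c$, then the critical set $K_c$ is compact and does not contain $0$. Its index must be at least $2$: otherwise one removes a neighborhood $N$ with $i(N) = i(K_c) \le 1$ using continuity $(i_4)$, deforms, and contradicts the definition of $c_{j+1}$. A set of index at least $2$ is infinite, so we get $m$ distinct pairs.

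The main obstacle I expect is the piercing step. One must check that $\Phi(A_0 \times [0,1])$ is closed, which follows from compactness of $A_0$ and continuity of the scaling and of $\varphi$. One must also handle the point $t=0$, where $\Phi = 0$ is not in $W\setminus\{0\}$. I would treat this by taking $C = A_0$ and the parameter interval $[t_0,1]$ with small $t_0 > 0$ such that $I(\varphi(u_{t_0R})) < \rho^\sigma$. This is possible by continuity and $\varphi(0)=0$, so the homotopy avoids the origin on $[t_0,1]$ while still starting in $C_0$.
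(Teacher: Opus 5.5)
The paper gives no proof of this statement; it quotes it from \cite[Theorem 2.33]{MR5043800}. The standard argument for results of this type is a pseudo-index one. One uses $i^\ast(M)=\min_\gamma i(\gamma(M)\cap\M_\rho)$, where $\gamma$ ranges over odd homeomorphisms of $W$ that are the identity outside $E^{-1}(0,c^\ast)$, with minimax levels for $j=k,\dots,k+m-1$. In that scheme:
\begin{itemize}
\item the lower bound is immediate: take $\gamma=\mathrm{id}$; since $i(\M_\rho\setminus B)=i(\M\setminus B_0)\le k-1$, every $M$ with $i^\ast(M)\ge k$ meets $B$;
\item the piercing property is spent on the upper bound $i^\ast(X)\ge k+m-1$, via $(u,t)\mapsto\gamma(u_{tR})$;
\item multiplicity comes from the abstract pseudo-index theorem.
\end{itemize}
Your scheme is the Rabinowitz-type removal construction: deformed tubes $\varphi(X)$ with holes $Y$ of small index, and levels $j=1,\dots,m$. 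The roles are swapped. Your upper bound is free, and you spend piercing plus the index gap on the lower bound, through $i(\varphi(X)\cap B)\ge m$. This is a valid, self-contained alternative. The cost is that you must check by hand that your classes are stable under removal, i.e.\ $\overline{M\setminus U}\supset\varphi(X)\setminus(Y\cup U)$, and under deformation. For the latter the deformation $\eta$ must be an odd homeomorphism, so that $\eta(Y)$ is closed, avoids $0$, and has index $i(Y)$.

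A few details need repair.

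(1) $(i_5)$ is for closed sets, and $\varphi(X)\cap(\M_\rho\setminus B)$ is not closed. Argue instead as follows. Suppose the compact set $K=\varphi(X)\cap B$ had $i(K)\le m-1$. By $(i_4)$, take a closed $\delta$-neighborhood $U$ of $K$ with $i(U)=i(K)$. Then $F=\set{w\in\varphi(X)\cap\M_\rho : \dist{w}{K}\ge\delta}$ is closed and lies in $\M_\rho\setminus B$. Subadditivity gives $k+m-1\le i(U)+i(F)\le(m-1)+(k-1)$, a contradiction.

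(2) Drop the $t_0$ device. The claim that the homotopy avoids $0$ on $[t_0,1]$ does not follow: an odd $\varphi$ may vanish at points outside $E^0$. It is also not needed, since $(i_7)$ only requires $0\notin C_0\cap C_1=\M_\rho$. The paper applies $(i_7)$ exactly this way in the proof of Proposition \ref{Proposition 9}, where the homotopy equals $0$ at $t=0$.

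(3) $\varphi(0)=0$ holds because $\varphi$ is odd; $E(0)=0$ is not among the hypotheses. You do, however, need $E(0)\le0$ to know that $0\notin K_c$ for $c>0$, and the statement fails without it. Take $W=\R$, $u_t=tu$, $I(u)=|u|$, $A_0=B_0=\M=\set{\pm1}$, $k=m=1$, $E(u)=1-u^2$, $c^\ast=2$, $\rho=1/2$, $R=2$. All hypotheses hold, but the only critical point is $0$, at level $1$. So this normalization, which holds in all the paper's applications, should be made explicit.

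(4) In the coincident-level step, remove $U$ from a set in $\Gamma_{j+1}$ with $\sup E\le c+\eps$ and deform. This yields a set in $\Gamma_j$ lying below $c$, which contradicts the definition of $c_j$, not $c_{j+1}$.
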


This theorem reduces to the following corollary when $B_0 = \M$ and $k = 1$. The special case $u_t = tu$ of this corollary was proved in \cite{MR4999814}.

\begin{corollary}[{\cite[Corollary 2.34]{MR5043800}}] \label{Corollary 1}
Let $E \in C^1(W,\R)$ be an even functional and assume that there exists $c^\ast > 0$ such that $E$ satisfies the {\em \PS{c}} condition for all $c \in (0,c^\ast)$. Let $A_0$ be a compact symmetric subset of $\M$ with $i(A_0) = m \ge 1$. Assume that there exist $R > \rho > 0$ such that
\[
\sup_{u \in A}\, E(u) \le 0 < \inf_{u \in \M_\rho}\, E(u), \qquad \sup_{u \in X}\, E(u) < c^\ast,
\]
where $A = \set{u_R : u \in A_0}$, $\M_\rho = \set{u_\rho : u \in \M}$, and $X = \set{u_t : u \in A_0,\, 0 \le t \le R}$. Then $E$ has $m$ distinct pairs of critical points at levels in $(0,c^\ast)$.
\end{corollary}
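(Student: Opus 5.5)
Corollary \ref{Corollary 1} follows from Theorem \ref{thm:even-non-min} by taking $B_0 = \M$ and $k = 1$. The plan is to check each hypothesis of Theorem \ref{thm:even-non-min} under this choice and then read off the conclusion.

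First, $B_0 = \M$ is a closed symmetric subset of $\M$: $\M$ is closed and symmetric because $I$ is continuous and even. Next, $\M \setminus B_0 = \emptyset$, so definiteness $(i_1)$ gives $i(\M \setminus B_0) = 0 = k - 1$. On the other side, $A_0$ is compact and symmetric with $i(A_0) = m = k + m - 1$, so the index hypotheses of Theorem \ref{thm:even-non-min} hold with $k = 1$.

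For the geometric hypotheses, note that with $B_0 = \M$ the set $B = \set{u_\rho : u \in B_0}$ is exactly $\M_\rho$. Hence the assumption $\sup_A E \le 0 < \inf_{\M_\rho} E$ is precisely $\sup_A E \le 0 < \inf_B E$. The sets $A$ and $X$ are defined identically in the two statements, so $\sup_X E < c^\ast$ carries over unchanged. The same is true of the \PS{c} condition on $(0,c^\ast)$, the evenness and $C^1$ regularity of $E$, and the standing assumptions on $I$ and $\M$.

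Theorem \ref{thm:even-non-min} then yields $m$ distinct pairs of critical points of $E$ at levels in $(0,c^\ast)$, which is the claim. Every step is a direct verification. The only point needing care is the identification of $B$ with $\M_\rho$ and the use of the convention $i(\emptyset) = 0$, so I expect no step to be a real obstacle.
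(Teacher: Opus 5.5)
Your proposal is correct and is exactly the paper's argument: the paper obtains the corollary by specializing Theorem \ref{thm:even-non-min} to $B_0 = \M$ and $k = 1$, so that $i(\M \setminus B_0) = i(\emptyset) = 0$ and $B = \M_\rho$. Your verification of the remaining hypotheses, including that $\M$ is closed and symmetric and that $i(A_0) = m = k + m - 1$, is complete.
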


In particular, we have the following corollary when $c^\ast = \infty$.

\begin{corollary} \label{Corollary 2}
Let $E \in C^1(W,\R)$ be an even functional that satisfies the {\em \PS{c}} condition for all $c \in (0,\infty)$. Assume that for any $m \ge 1$, there exist a compact symmetric subset $A_0$ of $\M$ with $i(A_0) = m$ and $R > \rho > 0$ such that
\[
\sup_{u \in A}\, E(u) \le 0 < \inf_{u \in \M_\rho}\, E(u),
\]
where $A = \set{u_R : u \in A_0}$ and $\M_\rho = \set{u_\rho : u \in \M}$. Then $E$ has infinitely many critical points at positive levels.
\end{corollary}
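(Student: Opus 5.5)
We apply Corollary \ref{Corollary 1} once for each $m \ge 1$. Since $c^\ast = \infty$, the \PS{c} hypothesis of Corollary \ref{Corollary 1} holds for every $c \in (0,c^\ast)$ by assumption. The condition $\sup_{u \in X} E(u) < c^\ast$ is automatic, provided $\sup_X E$ is finite. To see this, note that $X = \set{u_t : u \in A_0,\, 0 \le t \le R}$ is the image of the compact set $A_0 \times [0,R]$ under the continuous scaling map $(u,t) \mapsto u_t$, so $X$ is compact. Since $E$ is continuous, $E$ is bounded above on $X$. Hence, for each $m$, the data $A_0$, $R > \rho > 0$ given in the hypothesis satisfy every assumption of Corollary \ref{Corollary 1}, which yields $m$ distinct pairs of critical points of $E$ at levels in $(0,\infty)$.

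Since $m$ is arbitrary, $E$ has at least $2m$ critical points at positive levels for every $m$. Therefore the set of critical points at positive levels is infinite.

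The only step requiring any care is the finiteness of $\sup_X E$. For this I use the continuity of the scaling as a map on $W \times [0,\infty)$, which is part of the definition of a scaling, together with the compactness of $A_0$. No further estimate is needed. If one preferred to avoid compactness, one could bound $E$ on $X$ using $(A_4)$, since $X$ is bounded. However, that route would require $E$ to be bounded on bounded sets, which is not assumed, so the compactness argument is the one I would use.
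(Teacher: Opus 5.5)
Your proof is correct and matches how the paper obtains this corollary: it is Corollary \ref{Corollary 1} with $c^\ast = \infty$, applied once for each $m \ge 1$. Your check that $\sup_X E < \infty$, via compactness of $X$ as the continuous image of $A_0 \times [0,R]$ under the scaling, is the same observation the paper makes just before Theorem \ref{Theorem 8}.
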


\subsection{Scaled linking theorem}

Let $X$ be a closed subset of $W$, let $A$ be a proper subset of $X$ that is closed and bounded, and let $B$ be a nonempty closed subset of $W$ such that $A \cap B = \emptyset$. Set
\[
\Gamma = \bgset{\gamma \in C(X,W) : \gamma(X) \text{ is closed and } \restr{\gamma}{A} = \id}.
\]

\begin{definition}
We say that $A$ links $B$ with respect to $X$ if
\[
\gamma(X) \cap B \ne \emptyset \quad \forall \gamma \in \Gamma.
\]
\end{definition}

Let $\M$ be as in the last section. The following proposition is a variant of \cite[Proposition 2.22]{MR5043800}.

\begin{proposition} \label{Proposition 9}
Let $A_0$ and $B_0$ be disjoint nonempty symmetric subsets of $\M$ such that $A_0$ is compact, $B_0$ is closed, and
\begin{equation} \label{223}
i(A_0) = i(\M \setminus B_0) < \infty.
\end{equation}
Let $e \in \M$ and let $H \in C(A_0 \times [0,1],\M)$ satisfy
\[
H(u,0) = u, \quad H(u,1) = e \quad \forall u \in A_0.
\]
Let $R > \rho > 0$ and set
\begin{gather*}
X = \set{H(u,\tau)_t : u \in A_0,\, \tau \in [0,1],\, 0 \le t \le R},\\
A = \set{u_t : u \in A_0,\, 0 \le t \le R} \cup \set{H(u,\tau)_R : u \in A_0,\, \tau \in [0,1]},\\
B = \set{u_\rho : u \in B_0}.
\end{gather*}
Then $A$ links $B$ with respect to $X$.
\end{proposition}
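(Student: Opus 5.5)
Argue by contradiction: suppose some $\gamma \in \Gamma$ has $\gamma(X) \cap B = \emptyset$, and build from $\gamma$ an odd homotopy to which the piercing property $(i_7)$ applies, producing a symmetric subset of $\M \setminus B_0$ of index at least $i(A_0)+1$. This contradicts \eqref{223} via $(i_2)$.

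First, parametrize $X$. Since the projection $\pi$ of \eqref{102} is continuous and even with respect to the scaling, and each ray meets $\M$ once, every $w \in W \setminus \set{0}$ is uniquely $w = (\pi w)_{t}$ with $t = I(w)^{1/\sigma}$. Consider the cone $CA_0 = (A_0 \times [0,1])/(A_0 \times \set{1})$ and the odd map
\[
\widehat{\gamma} : A_0 \times [0,1] \times [0,R] \to W, \qquad (u,\tau,t) \mapsto \gamma(H(u,\tau)_t).
\]
This map is not obviously odd, since $H$ need not be odd and $e$ is a single point. To handle this, set $C = \Sigma A_0$ (suspension) and define $\varphi$ on $C \times [0,1]$ by using $H(u,\cdot)$ on the upper half $s \in [0,1]$ and $-H(-u,\cdot)$ on the lower half $s \in [-1,0]$. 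Both endpoints collapse to $e$ and $-e$ respectively, so $\varphi$ is well defined and odd on the suspension. The second parameter of $\varphi$ is the scaling $t \in [0,R]$, rescaled to $[0,1]$, composed with $\gamma$ extended oddly. Oddness of the extension is available because $\gamma$ only needs to be continuous on the closed set, and we may replace $\gamma$ by its odd symmetrization on the symmetric part; more carefully, one works with $\pm$ copies as in \cite[Proposition 2.22]{MR5043800}.

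Next, define $C_0 = \set{w : I(w) \le \rho^\sigma} \cup \set{0}$ and $C_1 = \set{w : I(w) \ge \rho^\sigma}$. These are closed, and their intersection is $\M_\rho$. Because $\gamma = \id$ on $A$, we have $\gamma(H(u,\tau)_0) = 0 \in C_0$ and $\gamma(H(u,\tau)_R) = H(u,\tau)_R \in C_1$, since $R > \rho$. The piercing property $(i_7)$ then gives
\[
i\big(\varphi(C \times [0,1]) \cap \M_\rho\big) \ge i(\Sigma A_0) = i(A_0) + 1,
\]
using $(i_6)$. Here $0$ must be removed, which is legitimate since $\M_\rho$ avoids the origin. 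By hypothesis the image misses $B$, so projecting by $\pi$ (an odd map into $\M$) lands in $\M \setminus B_0$. Monotonicity $(i_2)$ then yields $i(\M \setminus B_0) \ge i(A_0) + 1$, which contradicts \eqref{223}.

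The main obstacle is the oddness/symmetrization step. Elements of $\Gamma$ are not assumed odd, whereas $(i_7)$ requires an odd homotopy. I would first try the standard trick from \cite[Proposition 2.22]{MR5043800}. If that fails, I would restrict instead to the part $\set{u_t : u \in A_0,\, t \le R} \subset A$, where $\gamma = \id$ is automatically odd, and use the set $H$-part only through the cone over $A_0$ contracting to $e$. The key point is that $\set{u_\rho}$ pierced inside $A$ itself lies in $\M_\rho \cap \set{u_\rho : u \in A_0}$. The extra index gained from the cone (contractible to $e$) comes from the fact that $A_0$ is null-homotopic in $X$ while $\M \setminus B_0$ has index exactly $i(A_0)$. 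The precise bookkeeping of this index gain is the step I expect to require the most care.
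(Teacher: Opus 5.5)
Your architecture is the paper's: argue by contradiction, build an odd map on $\Sigma A_0\times[0,1]$, apply the piercing property $(i_7)$ with $C_0=\{I\le\rho^\sigma\}$ and $C_1=\{I\ge\rho^\sigma\}$, gain $i(\Sigma A_0)=i(A_0)+1$ from $(i_6)$, and finish with the odd homeomorphism $\M_\rho\setminus B\to\M\setminus B_0$, $w\mapsto w_{\rho^{-1}}$. There is a genuine gap, though. The one non-routine step is producing an \emph{odd} map out of a $\gamma$ that is not odd, and you leave exactly this open; the mechanisms you suggest for it do not work.

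\begin{itemize}
\item As you set it up, $\varphi$ composes $\gamma$ with the odd map $[u,\tau]\mapsto -H(-u,-\tau)$ on the lower half of the suspension. So $\gamma$ would have to be evaluated on $-X$, where it is not defined.
\item Extending $\gamma$ to $X\cup(-X)$ by $w\mapsto-\gamma(-w)$ is not well defined on $X\cap(-X)$. That set may contain points $H(u,\tau)_t$ with $0<t<R$, where $\gamma$ is arbitrary.
\item An odd symmetrization such as $\frac12(\gamma(w)-\gamma(-w))$ also needs $\gamma$ on $-X$. Worse, it no longer avoids $B$, which is the property the contradiction rests on.
\item Your fallback of working only on the piece of $A$ where $\gamma=\id$ discards the suspension, and with it the extra unit of index. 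Moreover, the points $u_\rho$ with $u\in A_0$ are not in $B$ anyway, since $A_0\cap B_0=\emptyset$.
\end{itemize}

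The missing idea is to put the minus sign outside $\gamma$, so that $\gamma$ is only ever evaluated on $X$:
\[
\varphi([u,\tau],t)=\begin{cases}\gamma(H(u,\tau)_{Rt}) & \text{if } \tau\in[0,1]\\ -\gamma(H(-u,-\tau)_{Rt}) & \text{if } \tau\in[-1,0].\end{cases}
\]
\begin{itemize}
\item Oddness, $\varphi([-u,-\tau],t)=-\varphi([u,\tau],t)$, holds by construction, with no assumption on $\gamma$.
\item The map is well defined at the poles because $H(\cdot,1)=e$.
\item The two formulas agree on the equator $\tau=0$ because $H(\pm u,0)_{Rt}=(\pm u)_{Rt}$ lies in the first piece $\{v_t:v\in A_0,\,0\le t\le R\}$ of $A$, where $\gamma=\id$. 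Hence $\gamma(u_{Rt})=u_{Rt}=-(-u)_{Rt}=-\gamma((-u)_{Rt})$ by $(A_2)$ and the symmetry of $A_0$. This is precisely why that piece is included in $A$.
\item The image of $\varphi$ is $\gamma(X)\cup-\gamma(X)$. This set is compact, since $X$ is compact, and so closed as $(i_7)$ requires. It is also disjoint from the symmetric set $B$.
\end{itemize}
With this $\varphi$, the rest of your argument goes through.
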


\begin{proof}
Let $\Sigma A_0$ be the suspension of $A_0$, obtained as the quotient space of $A_0 \times [-1,1]$ with $A_0 \times \set{1}$ and $A_0 \times \set{-1}$ collapsed to different points, equipped with the free involution $[u,\tau] \mapsto [-u,-\tau]$. The cohomological index is defined for all paracompact free $\Z_2$-spaces, and properties $(i_2)$, $(i_6)$, and $(i_7)$ of Proposition \ref{Proposition 7}, which are the ones used below, hold in this generality (see \cite{MR0478189}). Recall that
\begin{equation} \label{eq:susp}
i(\Sigma A_0) = i(A_0) + 1
\end{equation}
by Proposition \ref{Proposition 7} $(i_6)$.

If $A$ does not link $B$ with respect to $X$, then there is a continuous map $\gamma : X \to W \setminus B$ with $\restr{\gamma}{A} = \id$. Consider the map $\varphi : \Sigma A_0 \times [0,1] \to W$ defined by
\[
\varphi([u,\tau],t) = \begin{cases}
\gamma(H(u,\tau)_{Rt}) & \text{if } \tau \in [0,1]\\[5pt]
- \gamma(H(-u,-\tau)_{Rt}) & \text{if } \tau \in [-1,0].
\end{cases}
\]
Note that $H(u,\tau)_{Rt} = (H(u,\tau)_R)_t \in X$ for $t \in [0,1]$ by $(A_1)$. The map $\varphi$ is well-defined on $\Sigma A_0 \times [0,1]$ since $H(u,1) = e$ is independent of $u$, and it is continuous since $\gamma$ is the identity on the set $\set{u_t : u \in A_0,\, 0 \le t \le R} \subset A$ and hence
\[
\gamma(H(u,0)_{Rt}) = \gamma(u_{Rt}) = u_{Rt} = - (-u)_{Rt} = - \gamma((-u)_{Rt}) = - \gamma(H(-u,0)_{Rt})
\]
by $(A_2)$. It is immediate from the definition that $\varphi([-u,-\tau],t) = - \varphi([u,\tau],t)$ for all $([u,\tau],t) \in \Sigma A_0 \times [0,1]$. Moreover, $\varphi(\Sigma A_0 \times [0,1]) = \gamma(X) \cup - \gamma(X)$ is compact since $X$ is compact, $\varphi(\Sigma A_0 \times \set{0}) = \set{0}$ by $(A_3)$ since $0 \in A$ and hence $\gamma(0) = 0$, and
\[
\varphi(\Sigma A_0 \times \set{1}) = \set{H(u,\tau)_R : u \in A_0,\, \tau \in [0,1]} \cup \set{- H(u,\tau)_R : u \in A_0,\, \tau \in [0,1]}
\]
since $\restr{\gamma}{A} = \id$ and $A_0$ is symmetric. Noting that $I = R^\sigma > \rho^\sigma$ on $\varphi(\Sigma A_0 \times \set{1})$ by \eqref{121} since $\pm H(u,\tau) \in \M$ and applying Proposition \ref{Proposition 7} $(i_7)$ with $C = \Sigma A_0$, $C_0 = \set{u \in W : I(u) \le \rho^\sigma}$, and $C_1 = \set{u \in W : I(u) \ge \rho^\sigma}$ gives
\begin{equation}
i(\Sigma A_0) \le i((\gamma(X) \cup - \gamma(X)) \cap \M_\rho),
\end{equation}
where
\[
\M_\rho = C_0 \cap C_1 = \set{u \in W : I(u) = \rho^\sigma} = \set{u_\rho : u \in \M}.
\]
Since $\gamma(X) \cap B = \emptyset$ and $B$ is symmetric,
\[
(\gamma(X) \cup - \gamma(X)) \cap \M_\rho \subset \M_\rho \setminus B,
\]
and the map $\M_\rho \setminus B \to \M \setminus B_0,\, u \mapsto u_{\rho^{-1}}$ is an odd homeomorphism, so
\begin{equation} \label{224}
i((\gamma(X) \cup - \gamma(X)) \cap \M_\rho) \le i(\M_\rho \setminus B) = i(\M \setminus B_0)
\end{equation}
by Proposition \ref{Proposition 7} $(i_2)$. Combining \eqref{eq:susp}--\eqref{224} gives $i(A_0) + 1 \le i(\M \setminus B_0)$, contradicting \eqref{223}.
\end{proof}

The following scaled linking theorem follows from Proposition \ref{Proposition 9} (see, e.g., \cite[Proposition 3.21]{MR2640827}). Note that the set $X$ in Proposition \ref{Proposition 9} is compact since $A_0$ is compact and the scaling is continuous, so $\gamma(X)$ is automatically closed for every $\gamma \in C(X,W)$ and the closedness requirement in the definition of $\Gamma$ may be dropped.

\begin{theorem} \label{Theorem 8}
Let $E \in C^1(W,\R)$. Let $A_0$ and $B_0$ be disjoint nonempty symmetric subsets of $\M$ such that $A_0$ is compact, $B_0$ is closed, and
\[
i(A_0) = i(\M \setminus B_0) < \infty.
\]
Let $e \in \M$ and let $H \in C(A_0 \times [0,1],\M)$ satisfy
\[
H(u,0) = u, \quad H(u,1) = e \quad \forall u \in A_0.
\]
Let $R > \rho > 0$ and set
\begin{gather*}
X = \set{H(u,\tau)_t : u \in A_0,\, \tau \in [0,1],\, 0 \le t \le R},\\
A = \set{u_t : u \in A_0,\, 0 \le t \le R} \cup \set{H(u,\tau)_R : u \in A_0,\, \tau \in [0,1]},\\
B = \set{u_\rho : u \in B_0}.
\end{gather*}
Assume that
\[
\sup_{u \in A}\, E(u) \le \inf_{u \in B}\, E(u), \qquad \sup_{u \in X}\, E(u) < \infty.
\]
Let $\Gamma = \bgset{\gamma \in C(X,W) : \restr{\gamma}{A} = \id}$ and set
\[
c := \inf_{\gamma \in \Gamma}\, \sup_{u \in \gamma(X)}\, E(u).
\]
Then $\inf_{u \in B}\, E(u) \le c \le \sup_{u \in X}\, E(u)$. If, in addition, $E$ satisfies the {\em \PS{c}} condition, then $c$ is a critical value of $E$.
\end{theorem}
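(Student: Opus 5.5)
This is the standard deduction of a minimax critical value from a linking statement, with Proposition \ref{Proposition 9} supplying the linking. The plan is to prove the two inequalities for $c$ separately, and then obtain criticality of $c$ from a deformation argument.

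\emph{Upper bound.} The identity map of $X$ lies in $\Gamma$. Indeed, $A \subset X$: take $\tau = 0$ in the definition of $X$ to get $\set{u_t : u \in A_0,\, 0 \le t \le R}$, and take $t = R$ to get the second piece of $A$. Hence $c \le \sup_{u \in X} E(u) < \infty$. As noted before the theorem, $X$ is compact, because $A_0 \times [0,1] \times [0,R]$ is compact and $(u,\tau,t) \mapsto H(u,\tau)_t$ is continuous. Therefore every $\gamma(X)$ is compact and hence closed, so $\Gamma$ coincides with the class used in the definition of linking.

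\emph{Lower bound.} By Proposition \ref{Proposition 9}, $A$ links $B$ with respect to $X$. So for every $\gamma \in \Gamma$ we have $\gamma(X) \cap B \ne \emptyset$, which gives $\sup_{\gamma(X)} E \ge \inf_B E$. Taking the infimum over $\gamma \in \Gamma$ yields $c \ge \inf_B E$.

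\emph{Criticality.} Assume \PS{c} and suppose, for contradiction, that $c$ is not a critical value. The standard deformation lemma (for instance in the form of \cite[Proposition 3.21]{MR2640827}) then gives some $\eps_0 > 0$ such that, for every $\eps \in (0,\eps_0)$, there is a homeomorphism $\eta$ of $W$ with two properties: $\eta(E^{c+\eps}) \subset E^{c-\eps}$, and $\eta = \id$ outside $E^{-1}([c - \eps_0, c + \eps_0])$.

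The main obstacle is keeping $\eta \circ \gamma$ in $\Gamma$, that is, ensuring $\eta$ fixes $A$. I would split into two cases.

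\emph{Case $\sup_A E < c$.} Choose $\eps_0 < c - \sup_A E$. Then $\eta$ is the identity on $A$. Pick $\gamma \in \Gamma$ with $\sup_{\gamma(X)} E \le c + \eps$. Then $\eta \circ \gamma \in \Gamma$ and $\sup_{\eta\gamma(X)} E \le c - \eps$, contradicting the definition of $c$.

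\emph{Case $\sup_A E = c$.} Then $c = \inf_B E = \sup_A E$. Here I would use the refined version of the linking/deformation theorem, which in this degenerate case still produces a critical point at level $c$, located on $B$. The key facts are that $A \cap B = \emptyset$ and that $B$ is closed; the former holds because $B_0 \cap A_0 = \emptyset$ together with the separation $R > \rho$ and the fiber structure. One then deforms only in a neighborhood of $B$ that avoids $A$, and this is exactly the content of \cite[Proposition 3.21]{MR2640827}, which I would cite rather than reprove.
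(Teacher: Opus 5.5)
Your proposal is correct and follows the paper's route. The paper likewise obtains the theorem by combining the linking of Proposition \ref{Proposition 9} with the abstract linking principle \cite[Proposition 3.21]{MR2640827}, after noting that $X$ is compact so that the closedness requirement in $\Gamma$ can be dropped. Your explicit bounds on $c$ and your treatment of the case $\sup_A E < c$ simply unpack that citation. In the degenerate case $\sup_A E = \inf_B E = c$, localizing the deformation near $B$ away from $A$ works because $A$ is compact and therefore has positive distance from the closed set $B$.
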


\section{Existence and multiplicity results} \label{section:main}

In this section we state and prove our main existence and multiplicity results for equation \eqref{1}. Throughout this section we write $\norm{\cdot}$ for the norm in $E^{p,q}_b(\R^N)$.

\subsection{Scaling in the ball-mass energy space}

Set
\begin{equation} \label{103}
u_t(x) = t^\alpha\, u(tx), \quad x \in \R^N,\, t \ge 0,
\end{equation}
where $\alpha$ is given in \eqref{37}. Then
\begin{gather}
\label{520} \int_{\R^N} |\nabla u_t|^2\, dx = t^{2 \alpha + 2 - N} \int_{\R^N} |\nabla u|^2\, dx,\\[10pt]
\label{521} \int_0^\infty \rho^{-b}\, h_{u_t}^q(\rho)\, d\rho = t^{(\alpha p - N)\, q} \int_0^\infty \rho^{-b}\, h_u^q(t \rho)\, d\rho = t^{(\alpha p - N)\, q + b - 1} \int_0^\infty \rho^{-b}\, h_u^q(\rho)\, d\rho,\\[10pt]
\notag \int_{\R^N} |u_t|^\beta\, dx = t^{\alpha \beta - N} \int_{\R^N} |u|^\beta\, dx,
\end{gather}
where $\beta$ is given in \eqref{31}. We have
\begin{equation} \label{55}
\sigma := 2 \alpha + 2 - N = (\alpha p - N)\, q + b - 1 = \alpha \beta - N = \frac{2\, (b_\ast - b)}{pq - 2} > 0
\end{equation}
by \eqref{29}, \eqref{34}, and \eqref{2}.

The mapping
\begin{equation} \label{32}
E^{p,q}_{b,\rad}(\R^N) \times [0,\infty) \to E^{p,q}_{b,\rad}(\R^N), \quad (u,t) \mapsto u_t
\end{equation}
is a scaling in $E^{p,q}_{b,\rad}(\R^N)$. Assumptions $(A_1)$--$(A_3)$ are clearly satisfied. By \eqref{520}, \eqref{521}, and \eqref{55},
\begin{equation} \label{38}
\norm{u_t}^{2pq} = t^{pq \sigma} \norm[L^2(\R^N)]{\nabla u}^{2pq} + t^{2 \sigma} \norm[L^{p,q}_b(\R^N)]{u}^{2pq} \le \max \set{t^{pq \sigma},t^{2 \sigma}} \norm{u}^{2pq},
\end{equation}
and $(A_4)$ and $(A_5)$ follow from this estimate. It only remains to verify continuity, which we do in the following proposition.

\begin{proposition} \label{prop:continuous}
The mapping in \eqref{32} is continuous.
\end{proposition}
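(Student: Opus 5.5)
We need continuity of $(u,t)\mapsto u_t$ on $E^{p,q}_{b,\rad}(\R^N)\times[0,\infty)$. Let $(u_j,t_j)\to(u,t)$. Writing
\[
u_{j,t_j}-u_t=(u_j-u)_{t_j}+(u_{t_j}-u_t)
\]
and using linearity of $u\mapsto u_{t}$ (from $(A_2)$ and the definition) together with \eqref{38}, we get $\norm{(u_j-u)_{t_j}}\le \max\{t_j^{\sigma/2},t_j^{\sigma/pq}\}\norm{u_j-u}\to0$, since $t_j$ is bounded. So it suffices to prove continuity in $t$ for fixed $u$, i.e.\ $u_{t_j}\to u_t$ in $E^{p,q}_b(\R^N)$ whenever $t_j\to t$.

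\textbf{Case $t=0$.} By \eqref{38}, $\norm{u_{t_j}}\le\max\{t_j^{\sigma/2},t_j^{\sigma/pq}\}\norm{u}\to0=\norm{u_0}$, since $\sigma>0$ by \eqref{55}.

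\textbf{Case $t>0$.} Here I would use density. By Theorem \ref{Theorem 9} choose $\varphi\in C^\infty_{c,\rad}(\R^N)$ with $\norm{u-\varphi}<\eps$. Then
\[
\norm{u_{t_j}-u_t}\le\norm{(u-\varphi)_{t_j}}+\norm{\varphi_{t_j}-\varphi_t}+\norm{(\varphi-u)_t},
\]
and the first and last terms are at most $C\eps$ by \eqref{38}, uniformly for $t_j$ in a compact subset of $(0,\infty)$. It remains to show $\varphi_{t_j}\to\varphi_t$ for smooth compactly supported $\varphi$.

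\textbf{Smooth functions.} For $t_j$ near $t>0$, the functions $\varphi_{t_j}(x)=t_j^\alpha\varphi(t_jx)$ are supported in a fixed ball $B_R$ and converge to $\varphi_t$ in $C^1$ uniformly. Hence $\nabla\varphi_{t_j}\to\nabla\varphi_t$ in $L^2$, and $\varphi_{t_j}\to\varphi_t$ in $L^s(B_R)$ for any $s$. Taking $s$ as in \eqref{15}, Lemma \ref{Lemma 3} gives convergence in $L^{p,q}_b(\R^N)$. Combining the two estimates, $\norm{\varphi_{t_j}-\varphi_t}\to0$.

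The only genuine subtlety is the lack of uniform continuity of the scaling on nonsmooth $u$. The density step (Theorem \ref{Theorem 9}) together with the uniform bound \eqref{38} handles it, so I expect no serious obstacle beyond checking that the constant in \eqref{38} is uniform for $t_j$ near $t$.
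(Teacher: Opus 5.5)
Your proof is correct and shares the paper's outline. Both use the splitting $(u_j-u)_{t_j}+(u_{t_j}-u_t)$, controlled by the bound $\|v_t\|\le\max\{t^{\sigma/2},t^{\sigma/pq}\}\|v\|$ from \eqref{38}, dispose of $t=0$ directly, and reduce to $C^\infty_{c,\rad}(\R^N)$ by Theorem \ref{Theorem 9}. The routes differ in how the smooth case is handled.

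The paper first uses $(A_1)$ to reduce to $t_j\to 1$ and then argues softly. The sequence $(u_{t_j})$ is bounded, so a subsequence converges weakly, and the limit is identified as $u$ by pointwise convergence. The exact identity in \eqref{38} gives $\|u_{t_j}\|\to\|u\|$, and uniform convexity (Theorem \ref{Theorem 4}) upgrades weak convergence to strong convergence.

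You verify the convergence directly instead. For $t>0$ the supports of $\varphi_{t_j}$ stay in a fixed ball, and $C^1$ convergence gives convergence of the gradients in $L^2$. Lemma \ref{Lemma 3} then turns uniform convergence on that ball into convergence in $L^{p,q}_b(\R^N)$; it can be applied with any $s$ in the range \eqref{15}, which is nonempty by \eqref{106} and \eqref{404}.

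What each route buys:
\begin{itemize}
\item Your argument is more elementary: it needs neither reflexivity nor uniform convexity.
\item You write out the $\eps/3$ density step that the paper compresses into ``by density.'' You also correctly observe that this step only needs the constant in \eqref{38} to be uniform for bounded $t$.
\item The paper's argument uses smoothness only to identify the weak limit, since norm convergence holds for every $u$ by \eqref{38}. It would therefore work without density if the weak limit were identified another way, for instance through continuity of dilations in $L^p_\loc(\R^N)$ together with Theorem \ref{Theorem 3}.
\end{itemize}
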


\begin{proof}
We will show that if $u_j \to u$ in $E^{p,q}_{b,\rad}(\R^N)$ and $t_j \to t$ in $[0,\infty)$, then $(u_j)_{t_j} \to u_t$ in $E^{p,q}_{b,\rad}(\R^N)$. If $t = 0$, then $u_t = 0$ and $(u_j)_{t_j} \to 0$ by \eqref{38}, so suppose $t > 0$. Then
\[
\|(u_j)_{t_j} - u_t\| \le \|(u_j)_{t_j} - u_{t_j}\| + \|u_{t_j} - u_t\|,
\]
and \eqref{38} gives
\[
\|(u_j)_{t_j} - u_{t_j}\| = \|(u_j - u)_{t_j}\| \le \max \bgset{t_j^{\sigma/2},t_j^{\sigma/pq}} \norm{u_j - u} \to 0
\]
and
\[
\|u_{t_j} - u_t\| = \|(u_{t_j/t})_t - u_t\| = \|(u_{t_j/t} - u)_t\| \le \max \set{t^{\sigma/2},t^{\sigma/pq}} \norm{u_{t_j/t} - u}.
\]
So it suffices to show that $u_{t_j} \to u$ if $t_j \to 1$. We may assume that $u \in C^\infty_{c,\rad}(\R^N)$ by density (see Theorem \ref{Theorem 9}).

Since $(u_{t_j})$ is bounded in $E^{p,q}_{b,\rad}(\R^N)$ by \eqref{38} and $E^{p,q}_{b,\rad}(\R^N)$ is reflexive by Theorem \ref{Theorem 4}, a renamed subsequence of $(u_{t_j})$ converges weakly in $E^{p,q}_{b,\rad}(\R^N)$. The weak limit is $u$ since $u_{t_j} \to u$ pointwise as $t_j \to 1$ by the continuity of $u$. Since
\[
\|u_{t_j}\|^{2pq} = t_j^{pq \sigma} \norm[L^2(\R^N)]{\nabla u}^{2pq} + t_j^{2 \sigma} \norm[L^{p,q}_b(\R^N)]{u}^{2pq} \to \norm[L^2(\R^N)]{\nabla u}^{2pq} + \norm[L^{p,q}_b(\R^N)]{u}^{2pq} = \norm{u}^{2pq}
\]
by \eqref{38} and hence $\|u_{t_j}\| \to \norm{u}$, the desired conclusion follows from uniform convexity.
\end{proof}

\subsection{Ball-mass operator and its potential}

The scaled operator that corresponds to the left-hand side of equation \eqref{1} and its potential are given by
\begin{equation} \label{100}
\As(u) v = \int_{\R^N} \nabla u \cdot \nabla v\, dx + \int_{\R^N} \Phi_u(|x|)\, |u|^{p-2}\, uv\, dx, \quad u, v \in E^{p,q}_{b,\rad}(\R^N)
\end{equation}
and
\begin{equation} \label{101}
I_\sigma(u) = \frac{1}{2} \int_{\R^N} |\nabla u|^2\, dx + \frac{a}{pq} \int_0^\infty \rho^{-b}\, h_u^q(\rho)\, d\rho, \quad u \in E^{p,q}_{b,\rad}(\R^N),
\end{equation}
respectively (see Proposition \ref{prop:c1}). By \eqref{73},
\begin{equation} \label{134}
\As(u) u = \int_{\R^N} |\nabla u|^2\, dx + a \int_0^\infty \rho^{-b}\, h_u^q(\rho)\, d\rho
\end{equation}
and \eqref{101} can also be written as
\begin{equation} \label{140}
I_\sigma(u) = \frac{1}{2} \int_{\R^N} |\nabla u|^2\, dx + \frac{1}{pq} \int_{\R^N} \Phi_u(|x|)\, |u|^p\, dx,
\end{equation}
which is often the more convenient form in the applications. Assumptions $(A_6)$ and $(A_{10})$ are clearly satisfied. We verify assumption $(A_7)$ in the following proposition.

\begin{proposition} \label{prop:a7}
Every sequence $\seq{u_j}$ in $E^{p,q}_{b,\rad}(\R^N)$ such that $u_j \wto u$ and
\begin{equation} \label{40}
\limsup_{j \to \infty}\halfthin \left(\int_{\R^N} \nabla u_j \cdot \nabla (u_j - u)\, dx + \int_{\R^N} \Phi_{u_j}(|x|)\, |u_j|^{p-2}\, u_j\halfthin (u_j - u)\, dx\right) \le 0
\end{equation}
converges strongly to $u$.
\end{proposition}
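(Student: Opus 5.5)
The approach is the standard $(S_+)$ argument, using uniform convexity of $E^{p,q}_b(\R^N)$ (Theorem \ref{Theorem 4}). It is enough to prove $\norm{u_j} \to \norm{u}$; weak convergence plus convergence of norms in a uniformly convex space then gives strong convergence. Because the norm combines $\norm[L^2(\R^N)]{\nabla u}$ and $\norm[L^{p,q}_b(\R^N)]{u}$ through an $\ell^{2pq}$ sum, it suffices in turn to show that each of these two quantities converges to its value at $u$.

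Write the expression in \eqref{40} as a sum of two terms. By weak convergence,
\[
\int_{\R^N} \nabla u_j \cdot \nabla(u_j - u)\, dx = \norm[L^2(\R^N)]{\nabla u_j}^2 - \int_{\R^N} \nabla u_j \cdot \nabla u\, dx,
\]
where the last integral tends to $\norm[L^2(\R^N)]{\nabla u}^2$. For the nonlocal term, \eqref{73} and Corollary \ref{cor:bm-pairing} give
\[
\int_{\R^N} \Phi_{u_j}\, |u_j|^{p-2}\, u_j\, (u_j - u)\, dx \ge a \norm[L^{p,q}_b(\R^N)]{u_j}^{pq} - a \norm[L^{p,q}_b(\R^N)]{u_j}^{pq-1} \norm[L^{p,q}_b(\R^N)]{u}
= a \norm[L^{p,q}_b(\R^N)]{u_j}^{pq-1}\big(\norm[L^{p,q}_b(\R^N)]{u_j} - \norm[L^{p,q}_b(\R^N)]{u}\big).
\]
A cleaner alternative is Theorem \ref{thm:weak-cont}, which shows that $\int \Phi_{u_j}|u_j|^{p-2}u_j u\,dx \to \int \Phi_u|u|^p\,dx = a\norm[L^{p,q}_b(\R^N)]{u}^{pq}$. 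Either way, \eqref{40} yields
\[
\limsup_{j\to\infty} \Big(\norm[L^2(\R^N)]{\nabla u_j}^2 - \norm[L^2(\R^N)]{\nabla u}^2 + a\,\norm[L^{p,q}_b(\R^N)]{u_j}^{pq} - a\,\norm[L^{p,q}_b(\R^N)]{u}^{pq}\Big) \le 0 .
\]

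Weak lower semicontinuity of the two seminorms then finishes the argument. Both maps are continuous and convex on $E^{p,q}_{b,\rad}(\R^N)$, so $\liminf \norm[L^2(\R^N)]{\nabla u_j} \ge \norm[L^2(\R^N)]{\nabla u}$ and $\liminf \norm[L^{p,q}_b(\R^N)]{u_j} \ge \norm[L^{p,q}_b(\R^N)]{u}$. Since the limsup of the sum of the two differences is $\le 0$ while each difference has nonnegative liminf, each difference tends to $0$. To see this, pass to a subsequence along which both quantities converge; the limits must be at least the values at $u$ and sum to at most those values, so they are equal. This holds for every subsequence, so $\norm[L^2(\R^N)]{\nabla u_j} \to \norm[L^2(\R^N)]{\nabla u}$ and $\norm[L^{p,q}_b(\R^N)]{u_j} \to \norm[L^{p,q}_b(\R^N)]{u}$, hence $\norm{u_j} \to \norm{u}$, and uniform convexity gives $u_j \to u$ strongly.

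The only step needing care is the lower bound on the nonlocal term. The pairing inequality of Corollary \ref{cor:bm-pairing} handles it directly, with $\norm[L^{p,q}_b(\R^N)]{u_j}$ bounded. If one prefers the weak continuity route, one must check that Theorem \ref{thm:weak-cont} applies with test function $v = u$ fixed; it does, since $u \in E^{p,q}_{b,\rad}(\R^N)$. This route gives the exact limit, so no obstacle remains beyond bookkeeping.
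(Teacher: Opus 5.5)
Your argument is correct, but its main route differs from the paper's. The paper never evaluates the limit of the cross term. Since $v \mapsto A_\sigma(u)v$ is a bounded linear functional by Corollary \ref{cor:bm-pairing}, $A_\sigma(u)(u_j - u) \to 0$, and subtracting this from \eqref{40} gives
\[
\limsup_{j \to \infty} \Big(\norm[L^2(\R^N)]{\nabla(u_j - u)}^2 + \int_{\R^N} \big[\Phi_{u_j}\, |u_j|^{p-2}\, u_j - \Phi_u\, |u|^{p-2}\, u\big] (u_j - u)\, dx\Big) \le 0 .
\]
Two applications of the pairing inequality then bound the integral below by $a\,(x_j^{pq-1} - y^{pq-1})(x_j - y) \ge 0$, where $x_j = \norm[L^{p,q}_b(\R^N)]{u_j}$ and $y = \norm[L^{p,q}_b(\R^N)]{u}$. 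Both terms are nonnegative, so both tend to zero.

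This is the classical monotonicity ($S_+$) argument. It needs neither compactness nor weak lower semicontinuity, and it gives $\nabla u_j \to \nabla u$ in $L^2(\R^N;\R^N)$ directly. Your route via Theorem \ref{thm:weak-cont} yields the exact limit of the nonlocal term. The cost is that it imports the $L^p_\loc(\R^N)$ convergence of $u_j$ on which that theorem rests, a compactness fact, and it then needs a separate lower semicontinuity step. It is heavier than necessary but sound.

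One small inaccuracy concerns your route (a). A single use of the pairing inequality does not yield your displayed inequality with $a\,(x_j^{pq} - y^{pq})$. It only yields it with $a\, x_j^{pq-1}(x_j - y)$ in its place. This still closes the argument: the latter has nonnegative $\liminf$ because $\liminf x_j \ge y$ and $x_j$ is bounded, and its vanishing forces $x_j \to y$. But ``either way'' is not literally right.
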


\begin{proof}
Corollary \ref{cor:bm-pairing} shows that the mapping
\[
E^{p,q}_{b,\rad}(\R^N) \to \R, \quad v \mapsto \int_{\R^N} \nabla u \cdot \nabla v\, dx + \int_{\R^N} \Phi_u(|x|)\, |u|^{p-2}\, uv\, dx
\]
is a bounded linear functional on $E^{p,q}_{b,\rad}(\R^N)$, so weak convergence implies
\[
\int_{\R^N} \nabla u \cdot \nabla (u_j - u)\, dx + \int_{\R^N} \Phi_u(|x|)\, |u|^{p-2}\, u\, (u_j - u)\, dx \to 0.
\]
Subtracting this from \eqref{40} gives
\[
\limsup_{j \to \infty}\halfthin \left(\norm[L^2(\R^N)]{\nabla (u_j - u)}^2 + \int_{\R^N} \left[\Phi_{u_j}(|x|)\, |u_j|^{p-2}\, u_j - \Phi_u(|x|)\, |u|^{p-2}\, u\right] (u_j - u)\, dx\right) \le 0.
\]
By Corollary \ref{cor:bm-pairing}, the last integral is greater than or equal to
\begin{multline*}
a \left(\norm[L^{p,q}_b(\R^N)]{u_j}^{pq} - \norm[L^{p,q}_b(\R^N)]{u_j}^{pq-1} \norm[L^{p,q}_b(\R^N)]{u} - \norm[L^{p,q}_b(\R^N)]{u}^{pq-1} \norm[L^{p,q}_b(\R^N)]{u_j} + \norm[L^{p,q}_b(\R^N)]{u}^{pq}\right)\\[7.5pt]
= a \left(\norm[L^{p,q}_b(\R^N)]{u_j}^{pq-1} - \norm[L^{p,q}_b(\R^N)]{u}^{pq-1}\right)\! \left(\norm[L^{p,q}_b(\R^N)]{u_j} - \norm[L^{p,q}_b(\R^N)]{u}\right) \ge 0,
\end{multline*}
so it follows that $\norm[L^2(\R^N)]{\nabla (u_j - u)} \to 0$ and $\norm[L^{p,q}_b(\R^N)]{u_j} \to \norm[L^{p,q}_b(\R^N)]{u}$. Then $\norm{u_j} \to \norm{u}$ and hence the desired conclusion follows since $u_j \wto u$ and $E^{p,q}_{b,\rad}(\R^N)$ is uniformly convex (see Theorem \ref{Theorem 4}).
\end{proof}

\subsection{Regularity of weak radial solutions}

Since $b < b_\#$ by \eqref{2} and \eqref{36}, $N\halfthin (q - 1) - b > -3$. Fix $\theta \in (0,2)$ with $N\halfthin (q - 1) - b > - \theta - 1$. We have the following regularity result and pointwise bounds near zero for weak radial solutions of equation \eqref{1}.

\begin{theorem} \label{thm:bm-regularity}
Assume \eqref{33}--\eqref{2} and \eqref{26}. If $u \in E^{p,q}_{b,\rad}(\R^N)$ is a weak radial solution of equation \eqref{1}, then $u \in L^\infty_\loc(\R^N) \cap W^{2,m}_\loc(\R^N \setminus \set{0}) \cap C^{1,\vartheta}_\loc(\R^N \setminus \set{0})$ for all $m < \infty$ and some $\vartheta \in (0,1)$. Moreover, there exists a constant $C > 0$ such that
\begin{multline} \label{58}
|u(r)| \le C, \quad |f(u)| \le C, \quad h_u(r) \le Cr^N, \quad \Phi_u(r) \le Cr^{- \theta}, \quad |u'(r)| \le Cr^{1 - \theta},\\[7.5pt]
|u''(r)| \le Cr^{- \theta}
\end{multline}
for $0 < r \le 1$.
\end{theorem}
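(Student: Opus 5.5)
The plan is to work with the radial ODE. First, away from the origin: by Theorem \ref{Theorem 6}, $u$ is bounded on $\R^N \setminus B_R$ for every $R>0$. By Theorem \ref{Theorem 3}, $h_u(\rho) \le C\rho^{(b-1)/q}$. Hence $\Phi_u$ is finite and locally bounded on $(0,\infty)$. Indeed, $\Phi_u(r) \le a\int_r^\infty \rho^{-b}h_u^{q-1}\,d\rho$, and by the pairing bound $\rho^{-b}h_u^{q-1/p}$ is integrable after H\"older. Alternatively, $\Phi_u(r) \le a\, r^{-b}$-weighted tails are controlled by $h_u^{q-1}\le C\rho^{(b-1)(q-1)/q}$, which gives $\rho^{-b+(b-1)(q-1)/q}$, integrable at $\infty$ since the exponent is $<-1$.

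Testing the weak formulation with radial $v\in C^\infty_{c,\rad}(\R^N\setminus\{0\})$ (dense by Theorem \ref{Theorem 9}) shows that $r^{N-1}u'$ is absolutely continuous, with
\[
(r^{N-1}u')' = r^{N-1}\bigl(\Phi_u |u|^{p-2}u - f(u)\bigr)
\]
on $(0,\infty)$. The right-hand side is locally bounded there. This gives $u\in W^{2,m}_\loc(\R^N\setminus\{0\})$ for all $m$, and by Morrey's embedding $C^{1,\vartheta}_\loc$ away from $0$.

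Near the origin I would bootstrap. Since $u\in H^1(B_1)$ and $|f(u)|\le C(|u|^{s_1-1}+|u|^{s_2-1})$ with $s_2\le 2^\ast$, the equation reads $-\Delta u + V u = g$, where $V=\Phi_u|u|^{p-2}\ge0$. The key observation is that the nonlocal term has a good sign. Testing with $(u-k)^+$ or $|u|^{\beta}u$ truncations, Moser/Brezis--Kato iteration on $-\Delta |u| \le |f(u)|$ (Kato's inequality) shows $u\in L^\infty(B_1)$. Here $|f(u)|\le a(x)(1+|u|)$ with $a\in L^{N/2}$ in the critical case. For $N=2$ this is immediate from Trudinger-type integrability of any power. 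One technical point is justifying the tests near $0$, where $\Phi_u$ is singular. I would use cutoffs vanishing near $0$ and pass to the limit, since $\Phi_u|u|^p\in L^1$ by \eqref{73}. Boundedness gives $|u|\le C$ and $|f(u)|\le C$, and then $h_u(r)\le Cr^N$ directly.

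For $\Phi_u$, I split $\Phi_u(r)=a\int_r^1\rho^{-b}h_u^{q-1}\,d\rho + \text{const}$. Using $h_u\le C\rho^N$, the integrand is $\le C\rho^{N(q-1)-b}$. Since $N(q-1)-b>-\theta-1$, the integral is $\le C(1+r^{N(q-1)-b+1})\le Cr^{-\theta}$. When $q<1$ is impossible, so $q-1\ge0$ is fine. Then integrate the ODE from $0$. The step I expect to be the main obstacle is showing that $r^{N-1}u'(r)\to0$ as $r\to0$; I would get it from $u\in H^1$ (so $\liminf r^{N/2}|u'|$ is small along a sequence) together with monotone integrability of the right-hand side, which is $\le Cr^{N-1-\theta}$ and hence integrable since $\theta<2\le N$. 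This yields
\[
|u'(r)| \le r^{1-N}\int_0^r C\tau^{N-1-\theta}\,d\tau \le Cr^{1-\theta}.
\]
Finally, $u''=-\frac{N-1}{r}u'+\Phi_u|u|^{p-2}u-f(u)$ gives $|u''|\le Cr^{-\theta}$. Here one must take care when $p<2$ that $|u|^{p-1}$ is bounded, which holds since $u$ is bounded.
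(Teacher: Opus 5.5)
Your proposal is correct and follows essentially the paper's proof: Kato's inequality discards the sign-definite ball-mass term, Brezis--Kato plus the local boundedness estimate for subsolutions give $u\in L^\infty$ near the origin, and then $h_u\le Cr^N$, $\Phi_u\le Cr^{-\theta}$ and integrating $-(r^{N-1}u')'=r^{N-1}g$ from $0$ yield the bounds on $u'$ and $u''$. The differences are minor: you get regularity away from the origin directly from Theorem~\ref{Theorem 6} and the one-dimensional ODE rather than from elliptic regularity, and you explicitly justify $r^{N-1}u'(r)\to 0$ as $r\to 0$ using $\nabla u\in L^2$, which the paper uses without comment.
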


\begin{proof}
Set $g = f(u) - \Phi_u\, |u|^{p-2}\, u$ so that $- \Delta u = g$ weakly. By \eqref{26} and Theorem \ref{Theorem 1}, $u \in L^1_\loc(\R^N)$ and $f(u) \in L^1_\loc(\R^N)$. Taking $v = \sign(u)\, \mathbf{1}_{B_R}$ in Corollary \ref{cor:bm-pairing} shows that $\Phi_u\, |u|^{p-2}\, u \in L^1_\loc(\R^N)$. Hence $g \in L^1_\loc(\R^N)$ and Kato's inequality gives
\[
- \Delta\, |u| \le \sign(u)\, g = \sign(u)\, f(u) - \Phi_u\, |u|^{p-1} \le C \left(|u|^{s_1 - 1} + |u|^{s_2 - 1}\right)
\]
weakly. First we show that $u \in L^m_\loc(\R^N)$ for all $m < \infty$. This follows from Theorem \ref{Theorem 1} if $N = 2$, so suppose $N \ge 3$. Then
\[
C \left(|u|^{s_1 - 1} + |u|^{s_2 - 1}\right) \le C \left(1 + |u|^{2^\ast - 1}\right) \le C \left(1 + |u|^{2^\ast - 2}\right) (1 + |u|),
\]
so $w = |u|$ is a nonnegative subsolution of $- \Delta w = V\, (1 + w)$, where $V = C \left(1 + |u|^{2^\ast - 2}\right) \in L^{N/2}_\loc(\R^N)$ since $\left(2^\ast - 2\right) N/2 = 2^\ast$ and $u \in L^{2^\ast}(\R^N)$. The Br\'{e}zis-Kato iteration (see \cite{MR539217}) now gives $u \in L^m_\loc(\R^N)$ for all $m < \infty$. So $C \left(|u|^{s_1 - 1} + |u|^{s_2 - 1}\right) \in L^m_\loc(\R^N)$ for all $m < \infty$ and hence the local boundedness estimate for subsolutions gives $u \in L^\infty_\loc(\R^N)$ (see, e.g., \cite{MR1814364}). Then $g \in L^\infty_\loc(\R^N \setminus \set{0})$ since $\Phi_u$ is continuous on $(0,\infty)$, so elliptic regularity gives $u \in W^{2,m}_\loc(\R^N \setminus \set{0})$ for all $m < \infty$ and hence $u \in C^{1,\vartheta}_\loc(\R^N \setminus \set{0})$ for some $\vartheta \in (0,1)$.

Let $0 < r \le 1$. Since $u \in L^\infty(B_1)$, $|u(r)| \le C$ and hence $|f(u)| \le C$ by \eqref{26} and $h_u(r) \le Cr^N$. Then
\[
\Phi_u(r) \le C \int_r^1 \rho^{N\, (q - 1) - b}\, d\rho + \Phi_u(1) \le Cr^{- \theta}
\]
and
\[
|g(r)| \le C\, (1 + \Phi_u(r)) \le Cr^{- \theta}.
\]
The radial form $- \left(r^{N-1}\, u'(r)\right)' = r^{N-1}\halfthin g$ of $- \Delta u = g$ now gives
\[
|u'(r)| = \abs{-r^{-(N-1)} \int_0^r \tau^{N-1}\, g(\tau)\, d\tau} \le Cr^{-(N-1)} \int_0^r \tau^{N - \theta - 1}\, d\tau = Cr^{1 - \theta}
\]
since $\theta < 2 \le N$ and
\[
|u''(r)| = \abs{-g(r) - \frac{N - 1}{r}\, u'(r)} \le |g(r)| + \frac{N - 1}{r}\, |u'(r)| \le Cr^{- \theta}. \QED
\]
\end{proof}

\begin{remark} \label{rmk:distributional}
A weak radial solution $u$ of equation \eqref{1} is a solution in the sense of distributions, i.e., the defining identity holds for all test functions $v \in C^\infty_c(\R^N)$, radial or not. Indeed, $\Phi_u(|x|)\, |u|^{p-1}$ and $f(u)$ belong to $L^1_\loc(\R^N)$ (see the proof of Theorem \ref{thm:bm-regularity}), and for $v \in C^\infty_c(\R^N)$ the spherical average
\[
\bar{v}(x) = \frac{1}{\omega_{N-1}} \int_{\bdry{B_1}} v(|x|\, \xi)\, dS
\]
belongs to $C^\infty_{c,\rad}(\R^N) \subset E^{p,q}_{b,\rad}(\R^N)$. Since $u$ and $\Phi_u$ are radial and spherical averaging commutes with radial differentiation,
\begin{gather*}
\int_{\R^N} \nabla u \cdot \nabla v\, dx = \int_{\R^N} \nabla u \cdot \nabla \bar{v}\, dx, \qquad \int_{\R^N} f(u)\, v\, dx = \int_{\R^N} f(u)\, \bar{v}\, dx,\\[7.5pt]
\int_{\R^N} \Phi_u(|x|)\, |u|^{p-2}\, uv\, dx = \int_{\R^N} \Phi_u(|x|)\, |u|^{p-2}\, u \bar{v}\, dx,
\end{gather*}
and the claim follows. We note that the principle of symmetric criticality cannot be invoked directly here since the term $\int_{\R^N} F(u)\, dx$ is in general not defined on the whole space $E^{p,q}_b(\R^N)$ by Theorem \ref{thm:no-embedding}.
\end{remark}

\subsection{Ball-mass Poho\v{z}aev identity}

We have the following analog of the Poho\v{z}aev identity \cite{MR695535,MR2431434} for equation \eqref{1}. Assumptions $(A_{11})$ and $(f_4)$ are special cases of this.

\begin{theorem} \label{thm:bm-pohozaev}
\hspace{-2.37pt}Assume \eqref{33}--\eqref{2} and \eqref{26}. \hspace{-2.37pt}Then every weak radial solution $u \in E^{p,q}_{b,\rad}(\R^N)$ of the equation
\begin{equation} \label{59}
- \Delta u + \Phi_u(|x|)\, |u|^{p-2}\, u = f(u) \quad \text{in } \R^N
\end{equation}
satisfies the Poho\v{z}aev identity
\begin{equation} \label{66}
\frac{N - 2}{2} \int_{\R^N} |\nabla u|^2\, dx + \frac{a\, (b^\# - b)}{pq} \int_0^\infty \rho^{-b}\, h_u^q(\rho)\, d\rho = N \int_{\R^N} F(u)\, dx,
\end{equation}
the Nehari identity
\begin{equation} \label{67}
\int_{\R^N} |\nabla u|^2\, dx + a \int_0^\infty \rho^{-b}\, h_u^q(\rho)\, d\rho = \int_{\R^N} f(u)\, u\, dx,
\end{equation}
and the scaling identity
\begin{equation} \label{60}
\sigma \left(\frac{1}{2} \int_{\R^N} |\nabla u|^2\, dx + \frac{a}{pq} \int_0^\infty \rho^{-b}\, h_u^q(\rho)\, d\rho\right) = \int_{\R^N} \big(\alpha\halfthin f(u)\, u - N F(u)\big)\, dx,
\end{equation}
where $F(t) = \int_0^t f(\tau)\, d\tau$ is the primitive of $f$.
\end{theorem}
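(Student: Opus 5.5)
The Nehari identity \eqref{67} is immediate: I would take $v = u$ in the definition of weak radial solution and use \eqref{73} to rewrite $\int \Phi_u |u|^p\,dx$ as $a \int_0^\infty \rho^{-b} h_u^q\,d\rho$. The integrability of $f(u)u$ follows from \eqref{26} and Theorem \ref{Theorem 1}. The scaling identity \eqref{60} is then a linear combination: multiply \eqref{67} by $\alpha$, subtract \eqref{66}, and use $\sigma = 2\alpha + 2 - N$ and $\sigma = (\alpha p - N)q + b - 1$ from \eqref{55}. The gradient coefficient is $\alpha - \frac{N-2}{2} = \frac{\sigma}{2}$. The ball-mass coefficient is $a\bigl(\alpha - \frac{b^\# - b}{pq}\bigr) = \frac{a}{pq}(\alpha p q - Nq - 1 + b) = \frac{a\sigma}{pq}$. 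So everything reduces to the Poho\v{z}aev identity \eqref{66}.

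For \eqref{66} I would work with the radial ODE, using the regularity of Theorem \ref{thm:bm-regularity}. Writing $g = f(u) - \Phi_u |u|^{p-2}u$, we have $-(r^{N-1}u')' = r^{N-1} g$ on $(0,\infty)$, with $u \in C^{1,\vartheta}_\loc$ away from $0$ and $u \in W^{2,m}_\loc$. I would multiply by $r u'(r)$ and integrate over $(\eps, R)$. The gradient part gives the standard identity
\[
-\int_\eps^R (r^{N-1}u')' r u'\,dr = \frac{N-2}{2}\int_\eps^R r^{N-1}u'^2\,dr - \Bigl[\tfrac12 r^N u'^2\Bigr]_\eps^R .
\]
The $f$ part gives $\int_\eps^R r^N F(u)'\,dr = [r^N F(u)]_\eps^R - N\int_\eps^R r^{N-1}F(u)\,dr$.

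The ball-mass part is $-\frac1p\int_\eps^R r^N \Phi_u(r) \frac{d}{dr}|u|^p\,dr$. Since $\omega_{N-1} r^{N-1}|u|^p = h_u'$, this equals $-\frac{1}{p\,\omega_{N-1}}\int r\,\Phi_u\, h_u'' \cdots$, which is awkward. Instead I would integrate by parts once: it becomes $-\frac1p[r^N\Phi_u |u|^p] + \frac1p\int (r^N\Phi_u)'|u|^p\,dr$. Then I would use $\Phi_u' = -a r^{-b}h_u^{q-1}$. The term $\frac Np\int r^{N-1}\Phi_u|u|^p$ becomes $\frac{N}{p\,\omega_{N-1}}\cdot a\int \rho^{-b}h_u^q$ via \eqref{73}. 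The other term $-\frac ap\int r^{N-b} h_u^{q-1}|u|^p\,dr = -\frac{a}{pq\,\omega_{N-1}}\int r^{1-b}(h_u^q)'\,dr$ integrates by parts to $-\frac{a(b-1)}{pq\,\omega_{N-1}}\int r^{-b}h_u^q$ plus a boundary term $[r^{1-b}h_u^q]$. Collecting with the factor $\omega_{N-1}$, the coefficient is $\frac{a}{pq}(Nq - (b-1)) = \frac{a(b^\#-b)}{pq}$, as required. All signs must be tracked, with $g$ entering with a minus sign.

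The main obstacle is the vanishing of the boundary terms as $\eps \to 0$ and, along a suitable sequence, $R \to \infty$.

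\emph{At $\eps \to 0$:} the bounds \eqref{58} give $\eps^N u'^2 \le C\eps^{N+2-2\theta}$, $\eps^N F(u) = O(\eps^N)$, $\eps^N\Phi_u|u|^p \le C\eps^{N-\theta}$, and $\eps^{1-b}h_u^q \le C\eps^{Nq+1-b}$. All of these tend to $0$ because $\theta<2$ and $b<b^\#$.

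\emph{At $R \to \infty$:} each quantity $r^{N-1}u'^2$, $r^{N-1}F(u)$, $r^{N-1}\Phi_u|u|^p$ and $r^{-b}h_u^q$ is integrable on $(1,\infty)$. This uses $\nabla u \in L^2$, \eqref{26} with Theorem \ref{Theorem 1}, \eqref{73}, and $u \in L^{p,q}_b$. For the last one, note that $\int_R^\infty \rho^{-b}h_u^q \ge h_u^q(R)R^{1-b}/(b-1)$, so $R^{1-b}h_u^q(R) \to 0$ outright. For the others, integrability of $r^{-1}\cdot(r^N\,\cdot)$ gives $\liminf_{R\to\infty} R^N(\cdots) = 0$. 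One can choose a common sequence $R_j \to \infty$ along which all of them vanish simultaneously by applying this to their sum, which is legitimate since each term is nonnegative, with $|F|$ in place of $F$.

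Letting $\eps \to 0$ and $R_j \to \infty$, with dominated/monotone convergence for the interior integrals, yields \eqref{66} after multiplying by $\omega_{N-1}$.
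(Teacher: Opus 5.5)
Your argument is essentially the paper's. It uses the same multiplier $r u'(r) = x \cdot \nabla u$ and the same two integrations by parts in the ball-mass term via $\Phi_u' = -a r^{-b} h_u^{q-1}$ and $h_u' = \omega_{N-1} r^{N-1} |u|^p$. It also uses \eqref{58} in the same way to kill the boundary terms at the origin, and the same linear combination to obtain \eqref{60}.

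The only difference is at infinity. The paper tests the weak formulation with $w_R = \xi_R(r)\, r u'(r)$, after checking that $w_R \in E^{p,q}_{b,\rad}(\R^N)$. It then disposes of the $\xi_R'$ terms using $|r \xi_R'(r)| \le 4$ and $L^1$ tails. You instead truncate sharply at radii $R_j$ along which $R_j^N \big(u'^2 + |F(u)| + \Phi_u |u|^p\big)(R_j) \to 0$. Both handlings are fine.

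One correction is needed: the bulk term in your gradient identity has the wrong sign. Integrating by parts gives
\[
-\int_\eps^R (r^{N-1}u')'\, r u'\, dr = -\frac{N-2}{2} \int_\eps^R r^{N-1} u'^2\, dr - \Big[\tfrac{1}{2}\, r^N u'^2\Big]_\eps^R .
\]
You can check this with $u' \equiv c$: both sides then equal $-\frac{N-1}{N}\, c^2 (R^N - \eps^N)$. Only with this sign does the limit, multiplied by $-\omega_{N-1}$, yield \eqref{66}. With your sign the same bookkeeping would produce
\[
\frac{N-2}{2} \int_{\R^N} |\nabla u|^2\, dx + N \int_{\R^N} F(u)\, dx = \frac{a(b^\# - b)}{pq} \int_0^\infty \rho^{-b} h_u^q(\rho)\, d\rho ,
\]
which is not the Poho\v{z}aev identity.
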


\begin{proof}
By Theorem \ref{thm:bm-regularity}, $u \in L^\infty_\loc(\R^N) \cap W^{2,m}_\loc(\R^N \setminus \set{0}) \cap C^{1,\vartheta}_\loc(\R^N \setminus \set{0})$ for all $m < \infty$ and some $\vartheta \in (0,1)$. Let $\xi : [0,\infty) \to [0,1]$ be a smooth function such that $\xi \equiv 1$ on $[0,1]$, $\xi \equiv 0$ on $[2,\infty)$, and $|\xi'| \le 2$. For $R \ge 1$, set $\xi_R(t) = \xi(t/R)$ and
\[
w_R(x) = \xi_R(|x|)\halfthin (x \cdot \nabla u) = r\, \xi_R(r)\, u'(r).
\]
Then
\begin{equation} \label{68}
\nabla w_R = \big[\xi_R(r)\, u'(r) + r\, \xi_R(r)\, u''(r) + r\, \xi_R'(r)\, u'(r)\big]\, \frac{x}{r}
\end{equation}
and hence
\[
|\nabla w_R| \le |u'(r)| + r\, |u''(r)| \le Cr^{1 - \theta} \quad \text{for } 0 < r \le 1
\]
by \eqref{58}, so
\[
\int_{B_1} |\nabla w_R|^2\, dx \le C \int_0^1 r^{N + 1 - 2 \theta}\, dr < \infty
\]
since $\theta < 2 \le (N + 2)/2$. Since $u \in C^{1,\vartheta}_\loc(\R^N \setminus \set{0})$ and $\xi_R$ has compact support, it follows that $w_R \in E^{p,q}_{b,\rad}(\R^N)$ with compact support. Testing \eqref{59} with $w_R$ gives
\begin{equation} \label{62}
\int_{\R^N} \nabla u \cdot \nabla w_R\, dx + \int_{\R^N} \Phi_u(|x|)\, |u|^{p-2}\, u w_R\, dx = \int_{\R^N} f(u)\, w_R\, dx.
\end{equation}
We will pass to the limit as $R \to \infty$.

First we note that for any $g \in L^1([0,\infty))$,
\begin{equation} \label{71}
\int_0^\infty \xi_R(r)\, g(r)\, dr \to \int_0^\infty g(r)\, dr, \qquad \int_0^\infty r\, \xi_R'(r)\, g(r)\, dr \to 0
\end{equation}
as $R \to \infty$. The first limit follows from the dominated convergence theorem. Since $|r\, \xi_R'(r)| = |(r/R)\, \xi'(r/R)| \le 4$ and $\supp \xi_R' \subset [R,\infty)$,
\[
\abs{\int_0^\infty r\, \xi_R'(r)\, g(r)\, dr} \le 4 \int_R^\infty |g(r)|\, dr \to 0.
\]

Since $u \in H^2_\loc(\R^N \setminus \set{0})$, \eqref{68} gives
\begin{multline*}
\int_{\R^N} \nabla u \cdot \nabla w_R\, dx = \omega_{N-1}\, \bigg(\int_0^\infty r^{N-1}\, \xi_R(r)\, u'(r)^2\, dr + \frac{1}{2} \int_0^\infty r^N\halfthin \xi_R(r)\, (u'(r)^2)'\, dr\\[7.5pt]
+ \int_0^\infty r^N\halfthin \xi_R'(r)\, u'(r)^2\, dr\bigg).
\end{multline*}
Since $r^N\halfthin u'(r)^2 \le Cr^{N + 2 - 2 \theta} \to 0$ as $r \to 0$ by \eqref{58} and $\xi_R$ has compact support, integrating the second integral on the right by parts leads to
\[
\int_{\R^N} \nabla u \cdot \nabla w_R\, dx = \omega_{N-1} \left(- \frac{N - 2}{2} \int_0^\infty r^{N-1}\, \xi_R(r)\, u'(r)^2\, dr + \frac{1}{2} \int_0^\infty r^N\halfthin \xi_R'(r)\, u'(r)^2\, dr\right).
\]
Since $\nabla u \in L^2(\R^N)$, \eqref{71} now shows that
\begin{equation} \label{63}
\int_{\R^N} \nabla u \cdot \nabla w_R\, dx \to - \frac{N - 2}{2} \int_{\R^N} |\nabla u|^2\, dx \quad \text{as } R \to \infty.
\end{equation}

We have
\[
\int_{\R^N} \Phi_u(|x|)\, |u|^{p-2}\, u w_R\, dx = \frac{\omega_{N-1}}{p} \int_0^\infty r^N\halfthin \Phi_u(r)\, \xi_R(r)\, (|u(r)|^p)'\, dr.
\]
Since $r^N\halfthin \Phi_u(r)\, |u(r)|^p \le Cr^{N - \theta} \to 0$ as $r \to 0$ by \eqref{58} and $\xi_R$ has compact support, integrating by parts gives
\begin{multline*}
\int_{\R^N} \Phi_u(|x|)\, |u|^{p-2}\, u w_R\, dx = - \frac{\omega_{N-1}}{p}\, \bigg(N \int_0^\infty r^{N-1}\, \Phi_u(r)\, \xi_R(r)\, |u(r)|^p\, dr\\[7.5pt]
+ \int_0^\infty r^N\halfthin \Phi_u'(r)\, \xi_R(r)\, |u(r)|^p\, dr + \int_0^\infty r^N\halfthin \Phi_u(r)\, \xi_R'(r)\, |u(r)|^p\, dr\bigg).
\end{multline*}
Since $\Phi_u'(r) = - ar^{-b}\, h_u^{q-1}(r)$ and $h_u'(r) = \omega_{N-1}\, r^{N-1}\, |u(r)|^p$,
\begin{multline*}
\omega_{N-1} \int_0^\infty r^N\halfthin \Phi_u'(r)\, \xi_R(r)\, |u(r)|^p\, dr = - a \int_0^\infty r^{1-b}\, \xi_R(r)\, h_u^{q-1}(r)\, h_u'(r)\, dr\\[7.5pt]
= - \frac{a}{q} \int_0^\infty r^{1-b}\, \xi_R(r)\, (h_u^q(r))'\, dr.
\end{multline*}
Since $r^{1-b}\, h_u^q(r) \le Cr^{b^\# - b} \to 0$ as $r \to 0$ by \eqref{58}, \eqref{2}, and \eqref{36}, a second integration by parts leads to
\begin{multline*}
\int_{\R^N} \Phi_u(|x|)\, |u|^{p-2}\, u w_R\, dx = - \frac{N \omega_{N-1}}{p} \int_0^\infty r^{N-1}\, \Phi_u(r)\, \xi_R(r)\, |u(r)|^p\, dr\\[7.5pt]
- \frac{a\, (1 - b)}{pq} \int_0^\infty r^{-b}\, \xi_R(r)\, h_u^q(r)\, dr - \frac{a}{pq} \int_0^\infty r^{1-b}\, \xi_R'(r)\, h_u^q(r)\, dr\\[7.5pt]
- \frac{\omega_{N-1}}{p} \int_0^\infty r^N\halfthin \Phi_u(r)\, \xi_R'(r)\, |u(r)|^p\, dr.
\end{multline*}
Corollary \ref{cor:bm-pairing} together with \eqref{73} gives
\begin{equation} \label{74}
\omega_{N-1} \int_0^\infty r^{N-1}\, \Phi_u(r)\, |u(r)|^p\, dr = a \int_0^\infty \rho^{-b}\, h_u^q(\rho)\, d\rho < \infty,
\end{equation}
so \eqref{71} now shows that
\begin{equation} \label{64}
\int_{\R^N} \Phi_u(|x|)\, |u|^{p-2}\, u w_R\, dx \to - \frac{a\, (b^\# - b)}{pq} \int_0^\infty \rho^{-b}\, h_u^q(\rho)\, d\rho \quad \text{as } R \to \infty.
\end{equation}

We have
\[
\int_{\R^N} f(u)\, w_R\, dx = \omega_{N-1} \int_0^\infty r^N\halfthin \xi_R(r)\, (F(u))'\, dr.
\]
Since $r^N\halfthin F(u) \le Cr^N \to 0$ as $r \to 0$ by \eqref{58} and $\xi_R$ has compact support, integrating by parts gives
\[
\int_{\R^N} f(u)\, w_R\, dx = - \omega_{N-1} \left(N \int_0^\infty r^{N-1}\, \xi_R(r)\, F(u)\, dr + \int_0^\infty r^N\halfthin \xi_R'(r)\, F(u)\, dr\right).
\]
Since $F(u) \in L^1(\R^N)$ by \eqref{26} and Theorem \ref{Theorem 1}, \eqref{71} now shows that
\begin{equation} \label{65}
\int_{\R^N} f(u)\, w_R\, dx \to - N \int_{\R^N} F(u)\, dx \quad \text{as } R \to \infty.
\end{equation}

Combining \eqref{62} with \eqref{63}, \eqref{64}, and \eqref{65} gives \eqref{66}. In view of \eqref{74}, testing \eqref{59} with $u$ gives \eqref{67}. Multiplying \eqref{67} by $\alpha$, subtracting \eqref{66}, and noting that
\[
\alpha = \frac{\sigma}{2} + \frac{N - 2}{2}, \qquad \frac{b^\# - b}{pq} = \left(\frac{1}{2} - \frac{1}{pq}\right) \sigma + \frac{N - 2}{2}
\]
gives \eqref{60}.
\end{proof}

\subsection{Scaled eigenvalue problem} \label{ssec:eigen-prob}

Consider the scaled eigenvalue problem \eqref{52}, i.e.,
\begin{equation} \label{51}
- \Delta u + \Phi_u(|x|)\, |u|^{p-2}\, u = \lambda\, |u|^{\beta - 2}\, u \quad \text{in } \R^N,
\end{equation}
where $\beta$ is given in \eqref{31}. By \eqref{35},
\begin{equation} \label{303}
2 < \beta < \begin{cases}
\infty & \text{if } N = 2\\[5pt]
2^\ast & \text{if } N \ge 3.
\end{cases}
\end{equation}
We have
\begin{equation} \label{505}
\beta - 2^{p,q}_{b,\ast} = \begin{cases}
\dfrac{4\, (q - b)}{b_\ast - b} & \text{if } N = 2\\[15pt]
\dfrac{4\, [(N - 1)(q - 1) + 1 - b](b_\ast - b)}{(b_\# - b)(b^\ast - b)} & \text{if } N \ge 3.
\end{cases}
\end{equation}
We assume that
\begin{equation} \label{53}
b < (N - 1)(q - 1) + 1
\end{equation}
so that $\beta > 2^{p,q}_{b,\ast}$ by \eqref{2} and \eqref{36}, and hence $E^{p,q}_{b,\rad}(\R^N)$ is compactly embedded in $L^\beta(\R^N)$ by Theorem \ref{Theorem 1}.

The scaled operator that corresponds to the right-hand side of equation \eqref{51} and its potential are given by
\[
\Bs(u) v = \int_{\R^N} |u|^{\beta - 2}\, uv\, dx, \quad u, v \in E^{p,q}_{b,\rad}(\R^N)
\]
and
\begin{equation} \label{305}
J_\sigma(u) = \frac{1}{\beta} \int_{\R^N} |u|^\beta\, dx, \quad u \in E^{p,q}_{b,\rad}(\R^N),
\end{equation}
respectively. Assumption $(A_8)$ is clearly satisfied, while $(A_9)$ follows from the compactness of the embedding $E^{p,q}_{b,\rad}(\R^N) \hookrightarrow L^\beta(\R^N)$. Assumption $(A_{11})$ is the special case $f(t) = \lambda\, |t|^{\beta - 2}\, t$ of \eqref{60}. The following theorem is now immediate from Theorem \ref{Theorem 13}.

\begin{theorem} \label{thm:bm-eigenvalues}
Assume \eqref{33}--\eqref{2} and \eqref{53}. Then the scaled eigenvalue problem \eqref{51} has a sequence of positive eigenvalues $\lambda_k \nearrow \infty$ with radial eigenfunctions.
\end{theorem}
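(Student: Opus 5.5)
The plan is to apply Theorem \ref{Theorem 13} with $W = E^{p,q}_{b,\rad}(\R^N)$, the scaling \eqref{103}, and $\As$, $\Bs$ as above. The work is to check $(A_1)$--$(A_{11})$, collecting results already proved in the paper.

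\emph{The scaling and $\As$.} $W$ is reflexive by Theorem \ref{Theorem 4}. Assumptions $(A_1)$--$(A_5)$ hold by \eqref{38} and Proposition \ref{prop:continuous}, with exponent $\sigma$ from \eqref{55}.

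$\As$ is an odd potential operator with potential $I_\sigma$ in \eqref{101}; this uses Proposition \ref{prop:c1}. It maps bounded sets into bounded sets by Corollary \ref{cor:bm-pairing}. The scaling relation \eqref{105} follows from the change of variables $y = tx$: the gradient term scales like \eqref{520}, and the ball-mass term like \eqref{521} after polarizing through \eqref{107}. In both cases the exponent is $\sigma$. Assumptions $(A_6)$ and $(A_{10})$ follow from \eqref{134} and \eqref{101}, and $(A_7)$ is Proposition \ref{prop:a7}.

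\emph{$\Bs$ and the remaining assumptions.} $\Bs$ is odd, and it is a potential operator with potential $J_\sigma$ in \eqref{305}. The relation \eqref{53} gives $\beta > 2^{p,q}_{b,\ast}$ via \eqref{505}. Together with \eqref{303}, Theorem \ref{Theorem 1} then gives a continuous embedding into $L^\beta$, so $\Bs$ is bounded on bounded sets. The scaling $\Bs(u_t)v_t = t^{\alpha\beta - N}\Bs(u)v = t^\sigma \Bs(u)v$ holds by \eqref{55}. Assumption $(A_8)$ is trivial.

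For $(A_9)$, a weakly convergent sequence converges strongly in $L^\beta$ by compactness of the embedding. Continuity of the Nemytskii map $u \mapsto |u|^{\beta-2}u$ from $L^\beta$ to $L^{\beta'}$ then gives $\Bs(u_j) \to \Bs(u)$ in $W^\ast$.

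For $(A_{11})$, take $f(t) = \lambda|t|^{\beta-2}t$, which satisfies \eqref{26} with $s_1 = s_2 = \beta$. The scaling identity \eqref{60} then reads $\sigma I_\sigma(u) = \lambda(\alpha - N/\beta)\int|u|^\beta$. Since $\alpha\beta - N = \sigma$, this is exactly $I_\sigma(u) = \lambda J_\sigma(u)$.

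\emph{Conclusion.} Theorem \ref{Theorem 13} now yields eigenvalues $\lambda_k \nearrow \infty$. They are positive because $\lambda_1 > 0$. The eigenfunctions lie in $W$, so they are radial, and they are weak radial solutions of \eqref{51}.

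The only step needing care is the scaling relation \eqref{105} for $\As$ when $v \ne u$, i.e.\ the polarized form of the ball-mass term. The computation uses $h_{u_t}(\rho) = t^{\alpha p - N}h_u(t\rho)$, and the analogous identity for $P_\rho(u_t,v_t)$, inside \eqref{107}. Everything else is a direct citation.
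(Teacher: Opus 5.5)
Your proposal is correct and follows the paper's route: verify $(A_1)$--$(A_{11})$ for $W = E^{p,q}_{b,\rad}(\R^N)$, with $(A_9)$ coming from the compact embedding into $L^\beta$ and $(A_{11})$ being the case $f(t) = \lambda\,|t|^{\beta-2}t$ of the scaling identity \eqref{60}, and then invoke Theorem \ref{Theorem 13}. Your explicit check of \eqref{105} for the polarized ball-mass term via \eqref{107} is correct, and it fills in a detail the paper leaves implicit in \eqref{28}.
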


\subsection{Pure power equation}

Consider the pure power equation
\begin{equation} \label{star}
- \Delta u + \Phi_u(|x|)\, |u|^{p-2}\, u = |u|^{s-2}\, u \quad \text{in } \R^N,
\end{equation}
where $s \ne \beta$ (see \eqref{31}) satisfies
\begin{equation} \label{56}
2^{p,q}_{b,\ast} < s < \begin{cases}
\infty & \text{if } N = 2\\[5pt]
2^\ast & \text{if } N \ge 3.
\end{cases}
\end{equation}
The scaled operator that corresponds to the right-hand side of equation \eqref{star} and its potential are given by
\[
f(u) v = \int_{\R^N} |u|^{s-2}\, uv\, dx, \quad u, v \in E^{p,q}_{b,\rad}(\R^N)
\]
and
\[
F(u) = \frac{1}{s} \int_{\R^N} |u|^s\, dx, \quad u \in E^{p,q}_{b,\rad}(\R^N),
\]
respectively. Assumption $(f_1)$ in Subsection \ref{ssec:sub-sup} holds with
\[
\gamma = \alpha s - N \ne \sigma
\]
since $s \ne \beta$ and $\alpha > 0$ by \eqref{35}. Assumption $(f_2)$ is clearly satisfied and $(f_3)$ follows from the compactness of the embedding $E^{p,q}_{b,\rad}(\R^N) \hookrightarrow L^s(\R^N)$. Assumption $(f_4)$ is the special case $f(t) = \mu\, |t|^{s-2}\, t$ of \eqref{60}. The following theorem is now immediate from Theorem \ref{Theorem 10}, where
\[
\E(u) = \frac{1}{2} \int_{\R^N} |\nabla u|^2\, dx + \frac{a}{pq} \int_0^\infty \rho^{-b}\, h_u^q(\rho)\, d\rho - \frac{1}{s} \int_{\R^N} |u|^s\, dx, \quad u \in E^{p,q}_{b,\rad}(\R^N).
\]

\begin{theorem} \label{thm:pure-power}
Assume \eqref{33}--\eqref{2}. If $s \ne \beta$ satisfies \eqref{56}, then the pure power equation \eqref{star} has an infinite sequence of nontrivial weak radial solutions $u_k$.
\begin{enumroman}
\item If $s < \beta$, then equation \eqref{star} is subscaled, $\E(u_k) < 0$, and $\E(u_k) \nearrow 0$.
\item If $s > \beta$, then equation \eqref{star} is superscaled, $\E(u_k) > 0$, and $\E(u_k) \nearrow \infty$.
\end{enumroman}
\end{theorem}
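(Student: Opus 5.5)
The plan is to apply Theorem \ref{Theorem 10} with $W = E^{p,q}_{b,\rad}(\R^N)$, the scaling \eqref{103}, the ball-mass operator $\As$ from \eqref{100} with potential $I_\sigma$ from \eqref{101}, and $f(u)v = \int_{\R^N} |u|^{s-2}\, uv\, dx$ with potential $F(u) = \frac{1}{s}\int_{\R^N} |u|^s\, dx$, so that $E = I_\sigma - F = \E$. The hypotheses are checked in order; the energy conclusions then follow from the sign of $\gamma - \sigma$.

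\emph{Scaling and operator hypotheses.} Assumptions $(A_1)$--$(A_5)$ hold by \eqref{38} and Proposition \ref{prop:continuous}. The operator $\As$ is odd and is the derivative of $I_\sigma$ by Proposition \ref{prop:c1}. It maps bounded sets to bounded sets by Corollary \ref{cor:bm-pairing}, and satisfies the scaling law \eqref{105} with $\sigma$ as in \eqref{55}, by the change of variables $x \mapsto tx$ together with \eqref{520}--\eqref{521}. Assumption $(A_6)$ follows from \eqref{134}, $(A_7)$ from Proposition \ref{prop:a7}, and $(A_{10})$ because $I_\sigma$ dominates a positive multiple of $\min\{\norm{u}^2, \norm{u}^{pq}\}$.

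\emph{Hypotheses on $f$.} Since $s$ lies in the range \eqref{56}, Theorem \ref{Theorem 1} shows that $f$ is well defined, odd, maps bounded sets to bounded sets, and is a potential operator. The substitution $y = tx$ gives $f(u_t)v_t = t^{\alpha s - N} f(u)v$, so $(f_1)$ holds with $\gamma = \alpha s - N$. Since $\sigma = \alpha\beta - N$ and $\alpha > 0$, we have $\gamma \ne \sigma$ exactly when $s \ne \beta$, and $\gamma < \sigma$ exactly when $s < \beta$. Assumption $(f_2)$ is immediate. For $(f_3)$, note that $u_j \wto u$ implies $u_j \to u$ in $L^s$ by the compactness in Theorem \ref{Theorem 1}, and then continuity of the Nemytskii map $L^s \to L^{s'}$ gives $f(u_j) \to f(u)$ in $W^\ast$. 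For $(f_4)$, apply \eqref{60} to the nonlinearity $\mu |t|^{s-2}t$, whose growth satisfies \eqref{26} with $s_1 = s_2 = s$. This gives
\[
\sigma I_\sigma(u) = \mu\,(\alpha - N/s)\int_{\R^N} |u|^s\, dx = \gamma\,\mu F(u).
\]

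\emph{Conclusion and the delicate point.} Theorem \ref{Theorem 10} now yields solutions $u_k = \overline{u}_k$ with
\[
\E(u_k) = \left(1 - \frac{\sigma}{\gamma}\right)\mu_k^{-\sigma/(\sigma - \gamma)}, \qquad \mu_k \nearrow \infty.
\]
These solutions are weak radial solutions because $\E' = \As - f$.

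The only step needing care is the sign of $\gamma$. When $s > \beta$ we have $\gamma > \sigma > 0$, so the prefactor $1 - \sigma/\gamma$ is positive and the exponent $-\sigma/(\sigma-\gamma)$ is positive; hence $\E(u_k) > 0$ and $\E(u_k) \nearrow \infty$. When $s < \beta$ we need $\gamma > 0$, because the proof of Theorem \ref{Theorem 10} uses Lemma \ref{Lemma 6} with $\mu = (\sigma/\gamma)\, \widetilde G(u) > 0$. This holds because $s > 2^{p,q}_{b,\ast} > p$ by \eqref{404}, and $\alpha > (N-2)/2$ by \eqref{35}. For $N = 2$ this immediately gives $\gamma = \alpha s - 2 > 0$. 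For $N \ge 3$ it reduces to the inequality $\alpha s > N$, which I would verify from \eqref{630}, \eqref{37}, and the inequality $\alpha > (N-2)/2$. Granting $\gamma > 0$, in the case $s < \beta$ we have $0 < \gamma < \sigma$, so the prefactor is negative and the exponent is negative. Hence $\E(u_k) < 0$ and $\E(u_k) \nearrow 0$. This is also what makes the equation subscaled, since $s < \beta$ gives $|t|^{s-1}/|t|^{\beta-1} \to 0$ as $|t| \to \infty$; similarly $s > \beta$ makes it superscaled.
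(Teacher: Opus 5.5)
Your proposal is correct and is the paper's argument: Theorem \ref{Theorem 10} applied with $f(u)v=\int_{\R^N}|u|^{s-2}uv\,dx$ and $\gamma=\alpha s-N\ne\sigma$, with $(f_3)$ coming from the compact embedding of Theorem \ref{Theorem 1} and $(f_4)$ from the scaling identity \eqref{60}.

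You are right that Theorem \ref{Theorem 10} needs $\gamma>0$, which the paper leaves implicit, but this is automatic and needs no exponent bookkeeping. By \eqref{38}, $u_t\to 0$ in $E^{p,q}_{b,\rad}(\R^N)$ as $t\to 0$, so the continuous embedding into $L^s$ gives $t^{\alpha s-N}\norm[L^s(\R^N)]{u}^s=\norm[L^s(\R^N)]{u_t}^s\to 0$, which forces $\alpha s>N$. Your stated reason for $N=2$ does not work: $s>p$ and $\alpha>0$ alone do not give $\alpha s>2$, since $\alpha p<2$ can occur (e.g.\ $q=1$, $p=10$, $b=2.9$). What does work is $\alpha\, 2^{p,q}_{b,\ast}=2(pq+b-1)/(pq-2)>2$.
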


\subsection{Critical equations} \label{ssec:critical}

Consider the pure critical power equation
\begin{equation} \label{pure-crit}
- \Delta u + \Phi_u(|x|)\, |u|^{p-2}\, u = |u|^{2^\ast - 2}\, u \quad \text{in } \R^N,
\end{equation}
where $N \ge 3$,
\begin{gather}
\label{110} 1 < p < 2^\ast,\\[10pt]
\label{111} 1 \le q < \infty, \quad 2 < pq < 2^\ast,\\[10pt]
\label{112} 1 < b < b_\ast := 1 + Nq\, (1 - p/2^\ast).
\end{gather}
The Poho\v{z}aev and Nehari identities in Theorem \ref{thm:bm-pohozaev} give the following nonexistence result.

\begin{theorem} \label{thm:pure-crit-nonex}
Assume \eqref{110}--\eqref{112}. Then equation \eqref{pure-crit} has no nontrivial weak radial solution.
\end{theorem}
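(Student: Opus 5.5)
Apply Theorem \ref{thm:bm-pohozaev} with $f(t) = |t|^{2^\ast - 2}\, t$, so that $F(t) = |t|^{2^\ast}/2^\ast$. First check the hypotheses. Growth condition \eqref{26} holds with $s_1 = s_2 = 2^\ast$, since $2^{p,q}_{b,\ast} < 2^\ast$ by \eqref{404}. Conditions \eqref{110}--\eqref{112} are contained in \eqref{33}--\eqref{2}. So every weak radial solution $u$ satisfies \eqref{66} and \eqref{67}. Write
\[
D = \int_{\R^N} |\nabla u|^2\, dx, \qquad P = a \int_0^\infty \rho^{-b}\, h_u^q(\rho)\, d\rho, \qquad S = \int_{\R^N} |u|^{2^\ast} dx .
\]
Then the Poho\v{z}aev identity becomes $\frac{N-2}{2}\, D + \frac{b^\# - b}{pq}\, P = \frac{N}{2^\ast}\, S = \frac{N-2}{2}\, S$, and the Nehari identity becomes $D + P = S$.

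Multiply the Nehari identity by $\frac{N-2}{2}$ and subtract the Poho\v{z}aev identity. This eliminates both $D$ and $S$ and leaves
\[
\left(\frac{N - 2}{2} - \frac{b^\# - b}{pq}\right) P = 0.
\]
The key step is to show that the coefficient is nonzero, and in fact strictly positive, using $pq < 2^\ast$ and $b < b_\ast$. Since $b^\# - b > b^\# - b_\ast = Nq - Nq + Npq/2^\ast = Npq/2^\ast$, we get $\frac{b^\# - b}{pq} > \frac{N}{2^\ast} = \frac{N-2}{2}$. So the coefficient is actually negative, which is still nonzero. Alternatively, from the identity for $\frac{b^\#-b}{pq}$ at the end of the proof of Theorem \ref{thm:bm-pohozaev}, the coefficient equals $-\left(\frac12 - \frac1{pq}\right)\sigma$, which is strictly negative by $pq > 2$ and \eqref{55}. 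Either way, $P = 0$.

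From $P = 0$ we get $h_u \equiv 0$ (since $h_u$ is nondecreasing and continuous), so $u = 0$ a.e. Indeed $\norm[L^{p,q}_b(\R^N)]{u} = 0$ and the norm is positive definite by Theorem \ref{Theorem 2}. The only real check is the sign of the coefficient, and it is immediate. The constraint $pq < 2^\ast$ is not even needed for this step; it is needed elsewhere in the paper.
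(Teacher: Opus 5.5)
Your argument is correct and is the same as the paper's: you apply the Poho\v{z}aev and Nehari identities \eqref{66}--\eqref{67} with $f(t) = |t|^{2^\ast - 2}\, t$, and use $b < b_\ast$ (via $b^\# - b_\ast = Npq/2^\ast$) to show that the two coefficients of the ball-mass term differ, i.e.\ $\frac{2\,(b^\# - b)}{pq\,(N - 2)} > 1$, which forces $\norm[L^{p,q}_b(\R^N)]{u} = 0$. The only slip is in the wording: you first announce the coefficient as ``strictly positive'' and then correctly find it negative, and, as you observe at the end, $pq < 2^\ast$ plays no role in this step.
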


\begin{proof}
Let $u \in E^{p,q}_{b,\rad}(\R^N)$ be a weak radial solution of equation \eqref{pure-crit}. For $f(t) = |t|^{2^\ast - 2}\, t$, equations \eqref{66} and \eqref{67} reduce to
\begin{equation} \label{113}
\int_{\R^N} |\nabla u|^2\, dx + \frac{2a\, (b^\# - b)}{pq\, (N - 2)} \int_0^\infty \rho^{-b}\, h_u^q(\rho)\, d\rho = \int_{\R^N} |u|^{2^\ast}\halfthin dx
\end{equation}
and
\begin{equation} \label{114}
\int_{\R^N} |\nabla u|^2\, dx + a \int_0^\infty \rho^{-b}\, h_u^q(\rho)\, d\rho = \int_{\R^N} |u|^{2^\ast}\halfthin dx,
\end{equation}
respectively. By \eqref{112}, \eqref{29}, and \eqref{109},
\begin{equation} \label{115}
\frac{2\, (b^\# - b)}{pq\, (N - 2)} > \frac{2\, (b^\# - b_\ast)}{pq\, (N - 2)} = 1.
\end{equation}
It follows from \eqref{113}--\eqref{115} that $\norm[L^{p,q}_b(\R^N)]{u} = 0$ and hence $u = 0$.
\end{proof}

In view of this nonexistence result for the pure critical power equation, we consider the perturbed critical equation
\begin{equation} \label{pert-crit}
- \Delta u + \Phi_u(|x|)\, |u|^{p-2}\, u = \mu\, |u|^{s-2}\, u + |u|^{2^\ast - 2}\, u \quad \text{in } \R^N,
\end{equation}
where $\mu > 0$ and $s$ satisfies either
\begin{equation} \label{118}
\max \set{2^{p,q}_{b,\ast},pq} < s < 2^\ast,
\end{equation}
or
\begin{equation} \label{117}
2^{p,q}_{b,\ast} < \beta \le s < 2^\ast
\end{equation}
(see \eqref{31}). The variational functional associated with this equation is
\[
\E(u) = I_\sigma(u) - \frac{\mu}{s} \int_{\R^N} |u|^s\, dx - \frac{1}{2^\ast} \int_{\R^N} |u|^{2^\ast}\halfthin dx, \quad u \in E^{p,q}_{b,\rad}(\R^N)
\]
(see \eqref{101}). We have the following local \PS{} condition, where
\begin{equation} \label{88}
S = \inf_{u \in \D^{1,\,2}(\R^N) \setminus \set{0}}\, \frac{\dint_{\R^N} |\nabla u|^2\, dx}{\left(\dint_{\R^N} |u|^{2^\ast}\halfthin dx\right)^{2/2^\ast}}
\end{equation}
is the best Sobolev constant. Local compactness below the first noncompactness level is the standard route for critical growth problems and goes back to Br\'{e}zis and Nirenberg \cite{MR709644} (see also \cite{MR778970,MR834360,MR760051,MR2431434}).

\begin{proposition} \label{prop:local-ps}
Assume \eqref{110}--\eqref{112}. If $\mu > 0$ and $s$ satisfies \eqref{118} or \eqref{117}, then $\E$ satisfies the {\em \PS{c}} condition for all $c < \frac{1}{N}\, S^{N/2}$.
\end{proposition}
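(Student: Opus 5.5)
Let $(u_j)$ be a \PS{c} sequence with $c < \frac{1}{N}\, S^{N/2}$, i.e., $\E(u_j) \to c$ and $\E'(u_j) \to 0$. The plan has three steps: prove that $(u_j)$ is bounded, pass to a weak limit $u$ that solves the equation, and rule out loss of compactness at the critical exponent using the Br\'{e}zis-Lieb splittings below the level $\frac{1}{N}\, S^{N/2}$.

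\emph{Boundedness.} The argument splits according to the two hypotheses on $s$.

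Under \eqref{118} we have $s > pq > 2$. We form $\E(u_j) - \frac{1}{s}\, \E'(u_j) u_j$ and use \eqref{134}:
\[
\Big(\frac12 - \frac1s\Big) \norm[L^2]{\nabla u_j}^2 + a\Big(\frac{1}{pq} - \frac1s\Big) \norm[L^{p,q}_b]{u_j}^{pq} + \Big(\frac1s - \frac{1}{2^\ast}\Big) \norm[L^{2^\ast}]{u_j}^{2^\ast} = c + \o(1) + \o(\norm{u_j}).
\]
All coefficients are positive, so the left side controls $\norm{u_j}^{\min\set{2,pq}}$ up to constants. Hence $(u_j)$ is bounded.

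Under \eqref{117} the case $s < pq$ may occur, so the previous combination can fail. Instead we use a combination adapted to the scaling, namely $\frac{d}{dt}\E((u_j)_t)|_{t=1}$ together with $\E'(u_j)u_j$. Since the functional is not scale invariant, this needs care. The alternative route is to follow \cite{MR5043800}: by \eqref{55} we have $\sigma = 2\alpha + 2 - N$, and the exponents scale as $\alpha s - N \ge \sigma$ and $\alpha 2^\ast - N > \sigma$ since $s \ge \beta$. One then argues by contradiction. Suppose $\norm{u_j} \to \infty$. Normalize by $v_j = \pi(u_j) = (u_j)_{t_j} \in \M$ with $t_j \to 0$, and show that the combination $\E(u_j) - \frac{1}{\theta}\E'(u_j)u_j$ with $\theta = \min\set{s, 2^\ast}$ still yields positivity modulo the scaling. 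I expect this step to need the most care. If it is not clean, I would restrict the boundedness claim through the identity $\E'(u_j)u_j$ combined with the scaling derivative $\E'(u_j)w_j$, where $w_j = \frac{d}{dt}(u_j)_t|_{t=1}$, mimicking the Pohozaev combination \eqref{60}. This produces the coefficient $\sigma$ on $I_\sigma$ against nonnegative coefficients $(\alpha s - N - \sigma)/s$ and $(\alpha 2^\ast - N - \sigma)/2^\ast$.

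\emph{Weak limit.} By reflexivity (Theorem \ref{Theorem 4}) we have $u_j \wto u$ for a subsequence. Theorem \ref{Theorem 1} gives $u_j \to u$ in $L^s$, and also in $L^p_\loc$ by \eqref{106}. Theorem \ref{thm:weak-cont}, together with weak convergence of $|u_j|^{2^\ast-2}u_j$ in $L^{(2^\ast)'}$, shows that $\E'(u) = 0$. The Nehari and Pohozaev identities of Theorem \ref{thm:bm-pohozaev}, or more simply $\E(u) - \frac{1}{2}\E'(u)u$, give $\E(u) \ge 0$. Here we use that the coefficient $a(\frac{1}{pq} - \frac12) < 0$ must be absorbed; since $pq < 2^\ast$, we instead use $\E(u) - \frac{1}{pq}\E'(u)u \ge 0$ under \eqref{118}, and the scaling identity \eqref{60} under \eqref{117}.

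\emph{Exclusion of concentration.} Set $v_j = u_j - u$. The classical Br\'{e}zis-Lieb lemma applies to $\nabla u_j$ and to $L^{2^\ast}$. Theorem \ref{thm:bm-brezis-lieb} gives the splitting of the ball-mass term. The $L^s$ terms vanish by compactness. This yields
\[
\E(u_j) = \E(u) + \tfrac12 \norm{\nabla v_j}_2^2 + \tfrac{a}{pq}\norm[L^{p,q}_b]{v_j}^{pq} - \tfrac{1}{2^\ast}\norm{v_j}_{2^\ast}^{2^\ast} + \o(1),
\]
and similarly for $\E'(u_j)u_j$. From $\E'(u_j)u_j - \E'(u)u \to 0$ we get
\[
\norm{\nabla v_j}^2 + a \norm[L^{p,q}_b]{v_j}^{pq} - \norm{v_j}_{2^\ast}^{2^\ast} \to 0 .
\]
Pass to limits $\norm{\nabla v_j}_2^2 \to l_1$, $a\norm[L^{p,q}_b]{v_j}^{pq} \to l_2$ and $\norm{v_j}_{2^\ast}^{2^\ast} \to l_1 + l_2$. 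The Sobolev inequality gives $l_1 \ge S (l_1 + l_2)^{2/2^\ast} \ge S l_1^{2/2^\ast}$. So either $l_1 = 0$, which forces $l_2 = 0$ and strong convergence, or $l_1 + l_2 \ge S^{N/2}$. In the latter case
\[
c \ge \E(u) + \tfrac{l_1}{2} + \tfrac{l_2}{pq} - \tfrac{l_1 + l_2}{2^\ast} \ge \tfrac{1}{N}(l_1 + l_2) + \big(\tfrac{1}{pq} - \tfrac{1}{2}\big) l_2 .
\]
The term $\big(\tfrac{1}{pq} - \tfrac{1}{2}\big) l_2$ is negative, so this is the delicate point; it is where $pq < 2^\ast$ enters. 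We rewrite the bound as $\big(\tfrac12 - \tfrac{1}{2^\ast}\big) l_1 + \big(\tfrac{1}{pq} - \tfrac{1}{2^\ast}\big) l_2$. Both coefficients are positive because $pq < 2^\ast$. The first term is at least $\frac{1}{N} l_1$ and $l_1 \ge S(l_1 + l_2)^{2/2^\ast}$, which gives $c \ge \frac{1}{N} S^{N/2}$ whenever $l_2$ is small relative to $l_1$. In general one must minimize $\big(\tfrac12 - \tfrac{1}{2^\ast}\big) l_1 + \big(\tfrac{1}{pq} - \tfrac{1}{2^\ast}\big) l_2$ subject to $l_1 \ge S(l_1 + l_2)^{2/2^\ast}$. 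I expect this minimization to be the main obstacle; I would check that it is at least $\frac{1}{N} S^{N/2}$ using the monotonicity in $l_2$. Either outcome contradicts $c < \frac{1}{N} S^{N/2}$ or yields $v_j \to 0$ in $E^{p,q}_{b,\rad}(\R^N)$.
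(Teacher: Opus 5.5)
\paragraph{Gap: boundedness under \eqref{117}.} Your treatment of case \eqref{118}, the passage to a weak limit that solves the equation with $\E(u) \ge 0$, and the splitting of $\E(u_j)$ and $\E'(u_j)u_j$ all match the paper. The genuine gap is the boundedness of \PS{c} sequences under \eqref{117}.

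\begin{itemize}
\item Under \eqref{117} one always has $\beta < pq$. Indeed, $\beta > 2^{p,q}_{b,\ast}$ forces $b < (N-1)(q-1)+1$, hence $\alpha\,(pq-2) = b_\# - b > 2$.
\item So for $s = \beta$, which is exactly the case used in Theorems \ref{thm:left-nbhd} and \ref{thm:bn}, your combination $\E(u_j) - \frac{1}{s}\,\E'(u_j)u_j$ (your $\theta = \min\set{s,2^\ast} = s$) has the negative coefficient $a\,(\frac{1}{pq} - \frac{1}{s})$ on the ball-mass term. ``Positivity modulo the scaling'' is not an argument.
\item Your fallback also fails. For an arbitrary \PS{} sequence, $w_j = \alpha u_j + x \cdot \nabla u_j$ need not lie in $E^{p,q}_{b,\rad}(\R^N)$, since no second derivatives are available. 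Even if it did, $\E'(u_j)w_j \to 0$ would require a bound on $\norm{w_j}$.
\item A Poho\v{z}aev-type identity holds for solutions (via Theorem \ref{thm:bm-regularity}), not for general \PS{} sequences, and the \PS{c} condition concerns every such sequence.
\end{itemize}

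Nothing in your sketch exploits the strict inequality $2^{p,q}_{b,\ast} < \beta$. That inequality is what makes this step work.

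\paragraph{The missing argument: rescale to $\M$.} Suppose $\norm{u_j} \to \infty$.
\begin{itemize}
\item Write $u_j = (\widetilde{u}_j)_{\tilde t_j}$ with $\widetilde{u}_j = \pi(u_j) \in \M$ and $\tilde t_j = I_\sigma(u_j)^{1/\sigma} \to \infty$. By \eqref{38} and $pq > 2$, $\norm{u_j} = \O(\tilde t_j^{\,\sigma/2})$, so $\E'(u_j)u_j = \o(\tilde t_j^{\,\sigma/2})$.
\item Expressed in $\widetilde{u}_j$, both $\E(u_j)$ and $\E'(u_j)u_j$ involve the powers $\tilde t_j^{\,\sigma}$, $\tilde t_j^{\,\gamma}$, $\tilde t_j^{\,\gamma^\ast}$, where $\gamma = \alpha s - N$, $\gamma^\ast = \alpha\, 2^\ast - N$, and $\sigma \le \gamma < \gamma^\ast$.
\item Divide the relation for $\E'(u_j)u_j$ by $\tilde t_j^{\,\sigma}$. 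Since $\As(\widetilde{u}_j)\widetilde{u}_j$ is bounded, this gives $\int |\widetilde{u}_j|^{2^\ast} dx = \O(\tilde t_j^{\,-(\gamma^\ast - \sigma)})$.
\item Interpolate $L^s$ between $L^r$ and $L^{2^\ast}$, with $r \in (2^{p,q}_{b,\ast},\beta)$; $\M$ is bounded in $L^r$ by Theorem \ref{Theorem 1}. This gives $\tilde t_j^{\,\gamma - \sigma} \int |\widetilde{u}_j|^s dx = \O(\tilde t_j^{\,-\alpha(2^\ast - s)(\beta - r)/(2^\ast - r)}) \to 0$. The exponent is negative precisely because $r < \beta$.
\item Finally, $[2^\ast \E(u_j) - \E'(u_j)u_j]/\tilde t_j^{\,\sigma} \to 0$ yields $\frac{2}{N-2} \int |\nabla \widetilde{u}_j|^2 dx + a\,(\frac{2^\ast}{pq} - 1) \norm[L^{p,q}_b(\R^N)]{\widetilde{u}_j}^{pq} \to 0$. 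Since $pq < 2^\ast$, this contradicts $\widetilde{u}_j \in \M$.
\end{itemize}

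\paragraph{The final step is not an obstacle.} Your own inequality $l_1 \ge S\,(l_1 + l_2)^{2/2^\ast} \ge S\, l_1^{2/2^\ast}$ already gives $l_1 \ge S^{N/2}$ whenever $l_1 > 0$. Then $c \ge \E(u) + \frac{1}{N}\, l_1 + (\frac{1}{pq} - \frac{1}{2^\ast})\, l_2 \ge \frac{1}{N}\, S^{N/2}$, using $\E(u) \ge 0$, $l_2 \ge 0$ and $pq < 2^\ast$; this is exactly how the paper concludes. Also, the gradient splitting comes from weak convergence in $L^2$, not from the Br\'{e}zis-Lieb lemma, since you have no a.e.\ convergence of $\nabla u_j$.
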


\begin{proof}
Let $c < \frac{1}{N}\, S^{N/2}$ and let $\seq{u_j} \subset E^{p,q}_{b,\rad}(\R^N)$ be a \PS{c} sequence, i.e.,
\begin{equation} \label{81}
\E(u_j) = I_\sigma(u_j) - \frac{\mu}{s} \int_{\R^N} |u_j|^s\, dx - \frac{1}{2^\ast} \int_{\R^N} |u_j|^{2^\ast}\halfthin dx = c + \o(1)
\end{equation}
and
\begin{equation} \label{83}
\E'(u_j) v = \As(u_j) v - \mu \int_{\R^N} |u_j|^{s-2}\, u_j\halfthin v\, dx - \int_{\R^N} |u_j|^{2^\ast - 2}\, u_j\halfthin v\, dx = \o(\norm{v})
\end{equation}
for all $v \in E^{p,q}_{b,\rad}(\R^N)$ (see \eqref{100}). Taking $v = u_j$ in \eqref{83} gives
\begin{equation} \label{82}
\As(u_j) u_j - \mu \int_{\R^N} |u_j|^s\, dx - \int_{\R^N} |u_j|^{2^\ast}\halfthin dx = \o(\norm{u_j}).
\end{equation}

We begin by showing that $\seq{u_j}$ is bounded in $E^{p,q}_{b,\rad}(\R^N)$ if $s$ satisfies \eqref{118} or \eqref{117}. First suppose \eqref{118} holds. Dividing \eqref{82} by $s$ and subtracting from \eqref{81} gives
\begin{multline*}
\left(\frac{1}{2} - \frac{1}{s}\right) \int_{\R^N} |\nabla u_j|^2\, dx + a \left(\frac{1}{pq} - \frac{1}{s}\right) \int_0^\infty \rho^{-b}\, h_{u_j}^q(\rho)\, d\rho + \left(\frac{1}{s} - \frac{1}{2^\ast}\right) \int_{\R^N} |u_j|^{2^\ast}\halfthin dx\\[7.5pt]
\le C + \o(\norm{u_j})
\end{multline*}
for some constant $C > 0$ (see \eqref{134}). Since $2 < pq < s < 2^\ast$ by \eqref{111} and \eqref{118}, boundedness of $\seq{u_j}$ follows in this case. Now suppose \eqref{117} holds, but $\norm{u_j} \to \infty$ for a renamed subsequence. Set
\[
t_j = t_{u_j} = I_\sigma(u_j)^{- 1/\sigma}, \qquad \widetilde{u}_j = (u_j)_{t_j}, \qquad \tilde{t}_j = t_j^{-1} = I_\sigma(u_j)^{1/\sigma}
\]
(see \eqref{102}). Then $\widetilde{u}_j \in \M$ and
\begin{equation} \label{167}
u_j = (\widetilde{u}_j)_{\tilde{t}_j} = \tilde{t}_j^{\,\alpha}\, \widetilde{u}_j(\tilde{t}_j\, \cdot)
\end{equation}
by $(A_1)$, $(A_3)$, and \eqref{103}. Since $\M$ is a bounded manifold, $\seq{\widetilde{u}_j}$ is bounded. Since $\norm{u_j} \to \infty$, $I_\sigma(u_j) \to \infty$ by $(A_{10})$ and hence $\tilde{t}_j \to \infty$. By \eqref{38},
\begin{equation} \label{131}
\norm{u_j} = \|(\widetilde{u}_j)_{\tilde{t}_j}\| \le \tilde{t}_j^{\,\sigma/2}
\end{equation}
for all sufficiently large $j$ since $pq > 2$ by \eqref{111}. Using \eqref{167}, \eqref{104}, \eqref{105}, and \eqref{131}, we can write \eqref{81} and \eqref{82} as
\begin{equation} \label{168}
\tilde{t}_j^{\,\sigma} I_\sigma(\widetilde{u}_j) = \frac{\tilde{t}_j^{\,\gamma^\ast}}{2^\ast} \int_{\R^N} |\widetilde{u}_j|^{2^\ast}\halfthin dx + \frac{\mu\, \tilde{t}_j^{\,\gamma}}{s} \int_{\R^N} |\widetilde{u}_j|^s\, dx + c + \o(1)
\end{equation}
and
\begin{equation} \label{169}
\tilde{t}_j^{\,\sigma} \As(\widetilde{u}_j) \widetilde{u}_j = \tilde{t}_j^{\,\gamma^\ast} \int_{\R^N} |\widetilde{u}_j|^{2^\ast}\halfthin dx + \mu\, \tilde{t}_j^{\,\gamma} \int_{\R^N} |\widetilde{u}_j|^s\, dx + \o(\tilde{t}_j^{\,\sigma/2}),
\end{equation}
respectively, where
\begin{equation} \label{133}
\gamma = \alpha s - N, \qquad \gamma^\ast = \alpha\halfthin 2^\ast - N.
\end{equation}
By \eqref{55}, \eqref{35}, and \eqref{117}, $\gamma - \sigma = \alpha\, (s - \beta) \ge 0$ and $\gamma^\ast - \gamma = \alpha\, (2^\ast - s) > 0$, so
\begin{equation} \label{130}
\sigma \le \gamma < \gamma^\ast.
\end{equation}
Since $\tilde{t}_j \to \infty$, dividing \eqref{169} by $\tilde{t}_j^{\,\sigma}$ gives
\[
\tilde{t}_j^{\,\gamma^\ast - \sigma} \int_{\R^N} |\widetilde{u}_j|^{2^\ast}\halfthin dx + \mu\, \tilde{t}_j^{\,\gamma - \sigma} \int_{\R^N} |\widetilde{u}_j|^s\, dx = \As(\widetilde{u}_j) \widetilde{u}_j + \o(1).
\]
Since $\mu > 0$ and $\As$ maps bounded sets into bounded sets, this implies
\[
\int_{\R^N} |\widetilde{u}_j|^{2^\ast}\halfthin dx \le C\halfthin \tilde{t}_j^{\,-(\gamma^\ast - \sigma)}
\]
for some constant $C > 0$. Fix $r \in (2^{p,q}_{b,\ast},\beta)$ using \eqref{117}. Then $r \in (2^{p,q}_{b,\ast},2^\ast)$ and hence $\seq{\widetilde{u}_j}$ is bounded in $L^r(\R^N)$ by Theorem \ref{Theorem 1}. Now the interpolation inequality
\[
\norm[L^s(\R^N)]{\widetilde{u}_j} \le \norm[L^r(\R^N)]{\widetilde{u}_j}^{1 - \theta} \norm[L^{2^\ast}(\R^N)]{\widetilde{u}_j}^\theta,
\]
where $1/s = (1 - \theta)/r + \theta/2^\ast$, gives
\[
\tilde{t}_j^{\,\gamma - \sigma} \int_{\R^N} |\widetilde{u}_j|^s\, dx \le C\halfthin \tilde{t}_j^{\,- \alpha\, (2^\ast - s)(\beta - r)/(2^\ast - r)} \to 0
\]
since $r < \beta \le s < 2^\ast$. Now multiplying \eqref{168} by $2^\ast$, subtracting \eqref{169}, and dividing by $\tilde{t}_j^{\,\sigma}$ gives
\[
\frac{2}{N - 2} \int_{\R^N} |\nabla \widetilde{u}_j|^2\, dx + a \left(\frac{2^\ast}{pq} - 1\right) \int_0^\infty \rho^{-b}\, h_{\widetilde{u}_j}^q(\rho)\, d\rho = \o(1).
\]
This implies that $\widetilde{u}_j \to 0$ in $E^{p,q}_{b,\rad}(\R^N)$ since $pq < 2^\ast$ by \eqref{111}, contradicting $\widetilde{u}_j \in \M$. So $\seq{u_j}$ is bounded in this case also.

Since $\seq{u_j}$ is bounded and $E^{p,q}_{b,\rad}(\R^N)$ is reflexive by Theorem \ref{Theorem 4}, a renamed subsequence of $\seq{u_j}$ converges weakly to some $u \in E^{p,q}_{b,\rad}(\R^N)$. Since $s \in (2^{p,q}_{b,\ast},2^\ast)$, then $u_j$ also converges to $u$ strongly in $L^s(\R^N)$ by Theorem \ref{Theorem 1} and hence a.e.\! in $\R^N$ for a further subsequence. Now passing to the limit in \eqref{83} using Theorem \ref{thm:weak-cont} gives
\begin{equation} \label{132}
\As(u) v - \mu \int_{\R^N} |u|^{s-2}\, uv\, dx - \int_{\R^N} |u|^{2^\ast - 2}\, uv\, dx = 0
\end{equation}
for all $v \in E^{p,q}_{b,\rad}(\R^N)$, i.e., $u$ is a weak radial solution of equation \eqref{pert-crit}. Taking $v = u$ in \eqref{132} gives
\begin{equation} \label{84}
\As(u) u - \mu \int_{\R^N} |u|^s\, dx - \int_{\R^N} |u|^{2^\ast}\halfthin dx = 0.
\end{equation}

Next we show that $\E(u) \ge 0$. If $s$ satisfies \eqref{118}, dividing \eqref{84} by $s$ and subtracting from $\E(u)$ gives
\[
\E(u) = \left(\frac{1}{2} - \frac{1}{s}\right) \int_{\R^N} |\nabla u|^2\, dx + a \left(\frac{1}{pq} - \frac{1}{s}\right) \int_0^\infty \rho^{-b}\, h_u^q(\rho)\, d\rho + \left(\frac{1}{s} - \frac{1}{2^\ast}\right) \int_{\R^N} |u|^{2^\ast}\halfthin dx \ge 0
\]
since $2 < pq < s < 2^\ast$ (see \eqref{111}). If $s$ satisfies \eqref{117}, we use the scaling identity from Theorem \ref{thm:bm-pohozaev}. For the nonlinearity $f(t) = \mu\, |t|^{s-2}\, t + |t|^{2^\ast - 2}\, t$,
\[
\alpha\halfthin f(t)\, t - N F(t) = \mu \left(\alpha - \frac{N}{s}\right) |t|^s + \left(\alpha - \frac{N}{2^\ast}\right) |t|^{2^\ast} = \frac{\mu \gamma}{s}\, |t|^s + \frac{\gamma^\ast}{2^\ast}\, |t|^{2^\ast},
\]
where $\gamma$ and $\gamma^\ast$ are given in \eqref{133}, and hence \eqref{60} gives
\[
\sigma I_\sigma(u) = \frac{\mu \gamma}{s} \int_{\R^N} |u|^s\, dx + \frac{\gamma^\ast}{2^\ast} \int_{\R^N} |u|^{2^\ast}\halfthin dx.
\]
So
\[
\E(u) = \frac{\mu}{s}\, \bigg(\frac{\gamma}{\sigma} - 1\bigg) \int_{\R^N} |u|^s\, dx + \frac{1}{2^\ast} \left(\frac{\gamma^\ast}{\sigma} - 1\right) \int_{\R^N} |u|^{2^\ast}\halfthin dx \ge 0
\]
by \eqref{130}.

Now set $v_j = u_j - u$. We will show that $v_j \to 0$ in $E^{p,q}_{b,\rad}(\R^N)$ for a renamed subsequence. We have
\begin{equation} \label{85}
\int_{\R^N} |\nabla u_j|^2\, dx - \int_{\R^N} |\nabla u|^2\, dx = \int_{\R^N} |\nabla v_j|^2\, dx + \o(1)
\end{equation}
since $v_j \wto 0$ in $E^{p,q}_{b,\rad}(\R^N)$,
\begin{equation} \label{135}
\int_{\R^N} |u_j|^s\, dx - \int_{\R^N} |u|^s\, dx = \o(1)
\end{equation}
since $v_j \to 0$ in $L^s(\R^N)$, and
\begin{equation} \label{86}
\int_{\R^N} |u_j|^{2^\ast}\halfthin dx - \int_{\R^N} |u|^{2^\ast}\halfthin dx = \int_{\R^N} |v_j|^{2^\ast}\halfthin dx + \o(1)
\end{equation}
by the Br\'{e}zis-Lieb lemma. Since $u_j \to u$ in $L^s(\R^N)$ and $p < s$ by \eqref{106} and \eqref{118} or \eqref{117}, $u_j \to u$ in $L^p_\loc(\R^N)$ by the H\"{o}lder inequality, so
\begin{equation} \label{90}
\int_0^\infty \rho^{-b}\, h_{u_j}^q(\rho)\, d\rho - \int_0^\infty \rho^{-b}\, h_u^q(\rho)\, d\rho = \int_0^\infty \rho^{-b}\, h_{v_j}^q(\rho)\, d\rho + \o(1)
\end{equation}
by Theorem \ref{thm:bm-brezis-lieb}. Subtracting \eqref{84} from \eqref{82} and combining with \eqref{85}--\eqref{90} and \eqref{88} gives
\begin{multline} \label{87}
\int_{\R^N} |\nabla v_j|^2\, dx + a \int_0^\infty \rho^{-b}\, h_{v_j}^q(\rho)\, d\rho = \int_{\R^N} |v_j|^{2^\ast}\halfthin dx + \o(1)\\[7.5pt]
\le S^{- 2^\ast/2} \bigg(\int_{\R^N} |\nabla v_j|^2\, dx\bigg)^{2^\ast/2} + \o(1).
\end{multline}
So it suffices to show that $\int_{\R^N} |\nabla v_j|^2\, dx \to 0$ for a renamed subsequence. Suppose this is not the case. Then \eqref{87} gives
\begin{equation} \label{89}
\int_{\R^N} |\nabla v_j|^2\, dx \ge S^{N/2} + \o(1).
\end{equation}
Subtracting $\E(u)$ from \eqref{81} and combining with \eqref{85}--\eqref{90}, $\E(u) \ge 0$, and \eqref{87} gives
\begin{multline*}
c = \frac{1}{2} \int_{\R^N} |\nabla v_j|^2\, dx + \frac{a}{pq} \int_0^\infty \rho^{-b}\, h_{v_j}^q(\rho)\, d\rho - \frac{1}{2^\ast} \int_{\R^N} |v_j|^{2^\ast}\halfthin dx + \E(u) + \o(1)\\[7.5pt]
\ge \frac{1}{N} \int_{\R^N} |\nabla v_j|^2\, dx + a \left(\frac{1}{pq} - \frac{1}{2^\ast}\right) \int_0^\infty \rho^{-b}\, h_{v_j}^q(\rho)\, d\rho + \o(1).
\end{multline*}
Since $pq < 2^\ast$ by \eqref{111}, this together with \eqref{89} gives $c \ge \frac{1}{N}\, S^{N/2}$, a contradiction.
\end{proof}

First we consider the perturbed equation
\begin{equation} \label{136}
- \Delta u + \Phi_u(|x|)\, |u|^{p-2}\, u = \lambda\, |u|^{\beta - 2}\, u + |u|^{2^\ast - 2}\, u \quad \text{in } \R^N,
\end{equation}
where $\lambda > 0$. We assume that
\begin{equation} \label{137}
\beta > 2^{p,q}_{b,\ast}.
\end{equation}
Let $\lambda_k \nearrow \infty$ be the sequence of positive eigenvalues of the scaled eigenvalue problem \eqref{51} given in Theorem \ref{thm:bm-eigenvalues}. We will show that equation \eqref{136} has $m$ distinct pairs of nontrivial weak radial solutions for all $\lambda$ in a suitably small left neighborhood of any eigenvalue of multiplicity $m \ge 1$.

\begin{theorem} \label{thm:left-nbhd}
Assume \eqref{110}--\eqref{112} and \eqref{137}. If $\lambda_k = \cdots = \lambda_{k+m-1} < \lambda_{k+m}$ for some $k, m \ge 1$, then $\exists \delta_k > 0$ such that equation \eqref{136} has $m$ distinct pairs of nontrivial weak radial solutions at positive energy levels for all $\lambda \in (\lambda_k - \delta_k,\lambda_k)$. In particular, there exists a nontrivial solution for each $\lambda \in \bigcup_{k=1}^\infty (\lambda_k - \delta_k,\lambda_k)$.
\end{theorem}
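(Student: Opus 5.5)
We will apply Theorem \ref{thm:even-non-min} to the even functional
\[
\E(u) = I_\sigma(u) - \lambda J_\sigma(u) - \frac{1}{2^\ast} \int_{\R^N} |u|^{2^\ast}\, dx, \quad u \in E^{p,q}_{b,\rad}(\R^N),
\]
with $c^\ast = \frac{1}{N}\, S^{N/2}$. This choice is admissible because Proposition \ref{prop:local-ps}, applied with $s = \beta$ and $\mu = \lambda$, gives \PS{c} for all $c < c^\ast$; condition \eqref{117} holds by \eqref{137} and \eqref{303}. For the sets we take $A_0 = \widetilde{\Psi}^{\lambda_k}$ and $B_0 = \widetilde{\Psi}_{\lambda_{k+m}}$, in the notation of Theorem \ref{Theorem 13}. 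For the index bounds, $A_0$ is compact: sublevel sets of $\widetilde{\Psi}$ at eigenvalue levels are compact, since $J_\sigma$ is weakly continuous and $(A_7)$ holds. Moreover $i(A_0) \ge k + m - 1$, because $\lambda_{k+m-1} = \lambda_k$ forces $i(\widetilde{\Psi}^{\lambda_k}) \ge k + m - 1$ by the definition of $\lambda_{k+m-1}$, using continuity of the index. Finally, Theorem \ref{Theorem 13} \ref{Theorem 13.iii} with $\lambda_{k+m-1} < \lambda' < \lambda_{k+m}$ gives $i(\M \setminus B_0) = k + m - 1$. Theorem \ref{thm:even-non-min} with $k$ replaced by $k + m$ and multiplicity $m$ therefore requires $i(A_0) \ge k + m - 1$ and $i(\M \setminus B_0) \le k + m - 1$. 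This is exactly the setup of \cite{MR5043800}.

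Next come the geometric estimates, using the scaling \eqref{104}. For $u \in \M$ and $t > 0$,
\[
\E(u_t) = t^\sigma \big(1 - \lambda J_\sigma(u)\big) - \frac{t^{\gamma^\ast}}{2^\ast} \int_{\R^N} |u|^{2^\ast}\, dx, \qquad \gamma^\ast = \alpha 2^\ast - N > \sigma.
\]
On $B_0$ we have $J_\sigma(u) \le 1/\lambda_{k+m}$, so $1 - \lambda J_\sigma(u) \ge 1 - \lambda/\lambda_{k+m} > 0$. The Sobolev inequality and boundedness of $\M$ give $\int |u|^{2^\ast} \le C$, so $\inf_{B} \E > 0$ for small $\rho$. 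On $A_0$ we have $J_\sigma(u) \ge 1/\lambda_k$, hence $1 - \lambda J_\sigma(u) \le 1 - \lambda/\lambda_k =: \eps_\lambda$, and the critical term is negative. Also $\int_{\R^N} |u|^{2^\ast}\, dx \ge c_0 > 0$ uniformly on the compact set $A_0$ since $0 \notin A_0$. It follows that
\[
\sup_{t \ge 0} \E(u_t) \le \sup_{t \ge 0} \Big(\eps_\lambda t^\sigma - \frac{c_0}{2^\ast}\, t^{\gamma^\ast}\Big) = C\, \eps_\lambda^{\gamma^\ast/(\gamma^\ast - \sigma)} \to 0 \quad \text{as } \lambda \nearrow \lambda_k .
\]
Hence $\sup_X \E < c^\ast$ once $\lambda > \lambda_k - \delta_k$. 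Choosing $R$ large, independent of $\lambda$ in this range, gives $\sup_A \E \le 0$, and we may take $\rho < R$.

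The main obstacle is the compactness of $A_0$ together with the uniform lower bound on $\int |u|^{2^\ast}$. Both follow from compactness of $\widetilde{\Psi}^{\lambda_k}$. I would prove this as follows: take a sequence in it, pass to a weak limit, and use $J_\sigma(u_j) \ge 1/\lambda_k$ and weak continuity of $J_\sigma$ to see that the limit $u$ is nonzero. Then weak lower semicontinuity of $I_\sigma$ together with the radial projection onto $\M$ shows that $I_\sigma(u) = 1$. Finally, uniform convexity (as in Proposition \ref{prop:a7}) upgrades the convergence to strong convergence. The conclusion is then $m$ distinct pairs of critical points at levels in $(0,c^\ast)$, which are nontrivial weak radial solutions of \eqref{136}. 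The final assertion follows by taking the union over $k$, grouping equal eigenvalues.
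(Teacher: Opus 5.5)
Your use of Proposition \ref{prop:local-ps} with $s = \beta$ and your fiberwise formula for $\E(u_t)$ are fine. The choice of the sets $A_0$, $B_0$, however, fails in two ways.

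\textbf{First, the index bookkeeping fails with $B_0 = \widetilde{\Psi}_{\lambda_{k+m}}$.} Theorem \ref{thm:even-non-min} with parameters $(k',m')$ needs $i(A_0) \ge k' + m' - 1$ and $i(\M \setminus B_0) \le k' - 1$. So the multiplicity it certifies is at most the gap $i(A_0) - i(\M \setminus B_0)$. You have $i(\M \setminus B_0) = k + m - 1$. This forces $k' \ge k + m$, and then you need $i(A_0) \ge k + 2m - 1$, not $k + m - 1$. But $i(\widetilde{\Psi}^{\lambda_k}) = i(\widetilde{\Psi}^{\lambda_{k+m-1}}) = k + m - 1$ exactly, by Theorem \ref{Theorem 13} \ref{Theorem 13.iii}. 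The gap is zero and the theorem yields nothing. A symptom is that your estimate on $B$ only used $\lambda < \lambda_{k+m}$, so the hypothesis $\lambda < \lambda_k$ never entered.

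The right choice is $B_0 = \widetilde{\Psi}_{\lambda_k}$. Either $\lambda_1 = \lambda_k$ and $B_0 = \M$, or $\lambda_{l-1} < \lambda_l = \lambda_k$ for some $2 \le l \le k$, in which case $i(\M \setminus B_0) = l - 1$ by Theorem \ref{Theorem 13} \ref{Theorem 13.iii}. In both cases $i(\M \setminus B_0) \le k - 1$, so Theorem \ref{thm:even-non-min} applies with the original $k$ and $m$. Then $\inf_B \E > 0$ for small $\rho$ follows from $J_\sigma \le 1/\lambda_k$ on $B_0$ together with $\lambda < \lambda_k$.

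\textbf{Second, $\widetilde{\Psi}^{\lambda_k}$ is not compact when $\lambda_k > \lambda_1$, so it cannot serve as $A_0$.} The failing step is the claim $I_\sigma(u) = 1$ for the weak limit. Projecting only gives $\widetilde{\Psi}(\pi(u)) = I_\sigma(u)/J_\sigma(u) \le \lambda_k\, I_\sigma(u)$, which contradicts nothing unless $\lambda_k = \lambda_1$.

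Here is a concrete counterexample. Take a first eigenfunction $w \in \M$ and $\tau$ with $\lambda_1/\lambda_k < \tau^\sigma < 1$. Let $v_j = j^{(N-2)/2}\, \phi(j\, \cdot)$ with $\phi \in C^\infty_{c,\rad}(\R^N)$ and $\frac{1}{2} \norm[L^2(\R^N)]{\nabla \phi}^2 = 1 - \tau^\sigma$. Since $b < b_\ast$ and $\beta < 2^\ast$, the ball-mass and $L^\beta$ norms of $v_j$ tend to zero, while $v_j \wto 0$. Hence $I_\sigma(w_\tau + v_j) \to 1$ and $\widetilde{\Psi}(\pi(w_\tau + v_j)) \to \lambda_1/\tau^\sigma < \lambda_k$. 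So $\pi(w_\tau + v_j) \in \widetilde{\Psi}^{\lambda_k}$ for large $j$, yet it converges weakly to $w_\tau \notin \M$.

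This is why the paper takes $A_0$ to be a compact symmetric subset of index $k + m - 1$ of the open set $\M \setminus \widetilde{\Psi}_{\lambda_k + \eps}$, with $0 < \eps < \lambda_{k+m} - \lambda_k$ (see \cite{MR2371112}). On such a set one only has $J_\sigma > 1/(\lambda_k + \eps)$. Thus $\sup_X \E \le C\, (1 - \lambda/(\lambda_k + \eps))^{\gamma^\ast/(\gamma^\ast - \sigma)}$, and this falls below $\frac{1}{N}\, S^{N/2}$ only if $\eps$ and $\delta_k$ are both small. Consequently the lower bound for $\norm[L^{2^\ast}(\R^N)]{u}$ on $A_0$ must be independent of $\eps$, which compactness of $A_0$ cannot provide. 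The paper obtains it from $J_\sigma > 1/\lambda_{k+m}$ by interpolating $L^\beta$ between $L^{2^\ast}$ and $L^r$ with $r \in (2^{p,q}_{b,\ast},\beta)$, where $\M$ is bounded by Theorem \ref{Theorem 1}.
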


\begin{proof}
In view of Proposition \ref{prop:local-ps}, we apply Theorem \ref{thm:even-non-min} with $c^\ast = \frac{1}{N}\, S^{N/2}$. Let $\eps \in (0,\lambda_{k+m} - \lambda_{k+m-1})$. Then
\[
i(\M \setminus \widetilde{\Psi}_{\lambda_{k+m-1} + \eps}) = k + m - 1
\]
by Theorem \ref{Theorem 13} \ref{Theorem 13.iii}. Since $\M \setminus \widetilde{\Psi}_{\lambda_{k+m-1} + \eps}$ is an open symmetric subset of $\M$, then it has a compact symmetric subset $C$ of index $k + m - 1$ (see the proof of Proposition 3.1 in \cite{MR2371112}). We apply Theorem \ref{thm:even-non-min} with $A_0 = C$ and $B_0 = \widetilde{\Psi}_{\lambda_k}$. We have either $\lambda_1 = \cdots = \lambda_k$, or $\lambda_{l-1} < \lambda_l = \cdots = \lambda_k$ for some $l$ with $2 \le l \le k$. In the former case, $B_0 = \widetilde{\Psi}_{\lambda_1} = \M$ by Theorem \ref{Theorem 13} \ref{Theorem 13.i} and hence
\[
i(\M \setminus B_0) = i(\emptyset) = 0 \le k - 1.
\]
In the latter case,
\[
i(\M \setminus B_0) = i(\M \setminus \widetilde{\Psi}_{\lambda_l}) = l - 1 \le k - 1
\]
by Theorem \ref{Theorem 13} \ref{Theorem 13.iii}.

Let $R > \rho > 0$ and let $A = \set{u_R : u \in A_0}$, $B = \set{u_\rho : u \in B_0}$, and $X = \{u_t : u \in A_0,\, 0 \le t \le R\}$. For $u \in \M$ and $t \ge 0$, we have
\begin{equation} \label{180}
\E(u_t) = t^\sigma \left(1 - \frac{\lambda}{\widetilde{\Psi}(u)} - \frac{t^{\gamma^\ast - \sigma}}{2^\ast} \int_{\R^N} |u|^{2^\ast}\halfthin dx\right),
\end{equation}
where $\gamma^\ast$ is given in \eqref{133}. By \eqref{55} and \eqref{35}, $\gamma^\ast - \sigma = \alpha\, (2^\ast - \beta) > 0$, so $\sigma < \gamma^\ast$. Since $\M$ is bounded, this gives
\[
\E(u_t) \ge t^\sigma \left(1 - \frac{\lambda}{\lambda_k} - C t^{\gamma^\ast - \sigma}\right) \quad \forall u \in B_0,\, t \ge 0
\]
for some constant $C > 0$. So
\[
\inf_{u \in B}\, \E(u) > 0
\]
if $\lambda < \lambda_k$ and $\rho$ is sufficiently small. For $u \in A_0 \subset \M \setminus \widetilde{\Psi}_{\lambda_{k+m-1} + \eps}$,
\[
J_\sigma(u) > \frac{1}{\lambda_{k+m-1} + \eps} > \frac{1}{\lambda_{k+m}}
\]
since $\eps < \lambda_{k+m} - \lambda_{k+m-1}$ (see \eqref{305}). Since $\M$ is bounded in $L^r(\R^N)$ for $r \in (2^{p,q}_{b,\ast},\beta)$ by Theorem \ref{Theorem 1}, the interpolation inequality
\[
\norm[L^\beta(\R^N)]{u} \le \norm[L^r(\R^N)]{u}^{1 - \theta} \norm[L^{2^\ast}(\R^N)]{u}^\theta,
\]
where $1/\beta = (1 - \theta)/r + \theta/2^\ast$, then implies that $\norm[L^{2^\ast}(\R^N)]{u}$ is bounded from below by a positive constant independent of $\eps$. Then \eqref{180} together with $\lambda_{k+m-1} = \lambda_k$ gives
\[
\E(u_t) \le t^\sigma \left(1 - \frac{\lambda}{\lambda_k + \eps} - \kappa\halfthin t^{\gamma^\ast - \sigma}\right) \quad \forall u \in A_0,\, t \ge 0
\]
for some constant $\kappa > 0$ independent of $\eps$. So
\[
\sup_{u \in A}\, \E(u) \le R^\sigma \left(1 - \kappa R^{\gamma^\ast - \sigma}\right) \le 0
\]
if $R$ is sufficiently large and
\[
\sup_{u \in X}\, \E(u) \le \max_{t \ge 0}\, t^\sigma \left(1 - \frac{\lambda}{\lambda_k + \eps} - \kappa\halfthin t^{\gamma^\ast - \sigma}\right) = C \left(1 - \frac{\lambda}{\lambda_k + \eps}\right)^{\gamma^\ast/(\gamma^\ast - \sigma)} < \frac{1}{N}\, S^{N/2}
\]
if $\lambda > \lambda_k - \delta_k$ with $\eps$ and $\delta_k > 0$ sufficiently small. The desired conclusion now follows from Theorem \ref{thm:even-non-min}.
\end{proof}

Now we consider the equation
\begin{equation} \label{138}
- \Delta u + \Phi_u(|x|)\, |u|^{p-2}\, u = \mu\, |u|^{s-2}\, u + |u|^{2^\ast - 2}\, u \quad \text{in } \R^N,
\end{equation}
where $\mu > 0$ and $s$ satisfies
\begin{equation} \label{139}
\max \set{2^{p,q}_{b,\ast},\beta} < s < 2^\ast
\end{equation}
and
\begin{equation} \label{700}
s > pq \hquad \text{if} \hquad \beta \le 2^{p,q}_{b,\ast}.
\end{equation}
Then $s$ satisfies \eqref{118} if $\beta \le 2^{p,q}_{b,\ast}$ and \eqref{117} if $\beta > 2^{p,q}_{b,\ast}$, so Proposition \ref{prop:local-ps} applies. We will show that this equation has arbitrarily many weak radial solutions for all sufficiently large $\mu$.

\begin{theorem} \label{thm:large-mu}
Assume \eqref{110}--\eqref{112}, \eqref{139}, and \eqref{700}. Then for any $m \ge 1$, $\exists \mu_m > 0$ such that equation \eqref{138} has $m$ distinct pairs of nontrivial weak radial solutions at positive energy levels for all $\mu > \mu_m$. In particular, the number of solutions goes to infinity as $\mu \to \infty$.
\end{theorem}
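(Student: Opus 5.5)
We will apply Corollary \ref{Corollary 1} with $c^\ast = \frac{1}{N}\, S^{N/2}$, which is legitimate because Proposition \ref{prop:local-ps} gives \PS{c} for all $c < c^\ast$. As noted before the theorem, $s$ satisfies \eqref{118} or \eqref{117}, so that proposition applies. The functional $\E$ is even. The scaling \eqref{103} and $I = I_\sigma$ satisfy the standing hypotheses of that subsection. In particular, each ray meets $\M$ exactly once, since $I_\sigma(\tau u)$ is strictly increasing in $\tau > 0$ by $(A_6)$.

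For $u \in \M$ and $t \ge 0$, \eqref{104} gives
\[
\E(u_t) = t^\sigma - \frac{\mu\, t^\gamma}{s} \int_{\R^N} |u|^s\, dx - \frac{t^{\gamma^\ast}}{2^\ast} \int_{\R^N} |u|^{2^\ast}\halfthin dx,
\]
with $\gamma = \alpha s - N$ and $\gamma^\ast$ as in \eqref{133}. Since $s > \beta$, we have $\sigma < \gamma < \gamma^\ast$. Because $\M$ is bounded in $L^s \cap L^{2^\ast}$ (Theorem \ref{Theorem 1}), we get $\E(u_t) \ge t^\sigma (1 - C\mu\, t^{\gamma - \sigma} - C t^{\gamma^\ast - \sigma})$. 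Hence, for each fixed $\mu$, $\inf_{\M_\rho} \E > 0$ once $\rho = \rho(\mu)$ is small.

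The set $A_0$ is chosen independently of $\mu$. Fix $m$ linearly independent radial functions in $C^\infty_{c,\rad}(\R^N)$, let $V$ be their span, and set $A_0 = \pi(V \cap \partial B_1) = V \cap \M$. This set is compact and symmetric, and $i(A_0) = m$ because the radial projection is an odd homeomorphism from the unit sphere of $V$. By finite dimensionality and compactness, there are constants $c_0, c_1 > 0$ with $\int |u|^s \ge c_0$ and $\int |u|^{2^\ast} \ge c_1$ on $A_0$. Then
\[
\E(u_t) \le t^\sigma - \frac{c_0\, \mu}{s}\, t^\gamma - \frac{c_1}{2^\ast}\, t^{\gamma^\ast} \quad \forall u \in A_0.
\]
The main step is the bound $\sup_X \E < c^\ast$. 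Here we use
\[
\sup_{t \ge 0} \left(t^\sigma - \frac{c_0\mu}{s}\, t^\gamma\right) = C_0\, \mu^{-\sigma/(\gamma - \sigma)},
\]
which tends to $0$ as $\mu \to \infty$. So there is $\mu_m$ with $\sup_X \E < \frac{1}{N} S^{N/2}$ for $\mu > \mu_m$, independently of $R$.

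Finally, taking $R$ large gives $\E(u_R) \le R^\sigma(1 - c_1 R^{\gamma^\ast - \sigma}/2^\ast) \le 0$ on $A_0$. We also need $R > \rho$, which is immediate. Corollary \ref{Corollary 1} then yields $m$ distinct pairs of critical points at levels in $(0, c^\ast)$. These are nontrivial weak radial solutions of \eqref{138}, and they are nontrivial because their energies are positive while $\E(0) = 0$.

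The only delicate point is that $A_0$ must be fixed before $\mu$ is chosen, so that $c_0$ does not depend on $\mu$; the finite-dimensional choice above guarantees this. The boundary term $\E(u_R) \le 0$ only improves as $\mu$ grows, so it causes no difficulty.
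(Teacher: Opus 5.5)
Your proposal is correct and is essentially the paper's proof: Corollary~\ref{Corollary 1} with $c^\ast = \frac{1}{N}\, S^{N/2}$ via Proposition~\ref{prop:local-ps}, a $\mu$-independent compact symmetric $A_0 \subset \M$ of index $m$, and the bound $\sup_X \E \le C \mu^{-\sigma/(\gamma - \sigma)}$ for large $\mu$, with $\rho$ small and $R$ large giving the remaining inequalities. One minor slip: in the paper $\pi$ is the scaling projection \eqref{102}, which does not preserve $V$, so $\pi$ applied to the unit sphere of $V$ is not $V \cap \M$ in general; the set you actually mean, and the one your index argument uses, is $V \cap \M$, the image of the unit sphere of $V$ under the radial projection $u \mapsto \tau_u u$ with $I_\sigma(\tau_u u) = 1$, and that set is indeed compact, symmetric, and of index $m$.
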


\begin{proof}
In view of Proposition \ref{prop:local-ps}, we apply Corollary \ref{Corollary 1} with $c^\ast = \frac{1}{N}\, S^{N/2}$. Let $A_0$ be a compact symmetric subset of $\M$ with $i(A_0) = m$. Let $R > \rho > 0$ and let $A = \set{u_R : u \in A_0}$ and $X = \set{u_t : u \in A_0,\, 0 \le t \le R}$. For $u \in \M$ and $t \ge 0$, we have
\begin{equation} \label{80}
\E(u_t) = t^\sigma \left(1 - \frac{\mu t^{\gamma - \sigma}}{s} \int_{\R^N} |u|^s\, dx - \frac{t^{\gamma^\ast - \sigma}}{2^\ast} \int_{\R^N} |u|^{2^\ast}\halfthin dx\right),
\end{equation}
where $\gamma$ and $\gamma^\ast$ are given in \eqref{133}. By \eqref{55}, \eqref{35}, and \eqref{139}, $\gamma - \sigma = \alpha\, (s - \beta) > 0$ and $\gamma^\ast - \gamma = \alpha\, (2^\ast - s) > 0$, so
\[
\sigma < \gamma < \gamma^\ast.
\]
Since $\M$ is bounded, it follows that
\[
\inf_{u \in \M}\, \E(u_\rho) > 0
\]
if $\rho$ is sufficiently small. Since $A_0$ is compact and $0 \notin A_0$, \eqref{80} also gives
\[
\E(u_t) \le t^\sigma \left(1 - \kappa_1\halfthin \mu t^{\gamma - \sigma} - \kappa_2\halfthin t^{\gamma^\ast - \sigma}\right) \quad \forall u \in A_0,\, t \ge 0
\]
for some constants $\kappa_1, \kappa_2 > 0$. So
\[
\sup_{u \in A}\, \E(u) \le R^\sigma \left(1 - \kappa_2\halfthin R^{\gamma^\ast - \sigma}\right) \le 0
\]
if $R$ is sufficiently large and
\[
\sup_{u \in X}\, \E(u) \le \max_{t \ge 0}\, t^\sigma \left(1 - \kappa_1\halfthin \mu t^{\gamma - \sigma}\right) = C \mu^{- \sigma/(\gamma - \sigma)} < \frac{1}{N}\, S^{N/2}
\]
if $\mu$ is sufficiently large. The desired conclusion now follows from Corollary \ref{Corollary 1}.
\end{proof}

\subsection{Br\'{e}zis-Nirenberg result for the ball-mass operator} \label{ssec:bn}

In this subsection we revisit the critical equation
\begin{equation} \label{bn-eq}
- \Delta u + \Phi_u(|x|)\, |u|^{p-2}\, u = \lambda\, |u|^{\beta - 2}\, u + |u|^{2^\ast - 2}\, u \quad \text{in } \R^N,
\end{equation}
where $N \ge 3$,
\begin{gather}
\label{300} 1 < p < 2^\ast,\\[10pt]
\label{301} 1 < q < \infty, \quad 2 < pq < 2^\ast,\\[10pt]
\label{302} 1 < b < b_\ast := 1 + Nq\, (1 - p/2^\ast),
\end{gather}
and we assume that $\beta > 2^{p,q}_{b,\ast}$ (see \eqref{31}). Let $\lambda_k \nearrow \infty$ be the sequence of positive eigenvalues of the scaled eigenvalue problem \eqref{51} given in Theorem \ref{thm:bm-eigenvalues}. We will obtain a nontrivial weak radial solution of equation \eqref{bn-eq} for all $\lambda > 0$ that does not belong to this sequence of eigenvalues. We have the following analog of the classical results of Br\'{e}zis and Nirenberg \cite{MR709644} and Capozzi et al.\! \cite{MR831041} for the ball-mass operator.

\begin{theorem} \label{thm:bn}
Assume \eqref{300}--\eqref{302} and
\begin{equation} \label{304}
\beta > \max \set{2^{p,q}_{b,\ast},2^\ast - 2}.
\end{equation}
Then equation \eqref{bn-eq} has a nontrivial weak radial solution $u$ with $0 < \E(u) < \frac{1}{N}\, S^{N/2}$ in the following cases:
\begin{enumroman}
\item \label{thm:bn.i} $0 < \lambda < \lambda_1$;
\item \label{thm:bn.ii} $\lambda_k < \lambda < \lambda_{k+1}$ for some $k \ge 1$.
\end{enumroman}
\end{theorem}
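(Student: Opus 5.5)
Both cases rest on Proposition \ref{prop:local-ps} with $s = \beta$; this is admissible since \eqref{304} gives \eqref{117}. Hence $\E$ satisfies \PS{c} for every $c < \frac{1}{N}\, S^{N/2}$, and the whole task is to build minimax geometries whose levels lie in $(0,\frac{1}{N}\, S^{N/2})$. For $u \in \M$ we use the fiber formula \eqref{180},
\[
\E(u_t) = t^\sigma \left(1 - \frac{\lambda}{\widetilde{\Psi}(u)} - \frac{t^{\gamma^\ast - \sigma}}{2^\ast} \int_{\R^N} |u|^{2^\ast}\halfthin dx\right).
\]
The test functions are truncated Talenti bubbles $u_\eps$ concentrating at the origin, projected onto $\M$ via $\pi$. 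Maximizing the right-hand side in $t$ gives
\[
\max_{t \ge 0} \E((\pi u_\eps)_t) = \frac{1}{N} \left(\frac{I_\sigma(u_\eps) - \lambda J_\sigma(u_\eps)}{(\frac{1}{2^\ast}\int |u_\eps|^{2^\ast})^{2/2^\ast}}\right)^{N/2}\cdot(\text{const})
\]
after rescaling. Equivalently, one works directly with $\E(tu_\eps)$ along the scaling fiber of $u_\eps$.

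\emph{Bubble estimate (Lemma \ref{lem:bubble}).} We need
\[
\frac12\|\nabla u_\eps\|_2^2 = \frac12 S^{N/2}+O(\eps^{N-2}),\qquad \|u_\eps\|_{2^\ast}^{2^\ast}=S^{N/2}+O(\eps^N),
\]
\[
\int |u_\eps|^\beta \ge c\,\eps^{\delta_\beta},\quad \delta_\beta = N-\tfrac{(N-2)\beta}{2},
\]
valid because $\beta>2^\ast-2 \ge N/(N-2)$, which is where \eqref{304} enters. We also need the ball-mass term to be $O(\eps^\nu)$ with $\nu>\delta_\beta$. To obtain this, compute $h_{u_\eps}(\rho)$: it is $\approx \eps^{-(N-2)p/2}\rho^N$ for $\rho<\eps$, and for $\rho>\eps$ it is governed by the tail $\eps^{(N-2)p/2}\rho^{N-(N-2)p}$ (or by $\eps^{N-(N-2)p/2}$, depending on the sign of $N-(N-2)p$). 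Integrating $\rho^{-b}h^q$ then gives $\nu$ as a minimum of explicit exponents, and we verify $\nu>\delta_\beta$ from $b<b_\ast$ and $pq<2^\ast$. This comparison is the step I expect to be the main obstacle: the exponent bookkeeping splits according to whether $p \gtrless N/(N-2)$, and the truncation must be placed so that the large-$\rho$ part is also controlled. Granting this, $\sup_t \E(tu_\eps)$ along the fiber is $\le \frac1N S^{N/2} - c\eps^{\delta_\beta} + O(\eps^{\nu}) + O(\eps^{N-2}) < \frac1N S^{N/2}$ for small $\eps$, using also $\delta_\beta<N-2$, which follows from $\beta>2^\ast-2$.

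\emph{Case \ref{thm:bn.i}.} Since $\lambda<\lambda_1=\min_\M\widetilde\Psi$, \eqref{180} gives $\inf_{\M_\rho}\E>0$ for small $\rho$. Moreover $\E((\pi u_\eps)_R)<0$ for large $R$. The mountain pass theorem along the fiber path $t\mapsto(\pi u_\eps)_t$, combined with the bubble estimate and \PS{c}, yields a critical point at a level in $(0,\frac1N S^{N/2})$.

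\emph{Case \ref{thm:bn.ii}.} We apply Theorem \ref{Theorem 8} with $B_0=\widetilde\Psi_{\lambda_{k+1}}$. Then $i(\M\setminus B_0)=k$ by Theorem \ref{Theorem 13}\ref{Theorem 13.iii}, and $\inf_B\E>0$ for small $\rho$ since $\lambda<\lambda_{k+1}$. For $A_0$ we need a compact symmetric subset of $\widetilde\Psi^{\lambda'}$ with $\lambda_k<\lambda'<\lambda$, of index exactly $k$, whose elements all vanish on a fixed ball $B_{r_0}$ (Lemma \ref{lem:A0}). To construct it, start from a compact index-$k$ subset of the open set $\M\setminus\widetilde\Psi_{\lambda'}$ (as in the proof of Theorem \ref{thm:left-nbhd}, using Theorem \ref{Theorem 9} to take $C^\infty_{c,\rad}$ functions). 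Multiply by $1-\xi(|x|/r_0)$ and reproject with $\pi$; by continuity, for small $r_0$ the result remains in $\widetilde\Psi^{\lambda'}$, and the map is odd, so the index is preserved. Disjointness of $A_0$ from $B_0$ is automatic. Take $e=\pi u_\eps$, with the bubble truncated inside $B_{r_0}$, and define
\[
H(u,\tau)=\pi\big((1-\tau)u+\tau u_\eps\big).
\]
This is well defined because the supports are disjoint, so the combination never vanishes. By disjointness, $I_\sigma$ (the ball-mass term is only superadditive, so we use Theorem \ref{thm:bm-pairing}-type bounds, or rather that $h$ splits as a sum) and $J_\sigma$, $\int|\cdot|^{2^\ast}$ decouple along $H$. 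On $\{u_t:u\in A_0\}$ we have $\E\le t^\sigma(1-\lambda/\lambda')$, and $\E\le0$ for $t=R$ large, uniformly on the compact set $H(A_0\times[0,1])$. Hence $\sup_A\E\le0<\inf_B\E$.

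It remains to bound $\sup_X\E<\frac1N S^{N/2}$. Maximize over $t$ the sum of the $A_0$-part, which is nonpositive since $\lambda>\lambda'$, and the bubble part. Because the ball-mass term of a disjoint sum exceeds the sum of the parts, I will bound $h_{(1-\tau)u+\tau u_\eps}^q$ from above by $2^{q-1}$ times the sum, and absorb the cross error into the $O(\eps^\nu)$ estimate. This reduces the bound to the bubble estimate. Theorem \ref{Theorem 8} together with \PS{c} then gives a critical value $c\in(0,\frac1N S^{N/2})$.
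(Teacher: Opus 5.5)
\textbf{Gap: the linear homotopy breaks the level estimate in case \ref{thm:bn.ii}.} The compactness threshold, the bubble asymptotics, case \ref{thm:bn.i}, and the choice $B_0 = \widetilde{\Psi}_{\lambda_{k+1}}$ are fine. The problem is the bound $\sup_X \E < \frac{1}{N}\, S^{N/2}$, which fails for your homotopy $H(u,\tau) = \pi((1-\tau)u + \tau u_\eps)$.

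Your claim that the $A_0$-part is nonpositive is false. The inequality $I_\sigma(u) - \lambda J_\sigma(u) \le 1 - \lambda/\lambda' < 0$ on $\widetilde{\Psi}^{\lambda'}$ propagates along the scaling fibers $u_t$, because $I_\sigma$ and $J_\sigma$ are both homogeneous of degree $\sigma$ there by \eqref{104}. It does not propagate along rays $cu$. There $2 I_\sigma(cu) = c^2 \norm[L^2(\R^N)]{\nabla u}^2 + \O(c^{pq})$, while $\lambda J_\sigma(cu) = c^\beta \lambda J_\sigma(u)$, with $pq, \beta > 2$. Your phrase ``$\E(tu_\eps)$ along the scaling fiber'' shows the same conflation of multiplication by $t$ with the scaling.

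The estimate is in fact false, not merely unproved. With disjoint supports, the gradient, $L^\beta$ and $L^{2^\ast}$ terms split exactly and the ball-mass term is superadditive. Set $c = 1-\tau$, $x = c^2 \norm[L^{2^\ast}(\R^N)]{u}^2$, $y = (1-c)^2 \norm[L^{2^\ast}(\R^N)]{u_\eps}^2$ and $r = 2^\ast/2 > 1$. Using $(1+z)/(1+z^r)^{1/r} \ge 1 + z/2$ for small $z$, you get
\[
Q((1-\tau)u + \tau u_\eps) \ge \frac{(S - C c^{\beta-2})\, x + (S - C \eps^{\delta_\beta})\, y}{(x^r + y^r)^{1/r}} \ge S + \frac{x}{y} \left(\frac{S}{2} - C c^{\beta-2}\right) - C \eps^{\delta_\beta}.
\]
Fixing $c$ small and letting $\eps \to 0$ gives $Q > S$. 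You must take $R$ large to get $\sup_A \E \le 0$, so $X$ contains these fiber maxima. By \eqref{309}, $\sup_X \E > \frac{1}{N}\, S^{N/2}$.

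\textbf{The missing idea: interpolate along the scaling fibers.} The paper uses $H(u,\tau) = \pi(u_{1-\tau} + (\varphi_{\eps,\ell})_\tau)$. By \eqref{519}, $Q(H(u,\tau)) = Q(u_t + \varphi_{\eps,\ell})$ with $t = (1-\tau)/\tau$. The $A_0$-part then contributes $t^\sigma (I_\sigma(u) - \lambda J_\sigma(u)) \le -\mu\, t^\sigma$.

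This creates a difficulty your version never meets. Since $\supp u_t = t^{-1} \supp u$, the support moves into the ball carrying the bubble as $t \to \infty$, so disjointness holds only for $t < T$. The paper handles $t \ge T$ with Lemma \ref{lem:T}: the margin $-\mu$ and the Lipschitz continuity of $I_\sigma - \lambda J_\sigma$ force $Q = 0$, with $T$ independent of $\eps$ and $\ell$. It then builds $A_0$ supported outside $B_{T\ell}$ (Lemma \ref{lem:A0} with $m = 4T\ell$).

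\textbf{Cross-term bound.} Your bound also fails even with the right homotopy. The estimate $h^q \le 2^{q-1}(h_1^q + h_2^q)$ leaves an excess $(2^{q-1}-1)\frac{a}{pq} \int_0^\infty \rho^{-b}\, h_{u_t}^q\, d\rho$. This is of order $t^\sigma$, not $\O(\eps^\nu)$, and since $q > 1$ it need not be dominated by the margin $\mu\, t^\sigma$. The paper instead combines three steps (Lemmas \ref{lem:interaction} and \ref{lem:level}):
\begin{itemize}
\item the triangle inequality in $L^{p,q}_b(\R^N)$;
\item an expansion of $(a+b)^{pq}$ whose cross terms are at least linear in $\norm[L^{p,q}_b(\R^N)]{\varphi_{\eps,\ell}}$;
\item Young's inequality against $\mu\, t^\sigma$.
\end{itemize}

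\textbf{Minor points.} The inequality $2^\ast - 2 \ge N/(N-2)$ fails for $N \ge 5$; there use $\beta > 2 \ge N/(N-2)$ instead. Also, $\delta_\beta < b_\ast - b$ follows from $\beta > 2^{p,q}_{b,\ast}$ and $q > 1$, not from $pq < 2^\ast$.
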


For $N \ge 4$,
\[
2^\ast - 2 = \frac{4}{N - 2} \le 2 < \beta
\]
by \eqref{303} and hence \eqref{304} reduces to $\beta > 2^{p,q}_{b,\ast}$. For $N = 3$, it also requires $\beta > 4$. We also note that the condition $q > 1$ in \eqref{301} is not an additional restriction. Indeed, $\beta > 2^{p,q}_{b,\ast}$ implies $b < (N - 1)(q - 1) + 1$ by \eqref{505}, \eqref{2}, and \eqref{36}, which together with $b > 1$ forces $q > 1$.

We will prove Theorem \ref{thm:bn} using the mountain pass theorem of Ambrosetti and Rabinowitz \cite{MR0370183} in case \ref{thm:bn.i} and the scaled linking theorem, Theorem \ref{Theorem 8}, in case \ref{thm:bn.ii} (for the classical linking arguments in the critical case see \cite{MR829403,MR831041,MR867663,MR779872}). The variational functional associated with equation \eqref{bn-eq} is
\[
\E(u) = I_\sigma(u) - \lambda J_\sigma(u) - \frac{1}{2^\ast} \int_{\R^N} |u|^{2^\ast}\halfthin dx, \quad u \in E^{p,q}_{b,\rad}(\R^N)
\]
(see \eqref{101} and \eqref{305}). For $u \in E^{p,q}_{b,\rad}(\R^N) \setminus \set{0}$ and $t \ge 0$, we have
\begin{equation} \label{506}
\E(u_t) = t^\sigma\halfthin \big(I_\sigma(u) - \lambda J_\sigma(u)\big) - \frac{t^{\gamma^\ast}}{2^\ast} \int_{\R^N} |u|^{2^\ast}\halfthin dx,
\end{equation}
where $\gamma^\ast$ is given in \eqref{133}. So
\[
\max_{t \ge 0}\, \E(u_t) = \left(1 - \frac{\sigma}{\gamma^\ast}\right)\! \left(\frac{2^\ast \sigma}{\gamma^\ast}\right)^{\sigma/(\gamma^\ast - \sigma)}\! \big(I_\sigma(u) - \lambda J_\sigma(u)\big)_+^{\gamma^\ast/(\gamma^\ast - \sigma)} \left(\int_{\R^N} |u|^{2^\ast}\halfthin dx\right)^{- \sigma/(\gamma^\ast - \sigma)}.
\]
Since $2^\ast \sigma = 2\, (\alpha\halfthin 2^\ast - N) = 2 \gamma^\ast$ (see \eqref{55}), this simplifies to
\begin{equation} \label{309}
\max_{t \ge 0}\, \E(u_t) = \frac{1}{N}\, Q(u)^{N/2},
\end{equation}
where
\[
Q(u) = \frac{2\, \big(I_\sigma(u) - \lambda J_\sigma(u)\big)_+}{\left(\dint_{\R^N} |u|^{2^\ast}\halfthin dx\right)^{2/2^\ast}}.
\]
We note that
\begin{equation} \label{519}
Q(u_t) = Q(u) \quad \forall u \in E^{p,q}_{b,\rad}(\R^N) \setminus \set{0},\, t > 0.
\end{equation}

Next we turn to the asymptotics of the truncated Talenti bubbles built from the extremals of the Sobolev inequality found by Talenti \cite{MR0463908}. Let $\varphi \in C^\infty_{c,\rad}(\R^N)$ with $0 \le \varphi \le 1$, $\varphi \equiv 1$ on $B_{1/2}$, and $\supp \varphi \subset B_1$. For $0 < \eps \le \ell \le 1$, set
\[
\varphi_{\eps,\ell}(x) = \varphi\!\left(\frac{x}{\ell}\right) \frac{\eps^{(N-2)/2}}{\big(\eps^2 + |x|^2\big)^{(N-2)/2}}.
\]
Then $\varphi_{\eps,\ell} \in C^\infty_{c,\rad}(\R^N)$ is supported in $B_\ell$ and we have the classical estimates
\begin{equation} \label{507}
\int_{\R^N} |\nabla \varphi_{\eps,\ell}|^2\, dx = S^{N/2} + \O\!\left((\eps/\ell)^{N-2}\right), \qquad \int_{\R^N} \varphi_{\eps,\ell}^{2^\ast}\halfthin dx = S^{N/2} + \O\!\left((\eps/\ell)^N\right)
\end{equation}
and
\begin{equation} \label{501}
\int_{\R^N} \varphi_{\eps,\ell}^s\, dx \asymp \begin{cases}
\eps^{s\, (N-2)/2}\, \ell^{N - s\, (N-2)} & \text{if } 1 \le s < \frac{N}{N - 2}\\[5pt]
\eps^{N/2} \abs{\log (\eps/\ell)} & \text{if } s = \frac{N}{N - 2}\\[5pt]
\eps^{\delta_s} & \text{if } s > \frac{N}{N - 2},
\end{cases}
\end{equation}
where
\begin{equation} \label{306}
\delta_s = N - \frac{s\, (N - 2)}{2}
\end{equation}
(see \cite{MR709644}).

\begin{lemma} \label{lem:bubble}
There exists a constant $C > 0$, independent of $\eps$ and $\ell$, such that
\begin{gather}
\label{313} \int_{\R^N} \varphi_{\eps,\ell}^\beta\, dx \ge \frac{1}{C}\, \eps^{\delta_\beta},\\[10pt]
\label{314} \int_0^\infty \rho^{-b}\, h_{\varphi_{\eps,\ell}}^q(\rho)\, d\rho \le C \eps^\nu\halfthin \ell^{b_\ast - b - \nu}\halfthin (\abs{\log (\eps/\ell)} + 1), \hquad \nu = \min \set{\frac{pq\, (N - 2)}{2},b_\ast - b}
\end{gather}
for all $0 < \eps \le \ell \le 1$. Moreover, $\delta_\beta < \min \set{N - 2,\nu}$.
\end{lemma}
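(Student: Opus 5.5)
The lemma has three parts: a lower bound \eqref{313}, an upper bound \eqref{314}, and the exponent inequality. I would prove them in that order, using only the explicit profile $U_\eps(x) = \eps^{(N-2)/2}(\eps^2 + |x|^2)^{-(N-2)/2}$, for which $0 \le \varphi_{\eps,\ell} \le U_\eps$ and $\varphi_{\eps,\ell} = U_\eps$ on $B_{\ell/2}$.

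\emph{Lower bound \eqref{313}.} First I check that $\beta > N/(N-2)$. For $N \ge 4$ this follows from $\beta > 2$ in \eqref{303}. For $N = 3$ it follows from $\beta > 4 = 2^\ast - 2$ in \eqref{304}. Hence the $\beta$-integrand is integrable at infinity, which is the regime of the third case of \eqref{501}. To make the constant independent of $\ell$, I would not quote \eqref{501} but restrict to $B_{\ell/2}$ and substitute $x = \eps y$:
\[
\int_{\R^N} \varphi_{\eps,\ell}^\beta\, dx \ge \eps^{\delta_\beta} \int_{B_{\ell/2\eps}} (1 + |y|^2)^{-\beta(N-2)/2}\, dy \ge \eps^{\delta_\beta} \int_{B_{1/2}} (1 + |y|^2)^{-\beta(N-2)/2}\, dy,
\]
where the last step uses $\ell/\eps \ge 1$.

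\emph{Upper bound \eqref{314}.} Since $\supp \varphi_{\eps,\ell} \subset B_\ell$, we have $h(\rho) \le \int_{B_{\min\{\rho,\ell\}}} U_\eps^p\, dx$. Substituting $x = \eps y$ gives
\[
h(\rho) \le \eps^{N - p(N-2)/2}\, g(\min\{\rho,\ell\}/\eps), \qquad g(R) = \int_{B_R} (1 + |y|^2)^{-p(N-2)/2}\, dy .
\]
The function $g$ satisfies $g(R) \lesssim R^N$ for $R \le 1$. For $R \ge 1$ it satisfies $g(R) \lesssim R^{N - p(N-2)}$, $\log(1+R)$, or $1$, according as $p$ is less than, equal to, or greater than $N/(N-2)$. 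I then split $\int_0^\infty \rho^{-b} h^q\, d\rho$ over $(0,\eps)$, $(\eps,\ell)$ and $(\ell,\infty)$.
\begin{itemize}
\item On $(0,\eps)$ the integral is $\lesssim \eps^{-pq(N-2)/2} \int_0^\eps \rho^{Nq-b}\, d\rho = C\eps^{b_\ast - b}$. This uses the identity $b_\ast - 1 = Nq - pq(N-2)/2$ and $b < b^\#$.
\item On $(\ell,\infty)$ the integral is $\lesssim \ell^{1-b} h(\ell)^q$.
\item On $(\eps,\ell)$ the integral is a pure power integral. For example, when $p < N/(N-2)$ it equals $\eps^{pq(N-2)/2} \int_\eps^\ell \rho^{e-1}\, d\rho$ with $e = b_\ast - b - pq(N-2)/2$. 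This produces $\ell^e$, $\eps^e$, or $\log(\ell/\eps)$ according to the sign of $e$.
\end{itemize}
In every case the result is a sum of terms $\eps^{a}\ell^{c}$ with $a \ge \nu$ and $a + c = b_\ast - b$, possibly with a logarithmic factor. Since $\eps \le \ell \le 1$, each such term is bounded by $\eps^\nu \ell^{b_\ast - b - \nu}$, which gives \eqref{314}. The bookkeeping over the three ranges of $p$ and the sign of $e$ is routine but tedious.

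\emph{Exponent inequality.} First, $\delta_\beta < N-2$ is equivalent to $\beta(N-2)/2 > 2$, that is, $\beta > 2^\ast - 2$, which is part of \eqref{304}. For the comparison with $\nu$, I would write everything in terms of $\alpha$ using $\beta = 2 + 2/\alpha$. This gives $\delta_\beta = \sigma/\alpha$ with $\sigma = 2\alpha + 2 - N$, and $b_\ast - b = \sigma(pq-2)/2$ by \eqref{55}. The inequality $\delta_\beta < b_\ast - b$ then becomes $\alpha(pq-2) > 2$, that is, $b_\# - b > 2$. The inequality $\delta_\beta < pq(N-2)/2$ becomes a similar linear inequality in $b$.

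I expect this step to be the main obstacle. Neither inequality follows from \eqref{2} alone. I would try to derive both from $\beta > 2^{p,q}_{b,\ast}$ via \eqref{505}, i.e.\ $b < (N-1)(q-1) + 1$ (which gives $b_\# - b > q + N - 1 > 2$), combined with $pq > 2$, $q > 1$, and $\beta > 2^\ast - 2$, with separate treatment of $N = 3$ and $N \ge 4$ if necessary.
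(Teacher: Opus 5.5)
Your argument is the paper's proof in substance. The paper first rescales to $\ell = 1$ via $\varphi_{\eps,\ell}(x) = \ell^{-(N-2)/2}\,\varphi_{\eps/\ell,1}(x/\ell)$ and quotes \eqref{501} for \eqref{313}. Its split of the ball-mass integral, at $\rho = 1$ together with the substitution $\rho = \eps\varrho$ split at $\varrho = 1$, is your split at $\ell$ and $\eps$. You instead keep $\ell$ throughout and prove \eqref{313} directly on $B_{\ell/2}$. That is slightly cleaner, and it needs no lower bound on $\beta$ at all.

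The step you left open as the ``main obstacle'' is not one, but as written the proposal does not prove it, so it should be closed.
\begin{itemize}
\item You already showed $\delta_\beta < N - 2$ from $\beta > 2^\ast - 2$. Since $pq > 2$, this gives directly $\delta_\beta < N - 2 < pq\,(N-2)/2$. There is no need to rewrite it as an inequality in $b$.
\item For $\delta_\beta < b_\ast - b$, your reduction to $b_\# - b > 2$ is exactly the paper's. The arithmetic needs fixing, though: $b < (N-1)(q-1) + 1$ gives $b_\# - b > N(q-1) + 3 - (N-1)(q-1) - 1 = q + 1$, not $q + N - 1$. The conclusion $q + 1 > 2$ still holds.
\end{itemize}

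One bookkeeping point in \eqref{314}. When $p = N/(N-2)$, the logarithm enters to the power $q$: in the integrand on $(\eps,\ell)$, and in $h(\ell)^q$ on $(\ell,\infty)$. So your summary ``$a \ge \nu$, possibly times a logarithm'' does not suffice as stated. It works for two reasons, both resting on $b > 1$ exactly as in the paper's treatment of \eqref{502}:
\begin{itemize}
\item On $(\eps,\ell)$, the substitution $\rho = \eps t$ and $\int_1^\infty t^{-b}\log^q(1+t)\,dt < \infty$ remove the logarithm entirely.
\item On $(\ell,\infty)$, the exponent is $a = b_\ast - 1 \ge \nu + (b-1) > \nu$. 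The resulting factor $(\eps/\ell)^{b-1}$ absorbs $\log^q(1 + \ell/\eps)$.
\end{itemize}
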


\begin{proof}
Since $\varphi_{\eps,\ell}(x) = \ell^{-(N-2)/2}\, \varphi_{\eps/\ell,1}(x/\ell)$, $h_{\varphi_{\eps,\ell}}(\rho) = \ell^{\delta_p}\, h_{\varphi_{\eps/\ell,1}}(\rho/\ell)$, and $\delta_p\halfthin q + 1 = b_\ast$,
\[
\int_{\R^N} \varphi_{\eps,\ell}^\beta\, dx = \ell^{\delta_\beta} \int_{\R^N}
\varphi_{\eps/\ell,1}^\beta\, dx, \qquad \int_0^\infty \rho^{-b}\, h_{\varphi_{\eps,\ell}}^q(\rho)\, d\rho = \ell^{\,b_\ast -
b} \int_0^\infty \rho^{-b}\, h_{\varphi_{\eps/\ell,1}}^q(\rho)\, d\rho,
\]
so we may assume that $\ell = 1$.

We have
\begin{equation} \label{504}
\delta_\beta = N - \frac{\beta\, (N - 2)}{2} = 2 - \frac{N - 2}{\alpha}
\end{equation}
(see \eqref{31}), so $0 < \delta_\beta < 2$ by \eqref{35}. On the other hand, since $\beta > 4/(N - 2)$ by \eqref{304}, $\delta_\beta < N - 2$. So
\begin{equation} \label{316}
\delta_\beta < \min \set{2,N - 2} \le \frac{N}{2}
\end{equation}
and hence
\[
\beta = \frac{2\, (N - \delta_\beta)}{N - 2} > \frac{N}{N - 2}.
\]
The estimate \eqref{313} now follows from \eqref{501}.

For $0 < \rho < 1$,
\[
h_{\varphi_{\eps,1}}(\rho) = \int_{B_\rho} \varphi_{\eps,1}^p\, dy \le \int_{B_\rho} \frac{\eps^{p\,(N-2)/2}}{\big(\eps^2 + |y|^2\big)^{p\,(N-2)/2}}\, dy = \eps^{\delta_p}\, H\!\left(\frac{\rho}{\eps}\right),
\]
where
\[
H(\varrho) = \int_{B_\varrho} \frac{dy}{\big(1 + |y|^2\big)^{p\,(N-2)/2}}.
\]
So
\begin{equation} \label{503}
\int_0^1 \rho^{-b}\, h_{\varphi_{\eps,1}}^q(\rho)\, d\rho \le \eps^{\delta_p\halfthin q} \int_0^1 \rho^{-b}\, H^q\!\left(\frac{\rho}{\eps}\right) d\rho = \eps^{b_\ast - b} \int_0^{1/\eps} \varrho^{-b}\, H^q(\varrho)\, d\varrho.
\end{equation}
Using $H(\varrho) \le \omega_N\halfthin \varrho^N$ for $0 < \varrho \le 1$ and
\[
H(\varrho) \le \begin{cases}
C \varrho^{N - p\,(N-2)} & \text{if } p < \frac{N}{N - 2}\\[5pt]
C\, (1 + \abs{\log \varrho}) & \text{if } p \ge \frac{N}{N - 2}
\end{cases}
\]
for $\varrho > 1$ gives
\begin{align*}
\int_0^{1/\eps} \varrho^{-b}\, H^q(\varrho)\, d\varrho & \le \omega_N^q \int_0^1 \varrho^{Nq-b}\, d\varrho + \begin{cases}
C \int_1^{1/\eps} \varrho^{[N - p\,(N-2)]\, q - b}\, d\varrho & \text{if } p < \frac{N}{N - 2}\\[5pt]
C \int_1^{1/\eps} \varrho^{-b}\, (1 + \abs{\log \varrho})^q\, d\varrho & \text{if } p \ge \frac{N}{N - 2}
\end{cases}\\[7.5pt]
& \le C \left(1 + \int_1^{1/\eps} \varrho^{[N - p\,(N-2)]\, q - b}\, d\varrho\right) \hquad \text{since $1 < b < Nq + 1$ by \eqref{302}}\\[7.5pt]
& \le \begin{cases}
C\, (\eps^{-([N - p\,(N-2)]\, q + 1 - b)} + 1) & \text{if } b < [N - p\,(N-2)]\, q + 1\\[5pt]
C\, (\abs{\log \eps} + 1) & \text{if } b \ge [N - p\,(N-2)]\, q + 1.
\end{cases}
\end{align*}
Combining \eqref{503} with this estimate and noting that
\[
\delta_p\halfthin q - [N - p\, (N - 2)]\, q = \frac{pq\, (N - 2)}{2}
\]
gives
\begin{equation} \label{500}
\int_0^1 \rho^{-b}\, h_{\varphi_{\eps,1}}^q(\rho)\, d\rho \le \begin{cases}
C\, (\eps^{pq\,(N-2)/2} + \eps^{b_\ast - b}) & \text{if } b < [N - p\,(N-2)]\, q + 1\\[5pt]
C \eps^{b_\ast - b}\, (\abs{\log \eps} + 1) & \text{if } b \ge [N - p\,(N-2)]\, q + 1.
\end{cases}
\end{equation}
On the other hand, since
\[
h_{\varphi_{\eps,1}}(\rho) = \int_{\R^N} \varphi_{\eps,1}^p\, dy
\]
for $\rho \ge 1$ and $b > 1$,
\begin{equation} \label{502}
\int_1^\infty \rho^{-b}\, h_{\varphi_{\eps,1}}^q(\rho)\, d\rho \le \begin{cases}
C \eps^{pq\, (N-2)/2} & \text{if } p < \frac{N}{N - 2}\\[5pt]
C \eps^{\delta_p\halfthin q}\, (\abs{\log \eps} + 1)^q & \text{if } p \ge \frac{N}{N - 2}
\end{cases}
\end{equation}
by \eqref{501}. Since $\delta_p\halfthin q = b_\ast - 1 > b_\ast - b$, \eqref{314} follows from \eqref{500} and \eqref{502}.

Since $\delta_\beta < N - 2$ (see \eqref{316}) and $pq > 2$ by \eqref{301},
\[
\delta_\beta < \frac{pq\, (N - 2)}{2},
\]
so it only remains to show that $\delta_\beta < b_\ast - b$. By \eqref{504} and \eqref{55},
\[
\delta_\beta = \frac{2\, (b_\ast - b)}{\alpha\, (pq - 2)},
\]
and $b_\ast - b > 0$ by \eqref{302}, so it suffices to show that $\alpha\, (pq - 2) > 2$. By \eqref{37},
\[
\alpha\, (pq - 2) = N\halfthin (q - 1) + 3 - b.
\]
Since $\beta > 2^{p,q}_{b,\ast}$ by \eqref{304}, $b < (N - 1)(q - 1) + 1$ by \eqref{505}, \eqref{2}, and \eqref{36}, so
\[
N\halfthin (q - 1) + 3 - b > q + 1 > 2
\]
by \eqref{301}.
\end{proof}

It follows from \eqref{507} and Lemma \ref{lem:bubble} that there exists a constant $c_0 > 0$ such that
\begin{equation} \label{508}
Q(\varphi_{\eps,\ell}) = \frac{\left(\ds{\int_{\R^N} |\nabla \varphi_{\eps,\ell}|^2\, dx + \frac{2a}{pq} \int_0^\infty \rho^{-b}\, h_{\varphi_{\eps,\ell}}^q(\rho)\, d\rho - \frac{2 \lambda}{\beta} \int_{\R^N} \varphi_{\eps,\ell}^\beta\, dx}\right)_+}{\left(\dint_{\R^N} \varphi_{\eps,\ell}^{2^\ast}\halfthin dx\right)^{2/2^\ast}} \le S - c_0\, \lambda\, \eps^{\delta_\beta}
\end{equation}
for all sufficiently small $\eps > 0$, depending on $\ell$ and $\lambda$. We are now ready to prove case \ref{thm:bn.i} of Theorem \ref{thm:bn}.

\begin{proof}[Proof of Theorem \ref{thm:bn} \ref{thm:bn.i}]
We will show that the functional $\E$ has the mountain pass geometry with the mountain pass level below the compactness threshold $\frac{1}{N}\, S^{N/2}$ (see Proposition \ref{prop:local-ps}).

As in the proof of Theorem \ref{thm:left-nbhd},
\[
\E(u_t) \ge t^\sigma \left(1 - \frac{\lambda}{\lambda_1} - C t^{\gamma^\ast - \sigma}\right) \quad \forall u \in \M,\, t \ge 0
\]
for some constant $C > 0$. Since $\lambda < \lambda_1$ and $\gamma^\ast > \sigma$, it follows that if $\rho > 0$ is sufficiently small, then
\[
\inf_{u \in \bdry{U_\rho}}\, \E(u) > 0,
\]
where $U_\rho = \left\{u \in E^{p,q}_{b,\rad}(\R^N) : I_\sigma(u) < \rho^\sigma\right\}$ is a neighborhood of the origin since $I_\sigma$ is continuous with $I_\sigma(0) = 0$. On the other hand, for any $u \in E^{p,q}_{b,\rad}(\R^N) \setminus \set{0}$, $\E(u_t) \to - \infty$ as $t \to \infty$ by \eqref{506}. So $\E$ has the mountain pass geometry.

Let
\[
c := \inf_{\gamma \in \Gamma}\, \max_{t \in [0,1]}\, \E(\gamma(t)),
\]
where
\[
\Gamma = \set{\gamma \in C([0,1],E^{p,q}_{b,\rad}(\R^N)) : \gamma(0) = 0,\, \E(\gamma(1)) < 0},
\]
be the mountain pass level. By \eqref{309} and \eqref{508},
\[
c \le \max_{t \ge 0}\, \E((\varphi_{\eps,1})_t) = \frac{1}{N}\, Q(\varphi_{\eps,1})^{N/2} < \frac{1}{N}\, S^{N/2}
\]
for all sufficiently small $\eps > 0$.
\end{proof}

Now suppose $\lambda_k < \lambda < \lambda_{k+1}$ for some $k \ge 1$ and fix $\lambda' \in (\lambda_k,\lambda)$. We will apply the scaled linking theorem, Theorem \ref{Theorem 8}, with $e = \pi(\varphi_{\eps,\ell})$ and the homotopy
\begin{equation} \label{584}
H(u,\tau) = \pi(u_{1 - \tau} + (\varphi_{\eps,\ell})_\tau), \quad u \in A_0,\, \tau \in [0,1],
\end{equation}
which interpolates between a suitable subset $A_0$ of the sublevel set $\widetilde{\Psi}^{\lambda'}$ and the concentrating bubble $e$ along the fibers of the scaling (see \eqref{102}).

Set
\[
\mu = \frac{\lambda}{\lambda'} - 1 > 0.
\]
For $u \in \widetilde{\Psi}^{\lambda'}$,
\begin{equation} \label{517}
I_\sigma(u) - \lambda J_\sigma(u) = 1 - \frac{\lambda}{\widetilde{\Psi}(u)} \le 1 - \frac{\lambda}{\lambda'} = - \mu < 0,
\end{equation}
so it follows from \eqref{104} that
\[
Q(u_t) = 0 \quad \forall u \in \widetilde{\Psi}^{\lambda'},\, t > 0.
\]

Next we show that for all sufficiently large $t > 0$, $Q(u_t + \varphi_{\eps,\ell})$ vanishes identically on $\widetilde{\Psi}^{\lambda'}$.

\begin{lemma} \label{lem:T}
There exists $T \ge 1$, depending only on $\lambda$ and $\lambda'$, such that
\[
Q(u_t + \varphi_{\eps,\ell}) = 0 \quad \forall u \in \widetilde{\Psi}^{\lambda'},\, t \ge T,\, 0 < \eps \le \ell \le 1.
\]
\end{lemma}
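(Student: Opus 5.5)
The idea is to reduce the claim to the sign condition $I_\sigma(u_t + \varphi_{\eps,\ell}) - \lambda J_\sigma(u_t + \varphi_{\eps,\ell}) \le 0$, since $Q(w) = 0$ exactly when $\big(I_\sigma(w) - \lambda J_\sigma(w)\big)_+ = 0$. Writing $w = u_t + \varphi_{\eps,\ell}$, I will bound $I_\sigma(w)$ from above by a perturbation of $I_\sigma(u_t) = t^\sigma$, and $J_\sigma(w)$ from below by a perturbation of $J_\sigma(u_t) = t^\sigma J_\sigma(u)$. The error terms will involve only quantities attached to $\varphi_{\eps,\ell}$, and these are bounded uniformly in $0 < \eps \le \ell \le 1$. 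The negative gap $-\mu$ from \eqref{517}, multiplied by $t^\sigma$, then absorbs these errors once $t$ is large.

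\emph{Step 1 (elementary splittings).} Fix $\delta \in (0,1)$, to be chosen depending only on $\lambda,\lambda'$. For the gradient term, $\int |\nabla(u_t + \varphi)|^2 \le (1 + \delta) \int |\nabla u_t|^2 + (1 + \delta^{-1}) \int |\nabla \varphi|^2$. For the ball-mass term, the triangle inequality in $L^{p,q}_b(\R^N)$ (Theorem \ref{Theorem 2}) together with the convexity inequality $(x + y)^{pq} \le (1 + \delta)\, x^{pq} + C_\delta\, y^{pq}$ gives $\norm[L^{p,q}_b(\R^N)]{u_t + \varphi}^{pq} \le (1 + \delta) \norm[L^{p,q}_b(\R^N)]{u_t}^{pq} + C_\delta \norm[L^{p,q}_b(\R^N)]{\varphi}^{pq}$. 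Hence
\[
I_\sigma(w) \le (1 + \delta)\, I_\sigma(u_t) + C_\delta\, I_\sigma(\varphi_{\eps,\ell}) = (1 + \delta)\, t^\sigma + C_\delta\, I_\sigma(\varphi_{\eps,\ell}),
\]
using \eqref{104} and $u \in \M$. For $J_\sigma$, the pointwise inequality $|a + b|^\beta \ge (1 - \delta)\, |a|^\beta - C_\delta\, |b|^\beta$ gives
\[
J_\sigma(w) \ge (1 - \delta)\, t^\sigma J_\sigma(u) - C_\delta\, J_\sigma(\varphi_{\eps,\ell}).
\]

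\emph{Step 2 (uniform bounds on the bubble).} This is the only point needing care. By \eqref{507} and $\eps/\ell \le 1$, $\int |\nabla \varphi_{\eps,\ell}|^2$ is bounded by a constant. By \eqref{314}, the ball-mass term is at most $C\, \ell^{b_\ast - b}\, (\eps/\ell)^\nu\, (\abs{\log(\eps/\ell)} + 1)$. Since $b < b_\ast$, $\ell \le 1$, and $s^\nu(\abs{\log s} + 1)$ is bounded on $(0,1]$, this term is bounded too. So $I_\sigma(\varphi_{\eps,\ell}) \le K$ uniformly. Since $\varphi_{\eps,\ell}$ is supported in $B_1$ and $\beta < 2^\ast$, H\"older's inequality and the $L^{2^\ast}$ bound in \eqref{507} give $J_\sigma(\varphi_{\eps,\ell}) \le K$ as well.

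\emph{Step 3 (conclusion).} Since $\widetilde{\Psi}(u) \le \lambda'$, we have $J_\sigma(u) = 1/\widetilde{\Psi}(u) \ge 1/\lambda'$. Combining Steps 1 and 2,
\[
I_\sigma(w) - \lambda J_\sigma(w) \le t^\sigma \left[(1 + \delta) - (1 - \delta)\, \frac{\lambda}{\lambda'}\right] + C_\delta K (1 + \lambda).
\]
Choose $\delta$, depending only on $\lambda$ and $\lambda'$, so that the bracket is at most $-\mu/2$; this is possible because at $\delta = 0$ the bracket equals $-\mu$. Then take $T \ge 1$ with $T^\sigma \mu/2 \ge C_\delta K (1 + \lambda)$. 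For $t \ge T$ the right-hand side is $\le 0$, so $Q(w) = 0$; if $w = 0$ there is nothing to prove. The main obstacle is ensuring that $K$ is independent of both $\eps$ and $\ell$, which is exactly where the $\ell$-scaling in Lemma \ref{lem:bubble} is used.
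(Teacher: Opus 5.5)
Your argument is correct, but it takes a different route from the paper's.

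\textbf{The paper's route.} It first uses the invariance \eqref{519} to write $Q(u_t + \varphi_{\eps,\ell}) = Q(u + (\varphi_{\eps,\ell})_{1/t})$. This keeps the base point $u \in \M$ fixed and lets the bubble shrink: by \eqref{38}, $\norm{(\varphi_{\eps,\ell})_\tau} \le \tau^{\sigma/pq} \norm{\varphi_{\eps,\ell}}$ for $\tau \le 1$. It then uses that $I_\sigma - \lambda J_\sigma$ is Lipschitz on bounded sets, since its derivative is bounded there. This gives $I_\sigma - \lambda J_\sigma \le -\mu + L \tau^{\sigma/pq} \norm{\varphi_{\eps,\ell}}$, which is negative for small $\tau$.

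\textbf{Your route.} You stay at scale $t$ and split each term of $I_\sigma$ and $J_\sigma$ with Young-type inequalities with parameter $\delta$. This costs a factor $1 \pm \delta$ on the $u_t$ part, which the gap $-\mu$ absorbs once $\delta$ is chosen in terms of $\lambda, \lambda'$. It also costs an additive $C_\delta$ times the bubble energies, which $t^\sigma$ absorbs.

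\textbf{Comparison.} The paper's version is shorter and purely structural. It needs only the scaling, \eqref{38}, and bounded derivatives on bounded sets, so it would work for any scaled pair $I_\sigma, J_\sigma$. Yours uses the concrete form of $I_\sigma$ (squared $L^2$ norm of the gradient plus a $pq$-th power of the $L^{p,q}_b$ norm) and of $J_\sigma$. In exchange, it avoids the rescaling and the Lipschitz step and gives an explicit $T$. Both rest on the same input: the uniform bounds on $\varphi_{\eps,\ell}$ for $0 < \eps \le \ell \le 1$ from \eqref{507} and \eqref{314}. Your separate H\"{o}lder bound on $\int \varphi_{\eps,\ell}^\beta$ plays the role that the embedding into $L^\beta$ plays inside the paper's Lipschitz constant.

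\textbf{A small tidy-up.} $Q$ is not defined at $0$, so saying there is nothing to prove when $w = 0$ is too loose. Instead note that $w = 0$ would force $t^\sigma = I_\sigma(u_t) = I_\sigma(\varphi_{\eps,\ell}) \le K$, by evenness of $I_\sigma$. Taking $T^\sigma > K$ therefore excludes this case.
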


\begin{proof}
For $t > 0$, $u_t + \varphi_{\eps,\ell} = (u + (\varphi_{\eps,\ell})_{1/t})_t$ and hence $Q(u_t + \varphi_{\eps,\ell}) = Q(u + (\varphi_{\eps,\ell})_{1/t})$ by \eqref{519}, so it suffices to show that there exists $0 < \tau_0 \le 1$, depending only on $\lambda$ and $\lambda'$, such that
\[
Q(u + (\varphi_{\eps,\ell})_\tau) = 0 \quad \forall u \in \widetilde{\Psi}^{\lambda'},\, \tau \le \tau_0,\, 0 < \eps \le \ell \le 1.
\]
Since $\norm{\varphi_{\eps,\ell}}$ is bounded for $0 < \eps \le \ell \le 1$ by \eqref{507} and Lemma \ref{lem:bubble}, the set $\bgset{u + (\varphi_{\eps,\ell})_\tau : u \in \widetilde{\Psi}^{\lambda'},\, \tau \in [0,1],\, 0 < \eps \le \ell \le 1}$ is bounded. Since $I_\sigma'$ and $J_\sigma'$ are bounded on bounded sets, $I_\sigma - \lambda J_\sigma$ is Lipschitz continuous on this set, say, with Lipschitz constant $L > 0$. So
\[
I_\sigma(u + (\varphi_{\eps,\ell})_\tau) - \lambda J_\sigma(u + (\varphi_{\eps,\ell})_\tau) \le I_\sigma(u) - \lambda J_\sigma(u) + L \norm{(\varphi_{\eps,\ell})_\tau} \le - \mu + L \tau^{\sigma/pq} \norm{\varphi_{\eps,\ell}}
\]
for all $u \in \widetilde{\Psi}^{\lambda'}$, $\tau \in [0,1]$, and $0 < \eps \le \ell \le 1$ by \eqref{517} and \eqref{38}. The desired conclusion follows from this since $\norm{\varphi_{\eps,\ell}}$ is bounded.
\end{proof}

We will construct a suitable set $A_0 \subset \widetilde{\Psi}^{\lambda'}$ with elements supported away from the origin. We begin by proving the following truncation lemma.

\begin{lemma} \label{lem:cut}
Let $\vartheta : [0,\infty) \to [0,1]$ be a smooth function such that $\vartheta \equiv 0$ on $[0,1/2]$, $\vartheta \equiv 1$ on $[1,\infty)$, and $|\vartheta'| \le 3$. For $v \in E^{p,q}_{b,\rad}(\R^N)$ and $m > 0$, set $\vartheta_m(x) = \vartheta(|x|/m)$ and $v_m = \vartheta_m\, v$. Then
\begin{enumroman}
\item \label{lem:cut.i} $v \mapsto v_m$ is a bounded linear mapping on $E^{p,q}_{b,\rad}(\R^N)$;
\item \label{lem:cut.ii} $v_m \to v$ in $E^{p,q}_{b,\rad}(\R^N)$ as $m \to 0$, uniformly on compact sets.
\end{enumroman}
\end{lemma}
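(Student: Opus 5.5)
We treat the ball-mass part and the gradient part separately, and for (ii) we first prove convergence for a fixed $v$ and then upgrade it to uniformity on compact sets using the uniform operator bound from (i).

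\emph{Ball-mass part of (i).} Since $0 \le \vartheta_m \le 1$, we have $|v_m| \le |v|$ pointwise. Hence $h_{v_m} \le h_v$ and
\[
\norm[L^{p,q}_b(\R^N)]{v_m} \le \norm[L^{p,q}_b(\R^N)]{v}.
\]

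\emph{Gradient part of (i).} We write $\nabla v_m = \vartheta_m \nabla v + v \nabla \vartheta_m$. The first term is bounded by $\norm[L^2(\R^N)]{\nabla v}$. For the second, note that $|\nabla \vartheta_m| \le (3/m)\, \mathbf{1}_{B_m \setminus B_{m/2}}$. We bound
\[
\frac{9}{m^2} \int_{B_m \setminus B_{m/2}} v^2\, dx
\]
by a constant $C_m$ times $\norm{v}^2$, using Theorem \ref{Theorem 6}. On the annulus $m/2 \le |x| \le m$ we have
\[
\norm[L^\infty(\R^N \setminus B_{m/2})]{v} \le C_{m/2} \norm{v},
\]
so the annulus integral is at most $C m^{N-2} C_{m/2}^2 \norm{v}^2$. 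This proves (i): the map is linear and bounded with some constant $K_m$.

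\emph{Convergence for fixed $v$ in (ii).} For the ball-mass term, $1 - \vartheta_m$ is supported in $B_m$ and bounded by $1$, so
\[
\norm[L^{p,q}_b(\R^N)]{v - v_m}^{pq} \le \int_0^\infty \rho^{-b} \big(\min\{h_v(\rho), h_v(m)\}\big)^q\, d\rho.
\]
This tends to $0$ by dominated convergence, since $h_v(m) \to 0$ as $m \to 0$ and the integrand is dominated by $\rho^{-b} h_v^q \in L^1$. Also $(1 - \vartheta_m)\nabla v \to 0$ in $L^2$ by dominated convergence.

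\emph{Main obstacle: the term $v \nabla \vartheta_m$ as $m \to 0$.} Here the weight $m^{-2}$ blows up, and the decay estimate of Theorem \ref{Theorem 6} is useless near the origin. My first attempt would be Hardy's inequality for $N \ge 3$:
\[
\frac{1}{m^2} \int_{B_m \setminus B_{m/2}} v^2 \le 4 \int_{B_m} \frac{v^2}{|x|^2}\, dx,
\]
which is finite because $\int |x|^{-2} v^2 \le C \norm[L^2(\R^N)]{\nabla v}^2$, valid for our $v$ by density (Theorem \ref{Theorem 9}). The right-hand side tends to $0$ as $m \to 0$ by absolute continuity of the integral.

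For $N = 2$ Hardy fails, so I would argue by density instead. For $v \in C^\infty_{c,\rad}(\R^2)$, the term is bounded by
\[
\norm[\infty]{v}^2\, m^{-2} \abs{B_m} \le C \norm[\infty]{v}^2,
\]
which only gives boundedness, not smallness. A better choice in dimension $2$ is to show that $\norm{v_m}$ stays bounded uniformly in $m$ and then invoke uniform convexity. Failing that, I would replace the cutoff-style estimate by the following. For radial $v \in H^1(B_1)$, $v$ is continuous away from $0$, and
\[
|v(r) - v(m)| \le \left(\int_r^m \frac{d\tau}{\tau}\right)^{1/2} \norm[L^2(\R^2)]{\nabla v},
\]
which controls the annulus average of $(v - v(m))^2$ by $o(1)$. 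The remaining constant part $v(m)^2$ contributes $O(v(m)^2)$. This is the genuinely delicate point; if it is not small, I would expect to need $m$ along a subsequence, or to use $|v(m)| = o(|\log m|^{1/2})$.

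\emph{Uniformity on compact sets.} Once pointwise convergence is established, uniformity on a compact set $K$ follows from an equicontinuity argument. We need $\sup_{0 < m \le 1} K_m < \infty$, which the Hardy bound gives directly for $N \ge 3$. Then, for $\delta > 0$, cover $K$ by a finite $\delta$-net and use
\[
\norm{v_m - v} \le \norm{w_m - w} + (K_m + 1) \norm{v - w},
\]
where $w$ is a point of the net with $\norm{v - w} < \delta$.
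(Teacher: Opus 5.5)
\textbf{Verdict.} Your argument is correct where the lemma is actually used. Subsection~\ref{ssec:bn} assumes $N \ge 3$ throughout, and the paper's proof relies on the Sobolev inequality. In that setting the two proofs share the same structure and differ only in tools.

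\textbf{Comparison with the paper's proof.} The paper bounds the cutoff term by H\"{o}lder and Sobolev on the ball, $\int_{B_m} v^2\,dx \le C m^2 \big(\int_{B_m} |v|^{2^\ast} dx\big)^{2/2^\ast}$. This gives (i) with a constant independent of $m$. For (ii) it bounds the error by quantities that are continuous in $v$, nondecreasing in $m$, and tend to $0$. These are $\|\mathbf{1}_{(0,m)}\Lambda v\|_X^{pq} + C m^{b_\ast - b}\|v\|^{pq}$ for the ball-mass part and $\int_{B_m}|\nabla v|^2 dx + \big(\int_{B_m}|v|^{2^\ast}dx\big)^{2/2^\ast}$ for the gradient part. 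Uniformity on compact sets then follows from Dini's theorem.

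Your version replaces these as follows:
\begin{itemize}
\item Hardy's inequality instead of Sobolev for the cutoff term;
\item dominated convergence for the ball-mass term at fixed $v$;
\item a finite $\delta$-net plus $\sup_m K_m < \infty$ instead of Dini.
\end{itemize}
The two routes are interchangeable. Your bound $\int_{B_m} v^2/|x|^2\,dx$ is continuous in $v$ and monotone in $m$, so Dini would apply to it too. Conversely, the paper's uniform constant would feed your $\delta$-net argument. The detour through Theorem~\ref{Theorem 6} in (i) is harmless but unnecessary: Hardy already gives $\|\nabla v_m\|_{L^2} \le (1 + 6/(N-2))\|\nabla v\|_{L^2}$ for every $m > 0$.

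\textbf{The case $N = 2$.} Drop this discussion rather than leave it as an unresolved delicate step. The conclusion of (ii) is false when $N = 2$, so none of your proposed repairs can succeed (density, uniform convexity, subsequences, or $|v(m)| = o(|\log m|^{1/2})$).

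In the plane the Dirichlet energy of the cutoff is scale invariant:
\[
\int_{\R^2} |\nabla \vartheta_m|^2\,dx = 2\pi \int_{1/2}^1 \vartheta'(s)^2 s\,ds =: c_\vartheta > 0 .
\]
Take any $v \in C^\infty_{c,\rad}(\R^2)$ with $v(0) \ne 0$. Then
\[
\|v \nabla \vartheta_m\|_{L^2}^2 = 2\pi\int_{1/2}^1 v(ms)^2\,\vartheta'(s)^2 s\,ds \to c_\vartheta\, v(0)^2 ,
\]
while $(1 - \vartheta_m)\nabla v \to 0$ in $L^2$. Hence $\|\nabla(v_m - v)\|_{L^2} \to |v(0)|\sqrt{c_\vartheta} \ne 0$. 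A logarithmic cutoff would be needed in two dimensions. So the lemma genuinely requires $N \ge 3$, which is exactly the regime where your Hardy argument works.
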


\begin{proof}
\ref{lem:cut.i} Since $|v_m| \le |v|$, $h_{v_m} \le h_v$ and hence $\norm[L^{p,q}_b(\R^N)]{v_m} \le \norm[L^{p,q}_b(\R^N)]{v}$. Since $|\nabla v_m| \le \vartheta_m\, |\nabla v| + |\nabla \vartheta_m|\, |v| \le |\nabla v| + 3\, \mathbf{1}_{B_m}\, |v|/m$,
\[
\int_{\R^N} |\nabla v_m|^2\, dx \le 2 \left(\int_{\R^N} |\nabla v|^2\, dx + \frac{9}{m^2} \int_{B_m} v^2\, dx\right).
\]
By the H\"{o}lder and Sobolev inequalities,
\begin{equation} \label{511}
\int_{B_m} v^2\, dx \le \left(\omega_N\halfthin m^N\right)^{2/N}\! \left(\int_{B_m} |v|^{2^\ast}\halfthin dx\right)^{2/2^\ast} \le C\halfthin m^2 \int_{\R^N} |\nabla v|^2\, dx
\end{equation}
for some constant $C > 0$ independent of $m$. So
\[
\int_{\R^N} |\nabla v_m|^2\, dx \le C \int_{\R^N} |\nabla v|^2\, dx.
\]

\ref{lem:cut.ii} Since $\vartheta_m \equiv 1$ on $\R^N \setminus B_m$, $|v_m - v| = (1 - \vartheta_m)\, |v| \le \mathbf{1}_{B_m}\, |v|$ and hence
\[
h_{v_m - v}(\rho) = \int_{B_\rho} |v_m - v|^p\, dy \le \begin{cases}
h_v(\rho) & \text{if } 0 < \rho < m\\[5pt]
h_v(m) & \text{if } \rho \ge m.
\end{cases}
\]
By the H\"{o}lder and Sobolev inequalities,
\[
h_v(m) = \int_{B_m} |v|^p\, dy \le \left(\omega_N\halfthin m^N\right)^{1 - p/2^\ast}\! \left(\int_{B_m} |v|^{2^\ast}\halfthin dy\right)^{p/2^\ast} \le C\halfthin m^{N\, (1 - p/2^\ast)} \norm{v}^p.
\]
So
\begin{multline*}
\norm[L^{p,q}_b(\R^N)]{v_m - v}^{pq} = \int_0^\infty \rho^{-b}\, h_{v_m - v}^q(\rho)\, d\rho \le \int_0^m \rho^{-b}\, h_v^q(\rho)\, d\rho + \int_m^\infty \rho^{-b}\, h_v^q(m)\, d\rho\\[7.5pt]
\le \norm[X]{\mathbf{1}_{(0,m)}\, \Lambda v}^{pq} + C\halfthin m^{b_\ast - b} \norm{v}^{pq},
\end{multline*}
where $\Lambda$ is the isometry in \eqref{510}. The last expression is continuous in $v$ on $E^{p,q}_{b,\rad}(\R^N)$, nondecreasing in $m$, and converges to zero as $m \to 0$, so the convergence is uniform on compact sets by Dini's theorem. Since $|\nabla(v_m - v)| \le (1 - \vartheta_m)\, |\nabla v| + |\nabla \vartheta_m|\, |v| \le \mathbf{1}_{B_m}\, (|\nabla v| + 3\, |v|/m)$,
\begin{multline*}
\int_{\R^N} |\nabla(v_m - v)|^2\, dx \le 2 \left(\int_{B_m} |\nabla v|^2\, dx + \frac{9}{m^2} \int_{B_m} v^2\, dx\right)\\[7.5pt]
\le C \left[\int_{B_m} |\nabla v|^2\, dx + \left(\int_{B_m} |v|^{2^\ast}\halfthin dx\right)^{2/2^\ast}\right]
\end{multline*}
by \eqref{511} and Dini's theorem applies again.
\end{proof}

Next we prove the following lemma.

\begin{lemma} \label{lem:A0}
For all sufficiently small $m > 0$, $\widetilde{\Psi}^{\lambda'}$ has a compact symmetric subset $A_0$ of index $k$ such that
\begin{equation} \label{513}
\supp u \subset \R^N \setminus B_{m/4} \quad \forall u \in A_0.
\end{equation}
\end{lemma}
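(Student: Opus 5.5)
Since $\lambda_k < \lambda' < \lambda_{k+1}$, Theorem \ref{Theorem 13} \ref{Theorem 13.iii} gives $i(\widetilde{\Psi}^{\lambda''}) = k$ for any $\lambda'' \in (\lambda_k,\lambda')$. We fix such a $\lambda''$. The plan is to pick a compact symmetric set of index $k$ inside $\widetilde{\Psi}^{\lambda''}$, cut off each of its elements near the origin with the truncation of Lemma \ref{lem:cut}, and project back to $\M$. The strict gap $\lambda'' < \lambda'$ absorbs the small error this introduces.

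\emph{Step 1: a compact set $C$ of index $k$.} The set $\M \setminus \widetilde{\Psi}_{\lambda'}$ is open and symmetric. It contains $\widetilde{\Psi}^{\lambda''}$, so it has index at least $k$, and by Theorem \ref{Theorem 13} \ref{Theorem 13.iii} its index is exactly $k$. As in the proof of Theorem \ref{thm:left-nbhd} (via the argument of \cite{MR2371112}), it contains a compact symmetric subset $C$ of index $k$, and every $u \in C$ satisfies $\widetilde{\Psi}(u) < \lambda'$. Because $C$ is compact and $\widetilde{\Psi}$ is continuous, we even have $\max_C \widetilde{\Psi} \le \lambda' - 2\eta$ for some $\eta > 0$.

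\emph{Step 2: truncate and project.} For $m > 0$, define $A_0 = \set{\pi(u_m) : u \in C}$, where $u_m = \vartheta_m u$ is as in Lemma \ref{lem:cut} and $\pi$ is the projection \eqref{102}.
\begin{enumroman}
\item By Lemma \ref{lem:cut} \ref{lem:cut.ii}, $u_m \to u$ uniformly on $C$ as $m \to 0$. Since $0 \notin C$ and $C$ is compact, $u_m \ne 0$ for $m$ small, so $\pi(u_m)$ is defined.
\item The map $u \mapsto \pi(u_m)$ is continuous by Lemma \ref{lem:cut} \ref{lem:cut.i} and the continuity of $\pi$, and it is odd because it is linear in $u$ followed by the odd map $\pi$. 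Hence $A_0$ is compact and symmetric, and $i(A_0) \ge i(C) = k$ by $(i_2)$.
\item $\widetilde{\Psi}(\pi(v)) = I_\sigma(v)/J_\sigma(v)$ by \eqref{104}, and $I_\sigma$, $J_\sigma$ are continuous with $J_\sigma > 0$ on $C$. By uniform convergence and compactness, $\widetilde{\Psi}(\pi(u_m)) \le \lambda' - \eta$ for $m$ small, so $A_0 \subset \widetilde{\Psi}^{\lambda'}$.
\item Then $i(A_0) \le i(\widetilde{\Psi}^{\lambda'}) = k$, and therefore $i(A_0) = k$.
\end{enumroman}

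\emph{Step 3: support condition.} The function $u_m$ vanishes on $B_{m/2}$. We have $\pi(u_m) = (u_m)_t$ with $t = I_\sigma(u_m)^{-1/\sigma}$, and $(u_m)_t(x) = t^\alpha u_m(tx)$ vanishes for $|x| < m/(2t)$. The values $I_\sigma(u_m)$ converge uniformly to $I_\sigma(u) = 1$ on $C$, so $t \le 2$ for $m$ small. This gives $\supp \pi(u_m) \subset \R^N \setminus B_{m/4}$, which is \eqref{513}.

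\emph{Main obstacle.} The delicate point is the uniformity in Step 2: the truncation must keep $A_0$ inside the sublevel set $\widetilde{\Psi}^{\lambda'}$ and must not destroy the index. This is handled by starting from a compact set lying strictly below the level $\lambda'$ and by using the uniform convergence on compact sets from Lemma \ref{lem:cut} \ref{lem:cut.ii}. The upper bound $i(A_0) \le k$ needs the exact index value from Theorem \ref{Theorem 13} \ref{Theorem 13.iii}, which is where the assumption $\lambda' < \lambda_{k+1}$ is used.
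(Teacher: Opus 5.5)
Your proof is correct and follows the paper's argument essentially step by step. You extract a compact symmetric $C$ of index $k$ from the open set $\M \setminus \widetilde{\Psi}_{\lambda'}$, truncate with Lemma \ref{lem:cut}, and project by $\pi$; uniform convergence on $C$ keeps the image inside $\widetilde{\Psi}^{\lambda'}$ and pins the index at $k$, and the support condition follows from $I_\sigma(v_m) \to 1$. The auxiliary level $\lambda''$ is unnecessary: Theorem \ref{Theorem 13} \ref{Theorem 13.iii} gives $i(\M \setminus \widetilde{\Psi}_{\lambda'}) = k$ directly, and the margin below $\lambda'$ actually comes from the compactness of $C$. Its name also clashes with the $\lambda''$ used later in the proof of Theorem \ref{thm:bn}.
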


\begin{proof}
By Theorem \ref{Theorem 13} \ref{Theorem 13.iii}, $i(\M \setminus \widetilde{\Psi}_{\lambda'}) = k$. Since $\M \setminus \widetilde{\Psi}_{\lambda'}$ is an open symmetric subset of $\M$, then it has a compact symmetric subset $C$ of index $k$ (see the proof of Proposition 3.1 in \cite{MR2371112}). Let $v \mapsto v_m$ be the continuous mapping in Lemma \ref{lem:cut}. Since $C$ is compact, $v_m \to v$ as $m \to 0$, uniformly on $C$. Since the origin is not in the closed set $C$, it follows that for all sufficiently small $m > 0$, $v_m \ne 0$ for all $v \in C$. For such $m$, let
\[
A_0 = \set{\pi(v_m) : v \in C}.
\]
Since $C \to A_0,\, v \mapsto \pi(v_m)$ is an odd continuous map, $A_0$ is a compact symmetric subset of $\M$ and
\begin{equation} \label{514}
i(A_0) \ge i(C) = k
\end{equation}
by Proposition \ref{Proposition 7} $(i_2)$.

We have
\[
\widetilde{\Psi}(\pi(v_m)) = \frac{1}{J_\sigma((v_m)_{I_\sigma(v_m)^{- 1/\sigma}})} = \frac{I_\sigma(v_m)}{J_\sigma(v_m)}
\]
by \eqref{104}. On bounded sets, $I_\sigma'$ and $J_\sigma'$ are bounded and hence $I_\sigma$ and $J_\sigma$ are Lipschitz continuous, so
\[
I_\sigma(v_m) \to I_\sigma(v) = 1, \qquad J_\sigma(v_m) \to J_\sigma(v) = \frac{1}{\widetilde{\Psi}(v)}
\]
uniformly on $C$. Since $1/\widetilde{\Psi} > 1/\lambda' > 0$ on $C$, it follows that $\widetilde{\Psi}(\pi(v_m)) \to \widetilde{\Psi}(v)$ uniformly on $C$. Since $\widetilde{\Psi}(v) < \lambda'$ for all $v$ in the compact set $C$, then for all sufficiently small $m > 0$, $\widetilde{\Psi}(\pi(v_m)) \le \lambda'$ for all $v \in C$, i.e., $A_0 \subset \widetilde{\Psi}^{\lambda'}$. Then
\[
i(A_0) \le i(\widetilde{\Psi}^{\lambda'}) = k
\]
by Proposition \ref{Proposition 7} $(i_2)$ and Theorem \ref{Theorem 13} \ref{Theorem 13.iii}, which together with \eqref{514} gives $i(A_0) = k$.

Finally, \eqref{513} follows since $\supp v_m \subset \R^N \setminus B_{m/2}$,
\[
\pi(v_m)(x) = (v_m)_{I_\sigma(v_m)^{- 1/\sigma}}(x) = I_\sigma(v_m)^{- \alpha/\sigma}\, v_m(I_\sigma(v_m)^{- 1/\sigma}\halfthin x),
\]
and $I_\sigma(v_m) \to 1$ uniformly on $C$.
\end{proof}

Now we let $T \ge 1$ be as in Lemma \ref{lem:T}, $\ell > 0$ be so small that $m = 4T \ell$ is admissible in Lemma \ref{lem:A0}, and take $A_0$ to be the corresponding compact symmetric subset of $\widetilde{\Psi}^{\lambda'}$ of index $k$ such that
\begin{equation} \label{533}
\supp u \subset \R^N \setminus B_{T \ell} \quad \forall u \in A_0.
\end{equation}
We have the following interaction estimates.

\begin{lemma} \label{lem:interaction}
For all $u \in A_0$ and $0 \le t < T$,
\begin{gather}
\label{541} \int_{\R^N} |\nabla(u_t + \varphi_{\eps,\ell})|^2\, dx = \int_{\R^N} |\nabla u_t|^2\, dx + \int_{\R^N} |\nabla \varphi_{\eps,\ell}|^2\, dx,\\[10pt]
\label{548} \int_{\R^N} |u_t + \varphi_{\eps,\ell}|^\beta\, dx = \int_{\R^N} |u_t|^\beta\, dx + \int_{\R^N} \varphi_{\eps,\ell}^\beta\, dx,\\[10pt]
\label{542} \int_{\R^N} |u_t + \varphi_{\eps,\ell}|^{2^\ast}\halfthin dx = \int_{\R^N} |u_t|^{2^\ast}\halfthin dx + \int_{\R^N} \varphi_{\eps,\ell}^{2^\ast}\halfthin dx.
\end{gather}
For all $u \in A_0$ and all $t \ge 0$,
\begin{multline} \label{524}
\int_0^\infty \rho^{-b}\, h_{u_t + \varphi_{\eps,\ell}}^q(\rho)\, d\rho \le \int_0^\infty \rho^{-b}\, h_{u_t}^q(\rho)\, d\rho + \int_0^\infty \rho^{-b}\, h_{\varphi_{\eps,\ell}}^q(\rho)\, d\rho\\[7.5pt]
+ C \left(t^{\sigma\,(pq-1)/pq} \norm[L^{p,q}_b(\R^N)]{\varphi_{\eps,\ell}} + t^{\sigma/pq} \norm[L^{p,q}_b(\R^N)]{\varphi_{\eps,\ell}}^{pq-1}\right)
\end{multline}
for some constant $C > 0$ independent of $\eps$ and $\ell$.
\end{lemma}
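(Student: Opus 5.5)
The three identities \eqref{541}--\eqref{542} are disjoint-support statements. By \eqref{533}, every $u \in A_0$ vanishes in $B_{T\ell}$. Since $u_t(x) = t^\alpha u(tx)$, the function $u_t$ vanishes for $|x| < T\ell/t$; for $0 < t < T$ this radius exceeds $\ell$, and for $t = 0$ we simply have $u_t = 0$. On the other hand, $\varphi_{\eps,\ell}$ is supported in $B_\ell$. So $u_t$ and $\varphi_{\eps,\ell}$ have disjoint supports. The gradients then have disjoint supports a.e., the cross term in $|\nabla(u_t + \varphi_{\eps,\ell})|^2$ vanishes, and $|u_t + \varphi_{\eps,\ell}|^s = |u_t|^s + \varphi_{\eps,\ell}^s$ pointwise. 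Integrating gives \eqref{541}--\eqref{542}.

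For \eqref{524} no support condition is available for large $t$, so I would use a convexity (mean value) estimate in $L^{p,q}_b(\R^N)$. Write $\psi(w) = \norm[L^{p,q}_b(\R^N)]{w}^{pq}$, which is $C^1$ with $\psi'(w)v = pq \int_0^\infty \rho^{-b} h_w^{q-1}(\rho) P_\rho(w,v)\,d\rho$ by Proposition \ref{prop:c1}. By Theorem \ref{thm:bm-pairing}, $|\psi'(w)v| \le pq\,\norm{w}^{pq-1}\norm{v}$, all norms here being in $L^{p,q}_b$. The mean value theorem then gives
\[
\psi(u_t + \varphi) - \psi(u_t) - \psi(\varphi) \le pq \sup_{0\le\tau\le1}\norm{u_t+\tau\varphi}^{pq-1}\norm{\varphi} - \psi(\varphi).
\]
I would rather use the cleaner elementary inequality $(A+B)^{pq} \le A^{pq} + B^{pq} + C(A^{pq-1}B + AB^{pq-1})$ for $A,B \ge 0$, with $A = \norm{u_t}$ and $B = \norm{\varphi_{\eps,\ell}}$, combined with the triangle inequality $\norm{u_t+\varphi}\le A+B$. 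This inequality holds since $pq > 2 > 1$: by homogeneity reduce to $B = 1$ and compare the growth as $A \to 0$ and $A \to \infty$.

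It then remains to express $A$ through $t$. By \eqref{521} and \eqref{55}, $\norm{u_t} = t^{\sigma/pq}\norm{u}$, and $\norm{u}$ is bounded uniformly on $A_0 \subset \M$ since $\M$ is bounded. Hence $A^{pq-1}B \le C t^{\sigma(pq-1)/pq}\norm{\varphi_{\eps,\ell}}$ and $AB^{pq-1} \le C t^{\sigma/pq}\norm{\varphi_{\eps,\ell}}^{pq-1}$, with $C$ depending only on $\sup_{\M}\norm{\cdot}$ and $pq$, hence independent of $\eps$ and $\ell$. Together with $A^{pq} = \int \rho^{-b} h_{u_t}^q$ and $B^{pq} = \int \rho^{-b} h_{\varphi}^q$, this is exactly \eqref{524}.

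The only point needing care is the elementary inequality with a uniform constant. Set $g(x) = (x+1)^{pq} - x^{pq} - 1$. It is $O(x^{pq-1})$ as $x\to\infty$ and $O(x)$ as $x\to 0$, so $g(x) \le C(x^{pq-1}+x)$ on $(0,\infty)$ by continuity. Applying this with $x = A/B$ and multiplying by $B^{pq}$ gives the inequality.
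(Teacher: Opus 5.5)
Your proposal is correct and follows essentially the same route as the paper. The three identities come from disjoint supports, using \eqref{533} and $T\ell/t > \ell$ for $0 < t < T$. For \eqref{524}, the triangle inequality in $L^{p,q}_b(\R^N)$ is combined with the elementary inequality $(A+B)^{pq} \le A^{pq} + B^{pq} + C\,(A^{pq-1}B + AB^{pq-1})$, the scaling $\norm[L^{p,q}_b(\R^N)]{u_t} = t^{\sigma/pq}\norm[L^{p,q}_b(\R^N)]{u}$, and the boundedness of $\M$. The only difference is that the paper uses the explicit constant $pq\,2^{pq-1}$, whereas you obtain a constant from the asymptotics of $(x+1)^{pq} - x^{pq} - 1$, which is equally valid.
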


\begin{proof}
For $t = 0$, $u_t = 0$ and \eqref{541}--\eqref{524} are trivial. For $u \in A_0$ and $0 < t < T$,
\[
\supp u_t \subset \R^N \setminus B_{T \ell/t} \subset \R^N \setminus B_\ell
\]
by \eqref{533}, while $\supp \varphi_{\eps,\ell} \subset B_\ell$. So $u_t$ and $\varphi_{\eps,\ell}$ have disjoint supports and hence \eqref{541}--\eqref{542} hold. For $u \in A_0$ and $t \ge 0$,
\[
\int_0^\infty \rho^{-b}\, h_{u_t + \varphi_{\eps,\ell}}^q(\rho)\, d\rho = \norm[L^{p,q}_b(\R^N)]{u_t + \varphi_{\eps,\ell}}^{pq} \le \big(\norm[L^{p,q}_b(\R^N)]{u_t} + \norm[L^{p,q}_b(\R^N)]{\varphi_{\eps,\ell}}\big)^{pq}.
\]
Combining this with the elementary inequality
\[
(a + b)^s \le a^s + b^s + s\, 2^{s-1} \left(ab^{s-1} + a^{s-1} b\right) \quad \forall a, b \ge 0,\, s > 1
\]
gives \eqref{524} since $\norm[L^{p,q}_b(\R^N)]{u_t} = t^{\sigma/pq} \norm[L^{p,q}_b(\R^N)]{u}$ as in \eqref{38} and $A_0$ is a subset of the bounded set $\M$.
\end{proof}

Next we estimate $Q(u_t + \varphi_{\eps,\ell})$ on $A_0 \times [0,T)$.

\begin{lemma} \label{lem:level}
There exists $\eps_0 \in (0,\ell]$, depending only on $\lambda$, $\lambda'$, and $\ell$, such that
\[
Q(u_t + \varphi_{\eps,\ell}) \le S - \frac{c_0}{2}\, \lambda\, \eps^{\delta_\beta} \quad \forall u \in A_0,\, 0 \le t < T,\, 0 < \eps < \eps_0.
\]
\end{lemma}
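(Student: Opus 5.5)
The plan is to bound the numerator and denominator of $Q(u_t + \varphi_{\eps,\ell})$ separately, using the disjoint-support identities of Lemma \ref{lem:interaction}, and then compare with the bubble estimate \eqref{508}. If $I_\sigma(u_t + \varphi_{\eps,\ell}) - \lambda J_\sigma(u_t + \varphi_{\eps,\ell}) \le 0$, then $Q = 0$ and there is nothing to prove. So assume this quantity is positive.

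\emph{Numerator.} By \eqref{541}, \eqref{548}, and \eqref{524}, for $u \in A_0$ and $0 \le t < T$,
\[
I_\sigma(u_t + \varphi_{\eps,\ell}) - \lambda J_\sigma(u_t + \varphi_{\eps,\ell}) \le \big[I_\sigma(u_t) - \lambda J_\sigma(u_t)\big] + \big[I_\sigma(\varphi_{\eps,\ell}) - \lambda J_\sigma(\varphi_{\eps,\ell})\big] + C\big(t^{\sigma(pq-1)/pq} a_\eps + t^{\sigma/pq} a_\eps^{pq-1}\big),
\]
where $a_\eps = \norm[L^{p,q}_b(\R^N)]{\varphi_{\eps,\ell}}$. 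By \eqref{104} and \eqref{517}, since $A_0 \subset \widetilde{\Psi}^{\lambda'}$, the first bracket equals $t^\sigma (1 - \lambda/\widetilde{\Psi}(u)) \le - \mu t^\sigma$.

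The key step is to absorb the cross terms into this negative term. Split $-\mu t^\sigma$ into two halves and maximize each piece over $t \ge 0$ by Young's inequality. First,
\[
-\tfrac{\mu}{2} t^\sigma + C t^{\sigma(pq-1)/pq} a_\eps \le C_\mu a_\eps^{pq},
\]
since the maximum of $-A s^{pq} + B s^{pq-1}$ is of order $B^{pq}/A^{pq-1}$. Second,
\[
-\tfrac{\mu}{2} t^\sigma + C t^{\sigma/pq} a_\eps^{pq-1} \le C_\mu \big(a_\eps^{pq-1}\big)^{pq/(pq-1)} = C_\mu a_\eps^{pq}.
\]
Hence the numerator is at most $2\big(I_\sigma(\varphi_{\eps,\ell}) - \lambda J_\sigma(\varphi_{\eps,\ell})\big) + C a_\eps^{pq}$, with $C$ depending only on $\lambda$, $\lambda'$ (through $\mu$), uniformly in $u \in A_0$ and $t \in [0,T)$. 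By \eqref{314} with $\ell$ fixed, $a_\eps^{pq} \le C_\ell\, \eps^\nu (\abs{\log \eps} + 1)$.

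\emph{Denominator.} By \eqref{542}, $\int |u_t + \varphi_{\eps,\ell}|^{2^\ast} \ge \int \varphi_{\eps,\ell}^{2^\ast}$. Therefore
\[
Q(u_t + \varphi_{\eps,\ell}) \le Q(\varphi_{\eps,\ell}) + \frac{C_\ell\, \eps^\nu (\abs{\log \eps} + 1)}{\left(\int \varphi_{\eps,\ell}^{2^\ast}\right)^{2/2^\ast}}
\]
(if the bracket for $\varphi_{\eps,\ell}$ is negative, the bound only improves). By \eqref{507}, the denominator is bounded below by $\frac{1}{2} S^{(N-2)/2}$ for small $\eps$.

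\emph{Conclusion.} Now invoke \eqref{508}, $Q(\varphi_{\eps,\ell}) \le S - c_0 \lambda \eps^{\delta_\beta}$ for $\eps$ small depending on $\ell$ and $\lambda$. Lemma \ref{lem:bubble} gives $\delta_\beta < \nu$, so $\eps^\nu (\abs{\log \eps} + 1) = \o(\eps^{\delta_\beta})$. Choosing $\eps_0$ small, depending on $\lambda$, $\lambda'$, and $\ell$, the error is at most $\frac{c_0}{2} \lambda \eps^{\delta_\beta}$, which gives the claim.

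The main obstacle is the cross term with the small power $t^{\sigma/pq}$. It is not small for small $t$ in any absolute sense; the point is that its coefficient $a_\eps^{pq-1}$ is small. The argument therefore relies on the Young-type balancing against $-\mu t^\sigma$, which uses that $A_0$ lies strictly inside $\widetilde{\Psi}^{\lambda'}$ via $\mu > 0$. It also relies on the strict inequality $\delta_\beta < \nu$, which is where Lemma \ref{lem:bubble} is essential.
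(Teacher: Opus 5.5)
Your proposal is correct and follows essentially the same route as the paper: you split the energy using the disjoint-support identities of Lemma \ref{lem:interaction}, absorb the cross terms into $-\mu t^\sigma$ by Young's inequality (leaving an $\O(\eps^\nu(\abs{\log \eps}+1))$ error via \eqref{314}), bound the $L^{2^\ast}$ denominator below using \eqref{542}, and conclude from \eqref{508} together with $\delta_\beta < \nu$. The only detail to add is that $\eps_0$ should also be taken $\le \ell$, so that $\varphi_{\eps,\ell}$ is defined.
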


\begin{proof}
Let $u \in A_0$, let $0 \le t < T$, and set $w = u_t + \varphi_{\eps,\ell}$. By Lemma \ref{lem:interaction},
\begin{multline} \label{543}
I_\sigma(w) - \lambda J_\sigma(w) \le I_\sigma(u_t) - \lambda J_\sigma(u_t) + I_\sigma(\varphi_{\eps,\ell}) - \lambda J_\sigma(\varphi_{\eps,\ell})\\[7.5pt]
+ C \left(t^{\sigma\,(pq-1)/pq}\, \eta_{\eps,\ell} + t^{\sigma/pq}\, \eta_{\eps,\ell}^{\,pq-1}\right),
\end{multline}
where $\eta_{\eps,\ell} = \norm[L^{p,q}_b(\R^N)]{\varphi_{\eps,\ell}}$ and $C > 0$ is a constant independent of $\eps$ and $\ell$. By \eqref{104} and \eqref{517},
\begin{equation} \label{544}
I_\sigma(u_t) - \lambda J_\sigma(u_t) = t^\sigma\halfthin \big(I_\sigma(u) - \lambda J_\sigma(u)\big) \le - \mu\halfthin t^\sigma
\end{equation}
since $A_0 \subset \widetilde{\Psi}^{\lambda'}$. On the other hand, by Young's inequality,
\begin{equation} \label{545}
C \left(t^{\sigma\,(pq-1)/pq}\, \eta_{\eps,\ell} + t^{\sigma/pq}\, \eta_{\eps,\ell}^{\,pq-1}\right) \le \mu\halfthin t^\sigma + \frac{C'}{2}\, \eta_{\eps,\ell}^{\,pq}
\end{equation}
with $C' > 0$ depending on $\mu$. Combining \eqref{543}--\eqref{545} and taking positive parts gives
\[
2\, \big(I_\sigma(w) - \lambda J_\sigma(w)\big)_+ \le 2\, \big(I_\sigma(\varphi_{\eps,\ell}) - \lambda J_\sigma(\varphi_{\eps,\ell})\big)_+ + C' \eta_{\eps,\ell}^{\,pq}.
\]
Since
\[
\int_{\R^N} |w|^{2^\ast}\halfthin dx \ge \int_{\R^N} \varphi_{\eps,\ell}^{2^\ast}\halfthin dx
\]
by \eqref{542}, it follows that
\[
Q(w) \le Q(\varphi_{\eps,\ell}) + \frac{C' \eta_{\eps,\ell}^{\,pq}}{\left(\dint_{\R^N} \varphi_{\eps,\ell}^{2^\ast}\halfthin dx\right)^{2/2^\ast}}.
\]
Combining this with \eqref{508}, \eqref{314}, and \eqref{507} gives
\[
Q(w) \le S - c_0\, \lambda\, \eps^{\delta_\beta} + C'' \eps^\nu\halfthin (\abs{\log \eps} + 1)
\]
for all sufficiently small $\eps > 0$, with $C'' > 0$ depending on $\ell$. Since $\delta_\beta < \nu$ by Lemma \ref{lem:bubble}, the desired conclusion follows.
\end{proof}

Note that the scaling-based homotopy $H$ in \eqref{584} is well-defined, continuous, and satisfies the hypotheses of Theorem \ref{Theorem 8}. For $u \in A_0$ and $\tau \in [0,1]$, set $w_{u,\tau} = u_{1 - \tau} + (\varphi_{\eps,\ell})_\tau$. Then $w_{u,0} = u \ne 0$, while for $\tau > 0$, $u_{1 - \tau}$ vanishes in a neighborhood of the origin by \eqref{533} and hence
\[
w_{u,\tau}(0) = \tau^\alpha\, \varphi_{\eps,\ell}(0) = \tau^\alpha\, \eps^{-(N-2)/2} > 0.
\]
So $w_{u,\tau} \ne 0$ and hence $H$ is well-defined. Since the scaling \eqref{32} is continuous by Proposition \ref{prop:continuous} and $\pi$ is continuous on $E^{p,q}_{b,\rad}(\R^N) \setminus \set{0}$, $H \in C(A_0 \times [0,1],\M)$. For all $u \in A_0$, $H(u,0) = \pi(u) = u$ and $H(u,1) = \pi(\varphi_{\eps,\ell}) = e$. Now we estimate $\E$ on $H$.

\begin{lemma} \label{lem:sup}
For all sufficiently small $\eps > 0$,
\begin{equation} \label{546}
\sup \set{\E(H(u,\tau)_t) : u \in A_0,\, \tau \in [0,1],\, t \ge 0} < \frac{1}{N}\, S^{N/2}.
\end{equation}
\end{lemma}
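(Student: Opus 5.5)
The plan is to reduce everything to the quantity $Q$ through \eqref{309} and the scale invariance \eqref{519}. Since $H(u,\tau) = \pi(w_{u,\tau})$ with $w_{u,\tau} = u_{1-\tau} + (\varphi_{\eps,\ell})_\tau$, and $\pi$ only rescales along the fiber, for every $t \ge 0$ the function $H(u,\tau)_t$ lies on the fiber $\set{(w_{u,\tau})_s : s \ge 0}$. Hence
\[
\sup_{t \ge 0}\, \E(H(u,\tau)_t) \le \max_{s \ge 0}\, \E((w_{u,\tau})_s) = \frac{1}{N}\, Q(w_{u,\tau})^{N/2},
\]
so it suffices to show that $Q(w_{u,\tau}) \le S - \frac{c_0}{2}\, \lambda\, \eps^{\delta_\beta}$ uniformly in $u \in A_0$ and $\tau \in [0,1]$, for all sufficiently small $\eps$. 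The bound is then strictly below $\frac{1}{N}\, S^{N/2}$, uniformly, so the supremum is strictly smaller.

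I would first treat the endpoints. When $\tau = 0$, $w_{u,0} = u \in A_0 \subset \widetilde{\Psi}^{\lambda'}$, so $Q(u) = 0$ by \eqref{517}. When $\tau = 1$, $w_{u,1} = \varphi_{\eps,\ell}$, and \eqref{508} gives the bound. For $\tau \in (0,1)$, I would use \eqref{519} with scaling factor $1/\tau$ together with $(A_1)$ to write
\[
(w_{u,\tau})_{1/\tau} = u_{(1-\tau)/\tau} + \varphi_{\eps,\ell},
\]
so that $Q(w_{u,\tau}) = Q(u_t + \varphi_{\eps,\ell})$ with $t = (1-\tau)/\tau \in (0,\infty)$. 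This splits the remaining work into two regimes.

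If $t \ge T$, Lemma \ref{lem:T} gives $Q(u_t + \varphi_{\eps,\ell}) = 0$. If $0 < t < T$, Lemma \ref{lem:level} gives $Q(u_t + \varphi_{\eps,\ell}) \le S - \frac{c_0}{2}\, \lambda\, \eps^{\delta_\beta}$ for all $\eps < \eps_0$. Both bounds are uniform in $u \in A_0$. Taking $\eps < \eps_0$, and small enough that \eqref{508} also holds, yields
\[
\sup \le \frac{1}{N} \max \set{0,\, S - \tfrac{c_0}{2}\, \lambda\, \eps^{\delta_\beta}}^{N/2} < \frac{1}{N}\, S^{N/2}.
\]

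The only delicate point I expect is justifying the reduction to the fiber maximum. One needs that $H(u,\tau)$ is a positive rescaling of $w_{u,\tau}$, namely $H(u,\tau) = (w_{u,\tau})_{t_w}$ with $t_w = I_\sigma(w_{u,\tau})^{-1/\sigma} > 0$. This holds because $w_{u,\tau} \ne 0$, which was verified just before the lemma. Then $(A_1)$ gives $H(u,\tau)_t = (w_{u,\tau})_{t_w t}$, and \eqref{309} applies since $w_{u,\tau} \ne 0$. Another point to check is that \eqref{309} still holds, with value $0$, when $I_\sigma - \lambda J_\sigma \le 0$; this is immediate from \eqref{506}.
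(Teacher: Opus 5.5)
Your proposal is correct and follows essentially the same route as the paper. The paper reduces to $\frac{1}{N}\, Q(w_{u,\tau})^{N/2}$ along the fiber via \eqref{309}, disposes of $\tau = 0$ by \eqref{517}, rescales by \eqref{519} to $u_{(1-\tau)/\tau} + \varphi_{\eps,\ell}$, and splits at $T$ using Lemma \ref{lem:T} and Lemma \ref{lem:level}. The one cosmetic difference is your separate treatment of $\tau = 1$ via \eqref{508}; the paper absorbs that endpoint into the case $(1-\tau)/\tau = 0 < T$ of Lemma \ref{lem:level}.
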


\begin{proof}
Let $u \in A_0$ and $\tau \in [0,1]$. Since
\[
H(u,\tau) = \pi(w_{u,\tau}) = (w_{u,\tau})_s, \quad s = I_\sigma(w_{u,\tau})^{- 1/\sigma},
\]
the fibers through $H(u,\tau)$ and $w_{u,\tau}$ coincide, so
\[
\sup_{t \ge 0}\, \E(H(u,\tau)_t) = \max_{t \ge 0}\, \E((w_{u,\tau})_t) = \frac{1}{N}\, Q(w_{u,\tau})^{N/2}
\]
by \eqref{309}. By \eqref{517}, $Q(w_{u,0}) = Q(u) = 0$ since $A_0 \subset \widetilde{\Psi}^{\lambda'}$. For $\tau > 0$, $w_{u,\tau} = (u_{(1 - \tau)/\tau} + \varphi_{\eps,\ell})_\tau$ and hence
\[
Q(w_{u,\tau}) = Q(u_{(1 - \tau)/\tau} + \varphi_{\eps,\ell})
\]
by \eqref{519}. If $(1 - \tau)/\tau \ge T$, then $Q(u_{(1 - \tau)/\tau} + \varphi_{\eps,\ell}) = 0$ by Lemma \ref{lem:T}. If $(1 - \tau)/\tau < T$, then
\[
Q(u_{(1 - \tau)/\tau} + \varphi_{\eps,\ell}) \le S - \frac{c_0}{2}\, \lambda\, \eps^{\delta_\beta}
\]
for all sufficiently small $\eps > 0$ by Lemma \ref{lem:level}. So
\[
\sup \set{\E(H(u,\tau)_t) : u \in A_0,\, \tau \in [0,1],\, t \ge 0} \le \frac{1}{N} \left(S - \frac{c_0}{2}\, \lambda\, \eps^{\delta_\beta}\right)^{N/2} < \frac{1}{N}\, S^{N/2}. \qedhere
\]
\end{proof}

We are now ready to prove case \ref{thm:bn.ii} of Theorem \ref{thm:bn}.

\begin{proof}[Proof of Theorem \ref{thm:bn} \ref{thm:bn.ii}]
Fix $\lambda'' \in (\lambda,\lambda_{k+1})$ and let $B_0 = \widetilde{\Psi}_{\lambda''}$. Then $A_0 \cap B_0 = \emptyset$ since $\lambda' < \lambda''$ and
\[
i(A_0) = i(\M \setminus B_0) = k
\]
by Lemma \ref{lem:A0} and Theorem \ref{Theorem 13} \ref{Theorem 13.iii}.

Fix $\eps > 0$ so small that \eqref{546} holds. Let $e = \pi(\varphi_{\eps,\ell})$, let $R > \rho > 0$, and let $A$, $B$, and $X$ be as in Theorem \ref{Theorem 8}. Since $A_0 \subset \widetilde{\Psi}^{\lambda'}$,
\begin{equation} \label{581}
\E(u_t) \le 0 \quad \forall u \in A_0,\, t \ge 0
\end{equation}
by \eqref{104} and \eqref{517}. We have
\begin{equation} \label{580}
\E(u_t) = t^\sigma \left(1 - \frac{\lambda}{\widetilde{\Psi}(u)} - \frac{t^{\gamma^\ast - \sigma}}{2^\ast} \int_{\R^N} |u|^{2^\ast}\halfthin dx\right), \quad u \in \M,\, t \ge 0,
\end{equation}
where $\gamma^\ast$ is given in \eqref{133}. By \eqref{55}, \eqref{35}, and \eqref{303}, $\gamma^\ast - \sigma = \alpha\, (2^\ast - \beta) > 0$, so $\sigma < \gamma^\ast$. Since $\lambda > 0$, \eqref{580} gives
\begin{equation} \label{582}
\E(H(u,\tau)_R) \le R^\sigma \left(1 - \frac{R^{\gamma^\ast - \sigma}}{2^\ast} \norm[L^{2^\ast}(\R^N)]{H(u,\tau)}^{2^\ast}\right) \quad \forall u \in A_0,\, \tau \in [0,1].
\end{equation}
Since $A_0$ is compact, so is the set $\set{H(u,\tau) : u \in A_0,\, \tau \in [0,1]}$. Since the origin is not in this set, it follows that $\norm[L^{2^\ast}(\R^N)]{\cdot}$ is bounded from below by a positive constant there. So it follows from \eqref{581} and \eqref{582} that
\[
\sup_{u \in A}\, \E(u) \le 0
\]
if $R$ is sufficiently large. On the other hand, \eqref{580} gives
\[
\E(u_\rho) \ge \rho^\sigma \left(1 - \frac{\lambda}{\lambda''} - C \rho^{\gamma^\ast - \sigma}\right) \quad \forall u \in B_0
\]
for some constant $C > 0$ since $\M$ is bounded. Since $\lambda < \lambda''$, it follows that
\begin{equation} \label{547}
\inf_{u \in B}\, \E(u) > 0
\end{equation}
if $\rho$ is sufficiently small. By \eqref{546},
\begin{equation} \label{583}
\sup_{u \in X}\, \E(u) < \frac{1}{N}\, S^{N/2}.
\end{equation}
By \eqref{547} and \eqref{583}, the minimax level $c$ in Theorem \ref{Theorem 8} satisfies
\[
0 < c < \frac{1}{N}\, S^{N/2}.
\]
So $\E$ satisfies the \PS{c} condition by Proposition \ref{prop:local-ps} and hence $c$ is a critical value of $\E$.
\end{proof}

\begin{remark} \label{rmk:resonant}
Theorem \ref{thm:bn} does not cover the resonant case $\lambda = \lambda_k$. The obstruction is not in the linking geometry. The index computation $i(A_0) = i(\M \setminus B_0) = k$ and the estimates $\sup_{u \in A}\, \E(u) \le 0 < \inf_{u \in B}\, \E(u)$ remain valid at $\lambda = \lambda_k$ and Proposition \ref{prop:local-ps} is independent of $\lambda$. The difficulty lies in the construction of the set $A_0$ so that the linking minimax level $c < \frac{1}{N}\, S^{N/2}$. Whether equation \eqref{bn-eq} has a nontrivial weak radial solution when $\lambda = \lambda_k$ is an open problem.
\end{remark}

\begin{remark} \label{rmk:sign}
In case \ref{thm:bn.i} of Theorem \ref{thm:bn}, the solution may be taken to be nonnegative. Indeed, a straightforward modification of the above arguments shows that the functional
\[
\E_+(u) = I_\sigma(u) - \frac{\lambda}{\beta} \int_{\R^N} u_+^\beta\, dx - \frac{1}{2^\ast} \int_{\R^N} u_+^{2^\ast}\halfthin dx, \quad u \in E^{p,q}_{b,\rad}(\R^N),
\]
where $u_\pm = \max \set{\pm u,0}$, has a critical point $u$ with $0 < \E_+(u) < \frac{1}{N}\, S^{N/2}$ (note that $\varphi_{\eps,\ell} \ge 0$), and testing the corresponding equation with $- u_-$ gives
\[
\int_{\R^N} |\nabla u_-|^2\, dx + \int_{\R^N} \Phi_u(|x|)\, u_-^p\, dx = 0,
\]
so $u \ge 0$. If $p \ge 2$, then $- \Delta u + c(x)\, u \ge 0$ with $c(x) = \Phi_u(|x|)\, u^{p-2} \in L^\infty_\loc(\R^N \setminus \set{0})$ by Theorem \ref{thm:bm-regularity}, so the strong maximum principle gives $u > 0$ in $\R^N \setminus \set{0}$ (see, e.g., \cite[Theorem 9.6]{MR1814364}). If $1 < p < 2$, the absorption term $\Phi_u(|x|)\, u^{p-1}$ is sublinear near $u = 0$, the strong maximum principle of V\'{a}zquez \cite{MR768629} does not apply, and we do not know whether $u$ is positive. We note that this is the case for all the Schr\"{o}dinger-Poisson-Slater equations covered by Theorem \ref{thm:sps-bn} below since $p < N/(N - 2) \le 2$ when $N \ge 4$.
\end{remark}

\subsection{\hspace{-3.08pt}Applications to the defocusing inverse-power Schr\"{o}dinger equation} \label{ssec:app-ip}

When $q = 1$ and $a = b - 1$, equation \eqref{1} reduces to the defocusing inverse-power Schr\"{o}dinger equation
\begin{equation} \label{600}
- \Delta u + \frac{|u|^{p-2}\, u}{|x|^{b-1}} = f(u) \quad \text{in } \R^N,
\end{equation}
where $N \ge 2$ and
\begin{equation} \label{601}
2 < p < \begin{cases}
\infty & \text{if } N = 2\\[5pt]
2^\ast & \text{if } N \ge 3,
\end{cases} \qquad 1 < b < b_\ast := 1 + N - \frac{(N - 2)\, p}{2}.
\end{equation}
By Fubini's theorem,
\begin{equation} \label{602}
\int_0^\infty \rho^{-b}\, h_u(\rho)\, d\rho = \int_{\R^N} |u(y)|^p \int_{|y|}^\infty \rho^{-b}\, d\rho\, dy = \frac{1}{b - 1} \int_{\R^N} \frac{|u|^p}{|x|^{b-1}}\, dx,
\end{equation}
so in this case the ball-mass term is local, $E^{p,1}_b(\R^N)$ is the weighted Sobolev space of weakly differentiable functions with $\nabla u \in L^2(\R^N)$ and $|x|^{-(b-1)/p}\, u \in L^p(\R^N)$, and \eqref{101} gives
\[
I_\sigma(u) = \frac{1}{2} \int_{\R^N} |\nabla u|^2\, dx + \frac{1}{p} \int_{\R^N} \frac{|u|^p}{|x|^{b-1}}\, dx, \quad u \in E^{p,1}_{b,\rad}(\R^N).
\]
We have
\begin{equation} \label{603}
\beta = \frac{2\, (p + 1 - b)}{3 - b}, \qquad 2^{p,1}_{b,\ast} = \frac{2\, [(N - 1)\, p + b - 1]}{2N - 1 - b}
\end{equation}
(see \eqref{31} and \eqref{30}). It follows from \eqref{505}, \eqref{601}, and \eqref{36} that
\begin{equation} \label{605}
\beta < 2^{p,1}_{b,\ast}.
\end{equation}

Theorem \ref{thm:pure-power} together with \eqref{605} gives the following multiplicity result for the pure power equation
\begin{equation} \label{604}
- \Delta u + \frac{|u|^{p-2}\, u}{|x|^{b-1}} = |u|^{s-2}\, u \quad \text{in } \R^N,
\end{equation}
where
\[
\E(u) = \frac{1}{2} \int_{\R^N} |\nabla u|^2\, dx + \frac{1}{p} \int_{\R^N} \frac{|u|^p}{|x|^{b-1}}\, dx - \frac{1}{s} \int_{\R^N} |u|^s\, dx, \quad u \in E^{p,1}_{b,\rad}(\R^N).
\]

\begin{theorem} \label{Theorem 4.20}
Assume \eqref{601}. If
\[
2^{p,1}_{b,\ast} < s < \begin{cases}
\infty & \text{if } N = 2\\[5pt]
2^\ast & \text{if } N \ge 3,
\end{cases}
\]
then equation \eqref{604} has an infinite sequence of nontrivial weak radial solutions $u_k$ such that $\E(u_k) > 0$ and $\E(u_k) \nearrow \infty$.
\end{theorem}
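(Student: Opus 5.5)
This is a direct specialization of Theorem \ref{thm:pure-power}\,(ii) to $q = 1$, $a = b - 1$. The plan is to check the standing hypotheses \eqref{33}--\eqref{2} for these parameters, and then show that any admissible $s$ is automatically different from $\beta$ and in fact lies in the superscaled regime $s > \beta$.

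First, with $q = 1$, condition \eqref{34} reads $1 \le q$ and $pq = p > 2$, which is exactly the assumption $p > 2$ in \eqref{601}. The upper bound on $p$ in \eqref{601} is \eqref{33}. For \eqref{2}, note that with $q = 1$ we have $b_\ast = 1 + N(1 - p/2^\ast) = 1 + N - (N-2)p/2$ when $N \ge 3$, and $b_\ast = 3 = 1 + N$ when $N = 2$. Both agree with the $b_\ast$ in \eqref{601}, so \eqref{2} holds.

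Second, by \eqref{30} the exponent $2^{p,1}_{b,\ast}$ is the one in \eqref{603}, so the admissible range of $s$ in the statement coincides with \eqref{56}. The key input is \eqref{605}, namely $\beta < 2^{p,1}_{b,\ast}$. It follows from \eqref{505}: for $N = 2$ the numerator is $4(1 - b) < 0$ while $b_\ast - b > 0$. For $N \ge 3$ the factor $(N-1)(q-1) + 1 - b = 1 - b$ is negative and the remaining factors are positive by \eqref{2} and \eqref{36}. Hence every admissible $s$ satisfies $s > 2^{p,1}_{b,\ast} > \beta$, so in particular $s \ne \beta$.

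Theorem \ref{thm:pure-power}\,(ii) then yields infinitely many nontrivial weak radial solutions $u_k$ with $\E(u_k) > 0$ and $\E(u_k) \nearrow \infty$. It remains to identify the functional. By \eqref{602} with $a = b - 1$, the ball-mass term satisfies
\[
\frac{a}{p} \int_0^\infty \rho^{-b}\, h_u(\rho)\, d\rho = \frac{1}{p} \int_{\R^N} \frac{|u|^p}{|x|^{b-1}}\, dx,
\]
so the energy of Theorem \ref{thm:pure-power} is exactly the $\E$ displayed before the statement. Likewise, equation \eqref{star} becomes \eqref{604}, since $\Phi_u(r) = r^{-(b-1)}$ in this case. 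The argument is essentially bookkeeping; the only step needing care is the sign computation in \eqref{505}, and the slight notational check that $b_\ast$ in \eqref{601} matches \eqref{2} for $N = 2$.
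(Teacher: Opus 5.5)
Your proposal is correct and is the paper's own argument. The paper obtains this theorem by applying Theorem~\ref{thm:pure-power}, using \eqref{605} (read off from \eqref{505}, \eqref{601}, and \eqref{36}) to see that every admissible $s$ exceeds $\beta$, so only the superscaled alternative (ii) occurs; your checks of \eqref{33}--\eqref{2} and of the energy via \eqref{602} are the routine verifications the paper leaves implicit.
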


\begin{remark} \label{rmk:ip-pure}
Theorem \ref{Theorem 4.20} is contained in \cite[Theorem 5.2 $(iii)$]{GlPeRi}, applied with $\lambda = 0$, $\eta = 0$, and $h(t) = |t|^{s-2}\, t$. Indeed, when the parameters $q$ and $b$ of \cite{GlPeRi} are replaced by $p$ and $b - 1$, respectively, the energy space used there coincides with $E^{p,1}_b(\R^N)$, the scaling used there coincides with \eqref{103}, and the radial critical exponent $2^{\rm rad}_{b-1,p,0}$ defined there coincides with $2^{p,1}_{b,\ast}$.
\end{remark}

Theorem \ref{thm:pure-crit-nonex} gives the following nonexistence result for the pure critical power equation
\begin{equation} \label{606}
- \Delta u + \frac{|u|^{p-2}\, u}{|x|^{b-1}} = |u|^{2^\ast - 2}\, u \quad \text{in } \R^N,
\end{equation}
where $N \ge 3$.

\begin{theorem} \label{Theorem 4.21}
Assume \eqref{601}. Then equation \eqref{606} has no nontrivial weak radial solution.
\end{theorem}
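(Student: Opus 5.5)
The plan is to specialize Theorem \ref{thm:pure-crit-nonex} to $q = 1$, $a = b - 1$, checking that its hypotheses \eqref{110}--\eqref{112} follow from \eqref{601} when $N \ge 3$. Condition \eqref{110}, $1 < p < 2^\ast$, is immediate from $2 < p < 2^\ast$. With $q = 1$ the upper bound in \eqref{112} is
\[
b_\ast = 1 + N\,(1 - p/2^\ast) = 1 + N - \frac{(N - 2)\, p}{2},
\]
which is exactly the bound in \eqref{601}. So \eqref{112} holds as well.

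For \eqref{111}, $q = 1$ gives $pq = p$, and we need $2 < p < 2^\ast$, which is again part of \eqref{601}. Theorem \ref{thm:pure-crit-nonex} then says that \eqref{pure-crit} with these parameters has no nontrivial weak radial solution in $E^{p,1}_{b,\rad}(\R^N)$.

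It remains to identify \eqref{pure-crit} with \eqref{606}. For $q = 1$ the potential is $\Phi_u(r) = a\, r^{-(b-1)}/(b - 1)$, so with $a = b - 1$ we get $\Phi_u(|x|) = |x|^{-(b-1)}$. The weak formulations therefore coincide, and by \eqref{602} the energy space is the weighted space $E^{p,1}_b(\R^N)$.

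The nonexistence argument itself only uses the Pohožaev and Nehari identities of Theorem \ref{thm:bm-pohozaev}. The key strict inequality \eqref{115} rests on $b < b_\ast$ together with \eqref{29} and \eqref{109}, and these hold for $q = 1$.

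The only step needing real care is confirming that $pq < 2^\ast$ and $2 < pq$ hold, since \eqref{111} was written for general $q$. For $q = 1$ both reduce to the constraint on $p$ in \eqref{601}, so I expect no genuine obstacle.
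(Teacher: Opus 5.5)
Your proposal is correct and matches the paper's route: the paper obtains this theorem by specializing Theorem \ref{thm:pure-crit-nonex} to $q = 1$, $a = b - 1$. Your checks are the ones needed, namely that \eqref{601} gives \eqref{110}--\eqref{112}, in particular $b_\ast = 1 + N - (N - 2)\, p/2$ when $q = 1$, and that $\Phi_u(|x|) = |x|^{-(b-1)}$, so the two weak formulations coincide.
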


\begin{remark} \label{rmk:ip-nonex}
Theorem \ref{Theorem 4.21} is contained in \cite[Proposition 2.6]{GlPeRi}, applied with $\eta = 0$ and $r = 2^\ast$, where radial symmetry is not assumed.
\end{remark}

Theorem \ref{thm:large-mu} together with \eqref{404} and \eqref{605} gives the following multiplicity result for the perturbed critical equation
\begin{equation} \label{607}
- \Delta u + \frac{|u|^{p-2}\, u}{|x|^{b-1}} = \mu\, |u|^{s-2}\, u + |u|^{2^\ast - 2}\, u \quad \text{in } \R^N,
\end{equation}
where $N \ge 3$.

\begin{theorem} \label{Theorem 4.22}
Assume \eqref{601}. If $2^{p,1}_{b,\ast} < s < 2^\ast$, then for any $m \ge 1$, $\exists \mu_m > 0$ such that equation \eqref{607} has $m$ distinct pairs of nontrivial weak radial solutions at positive energy levels for all $\mu > \mu_m$. In particular, the number of solutions goes to infinity as $\mu \to \infty$.
\end{theorem}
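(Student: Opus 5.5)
This theorem is a direct specialization of Theorem \ref{thm:large-mu} to the case $q = 1$, $a = b - 1$, so the plan is to check that its hypotheses \eqref{110}--\eqref{112}, \eqref{139}, and \eqref{700} hold under \eqref{601} and $2^{p,1}_{b,\ast} < s < 2^\ast$.

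First, \eqref{110}--\eqref{112}. With $N \ge 3$ and $q = 1$, condition \eqref{110} is part of \eqref{601}. Condition \eqref{111} becomes $2 < p < 2^\ast$, which is exactly what \eqref{601} assumes. The value $b_\ast = 1 + N(1 - p/2^\ast)$ in \eqref{112} agrees with the one in \eqref{601}, since $Np/2^\ast = (N-2)p/2$.

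Next, \eqref{139}. By \eqref{605}, $\beta < 2^{p,1}_{b,\ast}$, so $\max \set{2^{p,1}_{b,\ast},\beta} = 2^{p,1}_{b,\ast}$. Hence \eqref{139} reduces to the assumed range $2^{p,1}_{b,\ast} < s < 2^\ast$. This range is nonempty by \eqref{404}.

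The one point needing care is \eqref{700}. Because $\beta \le 2^{p,1}_{b,\ast}$, we need $s > pq = p$. This follows from $s > 2^{p,1}_{b,\ast} > p$, where the second inequality is \eqref{404}; equivalently, use \eqref{106}.

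With all hypotheses verified, Theorem \ref{thm:large-mu} applies. The functional $\E$ is the one written above, by \eqref{602} and $a = b - 1$. Its weak radial solutions are weak radial solutions of \eqref{607}. This gives, for each $m$, a $\mu_m$ such that \eqref{607} has $m$ distinct pairs of nontrivial weak radial solutions at positive energy levels for all $\mu > \mu_m$. I expect no real obstacle beyond the bookkeeping for \eqref{700}.
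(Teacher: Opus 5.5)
Your proposal is correct and matches the paper's argument. The paper also derives Theorem \ref{Theorem 4.22} by specializing Theorem \ref{thm:large-mu} to $q = 1$, $a = b - 1$: \eqref{605} reduces \eqref{139} to $2^{p,1}_{b,\ast} < s < 2^\ast$, and \eqref{404} gives $s > p = pq$ for \eqref{700}. Your explicit check of \eqref{110}--\eqref{112} against \eqref{601} is just bookkeeping that the paper leaves implicit.
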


\begin{remark} \label{rmk:ip-large}
Theorem \ref{Theorem 4.22} does not seem to follow from the results of \cite{GlPeRi}. In the notation of \cite{GlPeRi}, equation \eqref{607} corresponds to the choice $\eta_1 = \eta_2 = 0$ of the weights on the critical term and on the perturbation, and this choice is excluded by the assumptions of \cite[Theorem 1.5 and Theorem 5.3]{GlPeRi} since $b > 1$.
\end{remark}

\subsection{Applications to the Chern-Simons-Schr\"{o}dinger equation} \label{ssec:app-css}

When $N = 2$, $p = 2$, $q = 3$, $a = 1/8 \pi^2$, $b = 3$, and $u = u(r)$, equation \eqref{1} reduces to the Chern-Simons-Schr\"{o}dinger equation
\begin{equation} \label{620}
- \Delta u + \left(\frac{\mathfrak{h}_u^2(|x|)}{|x|^2} + \int_{|x|}^\infty \frac{\mathfrak{h}_u(\rho)}{\rho}\, u^2(\rho)\, d\rho\right) u = f(u) \quad \text{in } \R^2,
\end{equation}
where $\mathfrak{h}_u$ is given in \eqref{623}. By \eqref{140} and \eqref{621},
\[
I_\sigma(u) = \frac{1}{2} \int_{\R^2} |\nabla u|^2\, dx + \frac{1}{6} \int_{\R^2} \left(\frac{\mathfrak{h}_u^2(|x|)}{|x|^2} + \int_{|x|}^\infty \frac{\mathfrak{h}_u(\rho)}{\rho}\, u^2(\rho)\, d\rho\right) u^2\, dx.
\]
By Fubini's theorem and \eqref{622},
\begin{multline*}
\int_{\R^2} \left(\int_{|x|}^\infty \frac{\mathfrak{h}_u(\rho)}{\rho}\, u^2(\rho)\, d\rho\right) u^2\, dx = 2 \pi \int_0^\infty \frac{\mathfrak{h}_u(\rho)}{\rho}\, u^2(\rho) \left(\int_0^\rho \tau\, u^2(\tau)\, d\tau\right) d\rho\\[7.5pt]
= 4 \pi \int_0^\infty \frac{\mathfrak{h}_u^2(\rho)}{\rho}\, u^2(\rho)\, d\rho = 2 \int_{\R^2} \frac{\mathfrak{h}_u^2(|x|)}{|x|^2}\, u^2\, dx
\end{multline*}
and hence
\[
I_\sigma(u) = \frac{1}{2} \int_{\R^2} |\nabla u|^2\, dx + \frac{1}{2} \int_{\R^2} \frac{\mathfrak{h}_u^2(|x|)}{|x|^2}\, u^2\, dx, \quad u \in E^{2,3}_{3,\rad}(\R^2).
\]
We have
\begin{equation} \label{608}
\beta = 4 = 2^{2,3}_{3,\ast}
\end{equation}
(see \eqref{31} and \eqref{30}).

Theorem \ref{thm:pure-power} together with \eqref{608} gives the following multiplicity result for the pure power equation
\begin{equation} \label{609}
- \Delta u + \left(\frac{\mathfrak{h}_u^2(|x|)}{|x|^2} + \int_{|x|}^\infty \frac{\mathfrak{h}_u(\rho)}{\rho}\, u^2(\rho)\, d\rho\right) u = |u|^{s-2}\, u \quad \text{in } \R^2,
\end{equation}
where
\[
\E(u) = \frac{1}{2} \int_{\R^2} |\nabla u|^2\, dx + \frac{1}{2} \int_{\R^2} \frac{\mathfrak{h}_u^2(|x|)}{|x|^2}\, u^2\, dx - \frac{1}{s} \int_{\R^2} |u|^s\, dx, \quad u \in E^{2,3}_{3,\rad}(\R^2).
\]

\begin{theorem} \label{thm:css-pure}
If $s > 4$, then equation \eqref{609} has an infinite sequence of nontrivial weak radial solutions $u_k$ such that $\E(u_k) > 0$ and $\E(u_k) \nearrow \infty$.
\end{theorem}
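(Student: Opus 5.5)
This will be a direct specialization of Theorem \ref{thm:pure-power} to the parameters $N = 2$, $p = 2$, $q = 3$, $b = 3$, $a = 1/8\pi^2$, in the same way Theorem \ref{Theorem 4.20} was obtained for the inverse-power case.

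First I check that these parameters satisfy \eqref{33}--\eqref{2}. Since $N = 2$, \eqref{33} only asks for $1 < p < \infty$, which holds. For \eqref{34}, $q = 3 \ge 1$ and $pq = 6 > 2$. For \eqref{2}, $b_\ast = 1 + 2q = 7$, so $1 < 3 < 7$.

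Next I use \eqref{608}, which gives $\beta = 2^{2,3}_{3,\ast} = 4$. So for $s > 4$ we automatically have $s \ne \beta$. We also have $s \in (2^{2,3}_{3,\ast},\infty)$, which is exactly condition \eqref{56} when $N = 2$. Moreover $s > \beta$, so equation \eqref{star} is superscaled. Theorem \ref{thm:pure-power}(ii) then gives an infinite sequence $u_k$ of nontrivial weak radial solutions of \eqref{star} with $\E(u_k) > 0$ and $\E(u_k) \nearrow \infty$.

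It remains to identify \eqref{star} and its functional with \eqref{609} and the $\E$ displayed above it. By \eqref{621}, with $a = 1/8\pi^2$, the potential $\Phi_u(|x|)$ equals the bracketed Chern-Simons potential in \eqref{609}; this was derived for $u \in L^2(\R^2)$. The general functional is $\E = I_\sigma - \frac1s\int |u|^s$, and the computation just before \eqref{608} rewrites $I_\sigma$ in the stated form.

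The one point needing care is that \eqref{621} integrates by parts assuming $u \in L^2(\R^2)$. For $u \in E^{2,3}_{3,\rad}(\R^2)$ I would instead read the potential in \eqref{609} as $\Phi_u$ defined by \eqref{39}, so that the solutions are weak radial solutions in the sense of Subsection \ref{ssec:var-form}. If needed, I would justify the identity \eqref{621} directly. Since $\mathfrak{h}_u$ is finite by Theorem \ref{Theorem 3} and $\rho^{-3}\mathfrak{h}_u^3$ is integrable, $\liminf_{\rho\to\infty}\rho^{-2}\mathfrak{h}_u^2(\rho) = 0$ along a sequence. The boundary term at infinity in the integration by parts therefore vanishes along that sequence, and monotone convergence handles the remaining integral. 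This is the only step beyond bookkeeping, and it is mild.
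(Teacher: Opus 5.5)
Your proposal is correct and is exactly the paper's argument: Theorem \ref{thm:pure-power}\,(ii) applied with $\beta = 2^{2,3}_{3,\ast} = 4$ from \eqref{608}, so every $s > 4$ lies in the superscaled range. Your extra check that \eqref{621} holds on all of $E^{2,3}_{3,\rad}(\R^2)$ is a valid refinement the paper leaves implicit, and it matters because elements of this space need not lie in $L^2(\R^2)$; note also that Theorem \ref{Theorem 3} gives $\mathfrak{h}_u(\rho) \le C \rho^{2/3}$, so the boundary term $\rho^{-2}\, \mathfrak{h}_u^2(\rho)$ tends to zero as $\rho \to \infty$ without passing to a subsequence.
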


\begin{remark} \label{rmk:css-pure}
To our knowledge, the zero-mass pure power equation \eqref{609} has not been considered previously. In \cite{MR2948224,MR3415024,MR3353806} a frequency $\omega > 0$, or equivalently a negative linear part in the nonlinearity, is present and the problem is posed in $H^1_\rad(\R^2)$, while in the zero-mass works \cite{MR4708596,MR4968174,MR4502773} an additional term $- a\, |u|^{r-2}\, u$ with $a > 0$ is added to the right-hand side (see Subsection \ref{ssec:related}). Neither device is needed here. We note that the exponent $4$ plays a distinguished role also in the positive-mass case (see \cite[Theorem 1.2]{MR2948224}).
\end{remark}

\subsection{Applications to the Schr\"{o}dinger-Poisson-Slater equation} \label{ssec:app-sps}

When $N \ge 3$, $q = 2$, $a = N - 2$, $b = N - 1$, and $u = u(r)$, equation \eqref{1} reduces to the Schr\"{o}dinger-Poisson-Slater equation
\begin{equation} \label{624}
- \Delta u + \left(\frac{1}{|x|^{N-2}} \star |u|^p\right) |u|^{p-2}\, u = f(u) \quad \text{in } \R^N,
\end{equation}
where
\begin{equation} \label{610}
1 < p < \frac{N + 2}{N - 2}.
\end{equation}
By \eqref{140} and \eqref{49},
\[
I_\sigma(u) = \frac{1}{2} \int_{\R^N} |\nabla u|^2\, dx + \frac{1}{2p} \int_{\R^N} \int_{\R^N} \frac{|u(x)|^p\, |u(y)|^p}{|x - y|^{N-2}}\, dx\, dy, \quad u \in E^{p,2}_{N-1,\rad}(\R^N).
\]
We have
\begin{equation} \label{612}
\beta = p + 1, \qquad 2^{p,2}_{N-1,\ast} = \frac{2\, [2\, (N - 1)\, p + N - 2]}{3N - 2}
\end{equation}
(see \eqref{31} and \eqref{30}). It follows from \eqref{505} and \eqref{36} that
\begin{equation} \label{613}
\beta > 2^{p,2}_{N-1,\ast}.
\end{equation}

Theorem \ref{thm:bm-eigenvalues} together with \eqref{613} gives the following result for the scaled eigenvalue problem
\begin{equation} \label{614}
- \Delta u + \left(\frac{1}{|x|^{N-2}} \star |u|^p\right) |u|^{p-2}\, u = \lambda\, |u|^{p-1}\, u \quad \text{in } \R^N,
\end{equation}
where $N \ge 3$.

\begin{theorem} \label{thm:sps-eigenvalues}
Assume \eqref{610}. Then equation \eqref{614} has a sequence of positive eigenvalues $\lambda_k \nearrow \infty$ with radial eigenfunctions.
\end{theorem}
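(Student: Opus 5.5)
The plan is to obtain Theorem \ref{thm:sps-eigenvalues} as the special case $q = 2$, $b = N - 1$, $a = N - 2$ of Theorem \ref{thm:bm-eigenvalues}. Two things must be checked: that these parameters satisfy the standing assumptions \eqref{33}--\eqref{2} together with \eqref{53}, and that for radial $u$ equation \eqref{51} with these parameters coincides with \eqref{614}.

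\emph{Step 1: parameter conditions.} Take $N \ge 3$, $q = 2$, $b = N-1$, and $p$ as in \eqref{610}.
\begin{itemize}
\item Condition \eqref{33}: since $p < \frac{N+2}{N-2} < 2^\ast$, we have $1 < p < 2^\ast$.
\item Condition \eqref{34}: we need $pq = 2p > 2$, which holds because $p > 1$.
\item Lower bound in \eqref{2}: $b = N - 1 > 1$ because $N \ge 3$.
\item Upper bound in \eqref{2}: we need $N - 1 < b_\ast = 1 + 2N(1 - p/2^\ast) = 1 + 2N - p(N-2)$. This is equivalent to $p(N-2) < N + 2$, which is exactly \eqref{610}.
\item Condition \eqref{53}: we need $b < (N-1)(q-1) + 1 = N$, which holds since $b = N - 1$.
\end{itemize}
These checks also confirm \eqref{613} via \eqref{505}. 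Indeed, the first factor there is $(N-1)(q-1) + 1 - b = 1 > 0$. The other factors are positive by \eqref{2} and \eqref{36}, using $b < b_\ast \le b_\# \le b^\ast$.

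\emph{Step 2: identification of the equation.} With these parameters, \eqref{37} gives $b_\# = N + 3$ and $\alpha = \frac{4}{2p-2} = \frac{2}{p-1}$. Then \eqref{31} gives $\beta = 2 + 2/\alpha = p + 1$, so the right-hand side of \eqref{51} is $\lambda |u|^{p-1} u$. For the potential, the introduction uses Newton's theorem to derive \eqref{49}, which says that for radial $u \in L^p$ the ball-mass potential $\Phi_u$ with $a = N - 2$ equals $|x|^{2-N} \star |u|^p$. The only point requiring care is that the weak formulations agree on $E^{p,2}_{N-1,\rad}(\R^N)$. By the Fubini computation preceding \eqref{49}, $\Phi_u(r) = \frac{a}{b-1} \int |u(y)|^p \max\{r,|y|\}^{-(b-1)}\, dy$. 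This expression is finite and is identified with the convolution without assuming $u \in L^p$ globally, because Newton's theorem applies pointwise to the radial density $|u|^p$.

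\emph{Conclusion.} Theorem \ref{thm:bm-eigenvalues} then yields positive eigenvalues $\lambda_k \nearrow \infty$ with radial eigenfunctions for \eqref{614}. I expect no real obstacle: Step 1 is bookkeeping, and the only subtle point is the justification of \eqref{49} in Step 2 for functions in the energy space rather than in $L^p$.
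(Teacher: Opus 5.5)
Your proposal is correct and follows the paper's route. The paper obtains this theorem as the specialization $q = 2$, $b = N-1$, $a = N-2$ of Theorem \ref{thm:bm-eigenvalues}, with \eqref{613} (from \eqref{505} and \eqref{36}) supplying the compact embedding into $L^\beta$ and the Fubini--Newton computation identifying $\Phi_u$ with $|x|^{2-N} \star |u|^p$. Your parameter checks, the computation $\beta = p+1$, and your remark that this identification does not require $u \in L^p(\R^N)$ are all accurate.
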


\begin{remark} \label{rmk:sps-eigen}
For $N = 3$ and $p = 2$, Theorem \ref{thm:sps-eigenvalues} is due to Ianni and Ruiz \cite[Theorem 1.3]{MR2902293} (see also \cite{MR5043800}). For general $N$ and $p$ satisfying \eqref{610}, the sequence $\lambda_k$ was constructed in \cite[Section 2]{MarMePe} in the Coulomb-Sobolev setting, where $\beta = p + 1$ appears as the Coulomb-Sobolev critical exponent of \cite{MR3568051}.
\end{remark}

Theorem \ref{thm:pure-power} together with \eqref{613} gives the following multiplicity result for the pure power equation
\begin{equation} \label{615}
- \Delta u + \left(\frac{1}{|x|^{N-2}} \star |u|^p\right) |u|^{p-2}\, u = |u|^{s-2}\, u \quad \text{in } \R^N,
\end{equation}
where $N \ge 3$ and
\[
\E(u) = \frac{1}{2} \int_{\R^N} |\nabla u|^2\, dx + \frac{1}{2p} \int_{\R^N} \int_{\R^N} \frac{|u(x)|^p\, |u(y)|^p}{|x - y|^{N-2}}\, dx\, dy - \frac{1}{s} \int_{\R^N} |u|^s\, dx, \quad u \in E^{p,2}_{N-1,\rad}(\R^N).
\]

\begin{theorem} \label{thm:sps-pure}
Assume \eqref{610}. If $s \ne p + 1$ satisfies $2^{p,2}_{N-1,\ast} < s < 2^\ast$, then equation \eqref{615} has an infinite sequence of nontrivial weak radial solutions $u_k$.
\begin{enumroman}
\item If $s < p + 1$, then $\E(u_k) < 0$ and $\E(u_k) \nearrow 0$.
\item If $s > p + 1$, then $\E(u_k) > 0$ and $\E(u_k) \nearrow \infty$.
\end{enumroman}
\end{theorem}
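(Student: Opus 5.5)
This is a direct specialization of Theorem \ref{thm:pure-power} to $N \ge 3$, $q = 2$, $b = N - 1$, $a = N - 2$, so the plan is to check that the hypotheses \eqref{33}--\eqref{2} hold for these parameters, identify the relevant exponents, and translate the conclusions back through the identity \eqref{49}.

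\emph{Parameter constraints.} With $q = 2$, condition \eqref{34} reads $2p > 2$, which holds since $p > 1$. Condition \eqref{33} requires $p < 2^\ast$, and this follows from \eqref{610} because $\frac{N+2}{N-2} < \frac{2N}{N-2}$. The only nontrivial check is \eqref{2}, namely $1 < N - 1 < b_\ast = 1 + 2N(1 - p/2^\ast) = 1 + 2N - p(N-2)$. The left inequality holds because $N \ge 3$. The right inequality is equivalent to $p(N - 2) < N + 2$, which is exactly \eqref{610}.

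\emph{Exponents.} By \eqref{612}, $\beta = p + 1$, so the condition $s \ne \beta$ becomes $s \ne p+1$. The range \eqref{56} becomes $2^{p,2}_{N-1,\ast} < s < 2^\ast$. So the hypotheses of Theorem \ref{thm:pure-power} are precisely those stated here. For completeness one can verify $\beta = p+1$ directly: $b_\# = N + 3$ gives $\alpha = \frac{4}{2p - 2} = \frac{2}{p-1}$, hence $\beta = 2(1 + \frac{p-1}{2}) = p + 1$.

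\emph{Identification of the equation and energy.} For radial $u$, \eqref{49} with $a = N - 2$ shows that $\Phi_u(|x|) = |x|^{2-N} \star |u|^p$, so \eqref{star} is exactly \eqref{615}. For the energy, \eqref{73} gives $\frac{a}{pq}\int_0^\infty \rho^{-b} h_u^q\,d\rho = \frac{1}{2p}\int \Phi_u |u|^p\,dx$. Writing out the convolution, this equals the double integral in the stated $\E$. Hence the functional there coincides with the functional of Theorem \ref{thm:pure-power}.

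\emph{Conclusion.} The subscaled case $s < \beta$ and the superscaled case $s > \beta$ then give (i) and (ii) verbatim. The only step requiring any care is the check of $b < b_\ast$, which reduces to \eqref{610}; nothing else is an obstacle.
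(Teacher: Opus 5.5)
Your proposal is correct and follows the paper's own route: the paper obtains this theorem directly from Theorem \ref{thm:pure-power} with $q = 2$, $b = N - 1$, $a = N - 2$, and your checks of \eqref{33}--\eqref{2}, of $\beta = p + 1$, and of the identification of $\Phi_u$ and $\E$ via \eqref{49} and \eqref{73} are accurate. The paper additionally cites \eqref{613}, $\beta > 2^{p,2}_{N-1,\ast}$. This is not logically needed, but it shows that the subscaled range in case (i) is nonempty, so (i) is not vacuous.
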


\begin{remark} \label{rmk:sps-pure}
For $N = 3$ and $p = 2$, both alternatives of Theorem \ref{thm:sps-pure} are known. In the subscaled range $18/7 < s < 3$, infinitely many solutions at negative energy levels were obtained in \cite[Corollary 1.31]{MR5043800}, following the existence of a positive solution in \cite[Theorem 1.3]{MR2679375}, and in the superscaled range $3 < s < 6$, infinitely many radial solutions with diverging energies were obtained in \cite[Theorem 1.2]{MR2902293}. For general $N$ and $p$, the existence of a radial groundstate in the whole range of Theorem \ref{thm:sps-pure} follows from \cite[Theorem 5]{MR3568051}, and existence results for more general nonlinearities in both regimes are given in \cite{MarMePe}. The infinite multiplicity for general $N$ and $p$ appears to be new.
\end{remark}

Now we strengthen \eqref{610} to
\begin{equation} \label{616}
1 < p < \frac{N}{N - 2}.
\end{equation}
Theorem \ref{thm:pure-crit-nonex} gives the following nonexistence result for the pure critical power equation
\begin{equation} \label{617}
- \Delta u + \left(\frac{1}{|x|^{N-2}} \star |u|^p\right) |u|^{p-2}\, u = |u|^{2^\ast - 2}\, u \quad \text{in } \R^N,
\end{equation}
where $N \ge 3$.

\begin{theorem} \label{thm:sps-nonex}
Assume \eqref{616}. Then equation \eqref{617} has no nontrivial weak radial solution.
\end{theorem}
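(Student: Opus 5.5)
The plan is to deduce Theorem \ref{thm:sps-nonex} directly from Theorem \ref{thm:pure-crit-nonex} by specializing the parameters. I set $N \ge 3$, $q = 2$, $b = N - 1$, $a = N - 2$. For radial $u$, identity \eqref{49} turns equation \eqref{1} with $f(t) = |t|^{2^\ast - 2}\, t$ into \eqref{617}, so weak radial solutions of \eqref{617} in $E^{p,2}_{N-1,\rad}(\R^N)$ are exactly weak radial solutions of \eqref{pure-crit}. It then suffices to check that \eqref{616} implies the hypotheses \eqref{110}--\eqref{112}.

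First, \eqref{110} holds: $p < N/(N - 2) < 2N/(N - 2) = 2^\ast$. Second, \eqref{111} with $q = 2$ requires $2 < 2p < 2^\ast$. The lower bound is $p > 1$. The upper bound is $p < 2^\ast/2 = N/(N - 2)$, which is exactly \eqref{616}. This is where the strengthening from \eqref{610} to \eqref{616} enters: it gives the constraint $pq < 2^\ast$ that was used in \eqref{115}. Third, for \eqref{112} I need
\[
1 < N - 1 < b_\ast = 1 + 2N\, (1 - p/2^\ast).
\]
The left inequality holds since $N \ge 3$. The right one is equivalent to $N - 2 < 2N - p\, (N - 2)$, that is, $p\, (N - 2) < N + 2$, which holds because $p < N/(N - 2) < (N + 2)/(N - 2)$.

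With \eqref{110}--\eqref{112} verified, Theorem \ref{thm:pure-crit-nonex} applies and shows that any weak radial solution vanishes. I do not expect a real obstacle here. The only point needing care is that the identification \eqref{49} uses $u \in L^p(\R^N)$, and this may not hold for all of $E^{p,2}_{N-1,\rad}(\R^N)$. To avoid the issue, I would interpret \eqref{617} through the ball-mass form: by the Newton's theorem computation preceding \eqref{49}, the potential equals $\frac{a}{b-1}\int |u(y)|^p \max\{r,|y|\}^{-(b-1)}\,dy$ with $a = b - 1 = N - 2$, and this coincides with the convolution whenever the latter is finite. The weak formulation is therefore the same in both forms.
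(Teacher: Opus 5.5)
Your proposal is correct and is exactly the paper's argument: Theorem \ref{thm:sps-nonex} is obtained by specializing Theorem \ref{thm:pure-crit-nonex} to $q = 2$, $b = N - 1$, $a = N - 2$, and your verification of \eqref{110}--\eqref{112} under \eqref{616} is right. One small correction to a side remark: \eqref{115} does not actually use $pq < 2^\ast$, since it rests only on $b < b_\ast$ and the identity $b^\# - b_\ast = (N - 2)\, pq/2$. So \eqref{616} is needed only to meet hypothesis \eqref{111} as stated in Theorem \ref{thm:pure-crit-nonex}, not for the Poho\v{z}aev--Nehari argument itself.
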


\begin{remark} \label{rmk:sps-nonex}
For $N = 3$ and $p = 2$, the corresponding nonexistence result for solutions in $H^2_\loc(\R^3)$ was obtained in \cite[Corollary 2.6]{MR2902293}.
\end{remark}

Theorem \ref{thm:left-nbhd} together with \eqref{613} gives the following multiplicity result for the perturbed critical equation
\begin{equation} \label{618}
- \Delta u + \left(\frac{1}{|x|^{N-2}} \star |u|^p\right) |u|^{p-2}\, u = \lambda\, |u|^{p-1}\, u + |u|^{2^\ast - 2}\, u \quad \text{in } \R^N,
\end{equation}
where $N \ge 3$, $\lambda > 0$, and $\lambda_k \nearrow \infty$ is the sequence of positive eigenvalues of problem \eqref{614} given in Theorem \ref{thm:sps-eigenvalues}. 

\begin{theorem} \label{thm:sps-left}
Assume \eqref{616}. If $\lambda_k = \cdots = \lambda_{k+m-1} < \lambda_{k+m}$ for some $k, m \ge 1$, then $\exists \delta_k > 0$ such that equation \eqref{618} has $m$ distinct pairs of nontrivial weak radial solutions at positive energy levels for all $\lambda \in (\lambda_k - \delta_k,\lambda_k)$. In particular, there exists a nontrivial solution for each $\lambda \in \bigcup_{k=1}^\infty (\lambda_k - \delta_k,\lambda_k)$.
\end{theorem}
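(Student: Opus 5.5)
This is a specialization of Theorem \ref{thm:left-nbhd}, so the plan is to check its hypotheses \eqref{110}--\eqref{112} and \eqref{137} for $q = 2$, $b = N - 1$, $a = N - 2$, $N \ge 3$, and $1 < p < N/(N-2)$. Under these hypotheses equation \eqref{136}, with $\beta = p + 1$ as in \eqref{612}, becomes \eqref{618}. Once that is done, the eigenvalues $\lambda_k$ of Theorem \ref{thm:sps-eigenvalues} are exactly those of Theorem \ref{thm:bm-eigenvalues} for these parameters, and the conclusion transfers word for word.

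The verification proceeds in order.

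\emph{Condition \eqref{110}.} Since $N/(N-2) < 2N/(N-2) = 2^\ast$, we get $1 < p < 2^\ast$.

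\emph{Condition \eqref{111}.} Here $q = 2 \ge 1$, and $pq = 2p > 2$ because $p > 1$. Also $2p < 2N/(N-2) = 2^\ast$ precisely because $p < N/(N-2)$. This is where the strengthened assumption \eqref{616} is needed, since \eqref{610} alone would allow $pq \ge 2^\ast$.

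\emph{Condition \eqref{112}.} We need $1 < N - 1 < b_\ast = 1 + 2N(1 - p/2^\ast) = 1 + 2N - p(N-2)$. The first inequality holds since $N \ge 3$. The second is equivalent to $p(N-2) < N + 2$, i.e.\ $p < (N+2)/(N-2)$, which follows from \eqref{616}.

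\emph{Condition \eqref{137}.} This is \eqref{613}, which comes from \eqref{505}. Explicitly, for $N \ge 3$ the sign of $\beta - 2^{p,q}_{b,\ast}$ is the sign of $(N-1)(q-1) + 1 - b = (N-1) + 1 - (N-1) = 1 > 0$, because the factors $b_\ast - b$, $b_\# - b$, and $b^\ast - b$ are all positive by \eqref{2} and \eqref{36}.

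Finally, identify the operator. For radial $u$, the term $\Phi_u(|x|)$ with $a = N - 2$ equals $|x|^{2-N} \star |u|^p$ by \eqref{49}, and $E^{p,2}_{N-1,\rad}(\R^N)$ is the energy space. So weak radial solutions of \eqref{136} are exactly weak radial solutions of \eqref{618}.

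No step is a real obstacle; the only point needing care is noticing that $pq < 2^\ast$ in \eqref{111} forces \eqref{616} rather than \eqref{610}.
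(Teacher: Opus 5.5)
Your proposal is correct and follows the paper's route. The paper obtains Theorem \ref{thm:sps-left} directly from Theorem \ref{thm:left-nbhd} together with \eqref{613}, and your explicit verification of \eqref{110}--\eqref{112} and of \eqref{137} via \eqref{505} simply spells out checks the paper leaves implicit. In particular, you correctly identify $pq = 2p < 2^\ast$ as the place where \eqref{616} is needed.
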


\begin{remark} \label{rmk:sps-left}
Theorem \ref{thm:sps-left} is known. For $N = 3$ and $p = 2$ it is \cite[Theorem 1.25]{MR5043800}, and for general $N$ and $p$ satisfying \eqref{616} it is \cite[Theorem 2.12]{MarMePe}. The existence of a positive solution of equation \eqref{618} for $N = 3$, $p = 2$, and some $\lambda > 0$ was proved earlier in \cite[Theorem 1.4]{MR3912770}.
\end{remark}

Theorem \ref{thm:large-mu} together with \eqref{613} and \eqref{616} gives the following multiplicity result for the perturbed critical equation
\begin{equation} \label{619}
- \Delta u + \left(\frac{1}{|x|^{N-2}} \star |u|^p\right) |u|^{p-2}\, u = \mu\, |u|^{s-2}\, u + |u|^{2^\ast - 2}\, u \quad \text{in } \R^N,
\end{equation}
where $N \ge 3$.

\begin{theorem} \label{thm:sps-large}
Assume \eqref{616}. If $p + 1 < s < 2^\ast$, then for any $m \ge 1$, $\exists \mu_m > 0$ such that equation \eqref{619} has $m$ distinct pairs of nontrivial weak radial solutions at positive energy levels for all $\mu > \mu_m$. In particular, the number of solutions goes to infinity as $\mu \to \infty$.
\end{theorem}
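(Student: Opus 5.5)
The plan is to obtain Theorem \ref{thm:sps-large} as a direct specialization of Theorem \ref{thm:large-mu}, in the same way the preceding results of this subsection were obtained. Take $N \ge 3$, $q = 2$, $b = N - 1$, $a = N - 2$, and $p$ satisfying \eqref{616}. First I will check the structural hypotheses \eqref{110}--\eqref{112}. Condition \eqref{110} holds because $p < N/(N-2) < 2^\ast$. For \eqref{111}, we have $pq = 2p > 2$ since $p > 1$. Moreover, $2p < 2N/(N-2) = 2^\ast$ is exactly \eqref{616}; this is why the strengthening from \eqref{610} to \eqref{616} is needed here.

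For \eqref{112}, note that $b_\ast = 1 + 2N(1 - p/2^\ast) = 1 + 2N - (N-2)p$. Then $b = N - 1 < b_\ast$ is equivalent to $(N-2)p < N + 2$, which follows from \eqref{616} (indeed already from \eqref{610}). The case $b > 1$ is immediate since $N \ge 3$.

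Next, I will verify the conditions \eqref{139} and \eqref{700} on $s$. By \eqref{612}, $\beta = p + 1$, and by \eqref{613}, $\beta > 2^{p,2}_{N-1,\ast}$. Hence $\max\set{2^{p,2}_{N-1,\ast},\beta} = p + 1$, and the assumption $p + 1 < s < 2^\ast$ is exactly \eqref{139}. Since $\beta > 2^{p,2}_{N-1,\ast}$, the extra requirement \eqref{700} is vacuous.

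Finally, the ball-mass term matches the convolution term. By \eqref{49} with $a = N - 2$, for radial $u$ we have $\Phi_u(|x|) = |x|^{2-N} \star |u|^p$. So equation \eqref{138} coincides with \eqref{619}, and weak radial solutions in $E^{p,2}_{N-1,\rad}(\R^N)$ of one are those of the other. Theorem \ref{thm:large-mu} then gives, for each $m$, a $\mu_m$ beyond which there are $m$ distinct pairs at positive energy levels.

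The only point needing care is that \eqref{49} uses $u \in L^p(\R^N)$. It should either be justified for elements of the energy space, or the statement should be understood with the convolution interpreted via Newton's theorem as in the introduction. I expect this bookkeeping, rather than any analysis, to be the sole obstacle. It can be handled by the Fubini computation given there, which requires only finiteness of the ball-mass energy.
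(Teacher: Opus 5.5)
Your proposal is correct and follows the paper's own argument: the paper obtains this theorem by specializing Theorem \ref{thm:large-mu} to $q = 2$, $b = N - 1$, $a = N - 2$. There \eqref{613} makes \eqref{700} vacuous and reduces \eqref{139} to $p + 1 < s < 2^\ast$, while \eqref{616} is exactly what guarantees $pq = 2p < 2^\ast$ in \eqref{111}. Your side concern about \eqref{49} is settled as you suggest: the Fubini/Newton computation in the introduction involves only nonnegative integrands (Tonelli), so it needs no assumption $u \in L^p(\R^N)$.
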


\begin{remark} \label{rmk:sps-large}
Theorem \ref{thm:sps-large} is known. For $N = 3$ and $p = 2$ it is \cite[Theorem 1.28]{MR5043800}, and for general $N$ and $p$ satisfying \eqref{616} it is \cite[Corollary 2.15]{MarMePe}. For $N = 3$ and $p = 2$, the existence of a positive solution of equation \eqref{619} was proved earlier in \cite[Theorem 1.1]{MR3912770} for $s \in (3,4]$ and all sufficiently large $\mu$, and for $s \in (4,6)$ and all $\mu > 0$.
\end{remark}

When $N \ge 4$, Theorem \ref{thm:bn} together with \eqref{613} gives the following analog of the classical results of Br\'{e}zis and Nirenberg \cite{MR709644} and Capozzi et al.\! \cite{MR831041} for the Schr\"{o}dinger-Poisson-Slater equation (see \eqref{88}).

\begin{theorem} \label{thm:sps-bn}
Let $N \ge 4$ and assume \eqref{616}. Then equation \eqref{618} has a nontrivial weak radial solution $u$ with $0 < \E(u) < \frac{1}{N}\, S^{N/2}$ in the following cases:
\begin{enumroman}
\item $0 < \lambda < \lambda_1$;
\item $\lambda_k < \lambda < \lambda_{k+1}$ for some $k \ge 1$.
\end{enumroman}
\end{theorem}

\begin{remark} \label{rmk:sps-bn}
Theorem \ref{thm:sps-bn} appears to be new. The results of \cite[Theorem 1.25]{MR5043800} and \cite[Theorem 2.12]{MarMePe} recalled in Remark \ref{rmk:sps-left} give solutions of equation \eqref{618} only for $\lambda$ in suitably small left neighborhoods of the eigenvalues $\lambda_k$, whereas Theorem \ref{thm:sps-bn} gives a solution for every $\lambda > 0$ that is not an eigenvalue. The restriction $N \ge 4$ cannot be removed within the present approach since for $N = 3$ condition \eqref{304} requires $p > 3$, which is incompatible with \eqref{616}. In particular, the case $N = 3$, $p = 2$ is not covered.
\end{remark}

\subsection{Concluding remarks and open problems} \label{section:open}

We conclude with some problems that are left open by the present work.

\begin{enumerate}
\item \emph{Resonant case.} Does equation \eqref{bn-eq} have a nontrivial weak radial solution when $\lambda = \lambda_k$? As mentioned in Remark \ref{rmk:resonant}, the linking geometry persists at $\lambda = \lambda_k$, and the difficulty lies in keeping the minimax level below $\frac{1}{N}\, S^{N/2}$.

\item \emph{Br\'{e}zis-Nirenberg problem in dimension three.} For $N = 3$, condition \eqref{304} requires $\beta > 4$, which for the Schr\"{o}dinger-Poisson-Slater equation means $p > 3$ and is incompatible with \eqref{616} (see Remark \ref{rmk:sps-bn}). This restriction enters through Lemma \ref{lem:bubble}, where the gain $\eps^{\delta_\beta}$ from the subcritical term must dominate the error $\O(\eps^{N-2})$ in the gradient term. Whether Theorem \ref{thm:sps-bn} holds for $N = 3$, in particular in the physical case $p = 2$, is open.

\item \emph{Endpoint embedding.} Theorem \ref{Theorem 1} leaves open the embedding at $s = 2^{p,q}_{b,\ast}$. It holds when $q = 1$ \cite[Lemma 5.1]{GlPeRi} and fails when $q = 2$ and $b = N - 1$ \cite[Theorem 4]{MR3568051}. What happens for general $(p,q,b)$?

\item \emph{Borderline Trudinger-Moser inequality.} Does Theorem \ref{Theorem 7} hold for $\nu = 4 \pi$ (see Remark \ref{rmk:tm-sharp})? Theorem \ref{Theorem 7} also suggests that zero-mass Chern-Simons-Schr\"{o}dinger equations with critical exponential growth can be treated directly in $E^{2,3}_{3,\rad}(\R^2)$, without the additional term used in \cite{MR4708596,MR4968174,MR4502773}.

\item \emph{Positivity.} Are the nonnegative solutions obtained in Remark \ref{rmk:sign} positive when $1 < p < 2$, or can they have dead cores? This question is relevant for the Schr\"{o}dinger-Poisson-Slater equation, where $p < 2$ in all the cases covered by Theorem \ref{thm:sps-bn}.

\item \emph{Scaled eigenvalues at the borderline.} When $\beta = 2^{p,q}_{b,\ast}$, as in the Chern-Simons-Schr\"{o}dinger case, Theorem \ref{thm:bm-eigenvalues} does not apply. Is there a general theory of scaled eigenvalues in the borderline case $\beta = 2^{p,q}_{b,\ast}$?

\item \emph{Nonradial solutions.} The ball-mass term is anchored at the origin, and by Theorem \ref{thm:no-embedding} the space $E^{p,q}_b(\R^N)$ has no Lebesgue embeddings other than the Sobolev embedding. For $q = 2$ and $b = N - 1$ the Coulomb-Sobolev space provides a translation-invariant framework for nonradial solutions \cite{MR3568051}, but for $q \ne 1, 2$ no such framework seems to be available.
\end{enumerate}

\def\cprime{$''$}


\begin{thebibliography}{10}

\bibitem{MR2902133}
Claudianor~O. Alves, Marco A.~S. Souto, and Marcelo Montenegro.
\newblock Existence of solution for two classes of elliptic problems in
  {$\R^N$} with zero mass.
\newblock {\em J. Differential Equations}, 252(10):5735--5750, 2012.

\bibitem{MR829403}
A.~Ambrosetti and M.~Struwe.
\newblock A note on the problem {$-\Delta u=\lambda u+u\vert u\vert \sp {2\sp
  \ast-2}$}.
\newblock {\em Manuscripta Math.}, 54(4):373--379, 1986.

\bibitem{MR2465993}
Antonio Ambrosetti.
\newblock On {S}chr\"{o}dinger-{P}oisson systems.
\newblock {\em Milan J. Math.}, 76:257--274, 2008.

\bibitem{MR0370183}
Antonio Ambrosetti and Paul~H. Rabinowitz.
\newblock Dual variational methods in critical point theory and applications.
\newblock {\em J. Functional Analysis}, 14:349--381, 1973.

\bibitem{MR3385192}
Marino Badiale, Michela Guida, and Sergio Rolando.
\newblock Compactness and existence results in weighted {S}obolev spaces of
  radial functions. {P}art {I}: compactness.
\newblock {\em Calc. Var. Partial Differential Equations}, 54(1):1061--1090,
  2015.

\bibitem{MR3576582}
Marino Badiale, Michela Guida, and Sergio Rolando.
\newblock Compactness and existence results in weighted {S}obolev spaces of
  radial functions. {P}art {II}: existence.
\newblock {\em NoDEA Nonlinear Differential Equations Appl.}, 23(6):Art. 67,
  34, 2016.

\bibitem{MR3852465}
Jacopo Bellazzini, Marco Ghimenti, Carlo Mercuri, Vitaly Moroz, and Jean
  Van~Schaftingen.
\newblock Sharp {G}agliardo-{N}irenberg inequalities in fractional
  {C}oulomb-{S}obolev spaces.
\newblock {\em Trans. Amer. Math. Soc.}, 370(11):8285--8310, 2018.

\bibitem{MR2187794}
Vieri Benci, Carlo~R. Grisanti, and Anna~Maria Micheletti.
\newblock Existence of solutions for the nonlinear {S}chr\"odinger equation
  with {$V(\infty)=0$}.
\newblock In {\em Contributions to nonlinear analysis}, volume~66 of {\em
  Progr. Nonlinear Differential Equations Appl.}, pages 53--65. Birkh\"auser,
  Basel, 2006.

\bibitem{MR695535}
H.~Berestycki and P.-L. Lions.
\newblock Nonlinear scalar field equations. {I}. {E}xistence of a ground state.
\newblock {\em Arch. Rational Mech. Anal.}, 82(4):313--345, 1983.

\bibitem{MR695536}
H.~Berestycki and P.-L. Lions.
\newblock Nonlinear scalar field equations. {II}. {E}xistence of infinitely
  many solutions.
\newblock {\em Arch. Rational Mech. Anal.}, 82(4):347--375, 1983.

\bibitem{MR2013491}
Olivier Bokanowski, Jos\'{e}~L. L\'{o}pez, and Juan Soler.
\newblock On an exchange interaction model for quantum transport: the
  {S}chr\"{o}dinger-{P}oisson-{S}later system.
\newblock {\em Math. Models Methods Appl. Sci.}, 13(10):1397--1412, 2003.

\bibitem{MR1702877}
Olivier Bokanowski and Norbert~J. Mauser.
\newblock Local approximation for the {H}artree-{F}ock exchange potential: a
  deformation approach.
\newblock {\em Math. Models Methods Appl. Sci.}, 9(6):941--961, 1999.

\bibitem{MR539217}
Ha\"{\i}m Br\'{e}zis and Tosio Kato.
\newblock Remarks on the {S}chr\"{o}dinger operator with singular complex
  potentials.
\newblock {\em J. Math. Pures Appl. (9)}, 58(2):137--151, 1979.

\bibitem{MR699419}
Ha{\"{\i}}m Br{\'e}zis and Elliott Lieb.
\newblock A relation between pointwise convergence of functions and convergence
  of functionals.
\newblock {\em Proc. Amer. Math. Soc.}, 88(3):486--490, 1983.

\bibitem{MR709644}
Ha{\"{\i}}m Br{\'e}zis and Louis Nirenberg.
\newblock Positive solutions of nonlinear elliptic equations involving critical
  {S}obolev exponents.
\newblock {\em Comm. Pure Appl. Math.}, 36(4):437--477, 1983.

\bibitem{MR2948224}
Jaeyoung Byeon, Hyungjin Huh, and Jinmyoung Seok.
\newblock Standing waves of nonlinear {S}chr\"odinger equations with the gauge
  field.
\newblock {\em J. Funct. Anal.}, 263(6):1575--1608, 2012.

\bibitem{MR768824}
L.~Caffarelli, R.~Kohn, and L.~Nirenberg.
\newblock First order interpolation inequalities with weights.
\newblock {\em Compositio Math.}, 53(3):259--275, 1984.

\bibitem{MR1163431}
D.~M. Cao.
\newblock Nontrivial solution of semilinear elliptic equation with critical
  exponent in {${\bf R}^2$}.
\newblock {\em Comm. Partial Differential Equations}, 17(3-4):407--435, 1992.

\bibitem{MR831041}
A.~Capozzi, D.~Fortunato, and G.~Palmieri.
\newblock An existence result for nonlinear elliptic problems involving
  critical {S}obolev exponent.
\newblock {\em Ann. Inst. H. Poincar\'e Anal. Non Lin\'eaire}, 2(6):463--470,
  1985.

\bibitem{MR878016}
Lennart Carleson and Sun-Yung~A. Chang.
\newblock On the existence of an extremal function for an inequality of {J}.\
  {M}oser.
\newblock {\em Bull. Sci. Math. (2)}, 110(2):113--127, 1986.

\bibitem{MR867663}
G.~Cerami, S.~Solimini, and M.~Struwe.
\newblock Some existence results for superlinear elliptic boundary value
  problems involving critical exponents.
\newblock {\em J. Funct. Anal.}, 69(3):289--306, 1986.

\bibitem{MR779872}
Giovanna Cerami, Donato Fortunato, and Michael Struwe.
\newblock Bifurcation and multiplicity results for nonlinear elliptic problems
  involving critical {S}obolev exponents.
\newblock {\em Ann. Inst. H. Poincar\'e Anal. Non Lin\'eaire}, 1(5):341--350,
  1984.

\bibitem{MR3415024}
Patricia~L. Cunha, Pietro d'Avenia, Alessio Pomponio, and Gaetano Siciliano.
\newblock A multiplicity result for {C}hern-{S}imons-{S}chr\"odinger equation
  with a general nonlinearity.
\newblock {\em NoDEA Nonlinear Differential Equations Appl.}, 22(6):1831--1850,
  2015.

\bibitem{MR2371112}
Marco Degiovanni and Sergio Lancelotti.
\newblock Linking over cones and nontrivial solutions for {$p$}-{L}aplace
  equations with {$p$}-superlinear nonlinearity.
\newblock {\em Ann. Inst. H. Poincar\'e Anal. Non Lin\'eaire}, 24(6):907--919,
  2007.

\bibitem{MR4292779}
Tomas Dutko, Carlo Mercuri, and Teresa~Megan Tyler.
\newblock Groundstates and infinitely many high energy solutions to a class of
  nonlinear {S}chr\"odinger-{P}oisson systems.
\newblock {\em Calc. Var. Partial Differential Equations}, 60(5):Paper No. 174,
  46, 2021.

\bibitem{MR0478189}
Edward~R. Fadell and Paul~H. Rabinowitz.
\newblock Generalized cohomological index theories for {L}ie group actions with
  an application to bifurcation questions for {H}amiltonian systems.
\newblock {\em Invent. Math.}, 45(2):139--174, 1978.

\bibitem{MR2379460}
Fran\c~cois Genoud and Charles~A. Stuart.
\newblock Schr\"odinger equations with a spatially decaying nonlinearity:
  existence and stability of standing waves.
\newblock {\em Discrete Contin. Dyn. Syst.}, 21(1):137--186, 2008.

\bibitem{MR1814364}
David Gilbarg and Neil~S. Trudinger.
\newblock {\em Elliptic partial differential equations of second order}.
\newblock Classics in Mathematics. Springer-Verlag, Berlin, 2001.
\newblock Reprint of the 1998 edition.

\bibitem{Gill}
T.~S. Gill.
\newblock Optical guiding of laser beam in nonuniform plasma.
\newblock {\em Pramana J. Phys.}, 55(5--6):835--842, 2000.

\bibitem{GlPeRi}
E. Gloss, K. Perera, and B. Ribeiro.
\newblock Inhomogeneous nonlinear {S}chr\"{o}dinger equations with competing
  singular nonlinearities.
\newblock preprint, \href{https://arxiv.org/abs/2601.02909}{\tt
  arXiv:2601.02909}.

\bibitem{MR2902293}
Isabella Ianni and David Ruiz.
\newblock Ground and bound states for a static
  {S}chr\"odinger-{P}oisson-{S}later problem.
\newblock {\em Commun. Contemp. Math.}, 14(1):1250003, 22, 2012.

\bibitem{MR2834784}
Norihisa Ikoma.
\newblock On radial solutions of inhomogeneous nonlinear scalar field
  equations.
\newblock {\em J. Math. Anal. Appl.}, 386(2):744--762, 2012.

\bibitem{MR1084552}
R.~Jackiw and So-Young Pi.
\newblock Classical and quantal nonrelativistic {C}hern-{S}imons theory.
\newblock {\em Phys. Rev. D (3)}, 42(10):3500--3513, 1990.

\bibitem{MR1817225}
Elliott~H. Lieb and Michael Loss.
\newblock {\em Analysis}, volume~14 of {\em Graduate Studies in Mathematics}.
\newblock American Mathematical Society, Providence, RI, second edition, 2001.

\bibitem{MR636734}
P.-L. Lions.
\newblock Some remarks on {H}artree equation.
\newblock {\em Nonlinear Anal.}, 5(11):1245--1256, 1981.

\bibitem{MR778970}
P.-L. Lions.
\newblock The concentration-compactness principle in the calculus of
  variations. {T}he locally compact case. {I}.
\newblock {\em Ann. Inst. H. Poincar\'e Anal. Non Lin\'eaire}, 1(2):109--145,
  1984.

\bibitem{MR834360}
P.-L. Lions.
\newblock The concentration-compactness principle in the calculus of
  variations. {T}he limit case. {I}.
\newblock {\em Rev. Mat. Iberoamericana}, 1(1):145--201, 1985.

\bibitem{LiuTripathi}
C.~S. Liu and V.~K. Tripathi.
\newblock Laser guiding in an axially nonuniform plasma channel.
\newblock {\em Phys. Plasmas}, 1(9):3100--3103, 1994.

\bibitem{MR3912770}
Zhisu Liu, Zhitao Zhang, and Shuibo Huang.
\newblock Existence and nonexistence of positive solutions for a static
  {S}chr\"odinger-{P}oisson-{S}later equation.
\newblock {\em J. Differential Equations}, 266(9):5912--5941, 2019.

\bibitem{MarMePe}
Artur~Jorge Marinho, Carlo Mercuri, and Kanishka Perera.
\newblock New solutions to {S}chr\"{o}dinger-{P}oisson-{S}later equations in
  {C}oulomb-{S}obolev spaces.
\newblock preprint, \href{https://arxiv.org/abs/2602.12784}{\tt
  arXiv:2602.12784}.

\bibitem{MR1836081}
N.~J. Mauser.
\newblock The {S}chr\"{o}dinger-{P}oisson-{$X\alpha$} equation.
\newblock {\em Appl. Math. Lett.}, 14(6):759--763, 2001.

\bibitem{MR3568051}
Carlo Mercuri, Vitaly Moroz, and Jean Van~Schaftingen.
\newblock Groundstates and radial solutions to nonlinear
  {S}chr\"odinger-{P}oisson-{S}later equations at the critical frequency.
\newblock {\em Calc. Var. Partial Differential Equations}, 55(6):Art. 146, 58,
  2016.

\bibitem{MR5043800}
Carlo Mercuri and Kanishka Perera.
\newblock Variational methods for scaled functionals with applications to the
  {S}chr\"odinger-{P}oisson-{S}later equation.
\newblock {\em J. Math. Pures Appl. (9)}, 212:Paper No. 103885, 70, 2026.

\bibitem{MR3210961}
Vitaly Moroz and Cyrill~B. Muratov.
\newblock Asymptotic properties of ground states of scalar field equations with
  a vanishing parameter.
\newblock {\em J. Eur. Math. Soc. (JEMS)}, 16(5):1081--1109, 2014.

\bibitem{MR0301504}
J.~Moser.
\newblock A sharp form of an inequality by {N}. {T}rudinger.
\newblock {\em Indiana Univ. Math. J.}, 20:1077--1092, 1970/71.

\bibitem{MR4999814}
Kanishka Perera.
\newblock Abstract multiplicity theorems and applications to critical growth
  problems.
\newblock {\em J. Anal. Math.}, 157(1):211--223, 2025.

\bibitem{MR2640827}
Kanishka Perera, Ravi~P. Agarwal, and Donal O'Regan.
\newblock {\em Morse theoretic aspects of {$p$}-{L}aplacian type operators},
  volume 161 of {\em Mathematical Surveys and Monographs}.
\newblock American Mathematical Society, Providence, RI, 2010.

\bibitem{MR3353806}
Alessio Pomponio and David Ruiz.
\newblock A variational analysis of a gauged nonlinear {S}chr\"odinger
  equation.
\newblock {\em J. Eur. Math. Soc. (JEMS)}, 17(6):1463--1486, 2015.

\bibitem{MR2679375}
David Ruiz.
\newblock On the {S}chr\"odinger-{P}oisson-{S}later system: behavior of
  minimizers, radial and nonradial cases.
\newblock {\em Arch. Ration. Mech. Anal.}, 198(1):349--368, 2010.

\bibitem{MR2032129}
\'{O}scar S\'{a}nchez and Juan Soler.
\newblock Long-time dynamics of the {S}chr\"{o}dinger-{P}oisson-{S}later
  system.
\newblock {\em J. Statist. Phys.}, 114(1-2):179--204, 2004.

\bibitem{MR4708596}
Liejun Shen.
\newblock Zero-mass gauged {S}chr\"odinger equations with supercritical
  exponential growth.
\newblock {\em J. Differential Equations}, 393:204--237, 2024.

\bibitem{MR4968174}
Liejun Shen and Marco Squassina.
\newblock Generalized {C}hern-{S}imons-{S}chr\"odinger system with critical
  exponential growth: the zero-mass case.
\newblock {\em Asymptot. Anal.}, 143(2):746--767, 2025.

\bibitem{Slater}
J.~C. Slater.
\newblock A simplification of the {H}artree-{F}ock method.
\newblock {\em Phys. Rev.}, 81(3):385--390, 1951.

\bibitem{MR0454365}
Walter~A. Strauss.
\newblock Existence of solitary waves in higher dimensions.
\newblock {\em Comm. Math. Phys.}, 55(2):149--162, 1977.

\bibitem{MR760051}
Michael Struwe.
\newblock A global compactness result for elliptic boundary value problems
  involving limiting nonlinearities.
\newblock {\em Math. Z.}, 187(4):511--517, 1984.

\bibitem{MR2431434}
Michael Struwe.
\newblock {\em Variational methods}, volume~34 of {\em Ergebnisse der
  Mathematik und ihrer Grenzgebiete. 3. Folge. A Series of Modern Surveys in
  Mathematics [Results in Mathematics and Related Areas. 3rd Series. A Series
  of Modern Surveys in Mathematics]}.
\newblock Springer-Verlag, Berlin, fourth edition, 2008.
\newblock Applications to nonlinear partial differential equations and
  Hamiltonian systems.

\bibitem{MR2334597}
Jiabao Su, Zhi-Qiang Wang, and Michel Willem.
\newblock Weighted {S}obolev embedding with unbounded and decaying radial
  potentials.
\newblock {\em J. Differential Equations}, 238(1):201--219, 2007.

\bibitem{MR0463908}
Giorgio Talenti.
\newblock Best constant in {S}obolev inequality.
\newblock {\em Ann. Mat. Pura Appl. (4)}, 110:353--372, 1976.

\bibitem{MR0216286}
Neil~S. Trudinger.
\newblock On imbeddings into {O}rlicz spaces and some applications.
\newblock {\em J. Math. Mech.}, 17:473--483, 1967.

\bibitem{MR768629}
J.~L. V{\'a}zquez.
\newblock A strong maximum principle for some quasilinear elliptic equations.
\newblock {\em Appl. Math. Optim.}, 12(3):191--202, 1984.

\bibitem{MR1400007}
Michel Willem.
\newblock {\em Minimax theorems}.
\newblock Progress in Nonlinear Differential Equations and their Applications,
  24. Birkh\"auser Boston Inc., Boston, MA, 1996.

\bibitem{MR4502773}
Ning Zhang, Xianhua Tang, and Sitong Chen.
\newblock Mountain-pass type solutions for the {C}hern-{S}imons-{S}chr\"odinger
  equation with zero mass potential and critical exponential growth.
\newblock {\em J. Geom. Anal.}, 33(1):Paper No. 12, 28, 2023.

\end{thebibliography}
\end{document}